\pdfoutput=1
\documentclass{article}

\usepackage[a-2b,mathxmp]{pdfx}
\usepackage[textheight=9in,
    % textwidth=5.5in,
    top=1in,
    left=1in,
    right=1in,
    headheight=12pt,
    headsep=25pt,
    footskip=30pt]{geometry}

\usepackage{url}            % simple URL typesetting
\usepackage{booktabs}       % professional-quality tables
\usepackage{amsfonts}       % blackboard math symbols
\usepackage{nicefrac}       % compact symbols for 1/2, etc.
\usepackage{microtype}      % microtypography
\usepackage[rgb]{xcolor}         % colors
\selectcolormodel{natural}
\usepackage{ninecolors}
\selectcolormodel{rgb}
\usepackage{booktabs}
\usepackage{color, colortbl}
\usepackage{diagbox}
\usepackage{enumitem}
\usepackage{tablefootnote}

\usepackage{todonotes}
\usepackage{times}
\usepackage{amsmath,color}
\usepackage{subcaption}
\usepackage{amsthm}
\usepackage{array}
\usepackage{graphicx}
\usepackage{multirow}
\usepackage{multicol}
\usepackage{caption}
\usepackage{makecell}
\usepackage{bbm}
\usepackage{breqn}
\usepackage{bm}
\captionsetup[figure]{font=small,skip=0pt}
\usepackage{algorithm}
\usepackage{algorithmic}
\usepackage{authblk}
\usepackage{wrapfig}
\usepackage{tabularray}
\usepackage{lmodern}

\usepackage{times}
\usepackage{amsfonts}
\usepackage{amsmath}
\usepackage{amsthm}
\usepackage{bm}
\usepackage{thm-restate}
\usepackage{pgfplots}
\usepackage{amssymb}
\usepackage{multirow}
\usepackage{multicol}
% \usetikzlibrary{arrows.meta}
\usepackage{bbm}
\usepackage{hyperref,pifont} % hyperlinks.
\usepackage{color}
\usepackage{graphicx}
\usepackage{bbm}
\usepackage{bm}
\usepackage{subcaption}
\usepackage{caption}[font=small]
\usepackage[utf8]{inputenc}
\usepackage{algorithm}
\usepackage{algorithmic}
\usepackage{amsmath}
\DeclareMathOperator*{\argmax}{arg\,max}
\usepackage{todonotes}

\newtheorem{assumption}{Assumption}
\newtheorem{theorem}{Theorem}[section]
\newtheorem{theorem*}{Theorem}

\newtheorem{lemma}[theorem]{Lemma}
\newtheorem{lemma*}{Lemma}

\newtheorem{definition}[theorem]{Definition}
\newtheorem{remark}[theorem]{Remark}

\newlength\myindent
\setlength\myindent{2em}

\newcommand{\lone}{\gamma_1}
\newcommand{\ltwo}{\gamma_2}
\newcommand{\mone}{\mu_1}
\newcommand{\mtwo}{\mu_2}
\newcommand{\inner}[1]{\langle#1\rangle}

\newcommand{\ba}[1]{\begin{align}#1\end{align}}
\newcommand{\bas}[1]{\begin{align*}#1\end{align*}}
\newcommand{\mult}[2]{\the\numexpr #1 * #2 \relax}
\newcommand{\bb}[1]{\left(#1\right)}
\newcommand{\babs}[1]{\left|#1\right|}

\newcommand{\bbb}[1]{\left[#1\right]}
\newcommand{\bk}{\color{black}}
\newcommand{\bl}{\color{blue}}

\newcommand{\sh}{S_{\mathsf{hi}}}

\newcommand{\reff}{r_{\mathsf{eff}}}
\newcommand{\R}{\mathbb{R}}
\newcommand{\ce}{\mathcal{E}}
\newcommand{\E}[1]{\mathbb{E}\left[#1\right]}
\newcommand{\Var}{\operatorname{Var}}
\newcommand{\Tr}{\operatorname{Tr}}

\newcommand{\diag}{\operatorname{diag}}

\newcommand{\init}{y_{0}}

\newcommand{\vp}{V_{\perp}}

\newcommand{\effecrank}{n}
\newcommand{\trunc}[2]{\left\lfloor #1 \right\rfloor_{#2}}

\newcommand{\norm}[1]{\left\lVert#1\right\rVert}
\newcommand{\OjaTruncateVector}{\mathsf{TruncateOja}}

\newcommand{\SupportRecovery}{\mathsf{OjaSupportRecovery}}
\newcommand{\ThresholdSupport}{\mathsf{Oja}$-$\mathsf{Thresholded}}
\newcommand{\SparsePCA}{\mathsf{Oja}}
\newcommand{\normBnu}{b_{n}}
\newcommand{\hatS}{\widehat{S}}

\newcommand{\vojatruncatevec}{\hat{v}_{\mathsf{truncvec}}}

\newcommand{\vojathresh}{\hat{v}_{\text{oja-thresh}}}
\newcommand{\vtrunc}{v_{\text{trunc}}}
\newcommand{\OjaSubgaussianOptimal}{\mathsf{OptimalOja}}
\newcommand{\constoracle}{\mathsf{ConstantSuccessOracle}}
\newcommand{\ProbabilityBoosting}{\mathsf{SuccessBoost}}
\newcommand{\indep}{\perp \!\!\! \perp}

\setlength{\belowcaptionskip}{-6pt}
\setlength{\abovedisplayskip}{-1pt}
\setlength{\belowdisplayskip}{-5pt}

\newcommand{\thmrestate}[3]{\begin{restatable}[#1]{thm}{#2}#3\end{restatable}}
\newcommand{\lemmarestate}[3]{\begin{restatable}[#1]{lem}{#2}#3\end{restatable}}

\usepackage{authblk,babel}

\title{Oja's Algorithm for Streaming Sparse PCA}
\vspace{-2em}
\author[1]{Syamantak Kumar\thanks{syamantak@utexas.edu}}
\affil[1]{Department of Computer Science, University of Texas at Austin}
\author[2]{Purnamrita Sarkar \thanks{purna.sarkar@utexas.edu}}
\affil[2]{Department of Statistics and Data Sciences, University of Texas at Austin}
\date{}

\pgfplotsset{compat=1.18}
\begin{document}

\maketitle
% \vspace{-10pt}
\begin{abstract}
% \vspace{-5pt}
Oja's algorithm for Streaming Principal Component Analysis (PCA) for $n$ data-points in a $d$ dimensional space achieves the same sin-squared error $O(\reff/n)$ as the offline algorithm in $O(d)$ space and $O(nd)$ time and a single pass through the datapoints. Here $\reff$ is the effective rank (ratio of the trace and the principal eigenvalue of the population covariance matrix $\Sigma$). Under this computational budget, we consider the problem of sparse PCA, where the principal eigenvector of $\Sigma$ is $s$-sparse, and $\reff$ can be large. In this setting, to our knowledge, \textit{there are no known single-pass algorithms} that achieve the minimax error bound in $O(d)$ space and $O(nd)$ time without either requiring strong initialization conditions or assuming further structure (e.g., spiked) of the covariance matrix.
We show that a simple single-pass procedure that thresholds the output of Oja's algorithm (the Oja vector) can achieve the minimax error bound under some regularity conditions in $O(d)$ space and $O(nd)$ time.  
We present a nontrivial and novel analysis of the entries of the unnormalized Oja vector, which involves the projection of a product of independent random matrices on a random initial vector. This is completely different from previous analyses of Oja's algorithm and matrix products, which have been done when the $\reff$ is bounded. %, \rd while in our setting, this quantity can be as large as $n/\log n$. \bk
\end{abstract}
% \vspace{-1em}
% \vspace{-5pt}
\section{Introduction}
% \vspace{-5pt}
 Principal Component Analysis (PCA)~\cite{pearson1901,jolliffe2003principal} is a classical statistical method for data analysis and visualization. Given a dataset $\left\{X_{i}\right\}_{i=1,\dots,n}$ where $ X_{i} \in \mathbb{R}^{d}$, sampled independently from a distribution with mean zero and covariance matrix $\Sigma$, the goal in PCA is to find the directions that explain most of the variance in the data. It is well known~\cite{Wedin1972PerturbationBI,jain2016streaming,vershynin2010introduction} that the leading eigenvector, $\hat{v}$, of the empirical covariance matrix, $\hat{\Sigma}$, provides an optimal error rate under suitable tail conditions on the datapoints. 
Computing $\hat{v}$ can be inefficient for large sample sizes, $n$, and dimensions $d$. 
Oja's algorithm~\cite{oja-simplified-neuron-model-1982} offers a comparable error rate in $O(nd)$ time and $O(d)$ space. Going back to the Canadian psychologist Donald Hebb's research~\cite{hebb-organization-of-behavior-1949}, it  has attracted a lot of attention in theoretical Statistics and Computer Science communities~\cite{jain2016streaming, allenzhu2017efficient, chen2018dimensionality, yang2018history, henriksen2019adaoja, DBLP:journals/corr/abs-2102-03646, mouzakis2022spectral, lunde2022bootstrapping, monnez2022stochastic,huang2021streaming}.
In these works, the error metric is the $\sin^2$ error between the estimated vector and the principal eigenvector of $\Sigma$ (true population eigenvector $v_1$). 
Notably, \cite{jain2016streaming}, \cite{allenzhu2017efficient}, and \cite{DBLP:journals/corr/abs-2102-03646} establish that Oja's algorithm achieves the same $O(\reff/n)$ sin-squared error as the \textit{offline algorithm} that estimates the top eigenvector of the empirical covariance matrix.

However, when the effective rank, $\reff$, of $\Sigma$ (defined as $ \Tr(\Sigma)/\norm{\Sigma}$) is large, PCA has been shown to be inconsistent~\cite{paul2007asymptotics, 10.1214/09-AOS709, doi:10.1198/jasa.2009.0121}. This setting comes up in sparse PCA problems, when $v_1$ is $s$-sparse (i.e. has only $s$ nonzero entries).
Let $\norm{.}_{0}$ denote the $l_{0}$ norm, i.e, the count of non-zero vector entries. Then, sparse PCA can be formally framed as the optimization problem: 
\ba{
    \widehat{v}_{\text{sparse}} := \argmax_{w \in \mathbb{R}^{d}}\sum_i \bb{X_{i}^{T}w}^{2}, \text{ under constraints } \norm{w}_{2}=1, \; \norm{w}_{0} = s \label{eq:SparsePCA_optimization}
}
In general, without further assumptions, Problem~\eqref{eq:SparsePCA_optimization} is non-convex and NP-hard~\cite{moghaddam2006generalized}, as it reduces to subset selection in ordinary least squares regression.
% where $\norm{.}_{0}$ denotes the $l_{0}$ norm, i.e., the number of non-zero entries of the vector. We refer to such a $w$ as being $s$-sparse. The optimization problem in Eq~\eqref{eq:SparsePCA_optimization} is non-convex and NP-hard~\cite{moghaddam2006generalized} since it can be reduced to subset selection for ordinary least squares regression. %Therefore, several approximation algorithms have been proposed in the literature.

\cite{pmlr-v22-vu12, cai2013sparse} showed a $O\bb{\sigma_{*}^{2}s\log\bb{d}/n}$ minimax lower bound for the $\sin^2$ error $1-(v_1^T\widehat{v})^2$, where $\sigma_{*}^{2} := \frac{\lambda_1\lambda_2}{\bb{\lambda_{1}-\lambda_{2}}^{2}}$. Here $\lambda_1 > \lambda_2\geq \dots \lambda_d$ are the eigenvalues of $\Sigma$. Extensive research has been conducted on optimal offline algorithms for sparse PCA, some of which are convex relaxation-based~\cite{berthet2013optimal,d2008optimal,NIPS2013_81e5f81d,10.1145/1273496.1273601,8412518,dey2017sparse,amini2008high,10.1214/13-AOS1097,cai2013sparse}. Others involve iterated thresholding~\cite{10.5555/1756006.1756021,10.1214/13-AOS1097,yuan2011truncated}, where a truncated power-method is analyzed along to achieve sparsity.  For brevity, we only describe algorithms that fit within the computation budget in consideration, i.e., $O\bb{nd}$ time, $O\bb{d}$ space. For a detailed comparison, see Table~\ref{tab:rate_comparison} and Appendix Section~\ref{section:further_details_related_work}.

\bk
\textbf{Support recovery algorithms in $O(nd)$ time, $O(d)$ space:}
Consider the spiked covariance model
\ba{\label{eq:spiked}
\Sigma=\sum_{i\in[r]}\nu_iv_iv_i^T+I_d
} 
where \( I_d \) is the identity matrix, \( \nu_i > 0 \), and \( v_i \) are sparse. For the general case, we only assume \( v_1 \) is \( s \)-sparse. When \( r = 1 \), $\Sigma_{ii}$  are the largest for $i\in S$. Diagonal thresholding essentially estimates $\Sigma_{ii}$ within our computational budget and uses thresholding to recover the support~\cite{doi:10.1198/jasa.2009.0121,amini2008high}. However, as we will show, without knowing the support sizes in each eigenvector and the number of spikes, this algorithm can fail, even in a spiked setting with \( r > 1 \). Also, for \( r = 1 \),~\cite{bresler2018sparse} show how to adapt a black-box algorithm for sparse linear regression for support recovery.
% Consider the spiked covariance model
% % \vspace{-1em}
% \ba{\label{eq:spiked}
% \Sigma=\sum_{i\in[r]}\nu_iv_iv_i^T+I_d
% } 
% where  $I_d$ being the identity, $\nu_i>0$, and $v_i$ are sparse. In contrast, for the \textit{general} $\Sigma$, the only assumption is that $v_1$ is $s$-sparse.
% When $k=1$, one can show that the marginal variances (which can be computed in the above computational budget) are the largest for features in the support of $v_1$. Known as diagonal thresholding, this is used by~\cite{doi:10.1198/jasa.2009.0121} and~\cite{amini2008high} to recover the support in $O(d)$ time $O(nd)$ space. \rd A strong guarantee on the recovered support leads to optimal error rate for the sparse PCA\bk. As we will show later, unless one knows the support sizes in each eigenvector and number of spikes this algorithm can easily fail even in spiked setting with $r>1$. For the spiked model with $r=1$,~\cite{bresler2018sparse} show how to transform a blackbox algorithm for sparse linear regression to do support recovery.

%which has suboptimal dependence on $n$ and $d$.

%These considerations enhance the appeal of single-pass algorithms with an easy initialization procedure that processes one data point at a time and operates in $O\bb{nd}$ time,  $O\bb{d}$ space.  
\begin{figure}
    \centering
    \begin{subfigure}{0.48\textwidth}
        \centering
        \includegraphics[width=\textwidth]{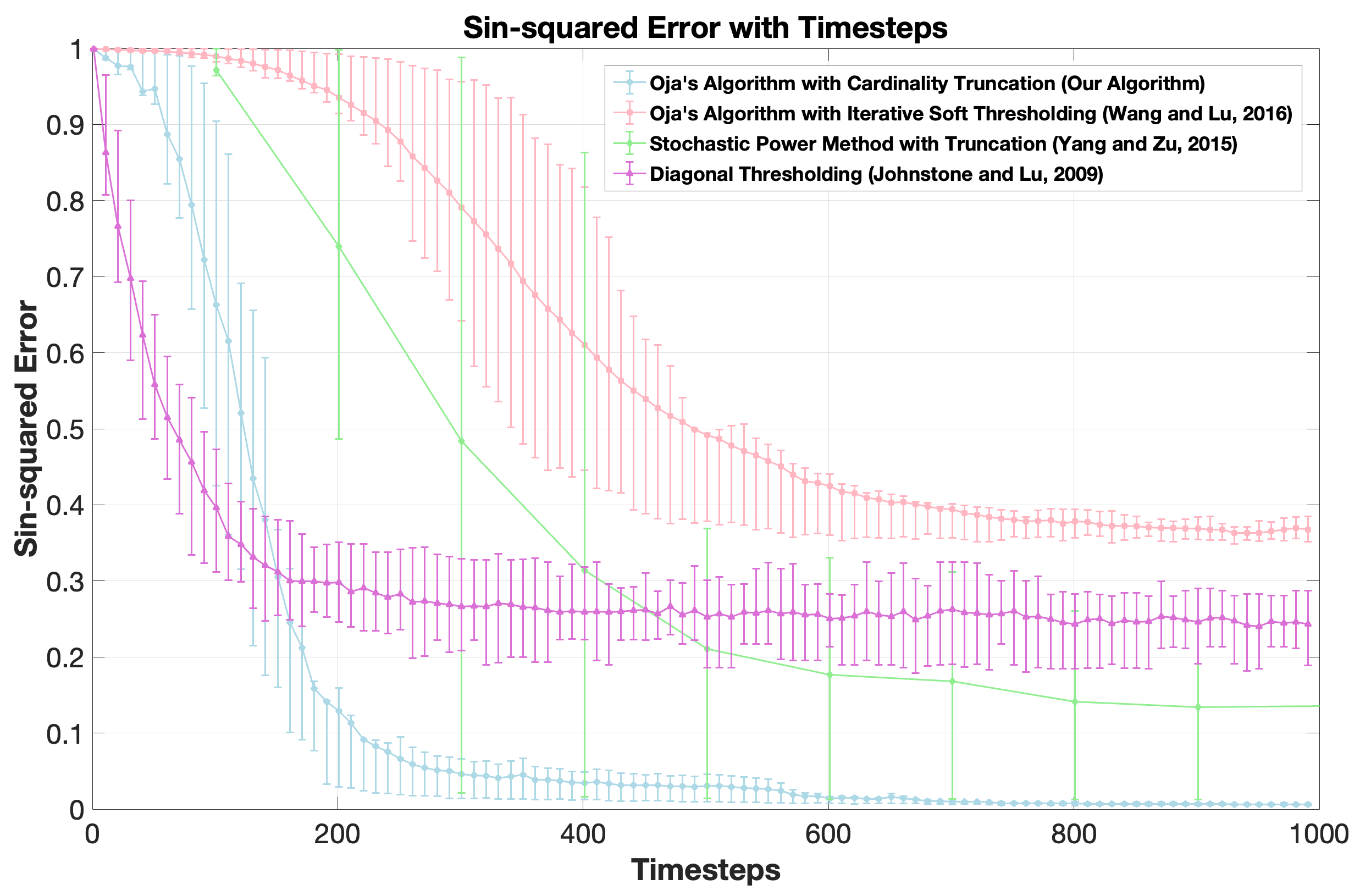}
        \caption{}
        \label{fig:comparison_sparse_pca}
    \end{subfigure}
    \hfill % Add some horizontal spacing
    \begin{subfigure}{0.45\textwidth}
        \centering
        \includegraphics[width=\textwidth]{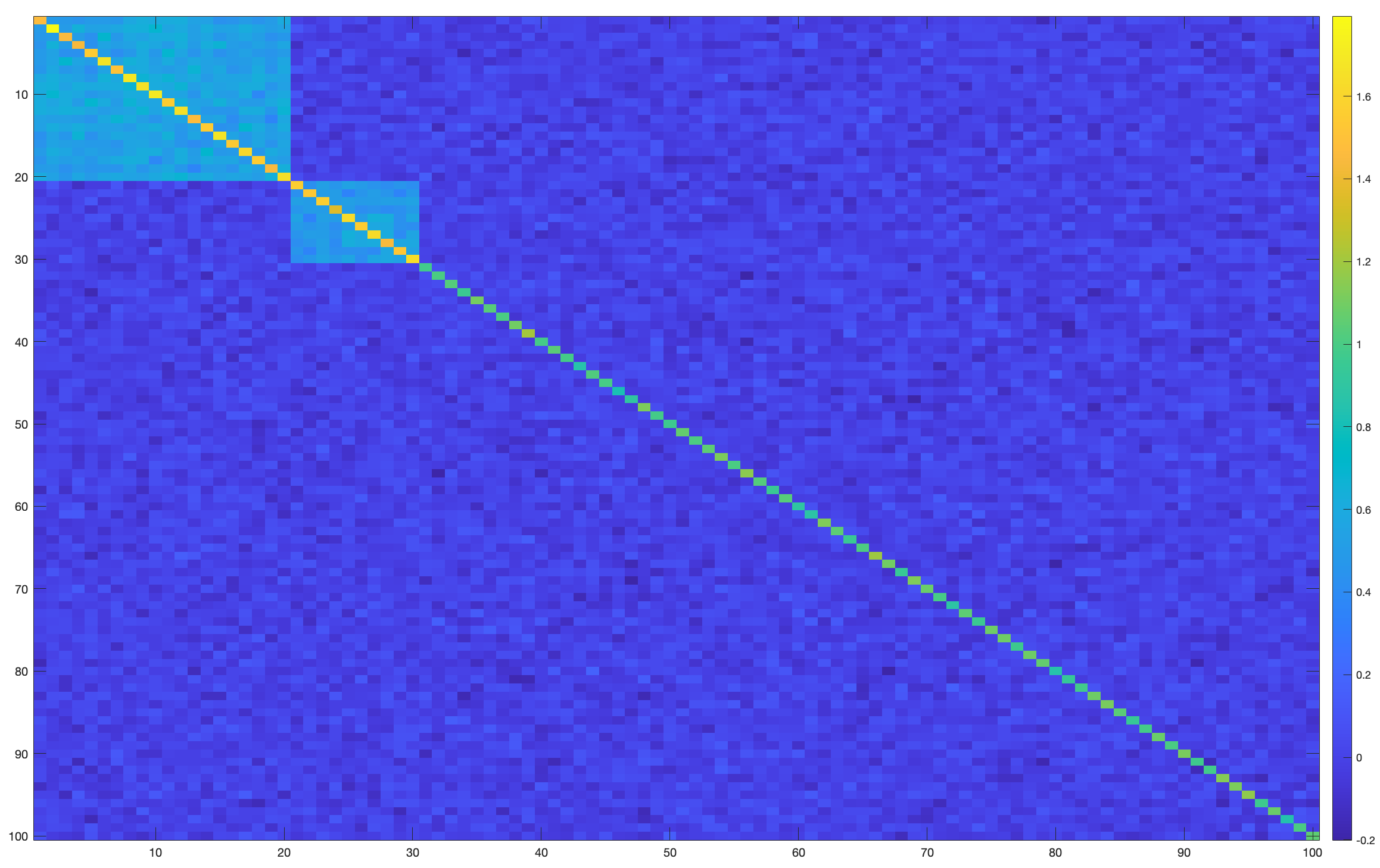}
        \caption{}
        \label{fig:cov_counterexample}
    \end{subfigure}
    \vspace{10pt}
    \caption{Comparison of Sparse PCA algorithms for identifying leading eigenvector, $v_{1}$, operating in $O\bb{d}$ space and $O\bb{nd}$ time with population covariance matrix specified in \cite{qiu2019gradientbased}, Section 5.1. Figure (a) plots \cite{doi:10.1198/jasa.2009.0121} (Purple), \cite{pmlr-v37-yangd15} (Black), \cite{wang2016online} (Orange) and our proposed Algorithm~\ref{alg:sparse_pca_with_support_trunc_vec} (Blue) for $n = d = 1000$, with error bars over 100 random runs. Figure (b) shows an image of the covariance matrix with $n=d=100$.}
    % \vspace{-10pt}
    \label{fig:sparse_pca_comparison}
\end{figure}
\textbf{Sparse PCA algorithms in $O\bb{nd}$ time,  $O\bb{d}$ space:} The streaming sparse PCA algorithms proposed by~\cite{pmlr-v37-yangd15} and~\cite{wang2016online} require an initialization $u_0$ with a sufficiently large $|u_0^Tv_1| = \Omega\bb{1}$ (local convergence), which can be hard to find for large $d$ and a general $\Sigma$.
See Table~\ref{tab:rate_comparison} for details.

In light of this lack of $O(nd)$ time, $O(d)$ space globally convergent algorithms for sparse PCA, we ask the following question in this work:
\\ \\
\textbf{Goal:} \textit{Is there a single-pass algorithm that, under a general $\Sigma$ with $s$-sparse $v_1$, outputs $\hat{v}$ achieving the minimax $\sin^2$ error ($1 - \inner{\widehat{v}, v_1}^2$) with $O(d)$ space, $O(nd)$ time, without a strong initialization?}
\\ \\
We provide a surprisingly simple answer to the above question:
\begin{theorem}[Informal]
    For a suitable range of the effective rank $\reff$ and the ratio $\lambda_{1}/\lambda_{2}$, there exists a single pass algorithm $\mathcal{A}$ that recovers the support of $v_1$ using Oja's algorithm, operates under $O(d)$ space, $O(nd)$ time and returns $\hat{v}$ with the minimax optimal $\sin^2$ error, $O\bb{\sigma_{*}^{2}s\log\bb{d}/n},$  for a general covariance matrix.
\end{theorem}

\newpage
\textbf{Our contributions:} 

%Say we have an upper bound $k$ on the sparsity parameter $s$. 
\vspace{-0.5em}
\begin{enumerate}[leftmargin=\parindent,align=left,labelwidth=\parindent,labelsep=1pt]
\item \textbf{Support recovery:} We show, for a \textit{general $\Sigma$} with the only constraint of a $s$-sparse $v_0$ that the top $k$ entries of the Oja vector in magnitude include the true support with high probability. The Oja vector is initialized by a random unit vector.
\item \textbf{Sparse PCA: } We use the recovered support to achieve a minimax optimal sparse PCA algorithm.%If we threshold the Oja vector and renormalize, then as long as $\lambda_1/\lambda_2=1+\Omega(1)$ and $\reff=O(n/\log n)$, it achieves a $\lambda_1/\lambda_2$ times the minimax optimal $\sin^2$ error rate. Furthermore, if one uses half of the data for support recovery, and run Oja's algorithm with a two state learning rate sketched in~\cite{liang2023optimality} achieves the minimax rate. 
\item \textbf{Entrywise analysis: } Our analysis is nontrivial and novel because it deviates from all existing analyses of matrix products and streaming PCA~\cite{huang2020matrix,henriksen2019matprod,lunde2022bootstrapping,liang2023optimality} which require $\|X_i\|^2/\lambda_1$ or $\reff$ to be bounded to obtain the $O(1/n)$ $\sin^2$ error rate.\bk
\vspace{-5pt}
\end{enumerate}
\begin{table}[!htb]
   % \centering
    % \hspace{-1cm}
    \begin{tabular}{|l|l|l|c|l|l|l|l|c|}
         \hline
         Paper(s) & \multirow{1}{*}{$\lambda_{1}/\lambda_{2}$} & \multirow{1}{*}{$\Sigma$} & \multirow{1}{*}{Global} &  \multirow{1}{*}{Space} & \multirow{1}{*}{Time}  & \multirow{1}{*}{$\sin^2$ error} \\
         &&&conv.?&&&\\
         % \multirow{1}{*}{} & \multirow{1}{*}{} & \multirow{1}{*}{} & \multirow{1}{*}{} &  \multirow{1}{*}{} & \multirow{1}{*}{}  & \multirow{1}{*}{} \\
          \hline
        \rowcolor{purple!20}Johnstone and Lu \cite{doi:10.1198/jasa.2009.0121}
          & $1 + o\bb{1}$ & Spiked & Y & $O\bb{d}$ & $O\bb{nd}$ & $o\bb{1}$ \\
          \hline
          SDP-based ~\cite{NIPS2013_81e5f81d} & $1 + o\bb{1}$ & General & Y & $O\bb{d^{2}}$ & $O\bb{n^{\omega}+d^{\omega}}$ & $O\bb{\dfrac{s^{2}\log\bb{d}}{n}}$ \\
          \cite{d2008optimal}&&&&&&\\
           \hline          Shen et al.~\cite{shen2013consistency}& $\Omega\bb{d^{\epsilon}},\epsilon>0$&General& Y & $O(d^2)$&  $O(nd^2)$ & $o(1)$\\
          \hline      {Ma, Cai et al.}~\cite{10.1214/13-AOS1097} & $1 + \Omega\bb{1}$ & Spiked & Y & $O\bb{d^{2}}$ & $O\bb{nd^{2}}$ & $O\bb{\dfrac{s\log\bb{d}}{n}}$ \\
          \cite{cai2013sparse}&&&&&&\\
          \hline
         Yuan and Zhang~\cite{yuan2011truncated}
           & $1 + \Omega\bb{1}$ & General & N & $O\bb{d^{2}}$ & $O\bb{nd^{2}}$ & $O\bb{\dfrac{s\log\bb{d}}{n}}$ \\
          \hline
         
        \rowcolor{purple!20}Yang and Xu~\cite{pmlr-v37-yangd15} & $1 + \Omega\bb{1}$ & Spiked & N & $O\bb{d}$ & $O\bb{nd}$ & $O\bb{\dfrac{s\log\bb{d}}{n}}$ \\
          \hline
         \rowcolor{purple!20} Wang and Lu~\cite{wang2016online} & $1 + \Omega\bb{1}$ & Spiked & N & $O\bb{d}$ & $O\bb{nd}$  & $o\bb{1}$ \\
          \hline
           \rowcolor{purple!20}  Oja's Algorithm \cite{jain2016streaming}
          & $1 + o\bb{1}$ & General & Y & $O\bb{d}$ & $O\bb{nd}$ & $O\bb{\dfrac{\reff}{n}}$ \\
         \hline
          Deshp et al.~\cite{deshp2016sparse} & $1 + o\bb{1}$ & Spiked & Y & $O\bb{d^{2}}$ & $O\bb{nd^{2}}$ & $O\bb{\dfrac{s^{2}\log\bb{d}}{n}}$ \\
          \hline
          Qiu et al.(Cor. 2)~\cite{qiu2019gradientbased} & $1 + o\bb{1}$ & General & Y & $O\bb{d^{2}}$ &  $\Omega\bb{nd^{2}}$ \tablefootnote{The authors do not state the runtime explicitly. The algorithm, as stated, requires at least $\Omega(nd^2)$ computation.} & $O\bb{\dfrac{s^{2}\log\bb{d}}{n}}$ \\
          \hline 
        Qiu et al. (Th. 4)~\cite{qiu2019gradientbased} & $1 + o\bb{1}$ & General & Y & $O\bb{d^{2}}$ & $O\bb{nd^{2}}$ & $O\bb{\dfrac{d^{2}\log\bb{d}}{\sqrt{n}}}$ \\
          \hline 
        Gataric et al. (Th. 2) & $1 + o\bb{1}$\tablefootnote{The authors require $\lambda_1/\lambda_2\geq 1+O(\sqrt{s^3\log d/n})$} & Spiked & Y & $O\bb{d^{2}}$ & $O\bb{nd^{2}}$\tablefootnote{When there are $m$ spikes,  Thm 2 of~\cite{gataric2020sparse} requires $A=\Omega(\frac{\nu_m^2}{\nu_1^2}d^2\log (d))$. When $\nu_1$ and $\nu_m$ are the same order, storing the empirical covariance matrix is computationally more efficient.} & $O\bb{\dfrac{s\log\bb{d}}{n}}$ \\
        \cite{gataric2020sparse}&&&&&&\\
          \hline
        \rowcolor{orange!50} Our work & $1 + \Omega\bb{1}$ & General & Y & $O\bb{d}$ & $O\bb{nd}$ & $O\bb{\dfrac{s\log\bb{d}}{n}}$ \\
          \hline
    \end{tabular}
    % \caption{\textbf{(R2, R3)}: Comparison of sparse PCA algorithms for leading eigenvector under $\frac{d}{n} \rightarrow c > 0$a}
    % \caption{Comparison of sparse PCA algorithms}
    \vspace{5pt}
    \caption{Comparison of sparse PCA algorithms for estimating $v_1$, based on various parameters. We require Assumptions~\ref{assumption:subg} and~\ref{assumption:high_dimensions}. The other algorithms may be valid under weaker assumptions. For ease of comparison, we fix $\frac{\lambda_{1}}{\lambda_{2}} = 1 + \Omega\bb{1}$ and $\frac{\reff\log\bb{n}}{n} = O\bb{1}$ for our results in this table.}
    % \vspace{-15pt}
    \label{tab:rate_comparison}
\end{table}

In Figure~\ref{fig:cov_counterexample}, for a simple spiked model with $r=2$, we show the relative performances of all $O(nd)$ time and $O(d)$ algorithms in Table~\ref{tab:rate_comparison}. Our thresholded and renormalized Oja algorithm outperforms all other algorithms operating under the same computational budget. The diagonal thresholding algorithm (~\cite{doi:10.1198/jasa.2009.0121}), which is successful for the special case of $r=1$, has a large error in the general case.

We now present an outline of our paper. We start by describing the problem setup and assumptions in Section~\ref{sec:setup}. Then we present our main results in Section~\ref{section:main_results}, which includes our results for Support Recovery (Section~\ref{subsection:support_recovery}), Sparse PCA (Section~\ref{subsection:sparse_pca_bound}) and Entrywise Deviation bounds (Section~\ref{section:entrywise_deviation_oja}) for the Oja vector. Finally, we provide a sketch of the proof along with the techniques used in Section~\ref{section:proof_technique}.

\section{Problem setup and preliminaries}
% \vspace{-0.5em}
\label{sec:setup}
\textbf{Notation}. We use $\mathbb{E}\bbb{.}$ to denote expectation and $[n]$ for $\{1, \dots, n\}$. The matrix multiplication constant is denoted as $\omega \approx 2.372$. $X \indep Y$ represents statistical independence between random variables $X$ and $Y$. The $\ell^{2}$ norm for vectors and operator norm for matrices is $\norm{.}_{2}$, the count of nonzero vector elements ($\ell^{0}$ norm) is $\norm{.}_{0}$, and the Frobenius norm for matrices is $\norm{.}_{F}$. For $v \in \mathbb{R}^{d}, R \subseteq [d]$,  $\trunc{v}{R} \in \mathbb{R}^{d}$ is the truncated vector with entries outside $R$ set to 0. $I_d \in \mathbb{R}^{d \times d}$ is the identity matrix, with $i^{th}$ column $e_{i} \in \mathbb{R}^{d \times 1}$. For any set $T \subseteq [d]$, $I_{T} \in \mathbb{R}^{d \times d}$ is defined as $I_{T}\bb{i,j} = \mathbbm{1}(i,j \in T)\mathbbm{1}(i = j)$, where $\mathbbm{1}(.)$ is the indicator random variable. $\inner{A,B} := \Tr(A^T B)$ represents the matrix inner product. $\widetilde{O}$ and $\widetilde{\Omega}$ represent order notations with logarithmic factors.
We start by defining subgaussianity for multivariate distributions.
\begin{definition}
    \label{definition:subgaussian}
     A random mean-zero vector $X \in \mathbb{R}^{d}$ with covariance matrix $\Sigma$ is a $\sigma-$subgaussian random vector ($\sigma > 0$) if for all vectors $v \in \mathbb{R}^{d}$, we have 
    $\E{\exp\bb{v^{T}X}} \leq \exp\bb{ \sigma^{2}v^T\Sigma v/2}$. Equivalently, $\exists \; L > 0$, such that $\forall p \geq 2$, $\bb{\E{|v^{T}X|^{p}}}^{\frac{1}{p}} \leq L\sigma \sqrt{p}\sqrt{v^T\Sigma v}$. \footnote{The results developed in this work follow if instead of subgaussianity, the moment bound holds $\forall \; p \leq 8$.}
\end{definition}
This definition of subgaussianity has been used in contemporary works on PCA and covariance estimation (See for example \cite{mendelson2020robust, jambulapati2020robust, diakonikolas2023nearly} and Theorem 4.7.1 in \cite{verhsynin-high-dimprob}).  We operate under the following two assumptions, unless otherwise specified, 
% \\ \\
%\textbf{Data}. Let $X_{1}, X_{2}, \cdots X_{n}$ be a sample of independent and identically distributed $\sigma$-subgaussian vectors in $\mathbb{R}^{d}$ with covariance matrix $\Sigma := \E{X_{i}X_{i}^{T}}$. We denote the eigenvectors of $\Sigma$ as $v_{1}, v_{2},  \cdots v_{d}$ and the corresponding eigenvalues as $\lambda_{1} > \lambda_{2} \geq \cdots \lambda_{d}$. Define $\vp := \bbb{v_{2}, v_{3}, \cdots v_{d}} \in \mathbb{R}^{d \times \bb{d-1}}$ and $\Lambda_{2} \in \mathbb{R}^{\bb{d-1} \times \bb{d-1}} = \diag\bb{\lambda_{2}, \lambda_{3}, \cdots \lambda_{d}}$. 
\begin{assumption}[Subgaussianity]
    \label{assumption:subg}
     $\left\{X_{i}\right\}_{i \in [n]}$ are of independent and identically distributed $\sigma$-subgaussian vectors in $\mathbb{R}^{d}$ with covariance matrix $\Sigma := \E{X_{i}X_{i}^{T}}$.
    \vspace{-5pt}
\end{assumption}
We denote the eigenvectors of $\Sigma$ as $v_{1}, v_{2},  \cdots v_{d}$ and the corresponding eigenvalues as $\lambda_{1} > \lambda_{2} \geq \cdots \lambda_{d}$. Define $\vp := \bbb{v_{2}, v_{3}, \cdots v_{d}} \in \mathbb{R}^{d \times \bb{d-1}}$ and $\Lambda_{2} \in \mathbb{R}^{\bb{d-1} \times \bb{d-1}} = \diag\bb{\lambda_{2}, \lambda_{3}, \cdots \lambda_{d}}$.
\begin{assumption}[Sparsity and Spectral gap]
    \label{assumption:high_dimensions} We assume that $\max\left\{1, \frac{\lambda_{2}}{\lambda_{1}-\lambda_{2}}\right\}\frac{\Tr\bb{\Lambda_{2}}}{\lambda_{1}-\lambda_{2}} \leq \frac{cn}{\log\bb{n}}$ and $ \frac{\lambda_{1}}{\lambda_{1}-\lambda_{2}} \leq c\sqrt{\frac{n}{\log^{2}\bb{n}}}$ for an absolute constant $c > 0$. The leading eigenvector, $v_{1}$, satisfies $\norm{v}_{0} \leq s$ with support set $S := \left\{i : v_{1}\bb{i} \neq 0\right\}$.
    %Here $C > 1$ is a constant dependent on model parameters $L, \sigma$.
\end{assumption}

\begin{remark}
We note that Assumption~\ref{assumption:high_dimensions} allows for $\reff$ to be as large as $d$, given a sufficient eigengap. This can be observed by setting $\lambda_1=\lambda_2(1+g_n)$ for some $g_{n} > 0$. Note that $\frac{\Tr(\Lambda_2)}{\lambda_1-\lambda_2}\leq \min\bb{\frac{1+g_n}{g_n}\reff,\frac{1}{g_n}d}$. If $g_n\leq 1$, then,
\bas{
\max\left\{1, \frac{1}{g_{n}}\right\}\frac{\Tr(\Lambda_2)}{\lambda_1-\lambda_2} \leq \frac{1}{g_n}\min\bb{\frac{1+g_n}{g_n}\reff,\frac{1}{g_n}d}=2\frac{1}{g_n}\min\bb{\frac{1+g_n}{2g_n}\reff,\frac{1}{2g_n}d}\leq 2\reff/g_n^2
}
If $g_n \gg 1 $, then,
\bas{
\max\left\{1, \frac{1}{g_{n}}\right\}\frac{\Tr(\Lambda_2)}{\lambda_1-\lambda_2} \leq \frac{\Tr(\Lambda_2)}{\lambda_1-\lambda_2}\leq \frac{d}{g_n}
}
therefore, in both cases, as long as $d \leq \frac{n g_n}{\log n}$, $\reff$ can be as large as $d$, while allowing for Assumption~\ref{assumption:high_dimensions} to hold.
\end{remark}

\textbf{Oja's algorithm with constant learning rate.} With a constant learning rate, $\eta$, and initial vector, $u_{0}$, Oja's algorithm ~\cite{oja1982simplified}, denoted as $\SparsePCA\bb{\left\{X_{t}\right\}_{t \in [n]}, \eta, u_{0}}$, performs the updates, $u_{t} \leftarrow (I + \eta X_{t}X_{t}^{T})u_{t-1}, \; u_{t}\leftarrow \frac{u_{t}}{\|u_{t}\|_{2}}$. 
For convenience of analysis, we also define $\forall t \in [n]$,
\ba{   
      B_{t} := \bb{I + \eta X_{t}X_{t}^{T}}\bb{I + \eta X_{t-1}X_{t-1}^{T}}\cdots \bb{I + \eta X_{1}X_{1}^{T}}, \; B_{0} = I \label{definition:Bn}
}
\section{Main results}
\label{section:main_results}
We present our main contributions in two stages. Firstly, in Section~\ref{subsection:support_recovery}, we demonstrate that with an upper bound on the support size, the top elements of the Oja vector include the support with constant probability, which can be enhanced using a boosting procedure ($\ProbabilityBoosting$) described in Section~\ref{subsection:probabilistic_boosting}. Secondly, in Section~\ref{subsection:sparse_pca_bound}, we use the support to extract the eigenvector and provide a high-probability $\sin^{2}$ error guarantee. Section~\ref{section:entrywise_deviation_oja} details our results on bounding the entrywise deviation of the Oja vector, which are crucial to our proofs and of independent interest. Detailed proofs are in the Appendix, Sections~\ref{appendix:proofs_support_recovery} and \ref{appendix:proofs_sparse_pca}, with the learning rate, $\eta$, specified in Lemma~\ref{lemma:learning_rate_set}.
% We describe our primary contributions in two stages. First, we show that, when we only have an upper bound on the support size the top elements of the Oja vector contains the support with constant probability, which can be boosted using a boosting procedure, $\ProbabilityBoosting$,  described in Section~\ref{subsection:probabilistic_boosting}   (Algorithm~\ref{alg:sparse_pca_with_support_trunc_data}).  Our second result, in Section~\ref{subsection:sparse_pca_bound}, then uses the support to extract the eigenvector and provide a high probability $\sin^{2}$ error guarantee. Section~\ref{section:entrywise_deviation_oja} describes our results for bounding the entrywise deviation of the Oja vector which form a critical part of our proofs and can be of independent interest. The detailed proofs are available in Appendix, Section~\ref{appendix:proofs_support_recovery} ,\ref{appendix:proofs_sparse_pca} and the choice of the learning rate, $\eta$ is specified in Lemma~\ref{lemma:learning_rate_set}.%The choice of the step-size, $\eta$, is justified in Lemma~\ref{lemma:learning_rate_set} in the Appendix.

\vspace{-7pt}
\subsection{Support recovery}
\vspace{-7pt}
\label{subsection:support_recovery}
\begin{algorithm}[H]
\caption{$\SupportRecovery\bb{\left\{X_{i}\right\}_{i \in [n]}, k, \eta}$} \label{alg:support_recovery}
\begin{algorithmic}[1]
\STATE \textbf{Input} : Dataset $\left\{X_{i}\right\}_{i \in [n]}$, Cardinality parameter $k \geq s$,  learning rate $\eta > 0$
\STATE $u_{0} \sim \mathcal{N}\bb{0,I}$
\STATE $\hat{v}\leftarrow \SparsePCA\bb{\left\{X_{i}\right\}_{i \in [n]}, \eta, y_{0}}$
\STATE $\widehat{S} \leftarrow \text{ Indices of } k \text{ largest values of} \left|\hat{v}\right|$
\RETURN $\widehat{S}$
\end{algorithmic}
\end{algorithm} 
\vspace{-10pt}
Algorithm~\ref{alg:support_recovery} provides an estimate, $\hatS$, of the true support set, $S$. It computes the Oja vector and returns the set of indices corresponding to its $k$ largest entries in absolute value. Our key result in Lemma~\ref{lemma:support_recovery_shi} discusses the recovery of the support set, $S$, for any $k \geq s$, without requiring exact knowledge of the sparsity parameter $s$. Using Algorithm~\ref{alg:support_recovery}, it provides a set $\hatS \supseteq S$ with probability at least 0.9.

\begin{restatable}[$s$-Agnostic Recovery]{lemma*}{sparsepcasupportreclargeelements}\label{lemma:support_recovery_shi} Under regularity conditions on the ratio $\frac{\lambda_{1}}{\lambda_{2}}$ and the effective rank, $\reff$, for $\min_{i}\left|v_{1}\bb{i}\right| = \widetilde{\Omega}\bb{\frac{\lambda_{1}}{\lambda_{1}-\lambda_{2}}\bb{\frac{d}{n^{2}}}^{\frac{1}{4}}}$, $\hatS \leftarrow \SupportRecovery\bb{\left\{X_{i}\right\}_{i \in [n]}, k, \eta := \frac{3\log\bb{n}}{n\bb{\lambda_{1}-\lambda_{2}}}}$ with $k \geq s$ satisfies, $\mathbb{P}\bb{S \subseteq \hatS} \geq 0.9$.
\end{restatable}

If $k = s$, i.e, the size of the support is exactly known, then we can improve the result of  Lemma~\ref{lemma:support_recovery_shi} to obtain an estimator, $\hatS$, of the support set with high probability. Theorem~\ref{theorem:support_recovery} provides the corresponding guarantees. The $\ProbabilityBoosting$ algorithm uses geometric aggregation on subsets returned from Algorithm~\ref{alg:support_recovery} run on $\log(1/\delta)$ disjoint subsets of the data and is described in Section~\ref{subsection:probabilistic_boosting}.

\begin{restatable}[High probability support recovery]{theorem}{sparsepcasupportrec}\label{theorem:support_recovery} Let Assumptions~\ref{assumption:subg}, \ref{assumption:high_dimensions} hold. For dataset $\mathcal{D} := \left\{X_{i}\right\}_{i \in [n]}$, let $\mathcal{A}$ be the randomized algorithm which computes $\hatS \leftarrow \SupportRecovery\bb{\left\{X_{i}\right\}_{i \in [n]}, k, \eta}$,
where $\eta := \frac{3\log\bb{n}}{n\bb{\lambda_{1}-\lambda_{2}}}$ and $k = s$. Then, for $\delta \in \bb{0,1}$, $\min_{i}\left|v_{1}\bb{i}\right| = \widetilde{\Omega}\bb{\frac{\lambda_{1}}{\lambda_{1}-\lambda_{2}}\bb{\frac{d}{n^{2}}}^{\frac{1}{4}}}$, $\tilde{S} \leftarrow \ProbabilityBoosting\bb{\left\{X_{i}\right\}_{i \in [n]}, \mathcal{A}, \delta}$ satisfies,
\bas{
    \mathbb{P}\bb{\tilde{S} = S} \geq 1-\delta
}
\end{restatable}
For comparison, existing support recovery algorithms for general $\Sigma$ are known for convex-relaxation-based algorithms like the SDP-based algorithm of ~\cite{lei2015sparsistency}. These require a much larger computational budget than ours. In Section~\ref{subsection:sparse_pca_bound}, we show how to use the $s$-agnostic support recovery in Lemma~\ref{lemma:support_recovery_shi} to perform Sparse PCA and obtain a $\sin^{2}$ error guarantee, where the final high probability error bound is obtained using a similar probability-boosting argument. For the learning rate, we follow the convention in related work (\cite{bal2013pcastreaming, Xu2018accerlerated, jain2016streaming, allenzhu2017efficient, huang2021streaming}) and choose the optimal value of the learning rate, which requires the knowledge of $\lambda_1-\lambda_2$. We believe an educated guess of $\eta$ would lead to consistency at the cost of a suboptimal error bound.
% \begin{remark}
%     It is important to note that, 
% \end{remark}

% \vspace{-5pt}
\subsection{Comparison with other support recovery algorithms}
\label{section:comparison_other_support}
% \vspace{-5pt}
We note that (see Table~\ref{tab:rate_comparison}), for the spiked model with $r=1$ (Eq~\ref{eq:spiked})~\cite{doi:10.1198/jasa.2009.0121} and~\cite{amini2008high} provide a diagonal thresholding algorithm for support recovery using $O\bb{d}$ space and $O\bb{nd}$ time.\footnote{The algorithm proposed in \cite{amini2008high} allows for a slight generalization of the spiked model in Eq~\ref{eq:spiked}.} %The non-asymptotic support recovery guarantees of their algorithm were analyzed in \cite{amini2008high}. 
%Their algorithm considers the $s$ largest entries of the diagonal of the sample covariance matrix, $\hat{\Sigma}$, and returns the corresponding indices as the estimated support. However, they place additional structural assumptions of the population covariance, $\Sigma$, and the corresponding leading eigenvector, $v_{1}$. 
% More precisely, they assume 
% \ba{\label{assumptions:diagonal_thresholding}
%     &\Sigma := \beta v_{1}v_{1}^{\top} + \begin{bmatrix}
%         I_{S} & 0 \\
%         0 & \Gamma_{d-s}
%     \end{bmatrix}, \max_{i,j\in [d-s]}\left|\Gamma_{p-s}\bb{i,j}\right| = O\bb{1}, \lambda_{\max}\bb{\Gamma_{d-s}} \leq \left\{1, \lambda_{\min}\bb{\Gamma_{d-s}}+\frac{\beta}{8}\right\}, \notag \\
%     &\forall i \in S, \; v_{1}\bb{i} \in \left\{\pm\frac{1}{\sqrt{s}}\right\}
% }
% Under Assumptions~\ref{assumptions:diagonal_thresholding}, 
To achieve a high-probability guarantee for the estimated support set, $\hatS$, of the form $\mathbb{P}\bb{\hatS = S} \geq 1 - \delta$, 
they require (Proposition 1, \cite{amini2008high})  $n = \Omega\bb{s^{2}\log\bb{d}+\log\bb{\frac{1}{\delta}}}$. In comparison, Theorem~\ref{theorem:support_recovery} requires a larger sample size.

\begin{remark}
    In practice, we will not know whether $\Sigma$ is spiked or general. So, we can always augment our support recovery algorithm by taking a union of the support from the Oja vector and diagonal thresholding, still maintaining $O(nd)$ time and $O(d)$ space. 
\end{remark}

However, diagonal thresholding only works for the spiked model with a single spike, which our results do not require. It is easy to construct a $\Sigma$ where the elements in the support of an eigenvector with a small eigenvalue have a larger magnitude than those of $v_1$ (Eq~\ref{eq:spiked_cov_counterexample}). Here the diagonal thresholding method fails (see Figure~\ref{fig:comparison_sparse_pca} and Proposition~\ref{proposition:counterexample_diagonal_thresh}). The explicit construction and the proof of Proposition~\ref{proposition:counterexample_diagonal_thresh} are available in the Appendix Section~\ref{appendix:useful_results}.  Figure~\ref{fig:sparse_pca_comparison} a) plots the $\sin^{2}$ error due to different Sparse PCA algorithms operating in $O\bb{d}$ space and $O\bb{nd}$ time on such a covariance matrix, $\Sigma$, which is visualized in Figure~\ref{fig:sparse_pca_comparison} a).

%In the same spiked model with $k=1$
\bl
% For comparison, we set $\forall i \in S, \; \left|v_{1}\bb{i}\right| = \frac{1}{\sqrt{s}}$. Then, Theorem~\ref{theorem:support_recovery} requires $n = \widetilde{\Omega}\bb{s^{2}\sqrt{d}\log\bb{d}\log\bb{\frac{1}{\delta}}}$ samples to obtain a similar guarantee. However, we note that Theorem~\ref{theorem:support_recovery} does not require any structural assumptions on the sample covariance matrix or the eigenvector. Due to the strong structural assumptions on the covariance matrix imposed by the diagonal thresholding algorithm, it can fail to recover the support for general covariance matrices, as shown in Proposition~\ref{proposition:counterexample_diagonal_thresh}. 
\bk
\begin{restatable}[Lower bound for diagonal thresholding]{proposition}{lowerbounddiagthresh}\label{proposition:counterexample_diagonal_thresh} Let Assumption~\ref{assumption:subg} hold. For any diagonal-thresholding algorithm, $\mathcal{A}$, performing support recovery with sparsity parameter $s$ such that $n = \Omega\bb{\sigma^{4}s^{2}\log\bb{d}}$, there exists a covariance matrix $\Sigma$ with principal eigenvector, $v_{1}$, $\norm{v_1}_{0} = s$, %$\Sigma = \lambda_{1}v_{1}v_{1}^{\top} + \vp\Lambda_{2}\vp^{\top}$ with $\norm{v_1}_{0} = s$, $\lambda_{1} > \lambda_{2}$ 
such that, $\mathbb{P}\bb{\left|\hatS \bigcap S\right| = 0} \geq 1 - d^{-10}$.
\end{restatable}
%\vspace{-5pt}
It may seem that if $r$ in Eq~\ref{eq:spiked} is small, and the sparsity parameters of each $v_i$, $i\leq r$ are known, then diagonal thresholding would work. However, in general, $r$ can be as large as $d$ and the union of supports of $v_i$ can be $[d]$.
\subsection{Sparse PCA}
%\vspace{-5pt}
\label{subsection:sparse_pca_bound}
In this section, we describe our results for Sparse PCA, which use the support recovery guarantees developed in Section~\ref{subsection:support_recovery}. For the results in this section, we split the dataset $D:= \left\{X_{i}\right\}_{i \in [n]}$ into two halves and estimate the support using the first half as $\hatS \leftarrow \SupportRecovery\bb{\left\{X_{i}\right\}_{i \in [\frac{n}{2}]}, k, \eta:= \frac{3\log\bb{n}}{n\bb{\lambda_{1}-\lambda_{2}}}}$ and input, $k \geq s$. The second half of the samples are then used to compute the estimated sparse eigenvector. Algorithm~\ref{alg:sparse_pca_with_support_trunc_vec} describes a general procedure for Sparse PCA given access to an estimated support set, $\hatS$. We start with an intuitive procedure in Theorem~\ref{theorem:convergence_truncate_vec}, which runs Oja's algorithm on the data and then uses the support to truncate the estimated eigenvector.
% \begin{algorithm}[H]
% \caption{$\OjaTruncateVector\bb{\left\{X_{i}\right\}_{i \in [n]}, \eta, \widehat{S}}$} \label{alg:sparse_pca_with_support_trunc_vec}
% \begin{algorithmic}[1]
% \STATE \textbf{Input} : Dataset $\left\{X_{i}\right\}_{i \in [n]}$, learning rate $\eta > 0$, estimated support set $\hatS \subseteq [d]$
% \STATE $w_{0} \sim \mathcal{N}\bb{0,I}$
% \STATE $\hat{v} \leftarrow \SparsePCA\bb{\left\{X_{i}\right\}_{i \in [n]}, \eta, w_{0}}$
% %\STATE $\widehat{v} \leftarrow \lfloor\hat{v}_2\rfloor_{\hatS}$
% \STATE $\vojatruncatevec \leftarrow \dfrac{\trunc{\hat{v}}{\hat{S}}}{\norm{\trunc{\hat{v}}{\hat{S}}}_{2}}$
% \RETURN $\vojatruncatevec$
% \end{algorithmic}
% \end{algorithm} 
\vspace{-7pt}
\begin{algorithm}[H]
\caption{$\OjaTruncateVector\bb{\left\{X_{i}\right\}_{i \in [n]}, \widehat{S},\mathcal{A},\Theta}$} \label{alg:sparse_pca_with_support_trunc_vec}
\begin{algorithmic}[1]
\STATE \textbf{Input} : Dataset $\left\{X_{i}\right\}_{i \in [n]}$, , estimated support set $\hatS \subseteq [d]$, Algorithm $\mathcal{A}$, Parameters $\Theta$
%\STATE $w_{0} \sim \mathcal{N}\bb{0,I}$
\STATE $\hat{v} \leftarrow \mathcal{A}\bb{\left\{X_{i}\right\}_{i \in [n]}, \Theta}$
%\STATE $\widehat{v} \leftarrow \lfloor\hat{v}_2\rfloor_{\hatS}$
\STATE $\vojatruncatevec \leftarrow \dfrac{\trunc{\hat{v}}{\hat{S}}}{\norm{\trunc{\hat{v}}{\hat{S}}}_{2}}$
\RETURN $\vojatruncatevec$
\end{algorithmic}
\end{algorithm} 

% \begin{restatable}[Convergence of $\OjaTruncateVector$]{theorem}{convergenceojatruncatevec}{
% \label{theorem:convergence_truncate_vec} Let Assumptions~\ref{assumption:subg} and~\ref{assumption:high_dimensions} hold. For dataset $\mathcal{D} := \left\{X_{i}\right\}_{i \in [n]}$, let $\mathcal{A}$ be the randomized algorithm which computes $\vojatruncatevec$ as $\hatS \leftarrow \SupportRecovery\bb{\left\{X_{i}\right\}_{i \in [\frac{n}{2}]}, k, \eta}$, and $\vojatruncatevec \leftarrow \OjaTruncateVector\bb{\left\{X_{i}\right\}_{i \in \left(\frac{n}{2},n\right]}, \eta, \widehat{S}}$,
% % \bas{
% %     \hatS \leftarrow \SupportRecovery\bb{\left\{X_{i}\right\}_{i \in [\frac{n}{2}]}, k, \eta}, \;\; \vojatruncatevec \leftarrow \OjaTruncateVector\bb{\left\{X_{i}\right\}_{i \in \left(\frac{n}{2},n\right]}, \eta, \widehat{S}}
% % }
% for inputs $k \geq s$ and $\eta := \frac{3\log\bb{n}}{n\bb{\lambda_{1}-\lambda_{2}}}$. Then for \rd $n = \Omega\bb{\frac{\sqrt{d}\log\bb{d}}{\min_{i \in \sh}v_{1}\bb{i}^{4}}}$\bk, $\tilde{v} \leftarrow \ProbabilityBoosting\bb{\left\{X_{i}\right\}_{i \in [n]}, \mathcal{A}, d^{-10}}$ satisfies,  
% \bas{
%     \sin^{2}\bb{\tilde{v}, v_{1}} \leq C''\bb{\dfrac{\lambda_{1}}{ \lambda_{1}-\lambda_{2}}}^{2}\dfrac{k\log^{2}\bb{d}}{n} 
% }
% with probability at least $1-d^{-10}$, where $C'' \geq 0$ is an absolute constant.
% } 
% \end{restatable}
\vspace{-8pt}
\begin{restatable}[Vector Truncation]{theorem}{convergenceojatruncatevec}{
\label{theorem:convergence_truncate_vec} Let Assumptions~\ref{assumption:subg} and~\ref{assumption:high_dimensions} hold and $k \geq s$. For dataset $D := \left\{X_{i}\right\}_{i \in [n]}$ and $w_{0} \sim \mathcal{N}\bb{0,I}$, let $\mathcal{A}$ be the randomized algorithm which computes  $\vojatruncatevec \leftarrow \OjaTruncateVector\bb{\left\{X_{i}\right\}_{i \in \left(\frac{n}{2},n\right]}, \widehat{S},\SparsePCA,\{\eta,w_0\}}$,
% \bas{
%     \hatS \leftarrow \SupportRecovery\bb{\left\{X_{i}\right\}_{i \in [\frac{n}{2}]}, k, \eta}, \;\; \vojatruncatevec \leftarrow \OjaTruncateVector\bb{\left\{X_{i}\right\}_{i \in \left(\frac{n}{2},n\right]}, \eta, \widehat{S}}
% }
where $\eta := \frac{3\log\bb{n}}{n\bb{\lambda_{1}-\lambda_{2}}}$. Then, for $\min_{i}\left|v_{1}\bb{i}\right| = \widetilde{\Omega}\bb{\frac{\lambda_1}{\lambda_1 - \lambda_{2}}\bb{\frac{d}{n^{2}}}^{\frac{1}{4}}}$, $\tilde{v} \leftarrow \ProbabilityBoosting\bb{\left\{X_{i}\right\}_{i \in [n]}, \mathcal{A}, d^{-10}}$ satisfies,  
\bas{
    \sin^{2}\bb{\tilde{v}, v_{1}} \leq C''\bb{\dfrac{\lambda_{1}}{ \lambda_{1}-\lambda_{2}}}^{2}\dfrac{k\log^{2}\bb{d}}{n} 
}
with probability at least $1-d^{-10}$, where $C'' \geq 0$ is an absolute constant.
} 
\end{restatable}
% \vspace{-10pt}
\begin{remark}[Limitation] Existing inconsistency results on PCA~\cite{doi:10.1198/jasa.2009.0121} provide a threshold for signal strength $(\lambda_1-\lambda_2)/\lambda_1$, below which, the principal eigenvector of $\hat{\Sigma}$ is asymptotically orthogonal to $v_1$. We believe a similar result may hold for the Oja vector, which leads to the signal strength condition in Assumption~\ref{assumption:high_dimensions}.\end{remark}
  %  rd[I am removing the bit about why reff cant be bigger]\bk
 %em  $d/n\rightarrow c>0$, under the spiked model with $k=1$ (Eq~\ref{eq:spiked}),  if $\nu_1^2\leq c(\hat{v}^Tv_1)^2\rightarrow \frac{(\nu_1^2-c)_+}{\nu_1^2+c\nu_1}.$
%inconsistency of PCA for using this algorithm, one cannot expect $\reff$ to exceed $n$. This is because, for the spiked covariance model with $k=1$ (Eq~\ref{eq:spiked})
%To see this, take the simple spiked covariance model where $\Sigma=\nu v_1v_1^{T}+I_{d}$. Assume $d/n\rightarrow c$. ~\cite{doi:10.1198/jasa.2009.0121} show that $\inner{\hat{v},v_1}^2\rightarrow \frac{(\nu_1^2-c)_+}{\nu_1^2+c\nu}.$
%The Oja vector, in adversarial settings, has been shown to be aligned with the principal eigenvector ($\hat{v}$) of the empirical covariance matrix~\cite{price2023spectral}.  $\hat{v}$ is nearly orthogonal to $v_1$, which happens if $c>\nu^2$. When $\nu=\widetilde{\Omega}(1)$ the $\sin^2$ error is $O(1/\nu)$, far from $O\bb{s\log d/n}$.
Note that the rate obtained in Theorem~\ref{theorem:convergence_truncate_vec} nearly matches the minimax lower bound proved in \cite{pmlr-v22-vu12, cai2013sparse}, up to a factor of $\frac{\lambda_{2}}{\lambda_{1}}$ and $\log\bb{d}$ and has optimal dependence on $s$, and $n$.
A limitation of Algorithm~\ref{alg:sparse_pca_with_support_trunc_vec} is that it uses the estimated support, $\hatS$, at the very end after computing the estimated eigenvector to enhance the signal by truncation. Instead, one may run Oja's algorithm on datapoints restricted to the recovered support in the beginning. %Recently, \cite{liang2023optimality} proposed an analysis of Oja's algorithm for subgaussian data with optimal rates. 

To this end, we use the algorithm in~\cite{liang2023optimality} (denoted by $\OjaSubgaussianOptimal$, see Proposition~\ref{prop:optimal_oja}) for subgaussian data, which uses an iteration-dependent sequence of step-sizes $\left\{\eta_{i}\right\}_{i \in [n]}$. We run Algorithm~\ref{alg:sparse_pca_with_support_trunc_vec} with $\OjaSubgaussianOptimal$ as the procedure to do sparse PCA. This leads to the minimax error rate, shown in Theorem~\ref{theorem:convergence_truncate_data}. %The key improvement over Algorithm~\ref{alg:sparse_pca_with_support_trunc_vec} is that here, the support is used to truncate all data points using the estimated support before performing PCA. This allows us to use Proposition~\ref{prop:optimal_oja} and provide a stronger convergence guarantee for Sparse PCA in Theorem~\ref{theorem:convergence_truncate_data}. 
The high probability bounds in Theorem~\ref{theorem:convergence_truncate_vec} and \ref{theorem:convergence_truncate_data} both use the support recovery guarantees derived in Section~\ref{subsection:support_recovery} and the boosting procedure described in Section~\ref{subsection:probabilistic_boosting}. Detailed proofs for both results can be found in Appendix Section~\ref{appendix:proofs_sparse_pca}.

\begin{restatable}[Data Truncation]{theorem}{convergenceojatruncatedata}\label{theorem:convergence_truncate_data} Let Assumptions~\ref{assumption:subg} and~\ref{assumption:high_dimensions} hold and $k \geq s$. For dataset $\mathcal{D} := \left\{X_{i}\right\}_{i \in [n]}$ and $w_{0} \sim \mathcal{N}\bb{0,I}$, let $\mathcal{A}$ be the randomized algorithm which computes  \\ $\vojatruncatevec \leftarrow \OjaTruncateVector\bb{\left\{\lfloor X_{i}\rfloor_{\hatS}\right\}_{i \in \left(\frac{n}{2},n\right]}, \widehat{S},\OjaSubgaussianOptimal,\{\left\{\eta_{t}\right\}_{t \in [\frac{n}{2}]}, w_{0}\}}$. Then for $\min_{i}\left|v_{1}\bb{i}\right| = \widetilde{\Omega}\bb{\frac{\lambda_1}{\lambda_1 - \lambda_{2}}\bb{\frac{d}{n^{2}}}^{\frac{1}{4}}}$, $\tilde{v} \leftarrow \ProbabilityBoosting\bb{\left\{X_{i}\right\}_{i \in [n]}, \mathcal{A}, d^{-10}}$ satisfies,  
\bas{
    \sin^{2}\bb{\tilde{v}, v_{1}} \leq C''\dfrac{\lambda_{1}\lambda_{2}}{\bb{\lambda_{1}-\lambda_{2}}^{2}}\dfrac{k\log\bb{d}}{n} 
}
with probability at least $1-d^{-10}$, where $C'' \geq 0$ is an absolute constant.
\end{restatable}

\begin{remark}
Algorithm~\ref{alg:sparse_pca_with_support_trunc_vec}, with both $\SparsePCA$ and $\OjaSubgaussianOptimal$ as input procedures require a simple initialization vector $w_{0} \sim \mathcal{N}\bb{0,I}$. In contrast, the block stochastic power method-based algorithm presented in ~\cite{pmlr-v37-yangd15} provides local convergence guarantees (see Theorem 1) requiring a block size of $O\bb{s\log\bb{d}}$. They provide an initialization procedure, but the theoretical guarantees to achieve such an initialization require block size $\Omega\bb{d}$. \cite{yuan2011truncated} also require a close enough initialization. In the particular setting of a single-spiked covariance model, they require $|w_{0}^{T}v_{1}| = \Omega\bb{1}$. In comparison for Algorithm~\ref{alg:sparse_pca_with_support_trunc_vec}, \ref{theorem:convergence_truncate_data}, it suffices to have $|w_{0}^{T}v_{1}| \geq \frac{\delta}{\sqrt{e}}$ with probability at least $1-\delta$ (see Lemma~\ref{lemma:gaussian_anti_concentration}).
\end{remark}
%\bl [We talk about this later in entrywise]
% \begin{remark}
% Perhaps the most related work in the offline setting is~\cite{Shen2011ConsistencyOS}, which shows that the eigenvector of the sample covariance matrix can be thresholded to obtain the support when $n$ is fixed, and $\lambda_1=d^\alpha\rightarrow\infty$ as $d \rightarrow \infty$. 
% \end{remark}
%tart by establishing that the randomized algorithm $\mathcal{A}$ is a $\constoracle\bb{\mathcal{D}, (\eta, k), \mathcal{T}, \rho, v_{1}, O\bb{k\log\bb{d}/n}}$ (Definition ~\ref{def:constant_success_oracle}) for the set $\mathcal{T} = \left\{u : u \in \R^{d}, \norm{u}_{2} = 1\right\}$ with the metric $\rho\bb{u,v} := \norm{uu^{\top} - vv^{\top}}_{F} = \frac{1}{2}\left|\sin\bb{u,v}\right|$.\bk The details can be found in Appendix Section~\ref{appendix:proofs_sparse_pca}.

\vspace{-5pt}
\subsection{Probabilistic boosting}
\label{subsection:probabilistic_boosting}
\vspace{-5pt}
%In this section, we describe a generic procedure for boosting the success probability of a given randomized algorithm, $\mathcal{A}$, satisfying Definition~\ref{def:constant_success_oracle}. We generalize the argument from Claim 1 in ~\cite{kelner2023semi} and apply it for boosting the success probabilities of our algorithms for support recovery (Alg.\ref{alg:support_recovery}) and Sparse PCA (Algorithm~\ref{alg:sparse_pca_with_support_trunc_vec} with $\SparsePCA$ and $\OjaSubgaussianOptimal$ as input procedures). 
In this section, we describe a generic procedure for boosting the success probability of a given randomized algorithm, $\mathcal{A}$ (also see \cite{kelner2023semi}). If $\mathcal{A}$ satisfies Definition~\ref{def:constant_success_oracle}, then its probability can be boosted using this procedure.
The formal guarantees of the boosting procedure are provided in Lemma~\ref{lemma:sin2_geometric_aggregation} (proof in Appendix Section~\ref{appendix:useful_results}). It divides the data evenly into $\log\bb{\frac{1}{\delta}}$ buckets\footnote{For simplicity, we assume $S, B$ in Algorithm~\ref{alg:prob_boosting} are integers.}, runs $\mathcal{A}$ on each bucket and aggregates the results via pairwise comparisons. 

\begin{definition}
    \label{def:constant_success_oracle}
    Let $\mathcal{T}$ be a set with metric $\rho$ and $\mathcal{A}$ be a randomized algorithm which takes as input $n$ i.i.d datapoints $D := \left\{X_{i}\right\}_{i \in [d]}$ and possibly additional statistically independent parameters $\theta$, and returns an estimate $q \in \mathcal{T}$, which satisfies $\mathbb{P}\bb{\rho\bb{q,q_{*}} \geq \epsilon} \leq \frac{1}{3}$
    for a fixed $q_{*} \in \mathcal{T}$. Then, $\mathcal{A}$ is said to be a constant success oracle with parameters $\bb{D, \theta, \mathcal{T}, \rho, q_{*}, \epsilon}$, denoted as $\mathcal{A} := \constoracle\bb{D, \theta, \mathcal{T}, \rho, q_{*}, \epsilon}$.
\end{definition}
\vspace{-5pt}
\begin{algorithm}[H]
\caption{$\ProbabilityBoosting\bb{\left\{X_{i}\right\}_{i \in [n]}, \mathcal{A}, \delta}$} \label{alg:prob_boosting}
\begin{algorithmic}[1]
\STATE \textbf{Input} : Dataset $D := \left\{X_{i}\right\}_{i \in [n]}$, $\mathcal{A} := \constoracle\bb{D, \theta, \mathcal{T}, \rho, q_{*}, \epsilon}$, Required failure probability $\delta$
\STATE \textbf{Return} : An estimate $\tilde{q} \in \mathcal{T}$ such that $\mathbb{P}\bb{\rho\bb{\tilde{q}, q_{*}} \leq 3\epsilon} \geq 1-\delta$
\STATE $S \leftarrow  30\log\bb{\frac{1}{\delta}}$, $B \leftarrow  n/S$ 
\STATE $\forall t \in [S], q_{t} \leftarrow \mathcal{A}\bb{\left\{X_{B(t-1)+i}\right\}_{i \in [B]}, \theta, \mathcal{T}, \rho, q_{*}, \epsilon}, \;\; \mathcal{C}_{t} \leftarrow \left\{t' \in [S] : \rho\bb{q_{t},q_{t'}} \leq 2\epsilon \right\}$
\STATE \textbf{If} $\exists q_{t} \text{ such that }  \left|\mathcal{C}_{t}\right|/S \geq 0.4$ \textbf{ Return } $q_{t}$ \textbf{ Else Return } $\perp$
\end{algorithmic}
\end{algorithm} 

\lemmarestate{Geometric Aggregation for Boosting}{geometricaggregation}{
    \label{lemma:sin2_geometric_aggregation} Let $\mathcal{A} := \constoracle\bb{D, \theta, \mathcal{T}, \rho, q_{*}, \epsilon}$ (Definition~\ref{def:constant_success_oracle}) for dataset $D := \left\{X_{i}\right\}_{i \in [n]}$. Then for $\delta \in \bb{0,1}$, $\tilde{q} \leftarrow \ProbabilityBoosting\bb{\left\{X_{i}\right\}_{i \in [n]}, \mathcal{A}, \delta}$ satisfies $\mathbb{P}\bb{\rho\bb{\tilde{q}, q_{*}} \leq 3\epsilon} \geq 1-\delta$.
}
%\vspace{-10pt}
\subsection{Entrywise deviation of the Oja vector}
\label{section:entrywise_deviation_oja}
%\vspace{-5pt}
To analyze the success probability of recovering the indices in $S$, we will define the following event, $\ce:=\{S \subseteq \hatS\}$.
We now upper bound $\mathbb{P}\bb{\mathcal{E}^{c}}$.  Define an element of the unnormalized Oja vector as $r_{i}:= e_{i}^{T}B_{n}u_{0},\ \  i \in [d]$. Here $u_{0} \sim \mathcal{N}\bb{0,I}$ is the initialization used in Algorithm~\ref{alg:support_recovery}.
%efine $r_{i}:= e_{i}^{T}B_{n}y_{0},$ $i \in [d]$. This is essentially an element of the unnormalized Oja vector. 
Observe that 
\bas{\mathcal{E} \iff \exists \tau_{n} > 0 \text{ such that } \left\{\forall i \in S, |r_{i}| \geq \tau_{n}\right\} \bigcap \left\{\left|\left\{i : i \notin S, |r_{i}| \geq \tau_{n} \right\}\right| \leq k-s\right\}}
    % \bas{
    %     \mathcal{E} \iff \exists \tau_{n} > 0 \text{ such that } \left\{\forall i \in S, |r_{i}| \geq \tau_{n}\right\} \bigcap \left\{\left|\left\{i : i \notin S, |r_{i}| \geq \tau_{n} \right\}\right| \leq k-s\right\}
    % }
    or equivalently, \bas{\mathcal{E}^{c} \iff \forall \tau_{n} > 0, \left\{\exists i \in S, |r_{i}| \leq \tau_{n}\right\} \bigcup \left\{\left|\left\{i : i \notin S, |r_{i}| \geq \tau_{n} \right\}\right| > k-s\right\}} Therefore, for any fixed $\tau_{n} > 0$, $\mathcal{E}^{c} \implies \left\{\exists i \in S, |r_{i}| \leq \tau_{n}\right\} \bigcup \left\{\left|\left\{i : i \notin S, |r_{i}| \geq \tau_{n} \right\}\right| > k-s\right\}$
    % \bas{
    %     \mathcal{E}^{c} %&\implies \left\{\exists i \in \sh, |r_{i}| \leq \tau_{n}\right\} \bigcup \left\{\left|\left\{i : i \notin \sh, |r_{i}| \geq \tau_{n} \right\}\right| > k-s'\right\} \\
    %     &\implies \left\{\exists i \in S, |r_{i}| \leq \tau_{n}\right\} \bigcup \left\{\left|\left\{i : i \notin S, |r_{i}| \geq \tau_{n} \right\}\right| > k-s\right\}
    % }
    We will, therefore, be interested in the tail behavior of $r_i$ for $i\in S$ and $i\not\in S$. Before presenting our theorems, we will use Figure~\ref{fig:concentration} to emphasize the daunting nature of what we aim to prove. Consider the quantity $\E{r_i|u_0} = \E{e_i^TB_n u_0|u_0}$.  We use $X = C\pm \Delta$ to denote $|X-C|\leq \Delta$.
    \ba{\label{eq:riE}
    \E{r_i|u_0} &= e_i^T \E{B_n} v_1v_1^T u_0+e_i^T \E{B_n} V_{\perp}V_{\perp}^T u_0\\
    &=\begin{cases} e_i^T v_{1}v_{1}^{T}u_{0}(1+\eta\lambda_1)^n \pm  \babs{e_i^T \vp\vp^{T}u_{0}}(1+\eta\lambda_2)^n & \mbox{For $i\in S$}\notag\\
    \pm \babs{e_i^T \vp\vp^{T}u_{0}}(1+\eta\lambda_2)^n & \mbox{For $i\not\in S$}
    \end{cases}
    }
Thus, traditional wisdom would make us hope that the elements, $r_i$, will concentrate around their respective expectations, whose absolute values are off by a ratio $|v_1(i)||u_0^Tv_1|\exp(n\eta(\lambda_1-\lambda_2))$. 

% \begin{figure}
%     \centering
%     \begin{subfigure}[b]
%     \centering
%     \includegraphics[width=0.48\textwidth]{SparsePCA_plot_new.png}
%     % \caption{Variation of $\log\bb{|e_{i}^{\top}B_{n}u_{0}|}$ for $i \in S$ and $i \notin S$ with $n$ for a fixed unit vector $u_{0}$. Here we used $\Sigma = \lambda_{1}v_{1}v_{1}^{T} + \sigma^{2}I$ with $\lambda_{1} = 5, \sigma = 1$, $d = 1000$. $v_{1}\bb{i} = \frac{1}{\sqrt{s}}$ for $s = 10$. $\eta$ is set as Theorem~\ref{theorem:convergence_cardinality_support} and $n$ grows from $1$ to $1000$.}
%     \label{fig:concentration}
%     \end{subfigure}%
%     ~
%     \begin{subfigure}[b]
%     \centering
%     \includegraphics[width=0.48\textwidth]{Jain_et_al_bound.png}
%     % \caption{Variation of $\log\bb{\norm{B_{n}B_{n}^{T}}}$ and $\log\bb{v_{1}^{T}B_{n}B_{n}^{T}v_{1}}$ with $n$. Here we used $\Sigma = \lambda_{1}v_{1}v_{1}^{T} + \sigma^{2}I$ with $\lambda_{1} = 5, \sigma = 1$, $d = 300$. $v_{1}\bb{i} = \frac{1}{\sqrt{s}}$ for $s = 10$. $\eta$ is set as Theorem~\ref{theorem:convergence_cardinality_support} and $n$ grows from $1$ to $300$. We also plot $\log$ of the bound on $\norm{B_{n}B_{n}^{T}}$ obtained through ~\cite{jain2016streaming} and $n\log\bb{1+\eta\lambda_{1}}$ varied with $n$ for comparison.}
%     \label{fig:jainalpha}
%     \end{subfigure}
% \end{figure}
\begin{figure}
    \centering
    \begin{subfigure}{0.48\textwidth}
        \centering
        \includegraphics[width=\textwidth]{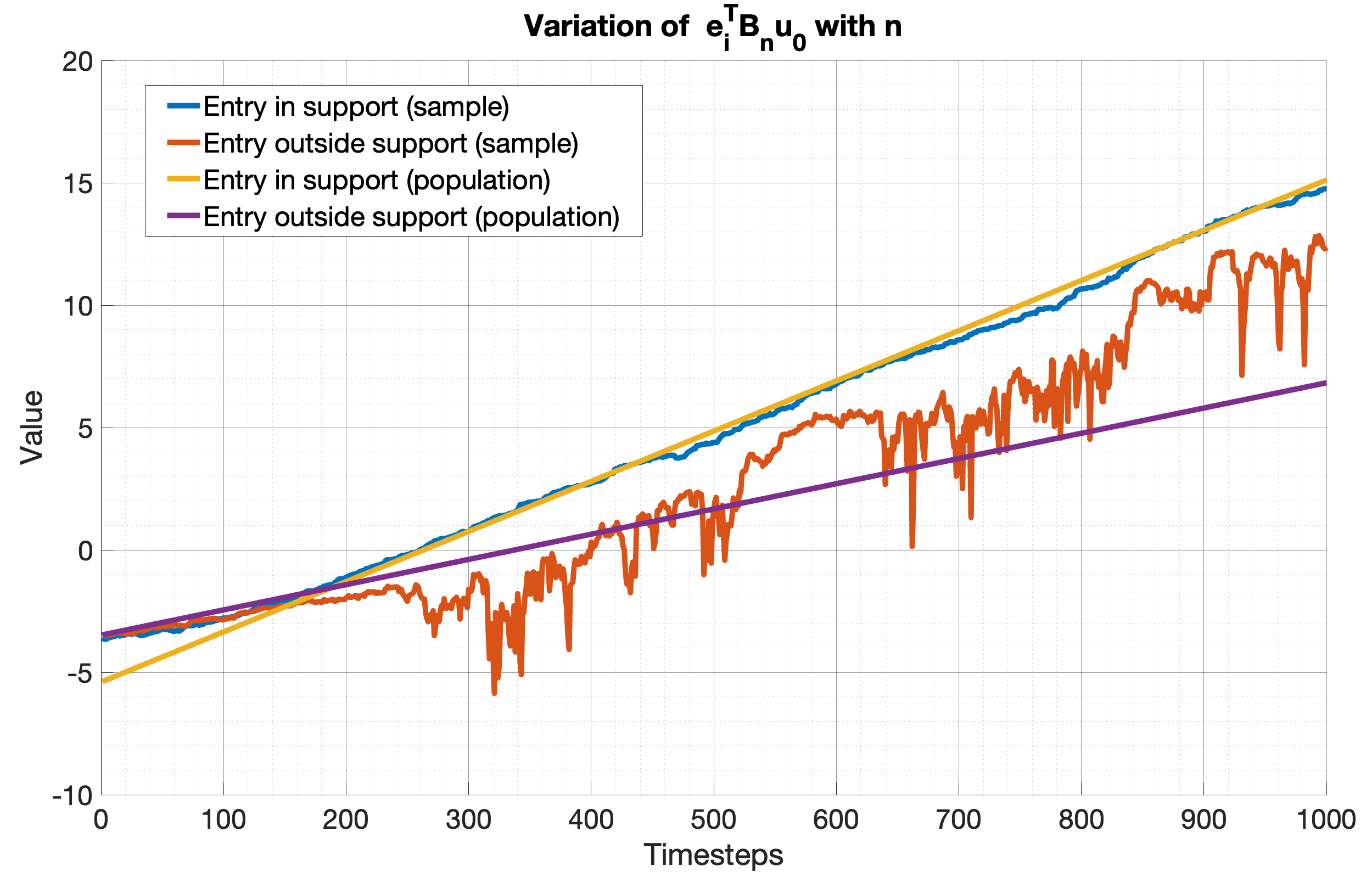}
        \caption{}
        \label{fig:concentration_entrywise}
    \end{subfigure}
    \hfill % Add some horizontal spacing
    \begin{subfigure}{0.48\textwidth}
        \centering
        \includegraphics[width=\textwidth]{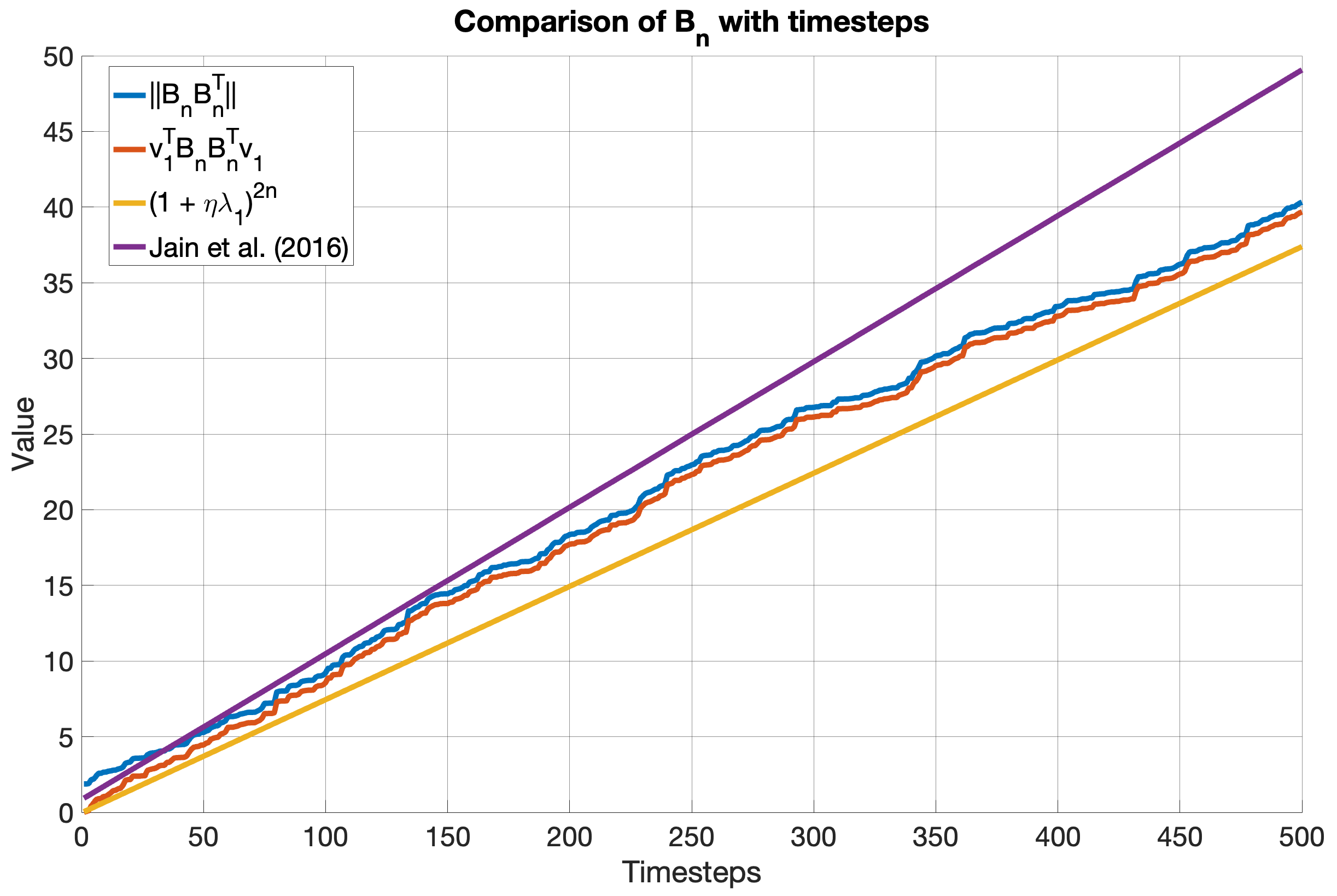}
        \caption{}
        \label{fig:concentration_norm}
    \end{subfigure}
    % \caption{
    % We use $\Sigma = \lambda_{1}v_{1}v_{1}^{T} + \sigma^{2}I$ with $\lambda_{1} = 5, \sigma = 1$, $d = 1000$. (a) Variation of $\log\bb{|e_{i}^{\top}B_{n}u_{0}|}$ for $i \in S$ and $i \notin S$ ($y$-axis) with $n$ ($x$-axis) for a fixed unit vector $u_{0}$. We use $\Sigma = \lambda_{1}v_{1}v_{1}^{T} + \sigma^{2}I$ with $\lambda_{1} = 5, \sigma = 1$, $d = 1000$. $v_{1}\bb{i} = \frac{1}{\sqrt{s}}$ for $s = 10$. $\eta$ is set as Theorem~\ref{theorem:convergence_cardinality_support} and $n$ grows from $1$ to $1000$. (b) Variation of $\log\bb{\norm{B_{n}B_{n}^{T}}}$ and $\log\bb{v_{1}^{T}B_{n}B_{n}^{T}v_{1}}$ ($y$-axis) with $n\in [300]$ ($x$-axis). We also plot $\log$ of the bound of $\norm{B_{n}B_{n}^{T}}$ as in~\cite{jain2016streaming} and $2n\log\bb{1+\eta\lambda_{1}}$ for comparison.}
    \caption{\label{fig:concentration}
    We use $\Sigma$ used in \cite{qiu2019gradientbased}, Section 5.1. (a) Variation of $\log\bb{|e_{i}^{\top}B_{n}u_{0}|}$ for $i \in S$ and $i \notin S$ ($y$-axis) with $n$ ($x$-axis) for a fixed unit vector $u_{0}$.  $\eta$ is set as Theorem~\ref{theorem:convergence_truncate_vec} and $n$ grows from $1$ to $1000$. The lines labelled “sample” plot $\log(|e_i^\top B_nu_0|)$, whereas the “population” curves plot $\log(|\mathbb{E}\bbb{e_i^\top B_nu_0}|)$. (b) Variation of $\log\bb{\norm{B_{n}B_{n}^{T}}}$ and $\log\bb{v_{1}^{T}B_{n}B_{n}^{T}v_{1}}$ ($y$-axis) with $n\in [300]$ ($x$-axis). We also plot $\log$ of the bound of $\norm{B_{n}B_{n}^{T}}$ as in~\cite{jain2016streaming} and $2n\log\bb{1+\eta\lambda_{1}}$ for comparison.}
    \vspace{-10pt}
\end{figure}

However, Figure~\ref{fig:concentration}(a) shows that while the elements in the support seem \textit{close} to their expectation, those not in support are, on average, \textit{much larger} than their expectation.
First, note that elementwise analysis of the Oja vector has not been done even in the low dimensional regime where $\reff/n\rightarrow 0$. In this regime, there is very recent related work for eigenvectors of the empirical covariance matrix $\hat{\Sigma}$~\cite{abbe2022} which are not applicable here. In the high-dimensional case, an analog can be drawn with elements $\hat{\Sigma}$, which concentrate around their mean individually.  Yet, $\|\hat{\Sigma}-\Sigma\|$ is not small. Thus, thresholding $\hat{\Sigma}$ obtains consistent estimates of $\Sigma$ under sparsity assumptions~\cite{Bickel2009CovarianceRB, deshp2016sparse, novikov2023sparse}. 

A similar principle is applied by~\cite{Shen2011ConsistencyOS} where the eigenvector of $\hat{\Sigma}$ is truncated. They assume that $n$ is fixed, and $\lambda_1/\lambda_{2}=d^\alpha\rightarrow\infty$ as $d \rightarrow \infty$.  In comparison, our analysis is about products of random matrices, not sums, and hence, completely different. We will show that $\hat{v}_1(i)$, even when $i\in S$, do not concentrate. But for a suitably chosen threshold, They are large with high probability, whereas those outside $S$ are much lower with high probability.  Proving this is also difficult because the analysis involves the concentration of the projection of a product of independent high dimensional matrices on some initial random vector. Lemma~\ref{lemma:i_in_S_bound} establishes exactly that for elements in the support.
\lemmarestate{Tail bound in support}{iinSbound}{
    \label{lemma:i_in_S_bound} Fix a $\delta \in \bb{0.1,1}$. Define the event $\mathcal{G} := \left\{ |v_{1}^{T}u_{0}| \geq \frac{\delta}{\sqrt{e}} \right\}$ and threshold  $\tau_{n}:= \frac{\delta}{\sqrt{2e}}\min_{i \in S}|v_{1}\bb{i}|\bb{1+\eta\lambda_{1}}^{n}$. Let the learning rate be set as in Lemma~\ref{lemma:support_recovery_shi}. Then, for an absolute constant $C_{H} > 0$, 
    \bas{
        &\forall i \in S, \;\; \;\; \mathbb{P}\bb{|r_{i}| \leq \tau_{n} \bigg| \mathcal{G}} \leq C_{H}\bbb{\eta \lambda_{1}\log\bb{\effecrank} + \eta\lambda_{1}\bb{\frac{\lambda_{1}}{\lambda_{1}-\lambda_{2}}}\frac{1}{v_{1}\bb{i}^{2}}}
    }
}
Our next result provides a bound for $i \notin S$.
\lemmarestate{Tail bound outside support}{inotinSbound}{
   \label{lemma:i_notin_S_bound} 
    Fix a $\delta \in \bb{0.1,1}$. Let the learning rate be set as in Lemma~\ref{lemma:support_recovery_shi} and define the threshold $\tau_{n}:= \frac{\delta}{\sqrt{2e}}\min_{i \in S}|v_{1}\bb{i}|\bb{1+\eta\lambda_{1}}^{n}$. Then, for $\min_{i}\left|v_{1}\bb{i}\right| = \widetilde{\Omega}\bb{\frac{\lambda_1}{\lambda_1 - \lambda_{2}}\bb{\frac{d}{n^{2}}}^{\frac{1}{4}}}$ and an absolute constant $C_{T} > 0$ we have,
    \bas{
        &\forall i \notin S,  \;\;\;\; \mathbb{P}\bb{|r_{i}| > \tau_{n}} \leq C_{T}\bbb{\eta^{2}\lambda_{1}^{2}\bb{\frac{\lambda_{1}}{\lambda_{1}-\lambda_{2}}}^{2}\bb{\frac{1}{\delta^{2}\min_{i\in S_{\text{hi}}}v_{1}\bb{i}^{2}}}^{2}} 
    }
}
The proofs of Lemmas~\ref{lemma:i_in_S_bound} and ~\ref{lemma:i_notin_S_bound} are based on tail-bounds involving the second and fourth moments of $r_{i}:= e_{i}^{T}B_{n}u_{0}$. The details of obtaining the tail bounds are deferred to the Appendix Section~\ref{appendix:proofs_entrywise_oja_vector_bounds}. The results developed in this section are used to analyze the support recovery and $\sin^{2}$ error guarantees provided in Section~\ref{subsection:support_recovery} and \ref{subsection:sparse_pca_bound}. We provide a brief proof sketch in  Section~\ref{section:proof_technique}.

% The proofs of Lemmas~\ref{lemma:i_in_S_bound} and ~\ref{lemma:i_notin_S_bound} are based on tail-bounds involving the second and fourth moments of $r_{i}:= e_{i}^{T}B_{n}y_{0}$. In Section~\ref{section:proof_technique}, we provide an outline of how to bound the moments and discuss the significance of initialization with a random gaussian vector. The details of obtaining the tail bounds are deferred to the Appendix Section~\ref{appendix:proofs_entrywise_oja_vector_bounds}. These lemmas control the deviation of the entries of the output of Oja's algorithm. They further show intuitively that a truncation on the right threshold would lead to most entries in the support being selected, and most entries outside the support being removed with probability at least $3/4$. The results developed in this section are used to analyze the support recovery and $\sin^{2}$ error guarantees provided in Section~\ref{subsection:support_recovery} and \ref{subsection:sparse_pca_bound}.
\vspace{-7pt}
\section{Proof technique}
\vspace{-5pt}
\label{section:proof_technique}
% In this section, we outline our proof techniques for the entrywise deviation bounds in Lemmas~\ref{lemma:i_in_S_bound}, \ref{lemma:i_notin_S_bound} since they form a critical part of the analysis of both the support recovery results  (Lemma~\ref{lemma:support_recovery_shi} and  Theorem~\ref{theorem:support_recovery}) and the sparse PCA results (Theorems \ref{theorem:convergence_truncate_vec},\ref{theorem:convergence_truncate_data}).   
% The proof requires bounds on the expectation and second moment of $u_{0}^{T}B_{n}UU^{T}B_{n}u_{0}$, where $U \in \R^{d \times k}$ is a fixed matrix and   and $u_{0} \sim \mathcal{N}\bb{0,I}$, and then applies Chebyshev's inequality to attain the tail bound. For the purpose of the proof sketch, $U = e_{i}$, but we follow a more general notation due to a wider applicability of these moment bounds for the analysis of Theorem~\ref{theorem:convergence_truncate_vec}. We describe our techniques to bound $\E{u_{0}^{T}B_{n}UU^{T}B_{n}u_{0}}$ in Section~\ref{section:solving_linear_system_recursions}.  

In this section, we outline the proof techniques for the entrywise deviation bounds in Lemmas~\ref{lemma:i_in_S_bound} and \ref{lemma:i_notin_S_bound}. These bounds are crucial for analyzing both the support recovery results (Lemma~\ref{lemma:support_recovery_shi} and Theorem~\ref{theorem:support_recovery}) and the sparse PCA results (Theorems \ref{theorem:convergence_truncate_vec} and \ref{theorem:convergence_truncate_data}). The proof involves deriving bounds on the expectation and second moment of \( u_0^T B_n UU^T B_n u_0 \), where \( U \in \mathbb{R}^{d \times k} \) is a fixed matrix and \( u_0 \sim \mathcal{N}(0, I) \). It then applies Chebyshev's inequality to obtain the tail bound. For the proof sketch, we use \( U = e_i \), but we maintain general notation for broader applicability in Theorem~\ref{theorem:convergence_truncate_vec}. For our results, we also need to bound this quantity with $U=I_S$ (see Lemma~\ref{lemma:one_step_truncated_power_method} for details).  Our techniques to bound \( \mathbb{E}[u_0^T B_n UU^T B_n u_0] \) are detailed in Section~\ref{section:solving_linear_system_recursions}.

% and discuss the importance of the initialization choice for Lemmas~\ref{lemma:i_in_S_bound}, \ref{lemma:i_notin_S_bound} in Section~\ref{section:randominitimp}.
% To show our proof technique, we will use the property of a Gaussian (see Lemma~\ref{lemma:random_gaussian_second_moment} in Appendix) as follows:
% \ba{\label{eq:decomp}
%         \E{u_{0}^{T}B_{n}^{T}UU^{T}B_{n}u_{0}} = \E{v_{1}^{T}B_{n}^{T}UU^{T}B_{n}v_{1}} + \E{\Tr\bb{\vp^{T}B_{n}^{T}UU^{T}B_{n}\vp}}
%     }
% Here $U$ is a general $d\times k$ matrix, which can be $e_i$ for entrywise analysis. Eq~\ref{eq:decomp} shows that we need to bound both $\E{v_1^TB_n UU^TB_n^T v_1}$ and $\E{\vp^T B_n UU^TB_n^T \vp}$. We start with the first. 
\vspace{-5pt}
\subsection{Solving a linear system of recursions}
% \vspace{-5pt}
\label{section:solving_linear_system_recursions}
One can show that (see Lemma~\ref{lemma:random_gaussian_second_moment} in Appendix), 
\ba{
    \E{u_{0}^{T}B_{n}^{T}UU^{T}B_{n}u_{0}} = \underbrace{\E{v_{1}^{T}B_{n}^{T}UU^{T}B_{n}v_{1}}}_{=: \alpha_{n}} + \underbrace{\E{\Tr\bb{\vp^{T}B_{n}^{T}UU^{T}B_{n}\vp}}}_{=:\beta_{n}} \label{eq:constant_u0_bound}
}
We start by showing how to bound $\alpha_{n}$ and $\beta_{n}$.
Before we dive into our techniques, we note that the analysis of Oja's algorithm~\cite{jain2016streaming} in the non-sparse setting provides some tools that we could potentially use here. Using the recursion from Lemma 9 in~\cite{jain2016streaming}, we get
\ba{
    \norm{\E{B_nUU^{T}B_n^T}} \leq \exp(2n\eta\lambda_1+n\eta^2\mathcal{V})\norm{UU^{T}}, \label{eq:jain_bad_bound}
}
where $\mathcal{V}$ is a variance parameter defined as $\norm{\E{(A_1-\Sigma)(A_1-\Sigma)^T}}$. Lemma~\ref{lemma:subgaussian_Nu_bound} shows that for $\sigma$-subgaussian $X$ (definition~\ref{definition:subgaussian}),
\bas{
\mathcal{V}:=\norm{\E{(A_1-\Sigma)(A_1-\Sigma)^T}}=\norm{\E{A_1A_1^T}-\Sigma^2}\leq 2L^4\sigma^4\lambda_1\Tr(\Sigma)+\lambda_1^2
}
This provides an upper bound on $\alpha_n\leq \norm{\E{B_nUU^{T}B_n^T}}$.
While this bound is tight when $\reff$ is bounded by a constant, in the high dimensional setting (Assumption \ref{assumption:high_dimensions}) considered in this work, this bound is too loose. This is evident from Figure~\ref{fig:concentration}(B), which plots  $\norm{\E{B_nUU^{T}B_n^T}}$ for $U = I$, along with the bound achieved using Eq~\ref{eq:jain_bad_bound}, labeled as \textit{Jain et al. (2016)}. Note that the plots are in the $\log$-scale so a difference in the slopes translates to a significant multiplicative difference. This warrants a more fine-grained analysis of $\alpha_{n}$.

% 
% Typically, in~\cite{jain2016streaming}, the first is upper bounded as:
% \ba{\label{eq:naivealpha}
% \alpha_n\leq \norm{\E{B_nB_n^T}}\leq \exp(2n\eta\lambda_1+n\eta^2(2L^4\sigma^4\Tr(\Sigma)+\lambda_1^2))
% }
Let us examine $\alpha_{n}$ more closely to obtain a finer bound. Using the structure of the matrix product, $B_n$, from Eq~\ref{definition:Bn}, we have:
 \ba{\alpha_{n}
     %  &= \E{v_{1}^{T}\bb{I+\eta A_{n}}B_{n-1}UU^{T}B_{n-1}^{T}\bb{I+\eta A_{n}}v_{1}} \notag \\
      % &= \alpha_{n-1} + 2\eta \E{v_{1}^{T}A_{n}B_{n-1}UU^{T}B_{n-1}^{T}v_{1}} + \eta^{2}\E{v_{1}^{T}A_{n}B_{n-1}UU^{T}B_{n-1}^{T}A_{n}v_{1}} \notag \\
      % &= \alpha_{n-1} + 2\eta \E{v_{1}^{T}\Sigma B_{n-1}UU^{T}B_{n-1}^{T}v_{1}} + \eta^{2}\E{ \bb{v_{1}^{T}X_{n}X_{n}^{T}v_{1}} \bb{X_{n}^{T}B_{n-1}UU^{T}B_{n-1}^{T}X_{n}}} \notag \\
       &= \alpha_{n-1}\bb{1 + 2\eta \lambda_{1}} + \eta^{2}\E{\bb{v_{1}^{T}X_{n}X_{n}^{T}v_{1}} \bb{X_{n}^{T}B_{n-1}UU^{T}B_{n-1}^{T}X_{n}}} \notag 
      % &= \alpha_{n-1}\bb{1 + 2\eta \lambda_{1}} + \eta^{2}\E{\E{\bb{v_{1}^{T}X_{n}X_{n}^{T}v_{1}} \bb{X_{n}^{T}B_{n-1}UU^{T}B_{n-1}^{T}X_{n}}\bigg|\mathcal{F}_{n-1}}} \notag \\
   %    &\leq \alpha_{n-1}\bb{1 + 2\eta \lambda_{1}} + \eta^{2}\E{\sqrt{\E{\bb{v_{1}^{T}X_{n}X_{n}^{T}v_{1}}^{2}\bigg| \mathcal{F}_{n-1}}\E{\bb{X_{n}^{T}B_{n-1}UU^{T}B_{n-1}^{T}X_{n}}^{2}\bigg| \mathcal{F}_{n-1}}}} \notag 
    %   &= \alpha_{n-1}\bb{1 + 2\eta \lambda_{1}} + \eta^{2}\E{\sqrt{\E{\bb{X_{n}^{T}v_{1}v_{1}^{T}X_{n}}^{2}\bigg|\mathcal{F}_{n-1}}\E{\bb{X_{n}^{T}B_{n-1}UU^{T}B_{n-1}X_{n}}^{2}\bigg|\mathcal{F}_{n-1}}}} \notag \\
     %  &\leq \alpha_{n-1}\bb{1 + 2\eta \lambda_{1}} + 4\eta^{2}L^4\sigma^{4}\Tr\bb{\Sigma, v_{1}v_{1}^{T}}\E{\Tr\bb{\Sigma, B_{n-1}UU^{T}B_{n-1}^{T}}}, \text{ using Lemma \ref{lemma:subgaussian_quadratic_form} with } p=2 \notag \\
   %    &= \alpha_{n-1}\bb{1 + 2\eta \lambda_{1}} + 4\eta^{2}L^4\sigma^{4}\lambda_{1}\bb{\E{\Tr\bb{\lambda_{1}v_{1}v_{1}^{T} + \vp\Lambda_{2}\vp^{T}, B_{n-1}UU^{T}B_{n-1}^{T}}}} \notag \\
    %   &\leq \bb{1 + 2\eta \lambda_{1} + 4L^{2}\eta^{2}\sigma^{4}\lambda_{1}^{2}}\alpha_{n-1} + 4L^{2}\eta^{2}\sigma^{4}\lambda_{1}\lambda_{2}\beta_{n-1} \label{eq:alpha_recursion_second_moment}
    }
    Now, as a consequence of subgaussianity (see Lemma~\ref{lemma:subgaussian_quadratic_form}), for $K := (2L^2\sigma^2)^2$, with the Cauchy-Schwartz inequality, the second term in the RHS can be bounded further using:
    \bas{
    \E{(v_{1}^{T}X_{n}X_{n}^{T}v_{1})^2} \leq K\lambda_1^2, \;\;
    \E{\bb{X_{n}^{T}B_{n-1}UU^{T}B_{n-1}^{T}X_{n}}^{2}\bigg|\mathcal{F}_{n-1}} \leq K\Tr(U^{T}B_{n-1}^{T}\Sigma B_{n-1}U)^2
    }
Therefore, using the above bound along with the eigen-decomposition $\Sigma := \lambda_{1}v_{1}v_{1}^{T} + \vp\Lambda_{2}\vp^{T}$,
\ba{
    \alpha_n%&\leq \alpha_{n-1}\bb{1 + 2\eta \lambda_{1}} + 4\eta^{2}L^4\sigma^{4}\lambda_1\E{\Tr(U^{T}B_{n-1}^{T}\Sigma B_{n-1}U)}\notag\\
    &\leq \bb{1 + 2\eta \lambda_{1} + 4L^{2}\eta^{2}\sigma^{4}\lambda_{1}^{2}}\alpha_{n-1} + 4L^{2}\eta^{2}\sigma^{4}\lambda_{1}\lambda_{2}\beta_{n-1} \label{eq:alpha_subgaussian_bound}
}
Similarly, $\beta_n$ can also be upper bounded as follows:
\ba{
    \label{eq:beta_subgaussian_bound}
    \beta_n\leq \bb{1+2\eta\lambda_{2} + 4\eta^{2}L^{4}\sigma^{4}\lambda_{2}\Tr\bb{\Sigma}}\beta_{n-1} + 4\eta^{2}L^{4}\sigma^{4}\lambda_{1}\Tr\bb{\Sigma}\alpha_{n-1}
}
% , and then $\beta_n$ is bounded using the above. 
% \begin{figure}
%     \centering
%     \includegraphics[width=0.8\textwidth]{}
%     \caption{Variation of $\log\bb{\norm{B_{n}B_{n}^{T}}}$ and $\log\bb{v_{1}^{T}B_{n}B_{n}^{T}v_{1}}$ with $n$. Here we use $\Sigma = \lambda_{1}v_{1}v_{1}^{T} + \sigma^{2}I$ with $\lambda_{1} = 5, \sigma = 1$, $d = 300$. $v_{1}\bb{i} = \frac{1}{\sqrt{s}}$ for $s = 10$. $\eta$ is set as Theorem~\ref{theorem:convergence_cardinality_support} and $n$ grows from $1$ to $300$. We also plot $\log$ of the bound on $\norm{B_{n}B_{n}^{T}}$ obtained through ~\cite{jain2016streaming} and $n\log\bb{1+\eta\lambda_{1}}$ varied with $n$ for comparison.}
%     \label{fig:jainalpha}
%     \vspace{-5pt}
% \end{figure}
Note that upper bounding and eliminating $\alpha_{n-1}$ or $\beta_{n-1}$ from Eq~\ref{eq:beta_subgaussian_bound},  ~\ref{eq:alpha_subgaussian_bound} respectively, would simplify the recursion but lead to a weaker bound as in Eq~\ref{eq:jain_bad_bound}. Therefore, we solve Eq~\ref{eq:alpha_subgaussian_bound} and \ref{eq:beta_subgaussian_bound} as a system of linear recursions in $\alpha_n$ and $\beta_n$. 
\ba{
\label{eq:system}
\begin{pmatrix}
\alpha_n \\
\beta_n
\end{pmatrix} = \underbrace{\begin{pmatrix}
1+2\eta\lambda_1+O\bb{\eta^2\lambda_1^2}& O\bb{\lambda_1\lambda_2} \\
O\bb{\Tr(\Sigma)}& 1+2\eta\lambda_2+O\bb{\eta^2\Tr(\Sigma)}
\end{pmatrix}}_{:=P}\begin{pmatrix}
\alpha_{n-1}\\
\beta_{n-1}
\end{pmatrix}
}

Estimating elements of $P^n$, where $P$ is the defined $2\times 2$ matrix, is crucial. \cite{power_2_by_2} gives a compact expression for these elements using $\lambda_1(P)$ and $\lambda_2(P)$. Under our assumptions, we have $P_{11} > P_{22}$. A naive upper bound on $\lambda_1(P)$ using Weyl's inequality~\cite{dietmann2015weyl} is $1+2\eta\lambda_1+c_3\Tr(\Sigma)$, similar to Eq~\ref{eq:jain_bad_bound}. Since recursions like Eq~\ref{eq:system} are common in our analysis, we provide a general solution in Lemma~\ref{lemma:matrix_recursion} (detailed in the Appendix Section~\ref{appendix:useful_results}).

An important consequence of this is that we now have the following bounds on $\alpha_n$ for $U=I$:
    %  \bas{
    %     \alpha_{n} &\leq \lambda_{1}\bb{P}^{n}\bbb{\alpha_{0} + \eta\lambda_{1}\bb{\frac{2c_{3}\lambda_{1}}{c_{1}\theta\bb{\lambda_{1}-\lambda_{2}}}}\bb{\beta_{0} + \alpha_{0}\frac{c_{5}}{c_{4}}\bb{\frac{1-\theta}{\theta}}}}, \notag \\
    %     \beta_{n} &\leq \beta_{0}\lambda_{2}\bb{P}^{n} + \bbb{\eta\lambda_{1}\bb{\frac{2c_{5}\lambda_{1}}{c_{1}\theta\bb{\lambda_{1}-\lambda_{2}}}}\bb{\alpha_{0}\frac{\Tr\bb{\Sigma}}{\lambda_{1}} + \beta_{0}\frac{c_{3}}{c_{4}}\bb{\frac{1-\theta}{\theta}}}}\lambda_{1}\bb{P}^{n}
    % }
\ba{
\alpha_n &\leq (1+2\eta\lambda_1+c_1\eta^2\lambda_1^2)^{n}\bb{1 + O(\eta\lambda_{1})} \label{eq:alpha_correct_bound}
 %   \beta_{n} &\leq \beta_{0}\lambda_{2}\bb{P}^{n} + \bbb{\eta\lambda_{1}\bb{\frac{2c_{5}\lambda_{1}}{c_{1}\theta\bb{\lambda_{1}-\lambda_{2}}}}\bb{\alpha_{0}\frac{\Tr\bb{\Sigma}}{\lambda_{1}} + \beta_{0}\frac{c_{3}}{c_{4}}\bb{\frac{1-\theta}{\theta}}}}\lambda_{1}\bb{P}^{n}
}
which is much tighter than Eq~\ref{eq:jain_bad_bound} in our high-dimensional regime. Furthermore, observing Figure~\ref{fig:concentration}(b), we see that Eq~\ref{eq:alpha_correct_bound} presents a much tighter upper bound, matching $\bb{1+\eta\lambda_{1}}^{2n}$ up to constant factors.

% Lemma~\ref{lemma:matrix_recursion_informal} provides insights into the proofs of Theorem~\ref{theorem:convergence_truncated_oja} and Lemmas~\ref{lemma:i_in_S_bound},~\ref{lemma:i_notin_S_bound}. The proof of Theorem~\ref{theorem:convergence_truncated_oja}, sets $UU^{T}$ as either the $W_{\hatS}$ or $G_{\hatS}$ matrices appropriately. Similarly, for Lemmas~\ref{lemma:i_in_S_bound},\ref{lemma:i_notin_S_bound} we select $U = e_{i}$ for both $i \in S$ and $i \notin S$.

Recall that the bounds obtained in this section deal with $\alpha_{n}, \beta_{n}$ defined in Eq~\ref{eq:constant_u0_bound}. A similar system of recursions can be obtained to get tight bounds on $\E{\bb{v_{1}^{T}B_{n}^{T}UU^{T}B_{n}v_{1}}^{2}}$ and $\E{\Tr\bb{\vp^{T}B_{n}^{T}UU^{T}B_{n}\vp}^{2}}$, details of which we defer to the Appendix in Lemmas~\ref{lemma:fourth_moment_alpha},  \ref{lemma:fourth_moment_beta}.

\vspace{-5pt}
\section{Conclusion}
\vspace{-5pt}
% Oja's algorithm for streaming PCA has received a lot of attention in the recent theoretical literature. However, the optimal guarantees are all in the setting where the datapoints are bounded a.s. or the trace of the covariance matrix is growing slowly with $n$. 
% In this paper, we tackle the high-dimensional setting where the effective rank $\reff$ can be as large as some constant fraction of $n$ but the underlying eigenvector is $s$-sparse. In this setting, the error rate of Oja's algorithm without any modifications is $O(\reff/n)$. However, our thresholded estimator achieves a near-optimal error rate of $O(s\log d/n)$. Empirically, we see that the elements of the unnormalized Oja vector do not concentrate in this regime. However,  with an analysis that carefully uncouples the projection of a product of independent random matrices on $v_1$ and the orthogonal subspace, we can establish that the entries of the Oja vector in the support of $v_1$ are large, whereas those outside are much smaller.

Oja's algorithm for streaming PCA has been extensively studied in the recent theoretical literature, typically assuming that $\|X_i\|^2/\lambda_1$ is bounded or a slowly growing covariance matrix effective rank $\reff$. This paper addresses the high-dimensional sparse PCA setting where the effective rank $\reff$ can be as large as $n/\log n$ while $v_1$ is $s$-sparse. In this context, while there has been a vast body of work that achieves minimax error bounds, we are unaware of any single-pass algorithm that works in $O(nd)$ time, $O(d)$ space, on a general $\Sigma$, without any strong initialization.  Surprisingly, our thresholded estimator achieves the minimax error bound of $O(s\log d/n)$, whereas the error rate of Oja's algorithm is $O(\reff/n)$. Empirically, the elements of the unnormalized Oja vector do not concentrate in this regime. Through an analysis that uncouples the projection of a product of independent random matrices on $v_1$ and its orthogonal subspace, we show that the entries of the Oja vector within the support of $v_1$ are large, while those outside are much smaller.

\section*{Acknowledgements}
We gratefully acknowledge NSF grants 2217069, 2019844, and DMS 2109155. We thank Kevin Tian for his valuable insight on geometric aggregation and boosting and the anonymous reviewers for their valuable feedback.
% \bas{
%     \mathbf{O\bb{s\log\bb{d}/n}} \;\;\; \mathbf{O\bb{d}} \;\;\;\; \mathbf{O\bb{nd}}
% }
% \newpage

\bibliographystyle{alpha}
\bibliography{references}

\newpage

\begin{appendix}
\section{Appendix}

\appendix

\renewcommand{\thetheorem}{A.\arabic{theorem}}
\renewcommand{\thesection}{A.\arabic{section}}
\renewcommand{\thefigure}{A.\arabic{figure}}
\renewcommand{\theequation}{A.\arabic{equation}}
% \newcounter{lemma}
% \newcounter{proposition}

\setcounter{theorem}{0}
\setcounter{section}{0}
\setcounter{figure}{0}
\setcounter{lemma}{0}
\setcounter{proposition}{0}

The Appendix is organized as follows:
\begin{enumerate}
    \item Section~\ref{section:further_details_related_work} provides further details about related work
    \item Section~\ref{appendix:useful_results} provides some useful results used in subsequent analyses
    \item Section~\ref{appendix:proofs_entrywise_oja_vector_bounds} provides Entrywise deviation bounds for the Oja vector (Lemmas~\ref{lemma:i_in_S_bound}, \ref{lemma:i_notin_S_bound})
    \item Section~\ref{appendix:proofs_support_recovery} proves convergence of Support Recovery results (Lemma~\ref{lemma:support_recovery_shi},  Theorem~\ref{theorem:support_recovery})
    \item Section~\ref{appendix:proofs_sparse_pca} proves convergence of Sparse PCA results (Theorems~\ref{theorem:convergence_truncate_vec},\ref{theorem:convergence_truncate_data})
    \item Section~\ref{appendix:alternate_truncation} provides another alternative way of truncation using a value-based thresholding (Theorem~\ref{theorem:convergence_threshold_support})
\end{enumerate}
\section{Further details on related work}
\label{section:further_details_related_work}
There has been a lot of work on computational computational hardness of sparse PCA~\cite{gao2017sparse,brennan2019optimal,ding2023subexponential,bandeira2020computational}.

\textbf{Minimax optimal Sparse PCA algorithms with global convergence:} These consist of SDP-based algorithms such as ~\cite{amini2008high, NIPS2013_81e5f81d, d2008optimal}, which do not scale well in high-dimensions (see ~\cite{berthet2013optimal, wainwright2019high}). The state-of-the-art SDP solvers~\cite{jiang2020faster, huang2022solving} currently have a runtime $\Omega\bb{n^{\omega} + d^{\omega}}$, where $\omega \approx 2.732$ is the matrix multiplication exponent. Algorithms proposed in \cite{10.1214/13-AOS1097, cai2013sparse, 10.5555/1756006.1756021,deshp2016sparse} involve forming the entire $\bb{d \times d}$ sample covariance matrix, which can itself be challenging from the perspective of space and time complexity. Furthermore, \cite{10.1214/13-AOS1097, cai2013sparse,deshp2016sparse} have been analyzed under the \textit{spiked covariance model} in Eq~\ref{eq:spiked}.
 %As we will show, estimating $v_1$ for general covariance matrices, where all we know is that the principal eigenvector $v_1$ is $s$-sparse, poses challenges.
% Iterated thresholding algorithms such as \cite{yuan2011truncated} provide local convergence guarantees when $|u_0^Tv_1|=\Omega(1)$ where $u_0$ is the initial vector.
~\cite{qiu2019gradientbased} propose a computationally efficient modification of the Fantope projection-based algorithm of \cite{NIPS2013_81e5f81d}, which requires $O(d^2)$ space, and $\Omega(nd^2)$ time.

\textbf{Single-pass online sparse PCA algorithms with $O(d^2)$ storage and $O(nd^2)$ time}
~\cite{qiu2019gradientbased} also provide a single-pass online algorithm and state that this algorithm (Theorem 4) \textit{is the first to provably obtain the global optima in a streaming setting without any initialization, under a general $\Sigma$}. However, this method requires $O(d^2)$ storage, $O(nd^2)$ time, and the estimation error is $O\bb{\frac{d^{2}}{\sqrt{n}}}$ (Theorem 4, \cite{qiu2019gradientbased}). The algorithm does $d$ sparse linear regression problems to achieve this.

\textbf{Support recovery algorithms with $O(d^2)$ storage and $O(nd^2)$ time} : \cite{lei2015sparsistency, lee2022support} use an SDP-based approach and \cite{bresler2018sparse} use sparse linear regression for support recovery. 

\textbf{More details on streaming PCA algorithms}
~\cite{pmlr-v37-yangd15} provides an online block version of the truncated power method in~\cite{yuan2011truncated} under the spiked model (Eq~\ref{eq:spiked}). They require an initialization $u_0$ with a sufficiently large $|u_0^Tv_1| = \Omega\bb{1}$ (local convergence). Their proposed initialization with streaming PCA algorithm until reaching a specific accuracy threshold, for which there is no known theoretical guarantee under the spiked high-dimensional setting. ~\cite{wang2016online} provides an analysis of streaming sparse PCA under Eq~\ref{eq:spiked} via partial differential equations (PDE), but they only prove asymptotic convergence. Similar to ~\cite{yuan2011truncated}, they also require $|u_0^Tv_{1}| = \Omega\bb{1}$ which can be hard to find in high dimensions for a general $\Sigma$. Recent results provide a black-box way to obtain the top-$k$ principal components ($k$-PCA) given an algorithm to extract the top eigenvector (see [a]) which could be employed treating our algorithm as a 1-PCA oracle (see \cite{pmlr-v247-jambulapati24a, mackey2008deflation}). We believe that our analysis can be extended to obtain top-k principal components simultaneously via QR decomposition and thresholding.
%The diagonal thresholding approach in~\cite{doi:10.1198/jasa.2009.0121} (also used for support recovery in~\cite{amini2008high}) works only under the spiked covariance model with one spike.  As we will show later, this does not apply to general models.
\section{Useful results}
\label{appendix:useful_results}

\begin{lemma}
    \label{lemma:gaussian_anti_concentration} (Fact 2.9 \cite{diakonikolas2023nearly})
    For any symmetric $d \times d$ matrix A, we have $\Var_{z \sim \mathcal{N}\bb{0,I}}\bbb{z^{T}Az} = 2\|A\|_{F}^{2}$. If A is a PSD matrix, then for any $\beta > 0$, it holds that 
    \bas{
       \mathbb{P}_{z \sim \mathcal{N}\bb{0,I}}\bbb{z^{T}Az \geq \beta \Tr\bb{A}} \geq 1 - \sqrt{e\beta} 
    }
\end{lemma}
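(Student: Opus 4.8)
The plan is to diagonalize $A$ and reduce both statements to a single sum of independent scaled squared Gaussians. Write $A = Q\,\operatorname{diag}(\lambda_{1},\dots,\lambda_{d})\,Q^{T}$; since $g := Q^{T}z \sim \mathcal{N}(0,I)$ whenever $z \sim \mathcal{N}(0,I)$, we get $z^{T}Az = \sum_{i=1}^{d}\lambda_{i}g_{i}^{2}$ with the $g_{i}$ i.i.d.\ standard normal. The variance identity is then immediate: by independence, $\Var(z^{T}Az) = \sum_{i}\lambda_{i}^{2}\Var(g_{i}^{2}) = \sum_{i}\lambda_{i}^{2}\bigl(\mathbb{E}[g_{i}^{4}]-1\bigr) = 2\sum_{i}\lambda_{i}^{2} = 2\|A\|_{F}^{2}$, using $\mathbb{E}[g_{i}^{4}]=3$.

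For the anti-concentration bound, assume $A$ is PSD so all $\lambda_{i}\ge 0$, and set $X := z^{T}Az \ge 0$ and $S := \Tr(A) = \sum_{i}\lambda_{i} = \mathbb{E}[X]$. If $A=0$ or $\beta \ge 1$ the claim is trivial (in the latter case $1-\sqrt{e\beta}<0$), so assume $S>0$ and $\beta\in(0,1)$. I would run a lower-tail Chernoff argument through the Laplace transform rather than a second-moment argument. Using the chi-square MGF, $\mathbb{E}[e^{-tX}] = \prod_{i=1}^{d}(1+2t\lambda_{i})^{-1/2}$ for all $t\ge 0$, and since the $\lambda_{i}$ are nonnegative, the elementary inequality $\prod_{i}(1+a_{i}) \ge 1+\sum_{i}a_{i}$ for $a_{i}\ge 0$ gives $\prod_{i}(1+2t\lambda_{i}) \ge 1+2tS$. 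Hence, applying Markov to $e^{-tX}$,
\[
\mathbb{P}(X \le \beta S) \;\le\; e^{t\beta S}\,\mathbb{E}[e^{-tX}] \;\le\; \frac{e^{t\beta S}}{\sqrt{1+2tS}}, \qquad t>0.
\]
Optimizing the right-hand side over $t$ (substitute $u=tS$; the minimizer satisfies $1+2u=1/\beta$) yields value $\sqrt{\beta}\,e^{(1-\beta)/2} \le \sqrt{e\beta}$, so $\mathbb{P}(X\le\beta S) \le \sqrt{e\beta}$ and therefore $\mathbb{P}\bigl(z^{T}Az \ge \beta\Tr(A)\bigr) \ge 1-\sqrt{e\beta}$.

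There is no serious obstacle here; the only step needing a bit of thought is recognizing that the right tool is the lower-tail Chernoff bound via the product form of the Laplace transform combined with $\prod(1+a_{i})\ge 1+\sum a_{i}$ — this is precisely what makes the eigenvalue profile of $A$ drop out and leaves a clean one-dimensional optimization yielding the dimension-free constant $\sqrt{e\beta}$. A naive Paley–Zygmund estimate, by contrast, would produce a bound depending on $\|A\|_{F}^{2}/\Tr(A)^{2}$ (hence on the effective rank), which is not what is claimed. Everything else — the MGF formula, the Gaussian fourth moment, and the scalar optimization over $t$ — is routine.
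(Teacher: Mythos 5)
Your proof is correct and takes essentially the same route as the paper's: diagonalize $A$, reduce to $\sum_i \lambda_i g_i^2$ with i.i.d.\ standard normals, use $\mathbb{E}[g_i^4]=3$ for the variance, and obtain the lower tail by applying Markov to $e^{-tX}$, invoking the chi-square Laplace transform, and exploiting $\prod_i(1+a_i)\ge 1+\sum_i a_i$ before optimizing $t$. The only (cosmetic) difference is the order of operations: you apply the product inequality first and then optimize over $t$, while the paper plugs in the optimal $t=\frac{1}{2\Tr(A)}(\frac{1}{\beta}-1)$ and then uses the product inequality; the two arrangements are algebraically interchangeable and give the identical bound $\sqrt{\beta}\,e^{(1-\beta)/2}\le\sqrt{e\beta}$.
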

\begin{proof}
    We give a short proof here. Since A is a symmetric matrix, let $A = P\Lambda P^{T}$ where $P$ is an orthonormal matrix and $\Lambda$ is a diagonal matrix. Then, denoting $y := P^{T}z$ we note that $y \sim \mathcal{N}\bb{0,I}$. Therefore, 
    \bas{
        z^{T}Az &= z^{T}P\Lambda P^{T}z = y^{T}\Lambda y = \sum_{i=1}^{d}\lambda_{i}y_{i}^{2}
    }
    Therefore, 
    \bas{
        \mathbb{E}_{z \sim \mathcal{N}\bb{0,I}}\bbb{z^{T}Az} = \mathbb{E}_{y \sim \mathcal{N}\bb{0,I}}\bbb{\sum_{i=1}^{d}\lambda_{i}y_{i}^{2}} = \sum_{i=1}^{d}\lambda_{i} = \Tr\bb{A}
    }
    and 
    \bas{
        \mathbb{E}_{z \sim \mathcal{N}\bb{0,I}}\bbb{\bb{z^{T}Az}^{2}} = \mathbb{E}_{y \sim \mathcal{N}\bb{0,I}}\bbb{\bb{\sum_{i=1}^{d}\lambda_{i}y_{i}^{2}}^{2}} &= \mathbb{E}_{y \sim \mathcal{N}\bb{0,I}}\bbb{\sum_{i=1}^{d}\lambda_{i}^{2}y_{i}^{4} + \sum_{i,j, i \neq j}^{d}\lambda_{i}\lambda_{j}y_{i}^{2}y_{j}^{2}} \\
        &= 3\sum_{i=1}^{d}\lambda_{i}^{2} + \sum_{i,j, i \neq j}^{d}\lambda_{i}\lambda_{j} = 2\Tr\bb{A^{2}} + \Tr\bb{A}^{2}
    }
    To get the tail lower bound, note that it trivially follows if $\beta > 1$. Therefore we proceed with $\beta \in \bb{0,1}$. We have
    \bas{
        \mathbb{P}_{z \sim \mathcal{N}\bb{0,I}}\bbb{z^{T}Az \leq \beta \Tr\bb{A}} &= \mathbb{P}_{y \sim \mathcal{N}\bb{0,I}}\bbb{\sum_{i=1}^{d}\lambda_{i}y_{i}^{2} \leq \beta \sum_{i=1}^{d}\lambda_{i} } \\
        &\leq \mathbb{E}\bbb{\exp\bb{t\bb{  \beta \sum_{i=1}^{d}\lambda_{i} - \sum_{i=1}^{d}\lambda_{i}y_{i}^{2}}}}, t > 0
    }
    \bas{
        &= \exp\bb{t\beta \sum_{i=1}^{d}\lambda_{i}}\mathbb{E}\bbb{\exp\bb{-t \sum_{i=1}^{d}\lambda_{i}y_{i}^{2}}} \\
        &= \exp\bb{t\beta \sum_{i=1}^{d}\lambda_{i}}\prod_{i=1}^{d}\bb{1 + 2\lambda_{i}t}^{-\frac{1}{2}}
    }
    Let $t = \frac{1}{2\sum_{i=1}^{d}\lambda_{i}}\bb{\frac{1}{\beta} - 1}$. Then, 
    \bas{
        \mathbb{P}_{z \sim \mathcal{N}\bb{0,I}}\bbb{z^{T}Az \leq \beta \Tr\bb{A}} &\leq \exp\bb{\frac{1-\beta}{2}}\prod_{i=1}^{d}\bb{1 + \frac{\lambda_{i}}{\sum_{i=1}^{d}\lambda_{i}}\bb{\frac{1}{\beta}-1}}^{-\frac{1}{2}} \\
        &\leq \exp\bb{\frac{1-\beta}{2}}\bb{1 + \bb{\frac{1}{\beta}-1}}^{-\frac{1}{2}} \\
        &= \exp\bb{\frac{1-\beta}{2}}\sqrt{\beta} \\
        &\leq \sqrt{e\beta}
    }
    Hence proved.
\end{proof}

\begin{lemma}
    \label{lemma:subgaussian_quadratic_form} Let $X \in \mathbb{R}^{d}$ be a $\sigma$-subgaussian random vector with covariance matrix $\Sigma$. Then, for any matrix $M \in \mathbb{R}^{d \times m}$ and any positive integer $p \geq 2$, 
    \bas{
        \E{\bb{X^{T}MM^{T}X}^{p}} \leq \bb{L^2\sigma^{2}p}^{p}\Tr\bb{M^{T}\Sigma M}^{p}
    }
\end{lemma}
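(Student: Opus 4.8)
The plan is to reduce the $p$-th moment of the quadratic form $X^{T}MM^{T}X=\norm{M^{T}X}_{2}^{2}$ to a $2p$-th moment of a \emph{linear} functional of $X$, so that the moment form of subgaussianity in Definition~\ref{definition:subgaussian} applies verbatim. The device is Gaussian randomization: let $g\sim\mathcal{N}(0,I_{m})$ be independent of $X$; then for any fixed $y\in\mathbb{R}^{m}$ one has $\langle g,y\rangle\sim\mathcal{N}(0,\norm{y}_{2}^{2})$, hence
\[
  \norm{y}_{2}^{2p}=\frac{1}{(2p-1)!!}\,\mathbb{E}_{g}\!\left[\langle g,y\rangle^{2p}\right].
\]
Applying this with $y=M^{T}X$ and using $\langle g,M^{T}X\rangle=\langle Mg,X\rangle$ gives $\mathbb{E}_{X}\big[(X^{T}MM^{T}X)^{p}\big]=\frac{1}{(2p-1)!!}\,\mathbb{E}_{X}\mathbb{E}_{g}\big[\langle Mg,X\rangle^{2p}\big]$, and since the integrand is nonnegative we may swap the two expectations by Tonelli's theorem.

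Next I would fix $g$ and apply subgaussianity of $X$ to the vector $v=Mg$: Definition~\ref{definition:subgaussian} yields $\mathbb{E}_{X}\big[\langle Mg,X\rangle^{2p}\big]\le(L\sigma\sqrt{2p})^{2p}\norm{Mg}_{\Sigma}^{2p}=(L\sigma)^{2p}(2p)^{p}\,(g^{T}Ag)^{p}$, where $A:=M^{T}\Sigma M\succeq 0$. It remains to control $\mathbb{E}_{g}\big[(g^{T}Ag)^{p}\big]$. Diagonalizing $A=Q\,\diag(\mu_{1},\dots,\mu_{m})\,Q^{T}$ and setting $\tilde g=Q^{T}g\sim\mathcal{N}(0,I_{m})$, we get $g^{T}Ag=\sum_{i}\mu_{i}\tilde g_{i}^{2}$, a sum of nonnegative terms, so the triangle inequality for the $L^{p}$-norm gives
\[
  \big\|\textstyle\sum_{i}\mu_{i}\tilde g_{i}^{2}\big\|_{L^{p}}\le\sum_{i}\mu_{i}\,\big\|\tilde g_{i}^{2}\big\|_{L^{p}}=\Big(\sum_{i}\mu_{i}\Big)\big(\mathbb{E}[\tilde g_{1}^{2p}]\big)^{1/p}=\Tr(A)\,\big((2p-1)!!\big)^{1/p},
\]
i.e.\ $\mathbb{E}_{g}\big[(g^{T}Ag)^{p}\big]\le(2p-1)!!\,\Tr(A)^{p}$. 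Substituting back, the factor $(2p-1)!!$ cancels and one is left with $\mathbb{E}_{X}\big[(X^{T}MM^{T}X)^{p}\big]\le(L\sigma)^{2p}(2p)^{p}\,\Tr(M^{T}\Sigma M)^{p}$, which is the asserted bound after replacing $L$ by $\sqrt{2}\,L$ — harmless, since $L$ is only an unspecified absolute constant in Definition~\ref{definition:subgaussian}.

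No individual step is deep here; the lemma is mostly a matter of assembling standard pieces. The one point that genuinely needs care — and the reason the naive route is unavailable — is that Definition~\ref{definition:subgaussian} only controls \emph{one-dimensional marginals} of $X$, not its joint law: in particular we cannot write $X=\Sigma^{1/2}Z$ with $Z$ having independent subgaussian coordinates and invoke a Hanson--Wright-type bound for $Z^{T}(\Sigma^{1/2}MM^{T}\Sigma^{1/2})Z$. The Gaussian-randomization identity is precisely what keeps the whole argument inside the class of linear functionals of $X$; the only bookkeeping to watch is that the normalizing constant $(2p-1)!!$ produced by the linearization is exactly matched by the one produced when bounding $\mathbb{E}_{g}[(g^{T}Ag)^{p}]$ (this matching is tight — rank-one $A$ saturates it), so that it drops out and we are left with the clean $p^{p}$ dependence.
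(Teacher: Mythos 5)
Your proof is correct and takes a genuinely different route from the paper's. The paper eigendecomposes $MM^{T}=P\Lambda P^{T}$, writes $X^{T}MM^{T}X=\sum_{i}\lambda_{i}y_{i}^{2}$ with $y_{i}=(Pe_{i})^{T}X$, expands the $p$-th power by the multinomial theorem, factors each mixed moment $\E{\prod_{i}y_{i}^{2k_{i}}}$ into marginal $2p$-th moments via generalized H\"older (using $\sum_{i}k_{i}=p$), applies subgaussianity to each $y_{i}$, and resums. Your Gaussian-randomization identity is a continuous surrogate for that discrete expansion: it linearizes $\|M^{T}X\|_{2}^{2p}$ in a single stroke, invokes subgaussianity only once (on $\langle Mg,X\rangle$ with $g$ fixed), and replaces the multinomial resummation by the standard $L^{p}$-moment bound for the Gaussian chaos $g^{T}Ag$. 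Both routes rest on the observation you make explicit at the end --- Definition~\ref{definition:subgaussian} only controls one-dimensional marginals of $X$, so the quadratic form must first be reduced to linear functionals --- and both are elementary; yours is arguably the cleaner of the two, and the exact cancellation of $(2p-1)!!$ (tight at rank one, as you note) is a pleasant touch. On bookkeeping: your final constant is $(2L^{2}\sigma^{2}p)^{p}$, a factor of $2^{p}$ off the stated $(L^{2}\sigma^{2}p)^{p}$, which you flagged and rightly called harmless. In fact, the paper's own derivation incurs the same loss: in the step bounding $\E{y_{i}^{2p}}$ it writes $(\sqrt{p})^{2p}$, but the moment clause of Definition~\ref{definition:subgaussian} applied at exponent $2p$ yields $(\sqrt{2p})^{2p}=(2p)^{p}$. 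So both proofs, carefully executed, land on the same constant $(2L^{2}\sigma^{2}p)^{p}$; the discrepancy with the lemma's statement is cosmetic and absorbed by rescaling $L$, exactly as you say.
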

\begin{proof}
    Let the eigendecomposition of $MM^{T}$ be $P\Lambda P^{T}$. Define $Y := P^{T}X$. 
    Then, 
    \ba{
        \E{\bb{X^{T}MM^{T}X}^{p}} &= \E{ \bb{\sum_{i=1}^{d}\lambda_{i}y_{i}^{2}}^{p}} \notag \\
        &= \sum_{k_{1}+k_{2}+\cdots k_{d}=p; \; k_{1},k_{2},\cdots k_{d} \geq 0} {n \choose k_{1},k_{2},\cdots k_{d}} \E{\prod_{i=1}^{d}\lambda_{i}^{k_{i}}y_{i}^{2k_{i}}}\label{eq:multinomial_expansion_1}
    }
    Therefore,
    % We have using Definition \ref{definition:subgaussian} for an absolute constant $C > 0$,
    \ba{
        \E{\prod_{i=1}^{d}\lambda_{i}^{k_{i}}y_{i}^{2k_{i}}} &= \bb{\prod_{i=1}^{d}\lambda_{i}^{k_{i}}}\E{\prod_{i=1}^{d}y_{i}^{2k_{i}}} \notag \\
        &\leq \bb{\prod_{i=1}^{d}\lambda_{i}^{k_{i}}}\prod_{i=1}^{d}\bb{\E{\bb{y_{i}^{2k_{i}}}^{\frac{p}{k_{i}}}}}^{\frac{k_{i}}{p}}, \text{ using Holder's inequality since } \sum_{i=1}^{d}k_{i} = p, \notag \\
        &= \bb{\prod_{i=1}^{d}\lambda_{i}^{k_{i}}}\prod_{i=1}^{d}\bb{\E{y_{i}^{2p}}}^{\frac{k_{i}}{p}} \label{eq:holders_inequality_bound_1}
        }
     Using the definition of sub-gaussianity (Definition \ref{definition:subgaussian}) we have, 
     \bas{
        \E{y_{i}^{2p}} &= \E{\bb{e_{i}^{T}P^{T}X}^{2p}} \\
                       &= \E{\bb{\bb{Pe_{i}}^{T}X}^{2p}} \\
                       &\leq L^{2p}\sigma^{2p}\bb{\sqrt{p}}^{2p}\bb{\norm{Pe_{i}}_{\Sigma}}^{2p} \\
                       &= L^{2p}\sigma^{2p}p^{p}\bb{e_{i}^{T}P^{T}\Sigma P e_{i}}^{p}
    }
    Susbstituting in Eq~\ref{eq:holders_inequality_bound_1} we have, 
    \bas{
        \E{\prod_{i=1}^{d}\lambda_{i}^{k_{i}}y_{i}^{2k_{i}}} &\leq \bb{\prod_{i=1}^{d}\lambda_{i}^{k_{i}}}\bb{\prod_{i=1}^{d}L^{2k_{i}}\sigma^{2k_{i}}p^{k_{i}}\bb{e_{i}^{T}P^{T}\Sigma P e_{i}}^{k_{i}}} \\
        &= \bb{L^{2}\sigma^{2}p}^{p}\prod_{i=1}^{d}\bb{\lambda_{i}e_{i}^{T}P^{T}\Sigma P e_{i}}^{k_{i}}
    }
    Substituting in Eq~\ref{eq:multinomial_expansion_1} we have,
    \bas{
        \E{\bb{X^{T}MM^{T}X}^{p}} &\leq \bb{L^{2}\sigma^{2}p}^{p}\sum_{k_{1}+k_{2}+\cdots k_{d}=p; \; k_{1},k_{2},\cdots k_{d} \geq 0} {n \choose k_{1},k_{2},\cdots k_{d}} \prod_{i=1}^{d}\bb{\lambda_{i}e_{i}^{T}P^{T}\Sigma P e_{i}}^{k_{i}} \\
        &= \bb{L^{2}\sigma^{2}p}^{p}\bb{\sum_{i=1}^{d}\lambda_{i}e_{i}^{T}P^{T}\Sigma P e_{i}}^{p} \\
        &= \bb{L^{2}\sigma^{2}p}^{p}\bb{\Tr\bb{\bb{\sum_{i=1}^{d}\lambda_{i}Pe_{i}e_{i}^{T}P^{T}}\Sigma}}^{p} = \bb{L^{2}\sigma^{2}p}^{p}\Tr\bb{M^{T}\Sigma M}^{p}
    }
    Hence proved.
\end{proof}

\begin{lemma}
    \label{lemma:subgaussian_Nu_bound} Let $X \in \mathbb{R}^{d}$ be a $\sigma$-subgaussian random vector with covariance matrix $\Sigma$. Then,
    \bas{
        \norm{\E{\bb{XX^{T}}^{2}}} \leq 4L^{4}\sigma^{4}\lambda_{1}\Tr\bb{\Sigma}
    }
\end{lemma}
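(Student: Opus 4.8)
The plan is to prove the bound $\norm{\E{(XX^{T})^{2}}} \leq 4L^{4}\sigma^{4}\lambda_{1}\Tr(\Sigma)$ by reducing the operator-norm computation to a supremum of a scalar quadratic form and then invoking the already-established moment estimate for quadratic forms, namely Lemma~\ref{lemma:subgaussian_quadratic_form}. Since $\E{(XX^{T})^{2}} = \E{\norm{X}_{2}^{2}\, XX^{T}}$ is a symmetric positive semidefinite matrix, its operator norm equals $\sup_{\norm{u}_{2}=1} u^{T}\E{\norm{X}_{2}^{2}XX^{T}}u = \sup_{\norm{u}_{2}=1}\E{\norm{X}_{2}^{2}(u^{T}X)^{2}}$.

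First I would apply Cauchy--Schwarz to the expectation $\E{\norm{X}_{2}^{2}(u^{T}X)^{2}}$, writing it as $\E{(X^{T}X)(u^{T}X)^{2}} \leq \sqrt{\E{(X^{T}X)^{2}}}\sqrt{\E{(u^{T}X)^{4}}}$. Both factors are fourth-moment quantities of the kind controlled by Lemma~\ref{lemma:subgaussian_quadratic_form}. For the first factor, take $M = I_{d}$ and $p=2$ in that lemma to get $\E{(X^{T}X)^{2}} \leq (2L^{2}\sigma^{2})^{2}\Tr(\Sigma)^{2}$, hence $\sqrt{\E{(X^{T}X)^{2}}} \leq 2L^{2}\sigma^{2}\Tr(\Sigma)$. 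For the second factor, take $M = u$ (a $d\times 1$ matrix) and $p=2$, giving $\E{(u^{T}X)^{4}} \leq (2L^{2}\sigma^{2})^{2}(u^{T}\Sigma u)^{2}$, so $\sqrt{\E{(u^{T}X)^{4}}} \leq 2L^{2}\sigma^{2}\, u^{T}\Sigma u \leq 2L^{2}\sigma^{2}\lambda_{1}$ since $\norm{u}_{2}=1$. Multiplying the two bounds yields $\E{\norm{X}_{2}^{2}(u^{T}X)^{2}} \leq 4L^{4}\sigma^{4}\lambda_{1}\Tr(\Sigma)$ uniformly in unit $u$, which is exactly the claimed operator-norm bound after taking the supremum.

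Alternatively — and perhaps even more directly — one can avoid Cauchy--Schwarz entirely: apply Lemma~\ref{lemma:subgaussian_quadratic_form} with the single matrix $M = [\,u \mid I_{d}\,] \in \R^{d\times(d+1)}$, so that $X^{T}MM^{T}X = (u^{T}X)^{2} + \norm{X}_{2}^{2}$, and $p$ chosen appropriately; but expanding this gives cross terms, so the cleaner route is the Cauchy--Schwarz split above. I would go with the Cauchy--Schwarz version.

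I do not anticipate a serious obstacle here: the statement is essentially a corollary of Lemma~\ref{lemma:subgaussian_quadratic_form}. The only point requiring a little care is the reduction of the operator norm of $\E{(XX^{T})^{2}}$ to the scalar supremum $\sup_{\norm u =1}\E{\norm{X}_2^2 (u^T X)^2}$ — this uses that $(XX^{T})^{2} = \norm{X}_{2}^{2}XX^{T}$ and that a PSD matrix attains its operator norm as $\sup_{\norm u = 1} u^{T}(\cdot)u$, together with linearity of expectation to pull the supremum outside. Everything else is a direct substitution into the previous lemma, so the proof is short.
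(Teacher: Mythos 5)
Your proof is correct and is essentially identical to the paper's: both reduce $\norm{\E{(XX^T)^2}}$ to $\sup_{\norm{u}_2=1}\E{(X^TX)(u^TX)^2}$, split via Cauchy--Schwarz, and apply Lemma~\ref{lemma:subgaussian_quadratic_form} with $p=2$ twice (once with $M=I$, once with $M=u$), finishing with $u^T\Sigma u \leq \lambda_1$.
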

\begin{proof}
    For any fixed unit vector $u \in \mathbb{R}^{d}$, we have 
    \bas{
        u^{T}\E{\bb{XX^{T}}^{2}}u &= \E{\bb{X^{T}X}\bb{X^{T}u}^{2}} \\ 
        &\leq \sqrt{\E{\bb{X^{T}X}^{2}}\E{\bb{X^{T}u}^4}} \\
        &= \sqrt{\E{\bb{X^{T}X}^{2}}\E{\bb{X^{T}uu^{T}X}^2}} \\
        &\leq \bb{2L^{2}\sigma^{2}}^{2}\Tr\bb{\Sigma}\Tr\bb{u^{T}\Sigma u} \\
        &\leq \bb{2L^{2}\sigma^{2}}^{2}\lambda_{1}\Tr\bb{\Sigma}
    }
    where we used Lemma~\ref{lemma:subgaussian_quadratic_form} with $p=2$ and $M = I$.
\end{proof}

\lowerbounddiagthresh*
\begin{proof}
Let $s$ be a multiple of 3 for ease of analysis. Consider a dataset with a covariance matrix, 
\ba{
    & \Sigma := \beta_{1}v_{1}v_{1}^{\top} + \beta_{2}v_{2}v_{2}^{\top} + \beta_{3}v_{2}v_{2}^{\top} + \beta_{4}v_{2}v_{2}^{\top} + \frac{1}{2}I, \beta_{1} = 2\beta_{2} = 2.1\beta_{3} = 2.2\beta_{4} \notag \\
    & \forall i \in \bbb{s}, \left|v_{1}\bb{i}\right| = \frac{1}{\sqrt{s}}, \;\;\; \forall i \in \left(s,\frac{4s}{3}\right], \left|v_{2}\bb{i}\right| = \sqrt{\frac{3}{s}} \notag \\ 
    & \forall i \in \left(\frac{4s}{3},\frac{5s}{3}\right], \left|v_{3}\bb{i}\right| = \sqrt{\frac{3}{s}} \;\;\; \forall i \in \left(\frac{5s}{3},2s\right], \left|v_{4}\bb{i}\right| = \sqrt{\frac{3}{s}}  \label{eq:spiked_cov_counterexample}
}
where $\beta_{1} = \frac{1}{2}$. Based on Eq~\ref{eq:spiked_cov_counterexample}, we have for, 
\bas{
    & i \in \left(1, s\right], \Sigma_{i,i} = \frac{1}{2} + \frac{\beta_{1}}{s} \\
    & i \in \left(s, \frac{4s}{3}\right], \Sigma_{i,i} = \frac{1}{2} + \frac{3\beta_{2}}{s} = \frac{1}{2} + \frac{3\beta_{1}}{2s} \\
    & i \in \left(\frac{4s}{3}, \frac{5s}{3}\right],  \Sigma_{i,i} = \frac{1}{2} + \frac{3\beta_{3}}{s} = \frac{1}{2} + \frac{3\beta_{1}}{2.1s} \\
    & i \in \left(\frac{5s}{3}, 2s\right], \Sigma_{i,i} = \frac{1}{2} + \frac{3\beta_{4}}{s} = \frac{1}{2} + \frac{3\beta_{1}}{2.2s} \\
    & i \in \left(2s, d\right], \Sigma_{i,i} = \frac{1}{2} 
}
Note that the largest eigenvalue of $\Sigma$, $\lambda_{1} = \beta_{1} + \frac{1}{2}$. Let $t_{n} := 10\sigma^{2}\lambda_{1}\sqrt{\frac{\log\bb{d}}{n}}$.
Using Lemma 6.26 from \cite{wainwright2019high}, we have, for the empirical covariance matrix, $\hat{\Sigma}$,
\bas{
    \mathbb{P}\bb{\max_{i,j \in [d]}\left|\hat{\Sigma}_{i,j}-\Sigma\bb{i,j}\right| \geq t_{n}} \leq \frac{1}{d^{10}}
}

Define the event, $\mathcal{E} := \max_{i,j \in [d]}\left|\hat{\Sigma}\bb{i,j}-\Sigma\bb{i,j}\right| \leq t_{n}$ and note that due to the sample complexity bound on $n$, under event $\mathcal{E}$, 
% \rd \bas{
%     \forall i \in \left(s, 2s\right], \hat{\Sigma}_{i,i} \geq \forall i \in \left[1, s\right], \hat{\Sigma}_{i,i} \geq  \forall i > 2s, \hat{\Sigma}_{i,i}
% }\bk
\bas{
    \min_{i \in \left(s, 2s\right]} \hat{\Sigma}_{i,i} > \max_{ i \in \left[1, s\right]} \hat{\Sigma}_{i,i} \geq \min_{ i \in \left[1, s\right]} \hat{\Sigma}_{i,i} \geq \max_{i > 2s} \hat{\Sigma}_{i,i}
}
Therefore, under event $\mathcal{E}$, the $s$ largest diagonal entries of $\hat{\Sigma}$ are $i \in \left(s, 2s\right]$, and therefore, $|\hat{S}\bigcap S| = 0$, which completes our proof.
\end{proof}

\geometricaggregation*
\begin{proof}
    Consider the indicator random variables $\chi_{i} := \mathbbm{1}\bb{\rho\bb{q_{i},q_{*}} \leq \epsilon}$. Let $p := \frac{1}{3}$ and $r = 300\log\bb{\frac{1}{\delta}}$ for convenience of notation. Then, $\forall i \in [r], \; \mathbb{P}\bb{\chi_{i} = 1} \geq 1-p$. Define the set $\mathcal{S} := \left\{i : i \in [r], \chi_{i} = 1\right\}$. We note that using standard Chernoff bounds for sums of independent Bernoulli random variables, for $\theta \in \bb{0,1}$,
    \bas{
       \mathbb{P}\bb{|\mathcal{S}| \leq \bb{1-\theta}\E{|\mathcal{S}|}} \leq \exp\bb{-\frac{\theta^{2}\E{|\mathcal{S}|}}{2}}
    }
    We have, $\E{|\mathcal{S}|} \geq r\bb{1-p}$ using linearity of expectation. Therefore,
    \ba{
        & \;\;\;\;\;\;\;\;\; \mathbb{P}\bb{|\mathcal{S}| \leq \bb{1-\theta}\bb{1-p}r} \leq \exp\bb{-\frac{\theta^{2}\bb{1-p}r}{2}} \notag \\
        & \implies \mathbb{P}\bb{|\mathcal{S}| \leq 0.9\bb{1-p}r} \leq \exp\bb{-\frac{\bb{1-p}r}{200}}, \text{ for } \theta := \frac{1}{10} \label{eq:chernoff_bernoulli}
    }
    Recall that Algorithm~\ref{alg:prob_boosting} defines $\tilde{q}$ as:
    \ba{
        \tilde{q} := q_{i}, \text{ such that }  \frac{\left|\left\{j \in [r] : \rho\bb{q_{i},q_{j}} \leq 2\epsilon \right\}\right|}{r} \geq 0.9\bb{1-p} \label{eq:definition_tilde_u}
    }
    Note that the definition of $\tilde{q}$ does not require knowledge of $q_{*}$ and it can be computed by calculating $\rho(.)$ error between all distinct ${r \choose 2}$ pairs $\bb{q_{i}, q_{j}}_{i,j \in [r], i \neq j}$.
    \\ \\
    Let $\mathcal{E}$ be the event $\{|\mathcal{S}| > 0.9\bb{1-p}r\}$ and denote $f := 0.9\bb{1-p}$ for convenience of notation. Let us now operate conditioned on $\mathcal{E}$. Note that conditioned on $\mathcal{E}$, such a $\tilde{q}$ always exists since any point in $\mathcal{S}$ is a valid selection of $\tilde{q}$. This is true since  
    % \bas{
    %     \norm{u_{i}u_{i}^{T}-u_{j}u_{j}^{T}}_{F} \leq \norm{u_{i}u_{i}^{T}-u_{*}u_{*}^{T}}_{F} + \norm{u_{j}u_{j}^{T}-u_{*}u_{*}^{T}}_{F} \leq 2\sqrt{2\epsilon}
    % }
    \bas{
        \rho\bb{q_{i},q_{j}} \leq \rho\bb{q_{i},q_{*}} + \rho\bb{q_{j},q_{*}} \leq 2\epsilon
    }
    Here we used the property of the event $\mathcal{E}$ and the triangle inequality for $\rho$. We further have, conditioned on $\mathcal{E}$ using triangle inequality for some $i \in \mathcal{S}$,
    % \ba{
    %     \norm{\tilde{u}\tilde{u}^{T}-u_{*}u_{*}^{T}}_{F} \leq \norm{\tilde{u}\tilde{u}^{T}-u_{i}u_{i}^{T}}_{F} + \norm{u_{i}u_{i}^{T}-u_{*}u_{*}^{T}}_{F} \leq 3\sqrt{2\epsilon} \label{eq:u_tilde_good_event_bound}
    % }
    \ba{
        \rho\bb{\tilde{q},q_{*}} \leq \rho\bb{\tilde{q},q_{i}} + \rho\bb{q_{i},q_{*}} \leq 3\epsilon \label{eq:u_tilde_good_event_bound}
    }
    Therefore, we have
    % \bas{
    %     \mathbb{P}\bb{\sin^{2}\bb{\tilde{u},u_{*}} \geq 9\epsilon} &= \mathbb{P}\bb{\mathcal{E}}\mathbb{P}\bb{\sin^{2}\bb{\tilde{u},u_{*}} \geq 9\epsilon|\mathcal{E}} + \mathbb{P}\bb{\mathcal{E}^{c}}\mathbb{P}\bb{\sin^{2}\bb{\tilde{u},u_{*}} \geq 9\epsilon|\mathcal{E}^{c}} \\
    %     &= 0 + \mathbb{P}\bb{\mathcal{E}^{c}}\mathbb{P}\bb{\sin^{2}\bb{\tilde{u},u_{*}} \geq 9\epsilon|\mathcal{E}^{c}} \text{ using Eq~\ref{eq:u_tilde_good_event_bound}}  \\
    %     &\leq \mathbb{P}\bb{\mathcal{E}^{c}} \\
    %     &\leq \exp\bb{-\frac{\bb{1-p}r}{20}} \text{ using Eq~\ref{eq:chernoff_bernoulli}}
    % }
    \bas{
        \mathbb{P}\bb{\rho\bb{\tilde{q},q_{*}} \geq 3\epsilon} &= \mathbb{P}\bb{\mathcal{E}}\mathbb{P}\bb{\rho\bb{\tilde{q},q_{*}} \geq 3\epsilon|\mathcal{E}} + \mathbb{P}\bb{\mathcal{E}^{c}}\mathbb{P}\bb{\rho\bb{\tilde{q},q_{*}} \geq 3\epsilon|\mathcal{E}^{c}} \\
        &= 0 + \mathbb{P}\bb{\mathcal{E}^{c}}\mathbb{P}\bb{\rho\bb{\tilde{q},q_{*}} \geq 3\epsilon|\mathcal{E}^{c}} \text{ using Eq~\ref{eq:u_tilde_good_event_bound}}  \\
        &\leq \mathbb{P}\bb{\mathcal{E}^{c}} \\
        &\leq \exp\bb{-\frac{\bb{1-p}r}{200}} \text{ using Eq~\ref{eq:chernoff_bernoulli}}
    }
    which completes our proof.
\end{proof}

\begin{lemma}[Learning rate schedule]
    \label{lemma:learning_rate_set} 
    Let the learning rate be set as $\eta:=  \dfrac{\kappa\log\bb{\effecrank}}{n\bb{\lambda_{1}-\lambda_{2}}}$ for a positive constant $\kappa > 0$. For constant $c \leq \frac{1}{8\kappa}\min\left\{\frac{1}{\sqrt{C}} \frac{1}{C}\right\}$, let 
    \bas{
        \max\left\{1, \frac{\lambda_{2}}{\lambda_{1}-\lambda_{2}}\right\}\frac{\Tr\bb{\Lambda_{2}}}{\lambda_{1}-\lambda_{2}} \leq \frac{cn}{\log\bb{n}}, \;\;\; \frac{\lambda_{1}}{\lambda_{1}-\lambda_{2}} \leq c\sqrt{\frac{n}{\log^{2}\bb{n}}}
    }
    If $\kappa\geq  2 + o\bb{1}, \; n = \Omega\bb{s^{2}\log\bb{d}}$, the following hold:
    \begin{enumerate}
        \item $\eta \leq \frac{1}{C}\frac{\bb{\lambda_{1}-\lambda_{2}}}{\lambda_{2}\Tr\bb{\Lambda_{2}}}$
        \item $C\eta \leq \frac{1}{4}\min\left\{\frac{1}{\lambda_{1}}, \frac{1}{\Tr\bb{\Lambda_{2}}}, \frac{1}{\sqrt{\lambda_{1}\Tr\bb{\Lambda_{2}}}}\right\}$
        \item $C\eta^{2} n\lambda_{1}^{2} \leq \frac{1}{4}$
        \item $\exp\bb{-r n\eta\bb{\lambda_{1}-\lambda_{2}}} \leq \eta\lambda_{1}\;$ for $r \geq \frac{1}{2}$
    \end{enumerate}
    where $C := 100\bb{L^4\sigma^{4} + L^{2}\sigma^{2}}+16$. We state another useful restatement of Claim (1) used in subsequent analysis,
    \bas{
        \exists \theta \in \bb{0.5,1}, \;\; \bb{1-\theta}\bb{\lambda_{1}-\lambda_{2}} + 50L^{4}\sigma^{4}\eta\lambda_{1}^{2} = 50L^{4}\sigma^{4}\log\bb{\effecrank}\eta\lambda_{2}\Tr\bb{\Sigma}
    }
\end{lemma}
\begin{proof}
    % \rd 
    % \begin{assumption}[Sparsity and Spectral gap]
    %  We assume that $\bb{\frac{\lambda_{1}}{\lambda_{1}-\lambda_{2}}}^{2}\frac{\max\left\{\reff, \log\bb{n}\right\}\log\bb{n}}{n} \leq c$ for an absolute constant $c > 0$. The leading eigenvector, $v_{1}$, satisfies $\norm{v}_{0} \leq s$ for $s = o\bb{\frac{n}{\log\bb{n}}}$ with support set $S := \left\{i : v_{1}\bb{i} \neq 0\right\}$.
    % \end{assumption}
    % \bk
    % We first note that using the computational-statistical trade-off for Sparse PCA in Corollary 9 from \cite{berthet2013complexity} if the planted-clique conjecture holds, then for any polynomial time algorithm,
    % \ba{
    %     \frac{\lambda_{1}}{\lambda_{2}} \geq 1 + \Omega\bb{\sqrt{\frac{s^{2}\log\bb{d}}{n}}} \label{eq:bound_n_1}
    % } 
    \bas{
        \eta \frac{\lambda_{2}\Tr\bb{\Lambda_{2}}}{\lambda_{1}-\lambda_{2}} &= \kappa \frac{\log\bb{n}}{n}\frac{\Tr\bb{\Lambda_{2}}}{\lambda_{1}-\lambda_{2}}\frac{\lambda_{2}}{\lambda_{1}-\lambda_{2}} \leq \kappa c 
    }
    Therefore, the first claim follows for $c \leq \frac{1}{\kappa C}$.
    % First, we note that since by Assumptions~\ref{assumption:high_dimensions}, $\frac{\lambda_{1}}{\lambda_{2}} = 1 + \Omega\bb{1}$, 
    % \ba{
    %     \frac{\lambda_{1}}{\lambda_{1}-\lambda_{2}} = \frac{1}{1 - \frac{\lambda_{2}}{\lambda_{1}}} \leq 2 \label{eq:lambda1lambda2bound1}
    % }
    % where the last inequality holds for sufficiently larger $d$. 
    % Furthermore, recall that $\frac{d}{n} = c$. Therefore, 
    % \bas{
    %     \frac{\Tr\bb{\Sigma}}{\lambda_{1}n} \leq \frac{2}{cW}
    % }
   % Now, for the first claim, we have
   %  \bas{
   %      \frac{\eta\lambda_{2}\Tr\bb{\Sigma}}{\bb{\lambda_{1}-\lambda_{2}}} &= \kappa \frac{\log\bb{n}\lambda_{2}\Tr\bb{\Sigma}}{n\bb{\lambda_{1}-\lambda_{2}}^{2}} 
   %      = \kappa\frac{\log\bb{n}\lambda_{2}\lambda_{1}}{\bb{\lambda_{1}-\lambda_{2}}^{2}}\frac{\Tr\bb{\Sigma}}{n\lambda_{1}} \leq \frac{4\kappa\log\bb{n}}{W} = \frac{\kappa}{3C} \stackrel{(i)}{\leq} \frac{1}{C}
   %  }
   %  where $(i)$ follows by the definition of $\kappa$.
   \notag
    For the second claim, 
    % \bas{
    %     C\eta\lambda_{1} \leq \frac{\lambda_{1}-\lambda_{2}}{\lambda_{2}\reff}
    % }
    % \bas{
    %     C\eta\lambda_{1} &= \frac{\kappa\log\bb{\effecrank}C\lambda_{1}}{n\bb{\lambda_{1}-\lambda_{2}}} \leq \frac{2\kappa\log\bb{\effecrank}C}{n} \leq 2C\frac{\log\bb{n}}{n}
    %     \stackrel{(ii)}{\leq} \frac{1}{4s}
    % }
    \ba{
        C\eta\lambda_{1} &= \frac{\kappa C\log\bb{\effecrank}\lambda_{1}}{n\bb{\lambda_{1}-\lambda_{2}}} \leq \kappa C c \frac{1}{\sqrt{n}} \leq \frac{1}{4} \label{eq:second_claim_1}
    }
    where the last inequality holds for $c \leq \frac{1}{4\kappa C}$ and $n \geq 1$. Furthermore we have
    \bas{   
        C\eta\Tr\bb{\Lambda_{2}} = C\frac{\kappa\log(n)}{n\bb{\lambda_1-\lambda_2}}\Tr\bb{\Lambda_{2}}\leq \kappa C c \leq \frac{1}{4}
    }
    where the last inequality holds for $\kappa C c \leq \frac{1}{4}$.
    % \rd This is bad!
    % \bas{   
    %     C\eta\Tr\bb{\Sigma} &= C\frac{\kappa\log\bb{\effecrank}\Tr\bb{\Sigma}}{n\bb{\lambda_{1}-\lambda_{2}}} = C\frac{\kappa\log\bb{\effecrank}\lambda_{1}\reff}{n\bb{\lambda_{1}-\lambda_{2}}} \\
    %     &\leq cC\frac{\kappa\lambda_{1}}{\bb{\lambda_{1}-\lambda_{2}}\sigma_*^2}=
    %      cC\frac{\kappa\bb{\lambda_{1}-\lambda_{2}}}{\lambda_2}
    % }
    % \bk
    % Furthermore, using Eq~\ref{eq:lambda1lambda2bound1} 
    % \bas{
    %     C\eta\Tr\bb{\Sigma} &= C\frac{\kappa\log\bb{\effecrank}\Tr\bb{\Sigma}}{n\bb{\lambda_{1}-\lambda_{2}}} 
    %     = \frac{2C\kappa\log\bb{\effecrank}\lambda_{1}}{W\bb{\lambda_{1}-\lambda_{2}}} 
    %     \stackrel{(iii)}{\leq} 1
    % }
    % where $(iii)$ follows using the definition of $\kappa$. 
    \noindent
    Note that $\eta C \leq \min\left\{\frac{1}{4\lambda_{1}}, \frac{1}{4}\frac{1}{\Tr\bb{\Lambda_{2}}}\right\}$ imply $4\eta C \leq \frac{1}{\sqrt{\lambda_{1}\Tr\bb{\Lambda_{2}}}}$.

    \noindent
    For the third claim, we have
    \bas{
        C\eta^{2} n\lambda_{1}^{2} = \eta\lambda_{1} \frac{\kappa C\log\bb{n}\lambda_{1}}{\bb{\lambda_{1}-\lambda_{2}}} &= \frac{\kappa^{2} C \log^{2}\bb{n}}{n}\bb{\frac{\lambda_{1}}{\lambda_{1}-\lambda_{2}}}^{2} \leq \kappa^{2}Cc^{2} \leq \frac{1}{4}
    }
    where the last inequality holds when $c \leq \frac{1}{\sqrt{4\kappa^{2}C}}$. 

    \noindent
    Next, we have the last claim,
    \bas{
        \exp\bb{-r n\eta\bb{\lambda_{1}-\lambda_{2}}} &= \exp\bb{-r \kappa\log\bb{\effecrank}} = \frac{1}{n^{r \kappa}}
    }
    Therefore, it suffices to ensure
    \bas{
        \frac{1}{n^{r \kappa}} \leq \frac{1}{n^{\frac{\kappa}{2}}} \leq \frac{\kappa\log\bb{\effecrank}}{n} \stackrel{(iv)}{\leq} \frac{\kappa\lambda_{1}\log\bb{\effecrank}}{n\bb{\lambda_{1}-\lambda_{2}}}
    }
    where $\bb{iv}$ follows since $\frac{\lambda_{1}}{\bb{\lambda_{1}-\lambda_{2}}} \geq 1$ as $\lambda_{1} > \lambda_{2}$. Therefore, we require
    \bas{
        \frac{1}{n^{\frac{\kappa}{2}}} \leq \frac{\kappa\log\bb{\effecrank}}{n}
    }
    % This is ensured by
    % \bas{
    %     \frac{1}{d^{c \kappa}} \leq \frac{1}{d^{1.5}} &\leq \frac{3\lambda_{1}\log\bb{\effecrank}}{n\bb{\lambda_{1}-\lambda_{2}}}
    % }
    which holds for $\kappa = 2 + o\bb{1}$ and sufficiently large $n$.
\end{proof}

\begin{lemma}
    \label{lemma:matrix_recursion}
    For constants $c_{1}, c_{2}, c_{3}, c_{4}, c_{5} > 0$, consider the following system of recursions - 
    \bas{
        \alpha_{n} &\leq \bb{1 + c_{1}\eta\lambda_{1} + c_{2}\eta^2\lambda_{1}^2}\alpha_{n-1} + c_{3}\eta^2\lambda_{1}\lambda_{2}\beta_{n-1},  \\
        \beta_{n} &\leq \bb{1 + c_{1}\eta\lambda_{2} + c_{4}\eta^2\lambda_{2}\Tr\bb{\Sigma}}\beta_{n-1} + c_{5}\eta^2\lambda_{1}\Tr\bb{\Sigma}\alpha_{n-1} 
    }
    Let $\exists \theta \in \bb{0.5, 1}$, which satisfies  $c_{1}\bb{1-\theta}\bb{\lambda_{1}-\lambda_{2}} + c_{2}\eta\lambda_{1}^{2} = c_{4}\eta\lambda_{2}\Tr\bb{\Sigma}$ and
    \bas{
      \frac{4c_{3}c_{5}}{c_{1}^{2}} \eta^{2}\lambda_{2}\Tr\bb{\Sigma}\bb{\frac{\lambda_{1}}{\theta\bb{\lambda_{1}-\lambda_{2}}}}^{2} \leq 1, \;\; 4\eta \lambda_{1}\bb{\frac{c_{2} \lambda_{1}}{c_{1}\bb{\lambda_{1}-\lambda_{2}}}} \leq 1-\theta 
    }
    Then we have, 
    \bas{
        \alpha_{n} &\leq \lambda_{1}\bb{P}^{n}\bbb{\alpha_{0} + \eta\lambda_{1}\bb{\frac{2c_{3}\lambda_{1}}{c_{1}\theta\bb{\lambda_{1}-\lambda_{2}}}}\bb{\beta_{0} + \alpha_{0}\frac{c_{5}}{c_{4}}\bb{\frac{1-\theta}{\theta}}}}, \notag \\
        \beta_{n} &\leq \beta_{0}\lambda_{2}\bb{P}^{n} + \bbb{\eta\lambda_{1}\bb{\frac{2c_{5}\lambda_{1}}{c_{1}\theta\bb{\lambda_{1}-\lambda_{2}}}}\bb{\alpha_{0}\frac{\Tr\bb{\Sigma}}{\lambda_{1}} + \beta_{0}\frac{c_{3}}{c_{4}}\bb{\frac{1-\theta}{\theta}}}}\lambda_{1}\bb{P}^{n}
    }
    where 
    \bas{
        & \left|\lambda_{1}\bb{P} - 1 - c_{1}\eta\lambda_{1} - c_{2} \eta^{2}\lambda_{1}^{2} \right| \leq \frac{c_{3}c_{5}}{c_{4}}\eta^{2} \lambda_{1}^{2}\bb{\frac{1-\theta}{\theta}} \\
        & \left|\lambda_{2}\bb{P} - 1 - c_{1}\eta\lambda_{2} - c_{4}\eta^{2}\lambda_{2}\Tr\bb{\Sigma} \right| \leq \frac{c_{3}c_{5}}{c_{4}}\eta^{2} \lambda_{1}^{2}\bb{\frac{1-\theta}{\theta}}
    }
    % \bas{
    %     \alpha_{n} \leq a_{1}\bb{1 + c_{1}\eta\lambda_{1} + c_{2}\eta^2\lambda_{1}^2}^n + a_{2}\bb{1 + c_{1}\eta\lambda_{2} + c_{4}\eta^2\lambda_{2}\Tr\bb{\Sigma}}^n \\
    %     \beta_{n} \leq b_{1}\bb{1 + c_{1}\eta\lambda_{1} + c_{2}\eta^2\lambda_{1}^2}^n + b_{2}\bb{1 + c_{1}\eta\lambda_{2} + c_{4}\eta^2\lambda_{2}\Tr\bb{\Sigma}}^n
    % }
    % where 
    % \bas{
    %     & a_{1} = \frac{\alpha_{0}}{1 + O\bb{\frac{\lambda_{1}^{2}}{\lambda_{2}\Tr\bb{\Sigma}}}} + \frac{\beta_{0}}{O\bb{\frac{\Tr\bb{\Sigma}}{\lambda_{1}} + \frac{\lambda_{1}}{\lambda_{2}}}}, \;\;\;\; a_{2} = \frac{\alpha_{0}}{1 + O\bb{\frac{\lambda_{2}\Tr\bb{\Sigma}}{\lambda_{1}^{2}}}} + \frac{\beta_{0}}{O\bb{\frac{\Tr\bb{\Sigma}}{\lambda_{1}} + \frac{\lambda_{1}}{\lambda_{2}}}} \\
    %     & b_{1} = \frac{\alpha_{0}}{O\bb{\frac{\lambda_{2}}{\lambda_{1}} + \frac{\lambda_{1}}{\Tr\bb{\Sigma}}}} + \frac{\beta_{0}}{O\bb{\frac{\Tr\bb{\Sigma}\lambda_{2}}{\lambda_{1}^{2}}} + 1}, \;\;\;\; b_{2} = \frac{\alpha_{0}}{O\bb{\frac{\Tr\bb{\Sigma}}{\lambda_{1}} + \frac{\lambda_{1}}{\lambda_{2}}}} + \frac{\beta_{0}}{O\bb{\frac{\Tr\bb{\Sigma}}{\lambda_{2}} + \frac{\lambda_{1}^{2}}{\lambda_{2}^{2}}}}
    % }
\end{lemma}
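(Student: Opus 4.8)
The plan is to exploit that both recursions have \emph{nonnegative} coefficients, so they iterate into an entrywise domination. Concretely, with
$$P := \begin{pmatrix} 1+c_1\eta\lambda_1+c_2\eta^2\lambda_1^2 & c_3\eta^2\lambda_1\lambda_2 \\ c_5\eta^2\lambda_1\Tr\bb{\Sigma} & 1+c_1\eta\lambda_2+c_4\eta^2\lambda_2\Tr\bb{\Sigma}\end{pmatrix},$$
one has $(\alpha_n,\beta_n)^\top \le P^n(\alpha_0,\beta_0)^\top$ entrywise, so the lemma reduces to estimating the four entries of $P^n$. For that I would use the closed form for powers of a $2\times 2$ matrix (as in \cite{power_2_by_2}): writing $\mu_1:=\lambda_1\bb{P}\ge \mu_2:=\lambda_2\bb{P}$ for its (real) eigenvalues, Cayley--Hamilton gives $P^n=a_nP+b_nI$ with $a_n=\frac{\mu_1^n-\mu_2^n}{\mu_1-\mu_2}$ and $b_n=\frac{\mu_1\mu_2^n-\mu_2\mu_1^n}{\mu_1-\mu_2}$, whence $(P^n)_{11}=\mu_1^n+a_n(P_{11}-\mu_1)$, $(P^n)_{22}=\mu_2^n+a_n(P_{22}-\mu_2)$, and $(P^n)_{12}=a_nP_{12}$, $(P^n)_{21}=a_nP_{21}$.

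The crux, and the main obstacle, is a sharp estimate of $\mu_1$: a naive Weyl/Gershgorin bound would pick up the large entry $P_{21}=c_5\eta^2\lambda_1\Tr\bb{\Sigma}$, which is precisely what must be avoided. From $\mu_{1,2}=\tfrac12(P_{11}+P_{22})\pm\sqrt{\tfrac14(P_{11}-P_{22})^2+P_{12}P_{21}}$ I would first use the defining relation for $\theta$, namely $c_1(1-\theta)(\lambda_1-\lambda_2)+c_2\eta\lambda_1^2=c_4\eta\lambda_2\Tr\bb{\Sigma}$, to compute the diagonal gap \emph{exactly}: $P_{11}-P_{22}=c_1\eta(\lambda_1-\lambda_2)+c_2\eta^2\lambda_1^2-c_4\eta^2\lambda_2\Tr\bb{\Sigma}=c_1\theta\eta(\lambda_1-\lambda_2)>0$. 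This is the step where the large $\Tr\bb{\Sigma}$ term is \emph{absorbed} into the gap rather than dominating. Then $\mu_1-\mu_2\ge P_{11}-P_{22}=c_1\theta\eta(\lambda_1-\lambda_2)$, and since $P_{12}P_{21}\ge 0$ we get $\mu_1\ge P_{11}\ge P_{22}\ge \mu_2$; applying $\sqrt{1+x}-1\le x/2$ (legitimate since the first smallness hypothesis forces $x:=4P_{12}P_{21}/(P_{11}-P_{22})^2\le 1$) yields $|\mu_1-P_{11}|=|\mu_2-P_{22}|\le \frac{P_{12}P_{21}}{P_{11}-P_{22}}=\frac{c_3c_5\eta^2\lambda_1^2}{c_1\theta(\lambda_1-\lambda_2)}\eta\lambda_2\Tr\bb{\Sigma}$. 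Substituting $\eta\lambda_2\Tr\bb{\Sigma}=\frac1{c_4}\bb{c_1(1-\theta)(\lambda_1-\lambda_2)+c_2\eta\lambda_1^2}$ and controlling the $c_2\eta\lambda_1^2$ piece with the second smallness hypothesis $4c_2\eta\lambda_1^2\le c_1(1-\theta)(\lambda_1-\lambda_2)$ gives $|\mu_1-P_{11}|,|\mu_2-P_{22}|=O\bb{\frac{c_3c_5}{c_4}\eta^2\lambda_1^2\frac{1-\theta}{\theta}}$, the claimed eigenvalue estimates.

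Finally the entry bounds are bookkeeping. Since $P_{11}\le\mu_1$ we have $(P^n)_{11}\le\mu_1^n$; since $0\le a_n=\frac{\mu_1^n-\mu_2^n}{\mu_1-\mu_2}\le\frac{\mu_1^n}{\mu_1-\mu_2}\le\frac{\mu_1^n}{c_1\theta\eta(\lambda_1-\lambda_2)}$, using $P_{12}=c_3\eta^2\lambda_1\lambda_2$, $P_{21}=c_5\eta^2\lambda_1\Tr\bb{\Sigma}$ and $\lambda_2\le\lambda_1$ we get $(P^n)_{12}\le\mu_1^n\,\eta\lambda_1\frac{2c_3\lambda_1}{c_1\theta(\lambda_1-\lambda_2)}$ and $(P^n)_{21}\le\mu_1^n\,\eta\lambda_1\frac{2c_5\lambda_1}{c_1\theta(\lambda_1-\lambda_2)}\frac{\Tr\bb{\Sigma}}{\lambda_1}$; and $(P^n)_{22}=\mu_2^n+a_n(\mu_1-P_{11})\le\mu_2^n+\mu_1^n\,\eta\lambda_1\frac{2c_5\lambda_1}{c_1\theta(\lambda_1-\lambda_2)}\frac{c_3}{c_4}\frac{1-\theta}{\theta}$ after inserting the eigenvalue estimate and again using $\eta\lambda_2\Tr\bb{\Sigma}\lesssim(1-\theta)(\lambda_1-\lambda_2)$. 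Plugging these into $(\alpha_n,\beta_n)^\top\le P^n(\alpha_0,\beta_0)^\top$, grouping the $\Tr\bb{\Sigma}/\lambda_1$ factor with $\alpha_0$, and writing $\mu_1^n=\lambda_1\bb{P}^n$, $\mu_2^n=\lambda_2\bb{P}^n$ produces the two displayed bounds (the extra $\alpha_0\frac{c_5}{c_4}\frac{1-\theta}{\theta}$ term in the stated $\alpha_n$ bound is harmless slack). The only genuinely delicate point is the eigenvalue estimate; the two smallness conditions and the exact $\theta$-balancing are what make it go through, and in the applications they are supplied by the learning-rate schedule of Lemma~\ref{lemma:learning_rate_set}.
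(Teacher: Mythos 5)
Your argument is correct and takes essentially the same route as the paper: both invoke the closed form for $P^n$ of a $2\times 2$ matrix (Cayley--Hamilton, as in \cite{power_2_by_2}), compute $\lambda_1(P),\lambda_2(P)$ from trace and determinant, and exploit the $\theta$-balancing relation so that the $\Tr(\Sigma)$ contribution cancels out of the diagonal gap $P_{11}-P_{22}=c_1\theta\eta(\lambda_1-\lambda_2)$ and the discriminant stays of order $\eta^2(\lambda_1-\lambda_2)^2$. Where you diverge is in the bookkeeping, and in two spots yours is tidier. First, to bound $a_n$ you use directly that $\mu_1-\mu_2\ge P_{11}-P_{22}$ (since $P_{12}P_{21}\ge 0$), whereas the paper goes through a triangle inequality on $|\lambda_1(P)-\lambda_2(P)|$ via the eigenvalue estimates and lands on the weaker $\tfrac{1}{2}c_1\theta\eta(\lambda_1-\lambda_2)$, which is where the factor of $2$ in the stated coefficients originates; you inherit it as slack. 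Second, using $P_{11}\le\mu_1$ and $a_n\ge 0$ gives $(P^n)_{11}\le\mu_1^n$ outright, so the extra $\alpha_0\frac{c_5}{c_4}\frac{1-\theta}{\theta}$ term the paper carries through Eq.~\ref{eq:alpha_bound_1} is indeed unnecessary in your version. One point worth flagging: with the stated hypothesis $4c_2\eta\lambda_1^2\le c_1(1-\theta)(\lambda_1-\lambda_2)$, the eigenvalue estimate you derive (and what the paper's algebra also delivers) is $|\mu_1-P_{11}|\le\frac{5}{4}\frac{c_3c_5}{c_4}\eta^2\lambda_1^2\frac{1-\theta}{\theta}$ rather than the constant $1$ printed in the lemma; your $O(\cdot)$ is honest about this, it is a pre-existing wrinkle in the statement rather than a gap in your proof, and it is immaterial downstream.
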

\begin{proof}
Writing the recursions in a matrix form, we have 
\ba{
    \begin{pmatrix}
        \alpha_{n} \\
        \beta_{n}
    \end{pmatrix} = \begin{pmatrix}
        1 + c_{1}\eta\lambda_{1} + c_{2}\eta^2\lambda_{1}^2 & c_{3}\eta^2\lambda_{1}\lambda_{2} \\
        c_{5}\eta^2\lambda_{1}\Tr\bb{\Sigma} & 1 + c_{1}\eta\lambda_{2} + c_{4}\eta^2\lambda_{2}\Tr\bb{\Sigma}
    \end{pmatrix} \begin{pmatrix}
        \alpha_{n-1} \\
        \beta_{n-1}
    \end{pmatrix} \label{eq:general_matrix_recursion}
}
% Let $\alpha c_{1}\bb{\lambda_{1}-\lambda_{2}} +  c_{2}\eta\lambda_{1}^{2} =  c_{2}\eta \lambda_{2}\Tr\bb{\Sigma}$ for $\alpha \in \bb{0,0.5}$. Then, 
% \bas{
%     1 + c_{1}\eta\lambda_{1} + c_{2}\eta^2\lambda_{1}^2 = 1 + c_{1}\eta\bb{\bb{1-\alpha} \lambda_{1} + \alpha \lambda_{2}} + c_{2}\eta^{2} \lambda_{2}\Tr\bb{\Sigma} > 1 + c_{1}\eta\lambda_{2} + c_{2}\eta^2\lambda_{2}\Tr\bb{\Sigma}
% }
Define 
\bas{
    P := \begin{pmatrix}
        1 + c_{1}\eta\lambda_{1} + c_{2}\eta^2\lambda_{1}^2 & c_{3}\eta^2\lambda_{1}\lambda_{2} \\
        c_{5}\eta^2\lambda_{1}\Tr\bb{\Sigma} & 1 + c_{1}\eta\lambda_{2} + c_{4}\eta^2\lambda_{2}\Tr\bb{\Sigma}
    \end{pmatrix}
}
Then $P := I + c_{1}\eta M$, where
\bas{
    M := \begin{pmatrix}
        \lambda_{1} + u \eta\lambda_{1}^2 & v\eta \lambda_{1}\lambda_{2} \\
        w \eta \lambda_{1}\Tr\bb{\Sigma} & \lambda_{2} + x\eta\lambda_{2}\Tr\bb{\Sigma}
    \end{pmatrix}
}
and $u := \frac{c_{2}}{c_{1}}$, $v = \frac{c_{3}}{c_{1}}$, $x = \frac{c_{4}}{c_{1}}$, $w = \frac{c_{5}}{c_{1}}$.
We now compute eigenvalues of $M$. The trace and determinants are given as - 
\bas{
    T &:= \lambda_{1}+\lambda_{2}+ u\eta\lambda_{1}^{2} + x\eta\lambda_{2}\Tr\bb{\Sigma} \\
    D &:= \lambda_{1}\lambda_{2} + \eta \lambda_{1}\lambda_{2}\bb{ x\Tr\bb{\Sigma} + u\lambda_{1} } + u x \eta^2\lambda_{1}^2\lambda_{2}\Tr\bb{\Sigma} - vw \eta^{2}\lambda_{1}^{2}\lambda_{2}\Tr\bb{\Sigma}
}
Next we compute $\frac{T^{2}}{4} - D$,
\bas{
   \frac{T^{2}}{4} - D &= \frac{\bb{\lambda_{1}-\lambda_{2}}^{2}}{4} + \eta\bb{ \frac{\bb{\lambda_{1}+\lambda_{2}}\bb{u\lambda_{1}^{2} + x\lambda_{2}\Tr\bb{\Sigma}} - 2\lambda_{1}\lambda_{2}\bb{x\Tr\bb{\Sigma}+u\lambda_{1}}}{2} } \\
   & \;\;\;\;\;\;\;\; + \frac{\eta^{2}\bb{u\lambda_{1}^{2} + x\lambda_{2}\Tr\bb{\Sigma}}^{2}}{4} - \bb{u x - vw} \eta^2\lambda_{1}^2\lambda_{2}\Tr\bb{\Sigma} \\
   &= \bbb{\frac{\bb{\lambda_{1}-\lambda_{2}}^{2}}{4} - 2 \bb{\frac{x\eta\lambda_{2}\Tr\bb{\Sigma}}{2}}\bb{\frac{\lambda_{1} - \lambda_{2}}{2}} + \bb{\frac{x\eta\lambda_{2}\Tr\bb{\Sigma}}{2}}^2}  \\
   & \;\;\;\;\;\;\;\; + \eta u \lambda_{1}^{2}\bb{\frac{\lambda_{1}- \lambda_{2}}{2} + \frac{\eta u \lambda_{1}^{2}}{4}} + \bb{vw - \frac{ux}{2}} \eta^2\lambda_{1}^2\lambda_{2}\Tr\bb{\Sigma} \\
   &= \frac{1}{4}\bb{ \bb{\lambda_{1} - \lambda_{2}} - x\eta\lambda_{2}\Tr\bb{\Sigma} }^2 + \frac{\eta u \lambda_{1}^{2}}{2}\bb{\bb{\lambda_{1} - \lambda_{2}} - x\eta\lambda_{2}\Tr\bb{\Sigma}} + \frac{\eta^{2}\lambda_{1}^{2}}{4}\bb{u^{2}\lambda_{1}^{2}+4vw\lambda_{2}\Tr\bb{\Sigma}}, \\
   &= \frac{1}{4}\bbb{\bb{\lambda_{1}-\lambda_{2}} - x\eta\lambda_{2}\Tr\bb{\Sigma} + \eta u \lambda_{1}^{2}}^{2} + vw\eta^{2}\lambda_{1}^{2}\lambda_{2}\Tr\bb{\Sigma}
   % &= \frac{\theta^{2}}{4}\bb{\lambda_{1} - \lambda_{2}}^{2} + \frac{\eta u \lambda_{1}^{2}}{2}\bb{\lambda_{1} - \lambda_{2} + \frac{\eta u \lambda_{1}^{2}}{2}}, \text{ since } \frac{ux}{2} = vw \\ 
   % &= \frac{\theta^{2}}{4}\bb{\lambda_{1} - \lambda_{2}}^{2}\bb{1 + \frac{2\eta u \lambda_{1}^{2}}{\theta^{2}\bb{\lambda_{1} - \lambda_{2}}^{2}}\bb{\lambda_{1} - \lambda_{2} + \frac{\eta u \lambda_{1}^{2}}{2}}} 
}
Let $\bb{\lambda_{1}-\lambda_{2}} - x\eta\lambda_{2}\Tr\bb{\Sigma} + \eta u \lambda_{1}^{2} = \theta\bb{\lambda_{1}-\lambda_{2}}$ for $\theta \in \bb{0, 1}$, 
\bas{
    \frac{T^{2}}{4} - D &= \frac{\theta^{2}\bb{\lambda_{1}-\lambda_{2}}^2}{4} + vw\eta^{2}\lambda_{1}^{2}\lambda_{2}\Tr\bb{\Sigma} \\
    &= \frac{\theta^{2}\bb{\lambda_{1}-\lambda_{2}}^2}{4}\bb{1 + 4\eta^{2}\lambda_{2}\Tr\bb{\Sigma}\frac{vw\lambda_{1}^{2}}{\theta^{2}\bb{\lambda_{1}-\lambda_{2}}^2}}
}
Let $\frac{\eta^{2}vw\lambda_{1}^{2}\lambda_{2}\Tr\bb{\Sigma}}{\theta^{2}\bb{\lambda_{1}-\lambda_{2}}^2} \leq \frac{1}{4}$. Then, using the identity $1 - \frac{x}{2} \leq \sqrt{1+x} \leq 1 + \frac{x}{2}$ for $x \in \bb{0,1}$ we have,
\ba{
  \frac{\theta}{2}\bb{\lambda_{1} - \lambda_{2}}\bb{1 - \frac{2\eta^{2}vw\lambda_{1}^{2}\lambda_{2}\Tr\bb{\Sigma}}{\theta^{2}\bb{\lambda_{1}-\lambda_{2}}^{2}}}  \leq \sqrt{\frac{T^{2}}{4} - D} \leq \frac{\theta}{2}\bb{\lambda_{1} - \lambda_{2}}\bb{1 + \frac{2\eta^{2}vw\lambda_{1}^{2}\lambda_{2}\Tr\bb{\Sigma}}{\theta^{2}\bb{\lambda_{1}-\lambda_{2}}^{2}}} \label{eq:trace_squared_minus_4D_1}
}
Let us simplify $\frac{\eta^{2}vw\lambda_{1}^{2}\lambda_{2}\Tr\bb{\Sigma}}{\theta^{2}\bb{\lambda_{1}-\lambda_{2}}^2}$ using the definition of $\theta$. We have
\bas{
    \frac{\eta^{2}vw\lambda_{1}^{2}\lambda_{2}\Tr\bb{\Sigma}}{\theta^{2}\bb{\lambda_{1}-\lambda_{2}}^2} &= \frac{\eta vw\lambda_{1}^{2}}{x}\bb{\frac{\bb{1-\theta}}{\theta^{2}\bb{\lambda_{1}-\lambda_{2}}} + \frac{\eta u \lambda_{1}^{2}}{\theta^{2}\bb{\lambda_{1}-\lambda_{2}}^2}}
}
Let $\bb{1-\theta} \geq \frac{4\eta u \lambda_{1}^{2}}{\bb{\lambda_{1}-\lambda_{2}}}$. Then,
\ba{
    \frac{\theta}{2}\bb{\lambda_{1} - \lambda_{2}} \times \frac{\eta^{2}vw\lambda_{1}^{2}\lambda_{2}\Tr\bb{\Sigma}}{\theta^{2}\bb{\lambda_{1}-\lambda_{2}}^2} &= \frac{\eta vw\lambda_{1}^{2}}{2x}\bb{\frac{1-\theta}{\theta} + \frac{4\eta u \lambda_{1}^{2}}{\theta\bb{\lambda_{1}-\lambda_{2}}}} \notag \\
    &= \frac{\eta vw\lambda_{1}^{2}}{2\theta x}\bb{1-\theta + \frac{4\eta u \lambda_{1}^{2}}{\bb{\lambda_{1}-\lambda_{2}}}} \notag \\
    &\leq \eta\lambda_{1}^{2}\frac{ vw}{ x}\bb{\frac{1-\theta}{\theta}} \label{eq:trace_squared_minus_4D_2}
}
% Then, since $\eta \leq \frac{c_{1}\bb{\lambda_{1}-\lambda_{2}}}{c_{2}\lambda_{1}^{2}}$, therefore, 
% \bas{
%      \frac{\theta^{2}}{4}\bb{\lambda_{1} - \lambda_{2}}^{2}\bb{1 + \frac{2\eta u \lambda_{1}^{2}}{\theta^{2}\bb{\lambda_{1} - \lambda_{2}}}} \leq \frac{T^{2}}{4} - D \leq \frac{\theta^{2}}{4}\bb{\lambda_{1} - \lambda_{2}}^{2}\bb{1 + \frac{8\eta u \lambda_{1}^{2}}{\theta^{2}\bb{\lambda_{1} - \lambda_{2}}}}
% }
% Taking square root on both sides and noting that $\frac{\eta u \lambda_{1}^{2}}{\theta^{2}\bb{\lambda_{1} - \lambda_{2}}} < 1$ along with the useful identity $1 - \frac{x}{2} \leq \sqrt{1+x} \leq 1 + \frac{x}{2}$ for $x \in \bb{0,1}$ we have 
% \ba{
%     & \frac{\theta}{2}\bb{\lambda_{1} - \lambda_{2}}\bb{1 - \frac{\eta u \lambda_{1}^{2}}{\theta^{2}\bb{\lambda_{1} - \lambda_{2}}}} \leq \sqrt{\frac{T^{2}}{4} - D} \leq \frac{\theta}{2}\bb{\lambda_{1} - \lambda_{2}}\bb{1 + \frac{4\eta u \lambda_{1}^{2}}{\theta^{2}\bb{\lambda_{1} - \lambda_{2}}}} \notag \\
%     & \implies \frac{\theta}{2}\bb{\lambda_{1} - \lambda_{2}} - \frac{1}{2\theta}\eta u \lambda_{1}^{2} \leq \sqrt{\frac{T^{2}}{4} - D} \leq \frac{\theta}{2}\bb{\lambda_{1} - \lambda_{2}} + \frac{2}{\theta}\eta u \lambda_{1}^{2} \label{eq:sqrt_TD_bounds}
% }
Then, using Eq~\ref{eq:trace_squared_minus_4D_1} and \ref{eq:trace_squared_minus_4D_2}, the eigenvalues of $M$ are given as $\lambda_{1}\bb{M} := \frac{T}{2} + \sqrt{\frac{T^{2}}{4} - D}$  and $\lambda_{2}\bb{M} := \frac{T}{2} - \sqrt{\frac{T^{2}}{4} - D}$ such that
\bas{
    & \left|\lambda_{1}\bb{M} - \lambda_{1} - u \eta\lambda_{1}^{2} \right|, \;\; \left|\lambda_{2}\bb{M} - \lambda_{2} - x\eta\lambda_{2}\Tr\bb{\Sigma} \right| \; \leq \; \eta\lambda_{1}^{2}\frac{ vw}{ x}\bb{\frac{1-\theta}{\theta}}
}
% \bas{
%     & \lambda_{1} + \eta u \lambda_{1}^{2}\bb{\frac{1}{2} - \frac{1}{2\theta}} \leq \lambda_{1}\bb{M} = \frac{T}{2} + \sqrt{\frac{T^{2}}{4} - D} \leq \lambda_{1} + \eta u \lambda_{1}^{2}\bb{\frac{1}{2} + \frac{2}{\theta}} \\
%     & \bb{1-\theta}\lambda_{1} + \theta\lambda_{2} + \eta u \lambda_{1}^{2}\bb{\frac{1}{2} - \frac{2}{\theta}} \leq \lambda_{2}\bb{M} = \frac{T}{2} - \sqrt{\frac{T^{2}}{4} - D} \leq \bb{1-\theta}\lambda_{1} + \theta\lambda_{2} + \eta u \lambda_{1}^{2}\bb{\frac{1}{2} + \frac{1}{2\theta}}
% }
The eigenvalues of $P$ are given as $\lambda_{1}\bb{P} := 1 + c_{1}\eta \lambda_{1}\bb{M}$ and $\lambda_{2}\bb{P} := 1 + c_{1}\eta \lambda_{2}\bb{M}$. Then we have, 
\ba{
    & \left|\lambda_{1}\bb{P} - 1 - c_{1}\eta\lambda_{1} - c_{2} \eta^{2}\lambda_{1}^{2} \right| = \left|\lambda_{1}\bb{P} - P_{1,1}\right| \; \leq \; \frac{c_{3}c_{5}}{c_{4}}\eta^{2} \lambda_{1}^{2}\bb{\frac{1-\theta}{\theta}} \label{eq:lambda1P_bound} \\
    & \left|\lambda_{2}\bb{P} - 1 - c_{1}\eta\lambda_{2} - c_{4}\eta^{2}\lambda_{2}\Tr\bb{\Sigma} \right| = \left|\lambda_{2}\bb{P}-P_{2,2}\right| \; \leq \; \frac{c_{3}c_{5}}{c_{4}}\eta^{2} \lambda_{1}^{2}\bb{\frac{1-\theta}{\theta}} \label{eq:lambda2P_bound}
}
We then use the result from \cite{power_2_by_2} to compute $P^{n}$ and $\alpha_{n}, \beta_{n}$. To compute $P^{n}$, we first compute the matrices $X$ and $Y$ -
\bas{
    X &= \frac{P - \lambda_{2}\bb{P}I}{\lambda_{1}\bb{P} - \lambda_{2}\bb{P}} = \frac{1}{\lambda_{1}\bb{P} - \lambda_{2}\bb{P}}\begin{pmatrix}
        P_{1,1} - \lambda_{2}\bb{P} & P_{1,2} \\
        P_{2,1} & P_{2,2} - \lambda_{2}\bb{P}
    \end{pmatrix}, \\
    Y &= \frac{P - \lambda_{1}\bb{P}I}{\lambda_{2}\bb{P} - \lambda_{1}\bb{P}} = \frac{1}{\lambda_{1}\bb{P} - \lambda_{2}\bb{P}}\begin{pmatrix}
        \lambda_{1}\bb{P} - P_{1,1} & -P_{1,2} \\
        -P_{2,1} & \lambda_{1}\bb{P} - P_{2,2}
    \end{pmatrix}
}
Then, $P^{n} = \lambda_{1}\bb{P}^{n}X + \lambda_{2}\bb{P}^{n}Y$, which gives
\bas{
    P^{n} = \begin{pmatrix}
        P_{1,1}a_{n} - b_{n} & P_{1,2}a_{n} \\
        P_{2,1}a_{n} & P_{2,2}a_{n} - b_{n}
    \end{pmatrix}
}
where 
\bas{
    a_{n} := \bb{\frac{\lambda_{1}\bb{P}^{n}-\lambda_{2}\bb{P}^{n}}{{\lambda_{1}\bb{P}-\lambda_{2}\bb{P}}}},  \;\; b_{n} := \bb{\frac{\lambda_{1}\bb{P}^{n}\lambda_{2}\bb{P} - \lambda_{1}\bb{P}\lambda_{2}\bb{P}^{n}}{\lambda_{1}\bb{P}-\lambda_{2}\bb{P}}}
}
Therefore, for $\init = \left[ {\begin{array}{cc}
    \alpha_{0}\\
    \beta_{0}
   \end{array} } \right]$, we have
\ba{
    \alpha_{n} = e_{1}^TP^n\init &= \bb{\alpha_{0}P_{1,1}+\beta_{0}P_{1,2}}\bb{\frac{\lambda_{1}\bb{P}^{n}-\lambda_{2}\bb{P}^{n}}{{\lambda_{1}\bb{P}-\lambda_{2}\bb{P}}}} - \alpha_{0}\lambda_{1}\bb{P}\lambda_{2}\bb{P}\bb{\frac{\lambda_{1}\bb{P}^{n-1} - \lambda_{2}\bb{P}^{n-1}}{\lambda_{1}\bb{P}-\lambda_{2}\bb{P}}} , \label{eq:alpha_n_matrix_rec} \\
    \beta_{n} = e_{2}^TP^n\init &= \bb{\alpha_{0}P_{2,1}+\beta_{0}P_{2,2}}\bb{\frac{\lambda_{1}\bb{P}^{n}-\lambda_{2}\bb{P}^{n}}{{\lambda_{1}\bb{P}-\lambda_{2}\bb{P}}}} - \beta_{0}\lambda_{1}\bb{P}\lambda_{2}\bb{P}\bb{\frac{\lambda_{1}\bb{P}^{n-1} - \lambda_{2}\bb{P}^{n-1}}{\lambda_{1}\bb{P}-\lambda_{2}\bb{P}}}\label{eq:beta_n_matrix_rec}
}
% Recall that
% \bas{
%     P := \begin{pmatrix}
%         1 + c_{1}\eta\lambda_{1} + c_{2}\eta^2\lambda_{1}^2 & c_{3}\eta^2\lambda_{1}\lambda_{2} \\
%         c_{5}\eta^2\lambda_{1}\Tr\bb{\Sigma} & 1 + c_{1}\eta\lambda_{2} + c_{4}\eta^2\lambda_{2}\Tr\bb{\Sigma}
%     \end{pmatrix}
% }
Therefore, using Eq~\ref{eq:lambda1P_bound} and Eq~\ref{eq:lambda2P_bound},
\ba{
    \alpha_{n} &\leq \alpha_{0}\lambda_{1}\bb{P}^{n} +   \bb{\beta_{0}P_{1,2} + \alpha_{0}\frac{c_{3}c_{5}}{c_{4}}\eta^{2} \lambda_{1}^{2}\bb{\frac{1-\theta}{\theta}}}\bb{\frac{\lambda_{1}\bb{P}^{n}-\lambda_{2}\bb{P}^{n}}{{\lambda_{1}\bb{P}-\lambda_{2}\bb{P}}}} \notag \\
    &= \alpha_{0}\lambda_{1}\bb{P}^{n} +   \bb{\beta_{0}c_{3}\eta^2\lambda_{1}\lambda_{2} + \alpha_{0}\frac{c_{3}c_{5}}{c_{4}}\eta^{2} \lambda_{1}^{2}\bb{\frac{1-\theta}{\theta}}}\bb{\frac{\lambda_{1}\bb{P}^{n}-\lambda_{2}\bb{P}^{n}}{{\lambda_{1}\bb{P}-\lambda_{2}\bb{P}}}}, \label{eq:alpha_bound_1} \\
    \beta_{n} &\leq \beta_{0}\lambda_{2}\bb{P}^{n} + \bb{\alpha_{0}P_{2,1} + \beta_{0}\frac{c_{3}c_{5}}{c_{4}}\eta^{2} \lambda_{1}^{2}\bb{\frac{1-\theta}{\theta}}}\bb{\frac{\lambda_{1}\bb{P}^{n}-\lambda_{2}\bb{P}^{n}}{{\lambda_{1}\bb{P}-\lambda_{2}\bb{P}}}} \notag \\
    &= \beta_{0}\lambda_{2}\bb{P}^{n} + \bb{\alpha_{0}c_{5}\eta^2\lambda_{1}\Tr\bb{\Sigma} + \beta_{0}\frac{c_{3}c_{5}}{c_{4}}\eta^{2} \lambda_{1}^{2}\bb{\frac{1-\theta}{\theta}}}\bb{\frac{\lambda_{1}\bb{P}^{n}-\lambda_{2}\bb{P}^{n}}{{\lambda_{1}\bb{P}-\lambda_{2}\bb{P}}}} \label{eq:beta_bound_1}
}
Recall that using Eq~\ref{eq:lambda1P_bound} and Eq~\ref{eq:lambda2P_bound}
\bas{
    |\lambda_{1}\bb{P}-\lambda_{2}\bb{P}| &\geq |P_{1,1} - P_{2,2}| -  \frac{2c_{3}c_{5}}{c_{4}}\eta^{2} \lambda_{1}^{2}\bb{\frac{1-\theta}{\theta}} \notag \\
    &= c_{1}\eta\bb{\bb{\lambda_{1}-\lambda_{2}} - x\eta\lambda_{2}\Tr\bb{\Sigma} + \eta u \lambda_{1}^{2}} - \frac{2c_{3}c_{5}}{c_{4}}\eta^{2} \lambda_{1}^{2}\bb{\frac{1-\theta}{\theta}} \notag \\
    &= c_{1}\theta\eta\bb{\lambda_{1}-\lambda_{2}} - \frac{2c_{3}c_{5}}{c_{4}}\eta^{2} \lambda_{1}^{2}\bb{\frac{1-\theta}{\theta}} \notag \\
    &\geq \frac{1}{2}c_{1}\theta\eta\bb{\lambda_{1}-\lambda_{2}}
}
Substituting in Eq~\ref{eq:alpha_bound_1} and Eq~\ref{eq:beta_bound_1} we have,
\bas{
    \alpha_{n} &\leq \lambda_{1}\bb{P}^{n}\bbb{\alpha_{0} + \eta\lambda_{1}\bb{\frac{2c_{3}\lambda_{1}}{c_{1}\theta\bb{\lambda_{1}-\lambda_{2}}}}\bb{\beta_{0} + \alpha_{0}\frac{c_{5}}{c_{4}}\bb{\frac{1-\theta}{\theta}}}}, \notag \\
    \beta_{n} &\leq \beta_{0}\lambda_{2}\bb{P}^{n} + \bbb{\eta\lambda_{1}\bb{\frac{2c_{5}\lambda_{1}}{c_{1}\theta\bb{\lambda_{1}-\lambda_{2}}}}\bb{\alpha_{0}\frac{\Tr\bb{\Sigma}}{\lambda_{1}} + \beta_{0}\frac{c_{3}}{c_{4}}\bb{\frac{1-\theta}{\theta}}}}\lambda_{1}\bb{P}^{n}
}
Hence proved.
\end{proof}
\begin{lemma}
    \label{lemma:second_moment_recursion_with_lambda2}
    Let $U \in \mathbb{R}^{d \times m}$ then, for all $t > 0$, under subgaussianity (Definition \ref{definition:subgaussian}) and the step-size $\eta$ satisfying $\bb{1-\theta}\bb{\lambda_{1}-\lambda_{2}} + 2L^{4}\sigma^{4}\eta\lambda_{1}^{2} = 2L^{4}\sigma^{4}\eta\lambda_{2}\Tr\bb{\Sigma}$ for $\theta \in \bb{\frac{1}{2}, 1}$ then we have
    \bas{
        & \lambda_{1}\E{U^{T}B_{n}^{T}v_{1}v_{1}^{T}B_{n}U} \leq \lone ^{n}\bbb{\alpha_{0} + \eta\lambda_{1}\bb{\frac{2\lambda_{1}}{\theta\bb{\lambda_{1}-\lambda_{2}}}}\bb{\beta_{0} + \alpha_{0}\bb{\frac{1-\theta}{\theta}}}}, \\
        & \E{\Tr\bb{U^{T}B_{n}^{T}\vp\Lambda_{2}\vp^{T}B_{n}U}} \leq \beta_{0}\ltwo ^{n} + \bbb{\eta\lambda_{1}\bb{\frac{2\lambda_{1}}{\theta\bb{\lambda_{1}-\lambda_{2}}}}\bb{\alpha_{0}\frac{\Tr\bb{\Sigma}}{\lambda_{1}} + \beta_{0}\bb{\frac{1-\theta}{\theta}}}}\lone ^{n}
    }
    where $B_{n}$ is defined in Eq~\ref{definition:Bn}, $\alpha_{0} = \lambda_{1}v_{1}^{T}UU^{T}v_{1}, \;\; \beta_{0} = \Tr\bb{U^{T}\vp\Lambda_{2}\vp^{T}U}$ and
    \bas{
        & \left|\lone  - 1 - 2\eta\lambda_{1} - 4L^{4}\sigma^{4} \eta^{2}\lambda_{1}^{2} \right| \leq 4L^{4}\sigma^{4}\eta^{2} \lambda_{1}^{2}\bb{\frac{1-\theta}{\theta}} \\
        & \left|\ltwo  - 1 - 2\eta\lambda_{2} - 4L^{4}\sigma^{4}\eta^{2}\lambda_{2}\Tr\bb{\Sigma} \right| \leq 4L^{4}\sigma^{4}\eta^{2} \lambda_{1}^{2}\bb{\frac{1-\theta}{\theta}}
    }
\end{lemma}
\begin{proof}
    Let $\alpha_{n} := \lambda_{1}\E{\Tr\bb{v_{1}^{T}B_{n}UU^{T}B_{n}^{T}v_{1}}}$,  $\beta_{n} := \E{\Tr\bb{U^{T}B_{n}^{T}\vp\Lambda_{2}\vp^{T}B_{n}U}}$ such that 
    \bas{
        \alpha_{n} + \beta_{n} &= \E{\Tr\bb{U^{T}B_{n}^{T}\Sigma B_{n}U}}
    }
    Define $A_{n} := X_{n}X_{n}^{T}$ and let $\mathcal{F}_{n}$ denote the filtration for observations $i \in [n]$. Then, 
    \ba{
        \alpha_{n} &= \lambda_{1}\E{v_{1}^{T}B_{n}UU^{T}B_{n}^{T}v_{1}} \notag \\
                   &= \lambda_{1}\E{v_{1}^{T}\bb{I+\eta A_{n}}B_{n-1}UU^{T}B_{n-1}^{T}\bb{I+\eta A_{n}}v_{1}} \notag \\
                   &= \alpha_{n-1} + 2\eta\lambda_{1} \E{v_{1}^{T}A_{n}B_{n-1}UU^{T}B_{n-1}^{T}v_{1}} + \eta^{2}\lambda_{1}\E{v_{1}^{T}A_{n}B_{n-1}UU^{T}B_{n-1}^{T}A_{n}v_{1}} \notag \\
                   &= \alpha_{n-1} + 2\eta \lambda_{1}\E{v_{1}^{T}\Sigma B_{n-1}UU^{T}B_{n-1}^{T}v_{1}} + \eta^{2}\lambda_{1}\E{ \bb{v_{1}^{T}X_{n}X_{n}^{T}v_{1}} \bb{X_{n}^{T}B_{n-1}UU^{T}B_{n-1}^{T}X_{n}}} \notag \\
                   &= \alpha_{n-1}\bb{1 + 2\eta \lambda_{1}} + \eta^{2}\lambda_{1}\E{\bb{v_{1}^{T}X_{n}X_{n}^{T}v_{1}} \bb{X_{n}^{T}B_{n-1}UU^{T}B_{n-1}^{T}X_{n}}} \notag \\
                   &= \alpha_{n-1}\bb{1 + 2\eta \lambda_{1}} + \eta^{2}\E{\E{\bb{v_{1}^{T}X_{n}X_{n}^{T}v_{1}} \bb{X_{n}^{T}B_{n-1}UU^{T}B_{n-1}^{T}X_{n}}\bigg|\mathcal{F}_{n-1}}} \notag \\
                   &\leq \alpha_{n-1}\bb{1 + 2\eta \lambda_{1}} + \eta^{2}\lambda_{1}\E{\sqrt{\E{\bb{v_{1}^{T}X_{n}X_{n}^{T}v_{1}}^{2}\bigg|\mathcal{F}_{n-1}}\E{\bb{X_{n}^{T}B_{n-1}UU^{T}B_{n-1}^{T}X_{n}}^{2}\bigg|\mathcal{F}_{n-1}}}} \notag \\
                   &= \alpha_{n-1}\bb{1 + 2\eta \lambda_{1}} + \eta^{2}\lambda_{1}\E{\sqrt{\E{\bb{X_{n}^{T}v_{1}v_{1}^{T}X_{n}}^{2}\bigg|\mathcal{F}_{n-1}}\E{\bb{X_{n}^{T}B_{n-1}UU^{T}B_{n-1}X_{n}}^{2}\bigg|\mathcal{F}_{n-1}}}} \notag \\
                   &\leq \alpha_{n-1}\bb{1 + 2\eta \lambda_{1}} + 4\eta^{2}L^{4}\sigma^{4}\lambda_{1}\Tr\bb{\Sigma, v_{1}v_{1}^{T}}\E{\Tr\bb{\Sigma, B_{n-1}UU^{T}B_{n-1}^{T}}}, \text{ using Lemma \ref{lemma:subgaussian_quadratic_form} with } p=2 \notag \\
                   &= \alpha_{n-1}\bb{1 + 2\eta \lambda_{1}} + 4\eta^{2}L^{4}\sigma^{4}\lambda_{1}^{2}\bb{\E{\Tr\bb{\lambda_{1}v_{1}v_{1}^{T} + \vp\Lambda_{2}\vp^{T}, B_{n-1}UU^{T}B_{n-1}^{T}}}} \notag \\
                   &= \bb{1 + 2\eta \lambda_{1} + 4\eta^{2}L^{4}\sigma^{4}\lambda_{1}^{2}}\alpha_{n-1} + 4\eta^{2}L^{4}\sigma^{4}\lambda_{1}^{2}\beta_{n-1} \label{eq:alpha_recursion_second_moment_1}
    }
    and similarly, 
    \ba{
        \beta_{n} &= 
  \E{\Tr\bb{U^{T}B_{n}^{T}\vp\Lambda_{2}\vp^{T}B_{n}U}} \notag \\
          &= \E{\Tr\bb{\Lambda_{2}^{\frac{1}{2}}\vp^{T}B_{n}UU^{T}B_{n}^{T}\vp\Lambda_{2}^{\frac{1}{2}}}} \notag \\
          &= \E{\Tr\bb{\Lambda_{2}^{\frac{1}{2}}\vp^{T}B_{n-1}UU^{T}B_{n-1}^{T}\vp\Lambda_{2}^{\frac{1}{2}}}} + 2\eta \E{\Tr\bb{\Lambda_{2}^{\frac{1}{2}}\vp^{T}A_{n}B_{n-1}UU^{T}B_{n-1}^{T}\vp\Lambda_{2}^{\frac{1}{2}}}} + \notag \\
          & \;\;\;\; + \eta^{2} \E{\Tr\bb{\Lambda_{2}^{\frac{1}{2}}\vp^{T} A_{n}B_{n-1}UU^{T}B_{n-1}^{T}A_{n}\vp\Lambda_{2}^{\frac{1}{2}}}} \notag \\
          &= \beta_{n-1} + 2\eta \E{\Tr\bb{\Lambda_{2}^{\frac{1}{2}}\vp^{T}\Sigma B_{n-1}UU^{T}B_{n-1}^{T}\vp\Lambda_{2}^{\frac{1}{2}}}}  + \eta^{2}\E{\bb{X_{n}^{T}\vp\Lambda_{2}\vp^{T}X_{n}} \bb{X_{n}^{T}B_{n-1}UU^{T}B_{n-1}^{T}X_{n}}} \notag \\
          &= \beta_{n-1} + 2\eta \E{\Tr\bb{\Lambda_{2}^{2}\vp^{T}\Sigma B_{n-1}UU^{T}B_{n-1}^{T}\vp}} + \eta^{2}\E{\E{\bb{X_{n}^{T}\vp\Lambda_{2}\vp^{T}X_{n}} \bb{X_{n}^{T}B_{n-1}UU^{T}B_{n-1}^{T}X_{n}} \bigg| \mathcal{F}_{n-1}}} \notag \\
          &\leq \bb{1+2\eta\lambda_{2}}\beta_{n-1} + \eta^{2}\E{\sqrt{\E{\bb{X_{n}^{T}\vp\Lambda_{2}\vp^{T}X_{n}}^{2}\bigg| \mathcal{F}_{n-1}}\E{\bb{X_{n}^{T}B_{n-1}UU^{T}B_{n-1}^{T}X_{n}}^{2} \bigg| \mathcal{F}_{n-1}}}} \notag \\
          &\leq \bb{1+2\eta\lambda_{2}}\beta_{n-1} + 4\eta^{2}L^{4}\sigma^{4}\Tr\bb{\Sigma \vp\Lambda_{2}\vp^{T}}\E{\Tr\bb{\Sigma B_{n-1}UU^{T}B_{n-1}^{T}}}, \text{ using Lemma \ref{lemma:subgaussian_quadratic_form} with } p = 2 \notag \\
          &= \bb{1+2\eta\lambda_{2} + 4\eta^{2}L^{4}\sigma^{4}\Tr\bb{\Lambda_{2}^{2}}}\beta_{n-1} + 4\eta^{2}L^{4}\sigma^{4}\Tr\bb{\Lambda_{2}^{2}}\alpha_{n-1} \notag \\
          &\leq \bb{1+2\eta\lambda_{2} + 4\eta^{2}L^{4}\sigma^{4}\lambda_{2}\Tr\bb{\Lambda_{2}}}\beta_{n-1} + 4\eta^{2}L^{4}\sigma^{4}\Tr\bb{\Lambda_{2}^{2}}\alpha_{n-1}
          \label{eq:beta_recursion_second_moment_2} 
    }
    Writing the recursions in a matrix form, we have 
    \ba{
        \begin{pmatrix}
            \alpha_{n} \\
            \beta_{n}
        \end{pmatrix} = \begin{pmatrix}
            1 + 2\eta\lambda_{1} + 4\eta^2L^{4}\sigma^{4}\lambda_{1}^2 & 4\eta^{2}L^{4}\sigma^{4}\lambda_{1}^{2} \\
            4\eta^{2}L^{4}\sigma^{4}\Tr\bb{\Lambda_{2}^{2}} & 1+2\eta\lambda_{2} + 4\eta^{2}L^{4}\sigma^{4}\lambda_{2}\Tr\bb{\Lambda_{2}}
        \end{pmatrix} \begin{pmatrix}
            \alpha_{n-1} \\
            \beta_{n-1}
        \end{pmatrix} \label{eq:general_matrix_recursion_1}
    }
    The result then follows by using Lemma \ref{lemma:matrix_recursion}.
\end{proof}

\begin{restatable}{lemma}{secondmomentrecursion}
    \label{lemma:second_moment_recursions}
    Let $U \in \mathbb{R}^{d \times m}$ then, for all $t > 0$, under subgaussianity (Definition \ref{definition:subgaussian}) and the step-size $\eta$ satisfying $\bb{1-\theta}\bb{\lambda_{1}-\lambda_{2}} + 2L^{4}\sigma^{4}\eta\lambda_{1}^{2} = 2L^{4}\sigma^{4}\eta\lambda_{2}\Tr\bb{\Sigma}$ for $\theta \in \bb{\frac{1}{2}, 1}$ then we have
    \bas{
        & \E{v_{1}^{T}B_{n}UU^{T}B_{n}^{T}v_{1}} \leq \lone ^{n}\bbb{\alpha_{0} + \eta\lambda_{1}\bb{\frac{2\lambda_{1}}{\theta\bb{\lambda_{1}-\lambda_{2}}}}\bb{\beta_{0} + \alpha_{0}\bb{\frac{1-\theta}{\theta}}}}, \\
        & \E{\Tr\bb{\vp^{T}B_{n}UU^{T}B_{n}^{T}\vp}} \leq \beta_{0}\ltwo ^{n} + \bbb{\eta\lambda_{1}\bb{\frac{2\lambda_{1}}{\theta\bb{\lambda_{1}-\lambda_{2}}}}\bb{\alpha_{0}\frac{\Tr\bb{\Sigma}}{\lambda_{1}} + \beta_{0}\bb{\frac{1-\theta}{\theta}}}}\lone ^{n}
    }
    where $B_{n}$ is defined in Eq~\ref{definition:Bn}, $\alpha_{0} = v_{1}^{T}UU^{T}v_{1}, \;\; \beta_{0} = \Tr\bb{\vp^{T}UU^{T}\vp}$ and
    \bas{
        & \left|\lone  - 1 - 2\eta\lambda_{1} - 4L^{4}\sigma^{4} \eta^{2}\lambda_{1}^{2} \right| \leq 4L^{4}\sigma^{4}\eta^{2} \lambda_{1}^{2}\bb{\frac{1-\theta}{\theta}} \\
        & \left|\ltwo  - 1 - 2\eta\lambda_{2} - 4L^{4}\sigma^{4}\eta^{2}\lambda_{2}\Tr\bb{\Sigma} \right| \leq 4L^{4}\sigma^{4}\eta^{2} \lambda_{1}^{2}\bb{\frac{1-\theta}{\theta}}
    }
\end{restatable}
\begin{proof}
    Let $\alpha_{n} := \E{\Tr\bb{v_{1}^{T}B_{n}UU^{T}B_{n}^{T}v_{1}}}$,  $\beta_{n} := \E{\Tr\bb{\vp^{T}B_{n}UU^{T}B_{n}^{T}\vp}}$. Define $A_{n} := X_{n}X_{n}^{T}$ and let $\mathcal{F}_{n}$ denote the filtration for observations $i \in [n]$. Then, 
    \ba{
        \alpha_{n} &= \E{v_{1}^{T}B_{n}UU^{T}B_{n}^{T}v_{1}} \notag \\
                   &= \E{v_{1}^{T}\bb{I+\eta A_{n}}B_{n-1}UU^{T}B_{n-1}^{T}\bb{I+\eta A_{n}}v_{1}} \notag \\
                   &= \alpha_{n-1} + 2\eta \E{v_{1}^{T}A_{n}B_{n-1}UU^{T}B_{n-1}^{T}v_{1}} + \eta^{2}\E{v_{1}^{T}A_{n}B_{n-1}UU^{T}B_{n-1}^{T}A_{n}v_{1}} \notag \\
                   &= \alpha_{n-1} + 2\eta \E{v_{1}^{T}\Sigma B_{n-1}UU^{T}B_{n-1}^{T}v_{1}} + \eta^{2}\E{ \bb{v_{1}^{T}X_{n}X_{n}^{T}v_{1}} \bb{X_{n}^{T}B_{n-1}UU^{T}B_{n-1}^{T}X_{n}}} \notag \\
                   &= \alpha_{n-1}\bb{1 + 2\eta \lambda_{1}} + \eta^{2}\E{\bb{v_{1}^{T}X_{n}X_{n}^{T}v_{1}} \bb{X_{n}^{T}B_{n-1}UU^{T}B_{n-1}^{T}X_{n}}} \notag \\
                   &= \alpha_{n-1}\bb{1 + 2\eta \lambda_{1}} + \eta^{2}\E{\E{\bb{v_{1}^{T}X_{n}X_{n}^{T}v_{1}} \bb{X_{n}^{T}B_{n-1}UU^{T}B_{n-1}^{T}X_{n}}\bigg|\mathcal{F}_{n-1}}} \notag \\
                   &\leq \alpha_{n-1}\bb{1 + 2\eta \lambda_{1}} + \eta^{2}\E{\sqrt{\E{\bb{v_{1}^{T}X_{n}X_{n}^{T}v_{1}}^{2}\bigg|\mathcal{F}_{n-1}}\E{\bb{X_{n}^{T}B_{n-1}UU^{T}B_{n-1}^{T}X_{n}}^{2}\bigg|\mathcal{F}_{n-1}}}} \notag \\
                   &= \alpha_{n-1}\bb{1 + 2\eta \lambda_{1}} + \eta^{2}\E{\sqrt{\E{\bb{X_{n}^{T}v_{1}v_{1}^{T}X_{n}}^{2}\bigg|\mathcal{F}_{n-1}}\E{\bb{X_{n}^{T}B_{n-1}UU^{T}B_{n-1}X_{n}}^{2}\bigg|\mathcal{F}_{n-1}}}} \notag \\
                   &\leq \alpha_{n-1}\bb{1 + 2\eta \lambda_{1}} + 4\eta^{2}L^{4}\sigma^{4}\Tr\bb{\Sigma, v_{1}v_{1}^{T}}\E{\Tr\bb{\Sigma, B_{n-1}UU^{T}B_{n-1}^{T}}}, \text{ using Lemma \ref{lemma:subgaussian_quadratic_form} with } p=2 \notag \\
                   &= \alpha_{n-1}\bb{1 + 2\eta \lambda_{1}} + 4\eta^{2}L^{4}\sigma^{4}\lambda_{1}\bb{\E{\Tr\bb{\lambda_{1}v_{1}v_{1}^{T} + \vp\Lambda_{2}\vp^{T}, B_{n-1}UU^{T}B_{n-1}^{T}}}} \notag \\
                   &\leq \bb{1 + 2\eta \lambda_{1} + 4\eta^{2}L^{4}\sigma^{4}\lambda_{1}^{2}}\alpha_{n-1} + 4\eta^{2}L^{4}\sigma^{4}\lambda_{1}\lambda_{2}\beta_{n-1} \label{eq:alpha_recursion_second_moment}
    }
    and similarly, 
    \ba{
        \beta_{n} &= \E{\Tr\bb{\vp^{T}B_{n}UU^{T}B_{n}^{T}\vp}} \notag \\
                  &= \E{\Tr\bb{\vp^{T}\bb{I+\eta A_{n}}B_{n-1}UU^{T}B_{n-1}^{T}\bb{I+\eta A_{n}}\vp}} \notag \\
                  &= \E{\Tr\bb{\vp^{T}B_{n-1}UU^{T}B_{n-1}^{T}\vp}} + 2\eta \E{\Tr\bb{\vp^{T}A_{n}B_{n-1}UU^{T}B_{n-1}^{T}\vp}} + \notag \\
                  & \;\;\;\; + \eta^{2} \E{\Tr\bb{\vp^{T} A_{n}B_{n-1}UU^{T}B_{n-1}^{T}A_{n}\vp}} \notag \\
                  &= \beta_{n-1} + 2\eta \E{\Tr\bb{\vp^{T}\Sigma B_{n-1}UU^{T}B_{n-1}^{T}\vp}}  + \eta^{2}\E{\bb{X_{n}^{T}\vp\vp^{T}X_{n}} \bb{X_{n}^{T}B_{n-1}UU^{T}B_{n-1}^{T}X_{n}}} \notag \\
                  &= \beta_{n-1} + 2\eta \E{\Tr\bb{\vp^{T}\Sigma B_{n-1}UU^{T}B_{n-1}^{T}\vp}} + \eta^{2}\E{\E{\bb{X_{n}^{T}\vp\vp^{T}X_{n}} \bb{X_{n}^{T}B_{n-1}UU^{T}B_{n-1}^{T}X_{n}} \bigg| \mathcal{F}_{n-1}}} \notag \\
                  &\leq \bb{1+2\eta\lambda_{2}}\beta_{n-1} + \eta^{2}\E{\sqrt{\E{\bb{X_{n}^{T}\vp\vp^{T}X_{n}}^{2}\bigg| \mathcal{F}_{n-1}}\E{\bb{X_{n}^{T}B_{n-1}UU^{T}B_{n-1}^{T}X_{n}}^{2} \bigg| \mathcal{F}_{n-1}}}} \notag \\
                  &\leq \bb{1+2\eta\lambda_{2}}\beta_{n-1} + 4\eta^{2}L^{4}\sigma^{4}\Tr\bb{\Sigma \vp\vp^{T}}\E{\Tr\bb{\Sigma B_{n-1}UU^{T}B_{n-1}^{T}}}, \text{ using Lemma \ref{lemma:subgaussian_quadratic_form} with } p = 2 \notag \\
                  &= \bb{1+2\eta\lambda_{2} + 4\eta^{2}L^{4}\sigma^{4}\lambda_{2}\Tr\bb{\Lambda_{2}}}\beta_{n-1} + 4\eta^{2}L^{4}\sigma^{4}\lambda_{1}\Tr\bb{\Lambda_{2}}\alpha_{n-1} \label{eq:beta_recursion_second_moment} 
    }
    The result then follows by using Lemma \ref{lemma:matrix_recursion}.
\end{proof}

\begin{lemma}
    \label{lemma:fourth_moment_recursions}
    For all $t > 0$, under subgaussianity (Definition \ref{definition:subgaussian}), let $U \in \mathbb{R}^{d \times m}$. Let the step-size $\eta$ be set according to Lemma~\ref{lemma:learning_rate_set} then we have,
    \bas{
        & \E{\bb{v_{1}^{T}B_{n}UU^{T}B_{n}^{T}v_{1}}^{2}} \leq \mone ^{2n}\bbb{\alpha_{0} + \eta\lambda_{1}\bb{\frac{2\lambda_{1}}{\theta\bb{\lambda_{1}-\lambda_{2}}}}\bb{\beta_{0} + \alpha_{0}\bb{\frac{1-\theta}{\theta}}}}^{2}, \\
        & \E{\Tr\bb{\vp^{T}B_{n}UU^{T}B_{n}^{T}\vp}^{2}} \leq \bb{\beta_{0}\mtwo ^{n} + \bbb{\eta\lambda_{1}\bb{\frac{2\lambda_{1}}{\theta\bb{\lambda_{1}-\lambda_{2}}}}\bb{\alpha_{0}\frac{\Tr\bb{\Sigma}}{\lambda_{1}} + \beta_{0}\bb{\frac{1-\theta}{\theta}}}}\mone ^{n}}^{2}
    }
    % \bas{
    %     \E{\bb{v_{1}^{T}B_{n}UU^{T}B_{n}^{T}v_{1}}^{2}} \leq \bb{a_{1}\bb{1 + 2\eta\lambda_{1} + 50\eta^{2}L^{4}\sigma^{4}\lambda_{1}^{2}}^n + a_{2}\bb{1 + 2\eta\lambda_{2} + 50\eta^{2}\log\bb{n}L^{4}\sigma^{4}\lambda_{2}\Tr\bb{\Sigma}}^n}^{2} \\
    %     \E{\Tr\bb{\vp^{T}B_{n}UU^{T}B_{n}^{T}\vp}^{2}} \leq \bb{b_{1}\bb{1 + 2\eta\lambda_{1} + 50\eta^{2}L^{4}\sigma^{4}\lambda_{1}^{2}}^n + b_{2}\bb{1 + 2\eta\lambda_{2} + 50\eta^{2}\log\bb{n}L^{4}\sigma^{4}\lambda_{2}\Tr\bb{\Sigma}}^n}^{2}
    % }
    where $B_{n}$ is defined in Eq~\ref{definition:Bn}, $\alpha_{0} = v_{1}^{T}UU^{T}v_{1}, \;\; \beta_{0} = \Tr\bb{\vp^{T}UU^{T}\vp}$ and
    \bas{
        & \left|\mone  - 1 - 2\eta\lambda_{1} - 50\eta^{2}L^{4}\sigma^{4}\lambda_{1}^{2} \right| \leq 50L^{4}\sigma^{4}\eta^{2} \lambda_{1}^{2}\bb{\frac{1-\theta}{\theta}} \\
        & \left|\mtwo  - 1 - 2\eta\lambda_{2} - 50\eta^{2}L^{4}\sigma^{4}\lambda_{2}\Tr\bb{\Sigma} \right| \leq 50L^{4}\sigma^{4}\eta^{2} \lambda_{1}^{2}\bb{\frac{1-\theta}{\theta}}
    }
    % \bas{
    %     & a_{1} = \frac{\alpha_{0}}{1 + O\bb{\frac{\lambda_{1}^{2}}{\lambda_{2}\Tr\bb{\Sigma}}}} + \frac{\beta_{0}}{O\bb{\frac{\Tr\bb{\Sigma}}{\lambda_{1}} + \frac{\lambda_{1}}{\lambda_{2}}}}, \;\;\;\; a_{2} = \frac{\alpha_{0}}{1 + O\bb{\frac{\lambda_{2}\Tr\bb{\Sigma}}{\lambda_{1}^{2}}}} + \frac{\beta_{0}}{O\bb{\frac{\Tr\bb{\Sigma}}{\lambda_{1}} + \frac{\lambda_{1}}{\lambda_{2}}}} \\
    %     & b_{1} = \frac{\alpha_{0}}{O\bb{\frac{\lambda_{2}}{\lambda_{1}} + \frac{\lambda_{1}}{\Tr\bb{\Sigma}}}} + \frac{\beta_{0}}{O\bb{\frac{\Tr\bb{\Sigma}\lambda_{2}}{\lambda_{1}^{2}}} + 1}, \;\;\;\; b_{2} = \frac{\alpha_{0}}{O\bb{\frac{\Tr\bb{\Sigma}}{\lambda_{1}} + \frac{\lambda_{1}}{\lambda_{2}}}} + \frac{\beta_{0}}{O\bb{\frac{\Tr\bb{\Sigma}}{\lambda_{2}} + \frac{\lambda_{1}^{2}}{\lambda_{2}^{2}}}}
    % }
    % \bas{
    %     \E{\bb{v_{1}^{T}B_{n}UU^{T}B_{n}^{T}v_{1}}^{2}} &\leq \\
    %     \E{\Tr\bb{\vp^{T}B_{n}UU^{T}B_{n}^{T}\vp}^{2}} &\leq 
    % }
    % where $B_{n}$ is defined in Eq~\ref{definition:Bn}.
\end{lemma}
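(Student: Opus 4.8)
The plan is to mirror the proof of the second‑moment Lemma~\ref{lemma:second_moment_recursions}, lifted from second to fourth moments. Throughout, write $A_t:=X_tX_t^T$, let $\mathcal{F}_t:=\sigma(X_1,\dots,X_t)$, and set $M_{n-1}:=B_{n-1}UU^TB_{n-1}^T\succeq 0$, so that $B_n=(I+\eta A_n)B_{n-1}$. Since the target bound has the shape $\mu_1^{2n}[\,\cdots]^2$, the natural quantities to track are the $L^2$ norms $a_n:=\bb{\E{\bb{v_1^TB_nUU^TB_n^Tv_1}^2}}^{1/2}$ and $b_n:=\bb{\E{\Tr\bb{\vp^TB_nUU^TB_n^T\vp}^2}}^{1/2}$, whose arguments are nonnegative (equal to $\norm{U^TB_n^Tv_1}^2$ and $\norm{U^TB_n^T\vp}_F^2$) and which satisfy $a_0=v_1^TUU^Tv_1=\alpha_0$ and $b_0=\Tr(\vp^TUU^T\vp)=\beta_0$. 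The goal is to derive a coupled \emph{linear} recursion in $(a_n,b_n)$ of exactly the form fed to Lemma~\ref{lemma:matrix_recursion} with $c_1=2$, i.e.\ $a_n\le(1+2\eta\lambda_1+O(\eta^2\lambda_1^2))a_{n-1}+O(\eta^2\lambda_1\lambda_2)b_{n-1}$ and $b_n\le(1+2\eta\lambda_2+O(\eta^2\log(n)\lambda_2\Tr\Sigma))b_{n-1}+O(\eta^2\log(n)\lambda_1\Tr\Sigma)a_{n-1}$, and then invoke that lemma (with $\theta$ as in the last display of Lemma~\ref{lemma:learning_rate_set}) to read off $\mu_1=\lambda_1(P)$, $\mu_2=\lambda_2(P)$; squaring its conclusions yields the statement.

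For the $a_n$‑recursion I would expand $v_1^TB_nUU^TB_n^Tv_1=a+b+c$ with $a:=v_1^TM_{n-1}v_1$, $b:=2\eta v_1^TA_nM_{n-1}v_1$, $c:=\eta^2 v_1^TA_nM_{n-1}A_nv_1$, and compute $\E{(a+b+c)^2}$ by conditioning on $\mathcal{F}_{n-1}$. The one martingale cross term is exact, $2\E{a\,\E{b\mid\mathcal{F}_{n-1}}}=4\eta\lambda_1\E{a^2}=4\eta\lambda_1 a_{n-1}^2$ (using $v_1^T\Sigma=\lambda_1 v_1^T$), which supplies the \emph{exact} leading factor $1+4\eta\lambda_1=(1+2\eta\lambda_1)^2+O(\eta^2\lambda_1^2)$ — this is why a plain Minkowski argument, giving only $(1+O(\eta\lambda_1))a_{n-1}$ with an unspecified constant, is not enough. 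The remaining terms $\E{b^2},\E{c^2},2\E{ac},2\E{bc}$ are controlled by Cauchy--Schwarz (applied to the conditional and then the outer expectation) together with the subgaussian moment bound Lemma~\ref{lemma:subgaussian_quadratic_form}; e.g.\ $\E{b^2\mid\mathcal{F}_{n-1}}=4\eta^2\E{(v_1^TX_n)^2(X_n^TM_{n-1}v_1)^2\mid\mathcal{F}_{n-1}}\le(2L\sigma)^4\,4\eta^2\lambda_1\,v_1^TM_{n-1}\Sigma M_{n-1}v_1$, while the terms involving $c$ bring in $\E{(v_1^TX_n)^8}$, which is exactly why moments up to $p=8$ are needed (footnote after Definition~\ref{definition:subgaussian}). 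I then substitute $\Sigma=\lambda_1 v_1v_1^T+\vp\Lambda_2\vp^T$ and use the operator‑norm bound $\norm{\vp^TB_{n-1}UU^TB_{n-1}^Tv_1}^2\le\Tr(\vp^TB_{n-1}UU^TB_{n-1}^T\vp)\cdot v_1^TB_{n-1}UU^TB_{n-1}^Tv_1$ to reduce $\E{v_1^TM_{n-1}\Sigma M_{n-1}v_1}$ and its analogues to $\lambda_1 a_{n-1}^2+\lambda_2\E{\Tr(\vp^TM_{n-1}\vp)\cdot(v_1^TM_{n-1}v_1)}\le\lambda_1 a_{n-1}^2+\lambda_2 a_{n-1}b_{n-1}$ (last inequality by Cauchy--Schwarz). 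All surviving contributions are then $O(\eta^2\lambda_1^2)a_{n-1}^2$, $O(\eta^2\lambda_1\lambda_2)a_{n-1}b_{n-1}$, or strictly higher order in $\eta$ (controlled via $\eta\lambda_1\ll 1$ etc.\ from Lemma~\ref{lemma:learning_rate_set}), and a $b_{n-1}^2$‑type remainder from $\E{c^2}$ is absorbed into the $a_{n-1}b_{n-1}$ coupling. This gives $a_n^2\le(1+4\eta\lambda_1+c_2\eta^2\lambda_1^2)a_{n-1}^2+c_3\eta^2\lambda_1\lambda_2\,a_{n-1}b_{n-1}$; completing the square turns this into the claimed linear bound on $a_n$.

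For the $b_n$‑recursion I would write $\Tr(\vp^TB_nUU^TB_n^T\vp)=g+h+k$ with $g:=\Tr(\vp^TM_{n-1}\vp)$, $h:=2\eta\Tr(\vp^TA_nM_{n-1}\vp)$, $k:=\eta^2\Tr(\vp^TA_nM_{n-1}A_n\vp)=\eta^2\bb{X_n^TM_{n-1}X_n}\bb{X_n^T(I-v_1v_1^T)X_n}$, and proceed identically. The two changes are: the martingale cross term now uses $(I-v_1v_1^T)\Sigma=\vp\Lambda_2\vp^T$, so $\E{h\mid\mathcal{F}_{n-1}}=2\eta\Tr(\Lambda_2\vp^TM_{n-1}\vp)\le 2\eta\lambda_2 g$, giving $2\E{gh}\le 4\eta\lambda_2 b_{n-1}^2$ and hence the exact $1+2\eta\lambda_2$; and the $O(\eta^2)$ terms now carry $\Tr((I-v_1v_1^T)\Sigma(I-v_1v_1^T))=\Tr(\Lambda_2)\le\Tr(\Sigma)$ in place of a $\lambda_1$. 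The crucial care here is to bound $\Tr(\vp^TA_nM_{n-1}\vp)$ through its inner‑product form $\langle\vp^TX_n,\vp^TM_{n-1}X_n\rangle$ — so Cauchy--Schwarz gives $\le\norm{\vp^TX_n}\,\norm{\vp^TM_{n-1}X_n}$, then Lemma~\ref{lemma:subgaussian_quadratic_form} on each factor, then again $\Sigma=\lambda_1 v_1v_1^T+\vp\Lambda_2\vp^T$ with $\norm{A}_F\le\Tr(A)$ for $A\succeq 0$ — rather than splitting it crudely (which would spuriously generate a $\lambda_1^2 a_{n-1}^2$ term and wreck the recursion structure). The outcome is $b_n^2\le(1+4\eta\lambda_2+O(\eta^2\log(n)\lambda_2\Tr\Sigma))b_{n-1}^2+O(\eta^2\log(n)\lambda_1\Tr\Sigma)a_{n-1}b_{n-1}$ — a $\log n$ factor surfaces in the $\Tr\Sigma$‑coefficients, matching the stated $\mu_2$, and is absorbed by the step‑size schedule $\eta=3\log(n)/(n(\lambda_1-\lambda_2))$ of Lemma~\ref{lemma:learning_rate_set}; completing the square gives the linear recursion for $b_n$.

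With both recursions in the form of Lemma~\ref{lemma:matrix_recursion} (with $c_1=2$, its hypotheses checked via Lemma~\ref{lemma:learning_rate_set}) I read off $a_n\le\lambda_1(P)^n\bbb{\alpha_0+\eta\lambda_1\bb{\tfrac{2c_3\lambda_1}{c_1\theta(\lambda_1-\lambda_2)}}\bb{\beta_0+\alpha_0\tfrac{c_5}{c_4}\tfrac{1-\theta}{\theta}}}$ and $b_n\le\beta_0\lambda_2(P)^n+\bbb{\eta\lambda_1\bb{\tfrac{2c_5\lambda_1}{c_1\theta(\lambda_1-\lambda_2)}}\bb{\alpha_0\tfrac{\Tr\Sigma}{\lambda_1}+\beta_0\tfrac{c_3}{c_4}\tfrac{1-\theta}{\theta}}}\lambda_1(P)^n$, where $\mu_1=\lambda_1(P)$ and $\mu_2=\lambda_2(P)$ satisfy $\babs{\mu_1-1-2\eta\lambda_1-50\eta^2L^4\sigma^4\lambda_1^2}$ and $\babs{\mu_2-1-2\eta\lambda_2-50\eta^2\log(n)L^4\sigma^4\lambda_2\Tr\Sigma}$, both $O(\eta^2\lambda_1^2(1-\theta)/\theta)$; squaring, and absorbing the model constants $L,\sigma$ into the displayed numerals exactly as in Lemma~\ref{lemma:second_moment_recursions}, gives the claim. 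The main obstacle is the middle two steps: the fourth‑moment expansion produces on the order of a dozen cross terms with up to eight copies of $X_n$, and each must be bounded so that (i) the $1+2\eta\lambda_1$ and $1+2\eta\lambda_2$ leading behaviour is captured \emph{exactly} from the martingale terms, and (ii) no $\Tr(\Sigma)$ factor contaminates the $\lambda_1$‑diagonal entry of the $2\times2$ recursion matrix — this separation is precisely the hypothesis that lets Lemma~\ref{lemma:matrix_recursion} keep the large $\Tr(\Sigma)$ term out of the top eigenvalue $\mu_1$.
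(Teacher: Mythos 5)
Your proposal takes essentially the same route as the paper. The paper first derives the fourth-moment recursions
$\alpha_n\le(1+4\eta\lambda_1+O(\eta^2\lambda_1^2))\alpha_{n-1}+O(\eta^2\lambda_1^2)\sqrt{\alpha_{n-1}\beta_{n-1}}+O(\eta^4\lambda_1^4)\beta_{n-1}$
and the analogous one for $\beta_n$ (with $\Tr\Sigma$ and $\log(n)$ factors, and a harmless $\eta^2\log^2(n)\lambda_1^2\alpha_{n-1}$ coupling) as standalone auxiliary results (Lemmas~\ref{lemma:fourth_moment_alpha} and~\ref{lemma:fourth_moment_beta}), by exactly the decomposition $v_1^TB_nUU^TB_n^Tv_1=a+b+c$ you describe: conditioning on $\mathcal{F}_{n-1}$ for the exact $(1+4\eta\lambda_1)$ martingale term, Cauchy--Schwarz plus Lemma~\ref{lemma:subgaussian_quadratic_form} (with $p=2,3,4$) for the remaining cross terms, the split $\Sigma=\lambda_1 v_1v_1^T+\vp\Lambda_2\vp^T$ to uncouple the two directions, and the $\langle\vp^TX_n,\vp^TM_{n-1}X_n\rangle$ inner-product bound to keep the $\Tr\Sigma$ factor out of the $\alpha_n$-diagonal. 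It then sets $a_n:=\sqrt{\alpha_n}$, $b_n:=\sqrt{\beta_n}$, linearizes via $\sqrt{1+x}\le 1+x/2$ (your "complete the square"), and invokes Lemma~\ref{lemma:matrix_recursion} with $c_1=2$ and $\theta$ as chosen in Lemma~\ref{lemma:learning_rate_set}. The only material differences are cosmetic: you carry $a_n,b_n$ throughout rather than factoring the fourth-moment recursions into separate lemmas, and you keep the tighter $\eta^2\lambda_1\lambda_2$ coupling where the paper loosens to $\eta^2\lambda_1^2$ before applying the generic recursion lemma; both lead to the same displayed bound on $\mu_1,\mu_2$ after constants are absorbed.
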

\begin{proof}
Let $\alpha_{n} := \E{\bb{v_{1}^{T}B_{n}UU^{T}B_{n}^{T}v_{1}}^{2}}$, $\beta_{n} := \E{\Tr\bb{\vp^{T}B_{n}UU^{T}B_{n}^{T}\vp}^{2}}$. Then, using Lemma \ref{lemma:fourth_moment_alpha} we have, 
\bas{
    \alpha_{n} &\leq \bb{1 + 4\eta\lambda_{1} + 100\eta^{2}L^{4}\sigma^{4}\lambda_{1}^{2}}\alpha_{n-1} + 100\eta^{2}L^{4}\sigma^{4}\lambda_{1}^{2}\sqrt{\alpha_{n-1}\beta_{n-1}}  + 600\eta^{4}L^{8}\sigma^{8}\lambda_{1}^{4}\beta_{n-1}
}
and using Lemma \ref{lemma:fourth_moment_beta} we have,
\bas{
    \beta_{n} &\leq \bb{1 + 4\eta\lambda_{2} + 100\eta^{2}L^{4}\sigma^{4}\lambda_{2}\Tr\bb{\Sigma}}\beta_{n-1} + 100\eta^{2}L^{2}\sigma^{2}\lambda_{1}\Tr\bb{\Sigma}\sqrt{\alpha_{n-1}\beta_{n-1}} \\
        & \;\;\;\;\; + 600\eta^{2}L^{4}\sigma^{4}\lambda_{1}^{2}\alpha_{n-1}
}
Define $a_{n} := \sqrt{\alpha_{n}}$ and $b_{n} := \sqrt{\beta_{n}}$. Then using $\sqrt{1+x} \leq 1 + \frac{x}{2}$, we have
\bas{
    a_{n} &\leq \sqrt{1 + 4\eta\lambda_{1} + 100\eta^{2}L^{4}\sigma^{4}\lambda_{1}^{2}}a_{n-1} + 25\eta^{2}L^{2}\sigma^{2}\lambda_{1}^{2}b_{n-1}, \\
          &\leq \bb{1 + 2\eta\lambda_{1} + 50\eta^{2}L^{4}\sigma^{4}\lambda_{1}^{2}}a_{n-1} + 25\eta^{2}L^{2}\sigma^{2}\lambda_{1}^{2}b_{n-1} \\
    b_{n} &\leq \sqrt{1 + 4\eta\lambda_{2} + 100\eta^{2}L^{4}\sigma^{4}\lambda_{2}\Tr\bb{\Sigma}}b_{n-1} + 25\eta^{2}L^{2}\sigma^{2}\lambda_{1}^{2}a_{n-1}, \\
    &\leq \bb{1 + 2\eta\lambda_{2} + 50\eta^{2}L^{4}\sigma^{4}\lambda_{2}\Tr\bb{\Sigma}}b_{n-1} + 25\eta^{2}L^{2}\sigma^{2}\lambda_{1}^{2}a_{n-1}
}
The result then follows from Lemma \ref{lemma:matrix_recursion}.
\end{proof}

\begin{lemma}
    \label{lemma:fourth_moment_alpha}
    For all $t > 0$, under subgaussianity (Definition \ref{definition:subgaussian}), let $U \in \mathbb{R}^{d \times m}$, $\alpha_{n} := \E{\bb{v_{1}^{T}B_{n}UU^{T}B_{n}^{T}v_{1}}^{2}}$, $\beta_{n} := \E{\Tr\bb{\vp^{T}B_{n}UU^{T}B_{n}^{T}\vp}^{2}}$ and $\eta L^{2}\sigma^{2}\lambda_{1} \leq \frac{1}{4}$ then
    \bas{
        \alpha_{n} \leq \bb{1 + 4\eta\lambda_{1} + 100\eta^{2}L^{4}\sigma^{4}\lambda_{1}^{2}}\alpha_{n-1} + 100\eta^{2}L^{4}\sigma^{4}\lambda_{1}^{2}\sqrt{\alpha_{n-1}\beta_{n-1}}  + 600\eta^{4}L^{8}\sigma^{8}\lambda_{1}^{4}\beta_{n-1}
    }
    where $B_{n}$ is defined in Eq~\ref{definition:Bn}.
\end{lemma}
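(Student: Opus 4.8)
The plan is to run one step of the $B_n$ recursion inside the square. Write $B_n=(I+\eta X_nX_n^T)B_{n-1}$ and set $M_{n-1}:=B_{n-1}UU^TB_{n-1}^T$ (PSD and $\mathcal{F}_{n-1}$-measurable), $g:=v_1^TX_n$, and $a:=v_1^TM_{n-1}v_1\ge 0$. Since $(I+\eta X_nX_n^T)v_1=v_1+\eta g X_n$, one gets
\[
v_1^TB_nUU^TB_n^Tv_1 = a + 2\eta g\,(v_1^TM_{n-1}X_n) + \eta^2 g^2\,(X_n^TM_{n-1}X_n),
\]
so squaring produces six terms: $a^2$, the linear cross term $4\eta a g\,(v_1^TM_{n-1}X_n)$, the two quadratic terms $4\eta^2 g^2(v_1^TM_{n-1}X_n)^2$ and $2\eta^2 a g^2(X_n^TM_{n-1}X_n)$, the quartic term $\eta^4 g^4(X_n^TM_{n-1}X_n)^2$, and the cubic cross term $4\eta^3 g^3(v_1^TM_{n-1}X_n)(X_n^TM_{n-1}X_n)$. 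The cubic term is the only awkward one; I would dispose of it by Young's inequality with weight $\epsilon=1/\eta$, namely $4\eta^3|g^3(v_1^TM_{n-1}X_n)(X_n^TM_{n-1}X_n)|\le 2\eta^2 g^2(v_1^TM_{n-1}X_n)^2 + 2\eta^4 g^4(X_n^TM_{n-1}X_n)^2$, so that it is absorbed (up to constants) into the quadratic and quartic terms already present.

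Next I would take $\E{\,\cdot\mid\mathcal{F}_{n-1}}$ over $X_n$ (independent of $\mathcal{F}_{n-1}$). The linear cross term is exact: $\E{g\,(v_1^TM_{n-1}X_n)\mid\mathcal{F}_{n-1}}=v_1^T\Sigma M_{n-1}v_1=\lambda_1 a$ because $v_1^T\Sigma=\lambda_1 v_1^T$, so this term contributes exactly $4\eta\lambda_1 a^2$. For the two quadratic terms I would first use the PSD Cauchy--Schwarz inequality $(v_1^TM_{n-1}X_n)^2\le a\,(X_n^TM_{n-1}X_n)$ to reduce everything to $a\,\E{g^2(X_n^TM_{n-1}X_n)\mid\mathcal{F}_{n-1}}$, then Cauchy--Schwarz in $X_n$ together with Lemma~\ref{lemma:subgaussian_quadratic_form} (applied with $p=2$, once with $M=v_1$ and once with $M=B_{n-1}U$) to obtain $\E{g^4\mid\mathcal{F}_{n-1}}\le 4L^4\sigma^4\lambda_1^2$ and $\E{(X_n^TM_{n-1}X_n)^2\mid\mathcal{F}_{n-1}}\le 4L^4\sigma^4\tau^2$, where $\tau:=\Tr(\Sigma M_{n-1})=\Tr(U^TB_{n-1}^T\Sigma B_{n-1}U)$; this bounds the quadratic contribution by $O(\eta^2 L^4\sigma^4\lambda_1\, a\tau)$. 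The quartic term is handled identically with $p=4$ in Lemma~\ref{lemma:subgaussian_quadratic_form} (this is the only place the eighth moment of the linear forms is used, which is why the moment hypothesis is only needed up to $p=8$), giving $O(\eta^4 L^8\sigma^8\lambda_1^2\tau^2)$.

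Finally I would plug in the eigendecomposition $\Sigma=\lambda_1 v_1v_1^T+\vp\Lambda_2\vp^T$ to get $\tau\le\lambda_1 a+\lambda_2 b\le\lambda_1(a+b)$ with $b:=\Tr(\vp^TM_{n-1}\vp)$, expand $a\tau$ and $\tau^2$ into the monomials $a^2,\,ab,\,b^2$, and use the hypothesis $\eta L^2\sigma^2\lambda_1\le\tfrac14$ to fold the $\eta^4 L^8\sigma^8\lambda_1^4 a^2$ piece back into the $\eta^2 L^4\sigma^4\lambda_1^2 a^2$ coefficient. Taking the outer expectation and using $\E{a^2}=\alpha_{n-1}$, $\E{b^2}=\beta_{n-1}$, and $\E{ab}\le\sqrt{\E{a^2}\E{b^2}}=\sqrt{\alpha_{n-1}\beta_{n-1}}$ then yields a bound of exactly the claimed form; tracking the constants through the Young and Cauchy--Schwarz steps (and tightening them where needed) gives the stated $100$ and $600$. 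The point that needs care — and the structural reason the large quantity $\beta_{n-1}$ appears only at order $\eta^4$ — is that the $b^2$ monomial is generated solely from $\tau^2$, which carries the $\eta^4$ prefactor coming from the quartic term, whereas the order-$\eta^2$ terms only ever see $a\tau\le\lambda_1 a^2+\lambda_1 ab$ and hence feed the $\alpha_{n-1}$ and $\sqrt{\alpha_{n-1}\beta_{n-1}}$ coefficients but never $\beta_{n-1}$. This is the main obstacle; once the cubic term is correctly redistributed by Young's inequality, the rest is bookkeeping in the spirit of the proof of Lemma~\ref{lemma:second_moment_recursions}.
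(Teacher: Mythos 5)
Your proposal follows essentially the same route as the paper's proof: expand $B_n=(I+\eta X_nX_n^T)B_{n-1}$ inside the square, take a conditional expectation over $X_n$, apply Cauchy--Schwarz and Lemma~\ref{lemma:subgaussian_quadratic_form} to each term, and split $\Sigma=\lambda_1 v_1v_1^T+\vp\Lambda_2\vp^T$ to sort the result into $\alpha_{n-1}$, $\sqrt{\alpha_{n-1}\beta_{n-1}}$, and $\beta_{n-1}$ pieces, with the crucial structural observation (which you state correctly) that $\beta_{n-1}$ can only be generated at order $\eta^4$. The one variation is the cubic cross term: the paper first bounds it via $p=3$ moments and then splits $\E{a^{1/2}b^{3/2}}$ by AM--GM with weight $\sqrt{3\eta L^2\sigma^2}$, whereas you apply Young's inequality with weight $1/\eta$ directly at the random-variable level before taking expectations, folding the cubic into the quadratic and quartic pieces; this is a touch cleaner since it sidesteps the $p=3$ case of Lemma~\ref{lemma:subgaussian_quadratic_form} entirely, and achieves the same $\eta^2$/$\eta^4$ distribution. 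You are right that the constants need tightening: a direct tracking of your bounds (and, for that matter, of the paper's own $T_1$--$T_4$ estimates under $\eta L^2\sigma^2\lambda_1\le\tfrac14$) gives coefficients somewhat above $100$ and $600$, so those should be read as convenient round numbers absorbed into later absolute constants rather than as exact outputs of either computation.
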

\begin{proof}
    Let $A_{n} := X_{n}X_{n}^{T}$ and $\mathcal{F}_{n}$ denote the filtration for observations $i \in [n]$. Then, 
    \ba{
        \alpha_{n} &= \E{\bb{v_{1}^{T}B_{n}UU^{T}B_{n}^{T}v_{1}}^{2}} \notag \\
                   &= \E{\bb{v_{1}^{T}B_{n-1}UU^{T}B_{n-1}^{T}v_{1} + 2\eta v_{1}^{T}A_{n}B_{n-1}UU^{T}B_{n-1}^{T}v_{1} + \eta^{2}v_{1}^{T}A_{n}B_{n-1}UU^{T}B_{n-1}^{T}A_{n}v_{1}}^{2}} \notag \\
                   &= \alpha_{n-1}\bb{1 + 4\eta \lambda_{1}} + 4\eta^{2}\underbrace{\E{\bb{v_{1}^{T}A_{n}B_{n-1}UU^{T}B_{n-1}^{T}v_{1}}^{2}}}_{T_{1}} \notag \\
                   & \;\;\;\; + 2\eta^{2}\underbrace{\E{\bb{v_{1}^{T}B_{n-1}UU^{T}B_{n-1}^{T}v_{1}}\bb{v_{1}^{T}A_{n}B_{n-1}UU^{T}B_{n-1}^{T}A_{n}v_{1}}}}_{T_{2}} \notag \\
                   & \;\;\;\; + 4\eta^{3}\underbrace{\E{\bb{v_{1}^{T}A_{n}B_{n-1}UU^{T}B_{n-1}^{T}v_{1}}\bb{v_{1}^{T}A_{n}B_{n-1}UU^{T}B_{n-1}^{T}A_{n}v_{1}}}}_{T_{3}} \notag \\
                   & \;\;\;\; + \eta^{4}\underbrace{\E{\bb{v_{1}^{T}A_{n}B_{n-1}UU^{T}B_{n-1}^{T}A_{n}v_{1}}^{2}}}_{T_{4}} \label{eq:alpha_recursion_fourth_moment}
    }
    For $T_{1}$, 
    \bas{
        T_{1} &= \E{\bb{v_{1}^{T}A_{n}B_{n-1}UU^{T}B_{n-1}^{T}v_{1}}^{2}} \\
        &= \E{\bb{X_{n}^{T}v_{1}v_{1}^{T}X_{n}}\bb{X_{n}^{T}B_{n-1}UU^{T}B_{n-1}^{T}v_{1}v_{1}^{T}B_{n-1}UU^{T}B_{n-1}^{T}X_{n}}} \\
        &= \E{\E{\bb{X_{n}^{T}v_{1}v_{1}^{T}X_{n}}\bb{X_{n}^{T}B_{n-1}UU^{T}B_{n-1}^{T}v_{1}v_{1}^{T}B_{n-1}UU^{T}B_{n-1}^{T}X_{n}}\bigg| \mathcal{F}_{n-1}}} \\
        &\leq \E{\sqrt{\E{\bb{X_{n}^{T}v_{1}v_{1}^{T}X_{n}}^{2} \bigg| \mathcal{F}_{n-1}}\E{\bb{X_{n}^{T}B_{n-1}UU^{T}B_{n-1}^{T}v_{1}v_{1}^{T}B_{n-1}UU^{T}B_{n-1}^{T}X_{n}}^{2}\bigg| \mathcal{F}_{n-1}}}} \\
        &\leq 4L^{4}\sigma^{4}\Tr\bb{\Sigma v_{1}v_{1}^{T}}\E{\Tr\bb{\Sigma B_{n-1}UU^{T}B_{n-1}^{T}v_{1}v_{1}^{T}B_{n-1}UU^{T}B_{n-1}^{T}}}, \text{ using Lemma } \ref{lemma:subgaussian_quadratic_form} \text{ with } p = 2 \\
        &= 4L^{4}\sigma^{4}\lambda_{1}^{2}\alpha_{n-1} +  4L^{4}\sigma^{4}\lambda_{1}\E{\Tr\bb{\vp\Lambda_{2}\vp^{T} B_{n-1}UU^{T}B_{n-1}^{T}v_{1}v_{1}^{T}B_{n-1}UU^{T}B_{n-1}^{T}}} \\
        &\leq 4L^{4}\sigma^{4}\lambda_{1}^{2}\alpha_{n-1} +  4L^{4}\sigma^{4}\lambda_{1}\lambda_{2}\E{\Tr\bb{\vp\vp^{T} B_{n-1}UU^{T}B_{n-1}^{T}v_{1}v_{1}^{T}B_{n-1}UU^{T}B_{n-1}^{T}}} \\
        &= 4L^{4}\sigma^{4}\lambda_{1}^{2}\alpha_{n-1} +  4L^{4}\sigma^{4}\lambda_{1}\lambda_{2}\E{v_{1}^{T}B_{n-1}UU^{T}B_{n-1}^{T}\vp\vp^{T}B_{n-1}UU^{T}B_{n-1}^{T}v_{1}} \\
        &\leq 4L^{4}\sigma^{4}\lambda_{1}^{2}\alpha_{n-1} +  4L^{4}\sigma^{4}\lambda_{1}\lambda_{2}\E{ \bb{v_{1}^{T}B_{n-1}UU^{T}B_{n-1}^{T}v_{1}}\Tr\bb{U^{T}B_{n-1}^{T}\vp\vp^{T}B_{n-1}U}} \\
        &\leq 4L^{4}\sigma^{4}\lambda_{1}^{2}\alpha_{n-1} + 4L^{4}\sigma^{4}\lambda_{1}\lambda_{2}\sqrt{\alpha_{n-1}\beta_{n-1}}
    }
    For $T_{2}$,
    \bas{
        T_{2} &= \E{\bb{v_{1}^{T}B_{n-1}UU^{T}B_{n-1}^{T}v_{1}}\bb{v_{1}^{T}A_{n}B_{n-1}UU^{T}B_{n-1}^{T}A_{n}v_{1}}} \\
        &= \E{\bb{v_{1}^{T}B_{n-1}UU^{T}B_{n-1}^{T}v_{1}}\E{\bb{v_{1}^{T}A_{n}B_{n-1}UU^{T}B_{n-1}^{T}A_{n}v_{1}}|\mathcal{F}_{n-1}}} \\
        &= \E{\bb{v_{1}^{T}B_{n-1}UU^{T}B_{n-1}^{T}v_{1}}\E{\bb{X_{n}^{T}v_{1}v_{1}^{T}X_{n}}\bb{X_{n}^{T}B_{n-1}UU^{T}B_{n-1}^{T}X_{n}}|\mathcal{F}_{n-1}}} \\
        &\leq \E{\bb{v_{1}^{T}B_{n-1}UU^{T}B_{n-1}^{T}v_{1}}\sqrt{\E{\bb{X_{n}^{T}v_{1}v_{1}^{T}X_{n}}^{2}\bigg|\mathcal{F}_{n-1}}\E{\bb{X_{n}^{T}B_{n-1}UU^{T}B_{n-1}^{T}X_{n}}^{2}|\mathcal{F}_{n-1}}}} \\
        &\leq 4L^{4}\sigma^{4}\E{\bb{v_{1}^{T}B_{n-1}UU^{T}B_{n-1}^{T}v_{1}}\Tr\bb{\Sigma v_{1}v_{1}^{T}}\Tr\bb{\Sigma B_{n-1}UU^{T}B_{n-1}^{T}}}, \text{ using Lemma } \ref{lemma:subgaussian_quadratic_form} \text{ with } p = 2 \\
        &= 4L^{4}\sigma^{4}\lambda_{1}^{2}\alpha_{n-1} + 4L^{4}\sigma^{4}\lambda_{1}\E{\bb{v_{1}^{T}B_{n-1}UU^{T}B_{n-1}^{T}v_{1}}\Tr\bb{\vp\Lambda_{2}\vp^{T} B_{n-1}UU^{T}B_{n-1}^{T}}} \\
        &\leq 4L^{4}\sigma^{4}\lambda_{1}^{2}\alpha_{n-1} + 4L^{4}\sigma^{4}\lambda_{1}\lambda_{2}\E{\bb{v_{1}^{T}B_{n-1}UU^{T}B_{n-1}^{T}v_{1}}\Tr\bb{\vp\vp^{T} B_{n-1}UU^{T}B_{n-1}^{T}}} \\
        &\leq 4L^{4}\sigma^{4}\lambda_{1}^{2}\alpha_{n-1} + 4L^{4}\sigma^{4}\lambda_{1}\lambda_{2}\sqrt{\alpha_{n-1}\beta_{n-1}}
    }
    For $T_{3}$, 
    \bas{
        T_{3} &= \E{\bb{v_{1}^{T}A_{n}B_{n-1}UU^{T}B_{n-1}^{T}v_{1}}\bb{v_{1}^{T}A_{n}B_{n-1}UU^{T}B_{n-1}^{T}A_{n}v_{1}}} \\
        &= \E{\bb{X_{n}^{T}v_{1}}^{3}\bb{X_{n}^{T}B_{n-1}UU^{T}B_{n-1}^{T}v_{1}}\bb{X_{n}^{T}B_{n-1}UU^{T}B_{n-1}^{T}X_{n}}} \\
        &\leq \E{\bb{X_{n}^{T}v_{1}v_{1}^{T}X_{n}}^{\frac{3}{2}}\bb{X_{n}^{T}B_{n-1}UU^{T}B_{n-1}^{T}X_{n}}^{\frac{3}{2}}\norm{U^{T}B_{n-1}^{T}v_{1}}_{2}} \\
        &= \E{\E{\bb{X_{n}^{T}v_{1}v_{1}^{T}X_{n}}^{\frac{3}{2}}\bb{X_{n}^{T}B_{n-1}UU^{T}B_{n-1}^{T}X_{n}}^{\frac{3}{2}}\bigg|\mathcal{F}_{n-1}}\norm{U^{T}B_{n-1}^{T}v_{1}}_{2}} \\
        &\leq \E{\sqrt{\E{\bb{X_{n}^{T}v_{1}v_{1}^{T}X_{n}}^{3}\bigg|\mathcal{F}_{n-1}}}\sqrt{\E{\bb{X_{n}^{T}B_{n-1}UU^{T}B_{n-1}^{T}X_{n}}^{3}\bigg|\mathcal{F}_{n-1}}}\norm{U^{T}B_{n-1}^{T}v_{1}}_{2}} \\
       &\leq \bb{3L^{2}\sigma^{2}}^{3}\lambda_{1}^{\frac{3}{2}}\E{\Tr\bb{B_{n-1}UU^{T}B_{n-1}^{T}\Sigma}^{\frac{3}{2}}\bb{v_{1}^{T}B_{n-1}UU^{T}B_{n-1}^{T}v_{1}}^{\frac{1}{2}}}, \text{ using Lemma } \ref{lemma:subgaussian_quadratic_form} \text{ with } p = 3 \\
       &\leq 2\bb{3L^{2}\sigma^{2}}^{3}\lambda_{1}^{\frac{3}{2}}\bb{\lambda_{1}^{\frac{3}{2}}\alpha_{n-1} + \lambda_{2}^{\frac{3}{2}}\E{\bb{v_{1}^{T}B_{n-1}UU^{T}B_{n-1}^{T}v_{1}}^{\frac{1}{2}}\Tr\bb{\vp^{T}B_{n-1}UU^{T}B_{n-1}^{T}\vp}^{\frac{3}{2}}}}
       }
       \bas{
       &= 2\bb{3L^{2}\sigma^{2}}^{3}\lambda_{1}^{3}\alpha_{n-1} + 2\bb{3L^{2}\sigma^{2}}^{3}\times  \\
       &  \;\;\;\;\E{\frac{\sqrt{\lambda_{1}\lambda_{2}\norm{U^{T}B_{n-1}^{T}v_{1}}_{2}^{2}\Tr\bb{\vp^{T}B_{n-1}UU^{T}B_{n-1}^{T}\vp}}}{\sqrt{3\eta L^{2}\sigma^{2}}}\sqrt{3\eta L^{2}\sigma^{2}}\lambda_{1}\lambda_{2}\Tr\bb{\vp^{T}B_{n-1}UU^{T}B_{n-1}^{T}\vp}} \\
       &\leq 2\bb{3L^{2}\sigma^{2}}^{3}\lambda_{1}^{3}\alpha_{n-1}  + \bb{3L^{2}\sigma^{2}}^{3}\E{\frac{\lambda_{1}\lambda_{2}v_{1}^{T}B_{n-1}UU^{T}B_{n-1}^{T}v_{1}\Tr\bb{\vp^{T}B_{n-1}UU^{T}B_{n-1}^{T}\vp}}{3\eta L^{2}\sigma^{2}}} \\
       & \;\;\;\; + \eta\bb{3L^{2}\sigma^{2}}^{4}\E{\lambda_{1}^{2}\lambda_{2}^{2}\Tr\bb{\vp^{T}B_{n-1}UU^{T}B_{n-1}^{T}\vp}^{2}} \\
       &= 2\bb{3L^{2}\sigma^{2}}^{3}\lambda_{1}^{3}\alpha_{n-1} + \frac{\bb{3L^{2}\sigma^{2}}^{2}\lambda_{1}\lambda_{2}}{\eta}\E{v_{1}^{T}B_{n-1}UU^{T}B_{n-1}^{T}v_{1}\Tr\bb{\vp^{T}B_{n-1}UU^{T}B_{n-1}^{T}\vp}} \\
       & \;\;\;\; + \eta\bb{3L^{2}\sigma^{2}}^{4}\lambda_{1}^{2}\lambda_{2}^{2}\E{\Tr\bb{\vp^{T}B_{n-1}UU^{T}B_{n-1}^{T}\vp}^{2}} \\
       &\leq 2\bb{3L^{2}\sigma^{2}}^{3}\lambda_{1}^{3}\alpha_{n-1} + \frac{\bb{3L^{2}\sigma^{2}}^{2}\lambda_{1}\lambda_{2}}{\eta}\sqrt{\alpha_{n-1}\beta_{n-1}} + \eta\bb{3L^{2}\sigma^{2}}^{4}\lambda_{1}^{2}\lambda_{2}^{2}\beta_{n-1}
    }
    For $T_{4}$, 
    \bas{
        T_{4} &= \E{\bb{v_{1}^{T}A_{n}B_{n-1}UU^{T}B_{n-1}^{T}A_{n}v_{1}}^{2}} \\
        &= \E{\bb{X_{n}^{T}v_{1}v_{1}^{T}X_{n}}^{2}\bb{X_{n}^{T}B_{n-1}UU^{T}B_{n-1}^{T}X_{n}}^{2}} \\
        &= \E{ \E{\bb{X_{n}^{T}v_{1}v_{1}^{T}X_{n}}^{2}\bb{X_{n}^{T}B_{n-1}UU^{T}B_{n-1}^{T}X_{n}}^{2} \bigg| \mathcal{F}_{n-1}} } \\
        &\leq \bb{4L^{2}\sigma^{2}}^{4}\E{\sqrt{\E{\bb{v_{1}^{T}X_{n}X_{n}^{T}v_{1}}^{4}\bigg| \mathcal{F}_{n-1}}\E{\Tr\bb{B_{n-1}UU^{T}B_{n-1}^{T}\Sigma}^{4} \bigg| \mathcal{F}_{n-1}}}} \\
        &\leq \bb{4L^{2}\sigma^{2}}^{4}\E{\sqrt{\bb{v_{1}^{T}\Sigma v_{1}}^{4}\E{\Tr\bb{B_{n-1}UU^{T}B_{n-1}^{T}\Sigma}^{4} \bigg| \mathcal{F}_{n-1}}}}, \text{ using Lemma } \ref{lemma:subgaussian_quadratic_form} \text{ with } p = 4 \\
        &= \bb{4L^{2}\sigma^{2}}^{4}\lambda_{1}^{2}\E{\sqrt{\bb{\Tr\bb{B_{n-1}UU^{T}B_{n-1}^{T}\Sigma}}^{4}}} \\
        &= \bb{4L^{2}\sigma^{2}}^{4}\lambda_{1}^{2}\E{\bb{\Tr\bb{B_{n-1}UU^{T}B_{n-1}^{T}\vp\Lambda_{2}\vp^{T}} + \lambda_{1}\Tr\bb{B_{n-1}UU^{T}B_{n-1}^{T}v_{1}v_{1}^{T}}}^{2}} \\
        &\leq 2\bb{4L^{2}\sigma^{2}}^{4}\lambda_{1}^{2}\bb{\lambda_{2}^{2}\beta_{n-1} + \lambda_{1}^{2}\alpha_{n-1}}
    }
    Substituting in Eq~\ref{eq:alpha_recursion_fourth_moment} along with using $\eta L^{2}\sigma^{2}\lambda_{1} \leq \frac{1}{4}$ we have, 
    \bas{
        \alpha_{n} \leq \bb{1 + 4\eta\lambda_{1} + 100\eta^{2}L^{4}\sigma^{4}\lambda_{1}^{2}}\alpha_{n-1} + 100\eta^{2}L^{4}\sigma^{4}\lambda_{1}^{2}\sqrt{\alpha_{n-1}\beta_{n-1}}  + 600\eta^{4}L^{8}\sigma^{8}\lambda_{1}^{4}\beta_{n-1}
    }
    Hence proved.
\end{proof}

\begin{lemma}
    \label{lemma:fourth_moment_beta}
    For all $t > 0$, under subgaussianity (Definition \ref{definition:subgaussian}), let $U \in \mathbb{R}^{d \times m}$, $\alpha_{n} := \E{\bb{v_{1}^{T}B_{n}UU^{T}B_{n}^{T}v_{1}}^{2}}$, $\beta_{n} := \E{\Tr\bb{\vp^{T}B_{n}UU^{T}B_{n}^{T}\vp}^{2}}$ and $\eta L^2\sigma^{2}\leq \frac{1}{4}\min\left\{\frac{1}{\lambda_{1}}, \frac{1}{\Tr\bb{\Sigma}}, \frac{1}{\sqrt{\lambda_{1}\Tr\bb{\Sigma}}}\right\}$ then
    \bas{
        & \beta_{n} \leq \bb{1 + 4\eta\lambda_{2} + 100\eta^{2}\log\bb{\effecrank}L^4\sigma^{4}\lambda_{2}\Tr\bb{\Sigma}}\beta_{n-1} + 100\eta^{2}\log\bb{\effecrank}L^2\sigma^{2}\lambda_{1}\Tr\bb{\Sigma}\sqrt{\alpha_{n-1}\beta_{n-1}} \\
        & \;\;\;\;\; + 600\eta^{2}\log^{2}\bb{n}L^4\sigma^{4}\lambda_{1}^{2}\alpha_{n-1}
    }
    where $B_{n}$ is defined in Eq~\ref{definition:Bn}.
\end{lemma}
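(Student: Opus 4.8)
The plan is to mirror the proof of Lemma~\ref{lemma:fourth_moment_alpha}, replacing the relevant copies of $v_{1}$ by $\vp$. Write $B_{n} = \bb{I + \eta A_{n}}B_{n-1}$ with $A_{n} := X_{n}X_{n}^{T}$, and abbreviate $C := B_{n-1}UU^{T}B_{n-1}^{T}$ (PSD and $\mathcal{F}_{n-1}$-measurable) and $\Pi := \vp\vp^{T}$. Expanding, $\Tr\bb{\vp^{T}B_{n}UU^{T}B_{n}^{T}\vp} = \Tr\bb{\vp^{T}C\vp} + 2\eta\,\Tr\bb{\vp^{T}A_{n}C\vp} + \eta^{2}\Tr\bb{\vp^{T}A_{n}CA_{n}\vp}$, and cyclicity of the trace gives $\Tr\bb{\vp^{T}A_{n}C\vp} = X_{n}^{T}C\Pi X_{n}$ (a scalar, possibly of either sign) and $\Tr\bb{\vp^{T}A_{n}CA_{n}\vp} = \bb{X_{n}^{T}CX_{n}}\bb{X_{n}^{T}\Pi X_{n}} \geq 0$. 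Squaring and taking expectations splits $\beta_{n}$ into the leading term $\E{\Tr\bb{\vp^{T}C\vp}^{2}} = \beta_{n-1}$, a linear-in-$A_{n}$ cross term $4\eta\,\E{\Tr\bb{\vp^{T}C\vp}\,\Tr\bb{\vp^{T}A_{n}C\vp}}$, and four higher terms $4\eta^{2}S_{1} + 2\eta^{2}S_{2} + 4\eta^{3}S_{3} + \eta^{4}S_{4}$, the exact analogues of $T_{1},\dots,T_{4}$ from Lemma~\ref{lemma:fourth_moment_alpha}. For the cross term, conditioning on $\mathcal{F}_{n-1}$ and using $\vp^{T}\Sigma\vp = \Lambda_{2}$ and $\vp^{T}v_{1} = 0$ gives $\E{\Tr\bb{\vp^{T}A_{n}C\vp}\mid\mathcal{F}_{n-1}} = \Tr\bb{\Lambda_{2}\vp^{T}C\vp} \leq \lambda_{2}\Tr\bb{\vp^{T}C\vp}$, so this term is at most $4\eta\lambda_{2}\beta_{n-1}$; this is what produces the factor $\bb{1 + 4\eta\lambda_{2}}$ in front of $\beta_{n-1}$, just as $\bb{1 + 4\eta\lambda_{1}}$ arose for $\alpha_{n}$.

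Next I would bound $S_{1},\dots,S_{4}$ one at a time with the same toolkit as in Lemma~\ref{lemma:fourth_moment_alpha}: peel off $X_{n}$ by conditional Cauchy--Schwarz (for $S_{1},S_{2},S_{4}$) or conditional Hölder with exponents $\bb{2,6,3}$ (for $S_{3}$, after first using $|X_{n}^{T}C\Pi X_{n}| \leq \sqrt{\bb{X_{n}^{T}CX_{n}}\bb{X_{n}^{T}\Pi C\Pi X_{n}}}$); control the resulting moments of the quadratic forms $X_{n}^{T}CX_{n}$, $X_{n}^{T}\Pi X_{n}$ and $X_{n}^{T}\Pi C\Pi X_{n}$ by Lemma~\ref{lemma:subgaussian_quadratic_form} with $p\in\{2,3,4\}$; and simplify using that $\vp\vp^{T}$ is an orthogonal projection, that $\Tr\bb{\vp^{T}\Sigma\vp} = \Tr\bb{\Lambda_{2}} \leq \Tr\bb{\Sigma}$, and that $\Tr\bb{\Sigma C} \leq \lambda_{1}v_{1}^{T}Cv_{1} + \lambda_{2}\Tr\bb{\vp^{T}C\vp}$. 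A final Cauchy--Schwarz in the outer expectation turns every product into $\alpha_{n-1}$, $\beta_{n-1}$, or $\sqrt{\alpha_{n-1}\beta_{n-1}}$. Tracking the constants, $S_{1}$ and $S_{2}$ come out at order $\eta^{2}$ as $O\bb{L^{4}\sigma^{4}}$ multiples of $\lambda_{1}\Tr\bb{\Sigma}\sqrt{\alpha_{n-1}\beta_{n-1}} + \lambda_{2}\Tr\bb{\Sigma}\beta_{n-1}$ — the $\Tr\bb{\Sigma}$ entering through the $X_{n}^{T}\Pi X_{n}$ factor — while $S_{3}$ and $S_{4}$ come out as $O\bb{L^{4}\sigma^{4}}$ times $\eta^{3}\Tr\bb{\Sigma}\bb{\lambda_{1}^{2}\alpha_{n-1}+\lambda_{2}^{2}\beta_{n-1}}$ and $\eta^{4}\Tr\bb{\Sigma}^{2}\bb{\lambda_{1}^{2}\alpha_{n-1}+\lambda_{2}^{2}\beta_{n-1}}$ respectively.

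To finish, I would discharge the extra powers of $\eta$ with the step-size schedule assumed in the statement: $\eta L^{2}\sigma^{2}\lambda_{1} \leq \tfrac{1}{4}$ turns an extra $\eta\lambda_{1}$ or $\eta\lambda_{2}$ into a constant, and $\eta L^{2}\sigma^{2}\Tr\bb{\Sigma} \leq \tfrac{1}{4}\log\bb{n}$ turns an extra $\eta\Tr\bb{\Sigma}$ into a $\log\bb{n}$, so $4\eta^{3}S_{3}$ and $\eta^{4}S_{4}$ become $\eta^{2}$-order terms carrying one and two extra logarithms — which is why the $\alpha_{n-1}$ coefficient in the statement has $\log^{2}\bb{n}$. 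The spectral-gap assumption, entering via Lemma~\ref{lemma:learning_rate_set} (so $\lambda_{1}/\lambda_{2} \geq \widetilde{\Omega}\bb{1}$ and hence $\log\bb{n}\lambda_{2} \leq \lambda_{1} \leq \Tr\bb{\Sigma}$), then lets me absorb each $\lambda_{2}^{2}\beta_{n-1}$ into $\log\bb{n}\lambda_{2}\Tr\bb{\Sigma}\beta_{n-1}$ and each $\lambda_{1}\lambda_{2}\sqrt{\alpha_{n-1}\beta_{n-1}}$ into $\log\bb{n}\lambda_{1}\Tr\bb{\Sigma}\sqrt{\alpha_{n-1}\beta_{n-1}}$; using $L^{2}\sigma^{2} \leq \log\bb{n}$ for large $n$ to match the powers of $L,\sigma$ and collecting terms gives the claimed recursion, which is what Lemma~\ref{lemma:fourth_moment_recursions} consumes. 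The main obstacle is not any individual inequality but the bookkeeping across the four error terms: keeping each one controlled while matching the precise constants ($100$ and $600$) and the precise powers of $\log\bb{n}$ demanded by the statement, and never inflating a moment beyond $\alpha_{n-1}$ and $\beta_{n-1}$. The one conceptual subtlety is that sandwiching $A_{n}$ between two copies of $\vp$ replaces an eigenvalue by $\Tr\bb{\vp^{T}\Sigma\vp}$, so that $\Tr\bb{\Sigma}$ — not a single eigenvalue — governs the $\beta$-recursion, and the spectral gap is exactly what makes the comparison to the target terms close.
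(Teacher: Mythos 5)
Your proposal mirrors the paper's proof essentially step for step: the same expansion of $B_n=(I+\eta A_n)B_{n-1}$, the same splitting into the leading $\beta_{n-1}(1+4\eta\lambda_2)$ term plus four error terms handled by conditional Cauchy--Schwarz/Hölder and Lemma~\ref{lemma:subgaussian_quadratic_form}, and the same final discharge of excess powers of $\eta$ via the step-size conditions; the only divergence is your slightly different Cauchy--Schwarz factorization of $X_n^T C\Pi X_n$ for the $S_3$ term and the Hölder exponents $(2,6,3)$ you use to peel it, whereas the paper first drops to $\|U^TB_{n-1}^TX_n\|_2^3\|v_\perp^TX_n\|_2^3\|U^TB_{n-1}^Tv_\perp\|_2$ and then applies conditional Cauchy--Schwarz with $p=3$ — both routes give the same order of terms. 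You also correctly flag the two places a reader might miss: that $\Tr(\Sigma)$ (rather than a single eigenvalue) enters the $\beta$-recursion through the $\vp^TX_n$ quadratic forms, and that the effective-rank/spectral-gap condition from Lemma~\ref{lemma:learning_rate_set} is what lets stray $\lambda_2^2\beta_{n-1}$ and $\lambda_1\lambda_2\sqrt{\alpha_{n-1}\beta_{n-1}}$ factors be folded into the stated coefficients.
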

\begin{proof}
    Let $A_{n} := X_{n}X_{n}^{T}$ and $\mathcal{F}_{n}$ denote the filtration for observations $i \in [n]$. 

    Let \bas{
        a_{n-1} &:= \Tr\bb{\vp^{T}B_{n-1}UU^{T}B_{n-1}^{T}\vp}, b_{n-1} := \Tr\bb{\vp^{T} A_{n}B_{n-1}UU^{T}B_{n-1}^{T}\vp},\\
        c_{n-1} &:= \Tr\bb{\vp^{T}A_{n}B_{n-1}UU^{T}B_{n-1}^{T}A_{n}\vp}
    }
    Then, 
    \ba{ & \beta_{n} \notag \\
         &= \E{\Tr\bb{\vp^{T}B_{n}UU^{T}B_{n}^{T}\vp}^{2}} \notag \\
                 &= \E{\bb{a_{n-1} + 2\eta b_{n-1} + \eta^{2}c_{n-1} }^{2}} \notag \\
                 &\leq \beta_{n-1}\bb{1 + 4\eta \lambda_{2}} + 4\eta^{2}\underbrace{\E{\Tr\bb{\vp^{T} A_{n}B_{n-1}UU^{T}B_{n-1}^{T}\vp}^{2}}}_{T_{1}} \notag \\
                 & \;\;\;\; + 2\eta^{2}\underbrace{\E{\Tr\bb{\vp^{T}B_{n-1}UU^{T}B_{n-1}^{T}\vp}\Tr\bb{\vp^{T}A_{n}B_{n-1}UU^{T}B_{n-1}^{T}A_{n}\vp}}}_{T_{2}} \notag \\
                 & \;\;\;\; + 4\eta^{3}\underbrace{\E{\Tr\bb{\vp^{T} A_{n}B_{n-1}UU^{T}B_{n-1}^{T}\vp}\Tr\bb{\vp^{T}A_{n}B_{n-1}UU^{T}B_{n-1}^{T}A_{n}\vp}}}_{T_{3}} \notag \\
                 & \;\;\;\; + \eta^{4}\underbrace{\E{\Tr\bb{\vp^{T}A_{n}B_{n-1}UU^{T}B_{n-1}^{T}A_{n}\vp}^{2}}}_{T_{4}} \label{eq:beta_recursion_fourth_moment}
    }
    For $T_{1}$, 
    \bas{
        T_{1} &= \E{\Tr\bb{\vp^{T} A_{n}B_{n-1}UU^{T}B_{n-1}^{T}\vp}^{2}} \\
        &= \E{\bb{X_{n}^{T}B_{n-1}UU^{T}B_{n-1}^{T}\vp\vp^{T} X_{n}}^{2}} \\
        &\leq \E{\norm{\vp^{T} B_{n-1}UU^{T}B_{n-1}^{T} X_{n}}_{2}^{2}\norm{\vp^{T} X_{n}}_{2}^{2}} \\
        &= \E{\E{\norm{\vp^{T} B_{n-1}UU^{T}B_{n-1}^{T} X_{n}}_{2}^{2}\norm{\vp^{T} X_{n}}_{2}^{2}\bigg|\mathcal{F}_{n-1}}} \\
        &\leq \E{\sqrt{\E{\norm{\vp^{T} B_{n-1}UU^{T}B_{n-1}^{T} X_{n}}_{2}^{2}\bigg|\mathcal{F}_{n-1}}\E{\norm{\vp^{T} X_{n}}_{2}^{2}\bigg|\mathcal{F}_{n-1}}}} \\
        &\leq  \bb{2L^2\sigma^{2}}^{2}\E{\Tr\bb{\vp^{T}B_{n-1}UU^{T}B_{n-1}^{T} \Sigma B_{n-1}UU^{T}B_{n-1}^{T} \vp}\Tr\bb{\vp^{T}\Sigma \vp}}, \text{ using Lemma } \ref{lemma:subgaussian_quadratic_form} \text{ with } p = 2  \\
        &\leq  \bb{2L^2\sigma^{2}}^{2}\E{\Tr\bb{\vp^{T}B_{n-1}UU^{T}B_{n-1}^{T} \bb{\lambda_{1}v_{1}v_{1}^{T} + \vp\Lambda_{2}\vp^{T}} B_{n-1}UU^{T}B_{n-1}^{T} \vp}\Tr\bb{\vp^{T}\Sigma \vp}} \\
        &\leq \bb{2L^2\sigma^{2}}^{2}\Tr\bb{\Lambda_{2}}\bb{\lambda_{2}\beta_{n-1} + \lambda_{1}\sqrt{\alpha_{n-1}\beta_{n-1}}}
    }
    For $T_{2}$,
    \bas{
        T_{2} &= \E{\Tr\bb{\vp^{T}B_{n-1}UU^{T}B_{n-1}^{T}\vp}\Tr\bb{\vp^{T}A_{n}B_{n-1}UU^{T}B_{n-1}^{T}A_{n}\vp}} \notag \\
        &= \E{\Tr\bb{\vp^{T}B_{n-1}UU^{T}B_{n-1}^{T}\vp}\bb{X_{n}^{T}B_{n-1}UU^{T}B_{n-1}^{T}X_{n}}\bb{X_{n}^{T}\vp\vp^{T}X_{n}}} \notag \\
        &= \E{\Tr\bb{\vp^{T}B_{n-1}UU^{T}B_{n-1}^{T}\vp}\E{\bb{X_{n}^{T}B_{n-1}UU^{T}B_{n-1}^{T}X_{n}}\bb{X_{n}^{T}\vp\vp^{T}X_{n}}\bigg|\mathcal{F}_{n-1}}} \notag \\
        &\leq \E{\Tr\bb{\vp^{T}B_{n-1}UU^{T}B_{n-1}^{T}\vp}\sqrt{\E{\bb{X_{n}^{T}B_{n-1}UU^{T}B_{n-1}^{T}X_{n}}^{2}\bigg|\mathcal{F}_{n-1}}\E{\bb{X_{n}^{T}\vp\vp^{T}X_{n}}^{2}\bigg|\mathcal{F}_{n-1}}}} \notag \\
        &\stackrel{(i)}{\leq} \bb{2L^2\sigma^{2}}^{2}\E{\Tr\bb{\vp^{T}B_{n-1}UU^{T}B_{n-1}^{T}\vp}\Tr\bb{ U^{T}B_{n-1}^{T}\Sigma B_{n-1}U}\Tr\bb{\vp^{T}\Sigma \vp}}, \\
        &= \bb{2L^2\sigma^{2}}^{2}\Tr\bb{\Lambda_{2}}\E{\Tr\bb{\vp^{T}B_{n-1}UU^{T}B_{n-1}^{T}\vp}\Tr\bb{U^{T}B_{n-1}^{T}\bb{\lambda_{1}v_{1}v_{1}^{T} + \vp\Lambda_{2}\vp^{T}}B_{n-1}U }} \\
        &\leq \bb{2L^2\sigma^{2}}^{2}\Tr\bb{\Lambda_{2}}\bb{\lambda_{2}\beta_{n-1} + \lambda_{1}\sqrt{\alpha_{n-1}\beta_{n-1}}}
    }
    where in $(i)$ we used $\text{ Lemma } \ref{lemma:subgaussian_quadratic_form} \text{ with } p = 2$.
    For $T_{3}$, 
    \bas{
        T_{3} &= \E{\Tr\bb{\vp^{T} A_{n}B_{n-1}UU^{T}B_{n-1}^{T}\vp}\Tr\bb{\vp^{T}A_{n}B_{n-1}UU^{T}B_{n-1}^{T}A_{n}\vp}} \\
        &= \E{\bb{X_{n}^{T}B_{n-1}UU^{T}B_{n-1}^{T}\vp\vp^{T} X_{n}}\bb{X_{n}^{T}B_{n-1}UU^{T}B_{n-1}^{T}X_{n}}\bb{X_{n}^{T}\vp\vp^{T}X_{n}}} \\
        &\leq \E{\norm{U^{T}B_{n-1}^{T}X_{n}}_{2}^{3}\norm{\vp^{T}X_{n}}_{2}^{3}\norm{U^{T}B_{n-1}^{T}\vp}_{2}} \\
        &= \E{\E{\norm{U^{T}B_{n-1}^{T}X_{n}}_{2}^{3}\norm{\vp^{T}X_{n}}_{2}^{3}\bigg|\mathcal{F}_{n-1}}\norm{U^{T}B_{n-1}^{T}\vp}_{2}} \\
        &\leq \E{ \sqrt{\E{\norm{U^{T}B_{n-1}^{T}X_{n}}_{2}^{6}\bigg|\mathcal{F}_{n-1}}\E{\norm{\vp^{T}X_{n}}_{2}^{6}\bigg|\mathcal{F}_{n-1}}}\norm{U^{T}B_{n-1}^{T}\vp}_{2}} \\ 
        &= \E{\sqrt{\E{\bb{X_{n}^{T}B_{n-1}UU^{T}B_{n-1}^{T}X_{n}}^{3}\bigg|\mathcal{F}_{n-1}}\E{\bb{X_{n}^{T}\vp\vp^{T}X_{n}}^{3}\bigg|\mathcal{F}_{n-1}}}\norm{U^{T}B_{n-1}^{T}\vp}_{2}} \\
        &\leq \bb{3L^2\sigma^{2}}^{3}\E{\Tr\bb{ U^{T}B_{n-1}^{T} \Sigma B_{n-1}U}^{\frac{3}{2}}\Tr\bb{\vp^{T}\Sigma \vp}^{\frac{3}{2}}\norm{U^{T}B_{n-1}^{T}\vp}_{2}}, \text{ using Lemma } \ref{lemma:subgaussian_quadratic_form} \text{ with } p = 3 \\
        &= \bb{3L^2\sigma^{2}}^{3}\Tr\bb{\Lambda_{2}}^{\frac{3}{2}}\E{\Tr\bb{ U^{T}B_{n-1}^{T} \Sigma B_{n-1}U}^{\frac{3}{2}}\norm{U^{T}B_{n-1}^{T}\vp}_{2}} \\
        &\leq 2\bb{3L^2\sigma^{2}}^{3}\Tr\bb{\Lambda_{2}}^{\frac{3}{2}}\E{ \bb{\lambda_{1}^{\frac{3}{2}}\bb{v_{1}^{T}B_{n-1}UU^{T}B_{n-1}^{T}v_{1}}^{\frac{3}{2}} + \lambda_{2}^{\frac{3}{2}}\Tr\bb{\vp^{T}B_{n-1}UU^{T}B_{n-1}^{T}\vp}^{\frac{3}{2}}}\norm{U^{T}B_{n-1}^{T}\vp}_{2} } \\
        &\leq 2\bb{3L^2\sigma^{2}}^{3}\lambda_{1}^{\frac{3}{2}}\Tr\bb{\Lambda_{2}}^{\frac{3}{2}}\E{\bb{v_{1}^{T}B_{n-1}UU^{T}B_{n-1}^{T}v_{1}}^{\frac{3}{2}}\Tr\bb{\vp^{T}B_{n-1}UU^{T}B_{n-1}^{T}\vp}^{\frac{1}{2}}} \\
        & \;\;\;\; + 2\bb{3L^2\sigma^{2}}^{3}\lambda_{2}^{\frac{3}{2}}\Tr\bb{\Lambda_{2}}^{\frac{3}{2}}\E{\Tr\bb{\vp^{T}B_{n-1}UU^{T}B_{n-1}^{T}\vp}^{2}}
        }
        \bas{
        &= 2\bb{3L^2\sigma^{2}}^{3}\lambda_{1}^{\frac{3}{2}}\Tr\bb{\Lambda_{2}}^{\frac{3}{2}}\E{\bb{v_{1}^{T}B_{n-1}UU^{T}B_{n-1}^{T}v_{1}}^{\frac{3}{2}}\Tr\bb{\vp^{T}B_{n-1}UU^{T}B_{n-1}^{T}\vp}^{\frac{1}{2}}} \\
        & \;\;\;\; + 2\bb{3L^2\sigma^{2}}^{3}\lambda_{2}^{\frac{3}{2}}\Tr\bb{\Lambda_{2}}^{\frac{3}{2}}\E{\Tr\bb{\vp^{T}B_{n-1}UU^{T}B_{n-1}^{T}\vp}^{2}} \\
        &= 2\bb{3L^2\sigma^{2}}^{3}\lambda_{1}^{\frac{3}{2}}\Tr\bb{\Lambda_{2}}^{\frac{3}{2}}\E{\sqrt{\bb{v_{1}^{T}B_{n-1}UU^{T}B_{n-1}^{T}v_{1}}\Tr\bb{\vp^{T}B_{n-1}UU^{T}B_{n-1}^{T}\vp}}\bb{v_{1}^{T}B_{n-1}UU^{T}B_{n-1}^{T}v_{1}} } \\
        & \;\;\;\; + 2\bb{3L^2\sigma^{2}}^{3}\lambda_{2}^{\frac{3}{2}}\Tr\bb{\Lambda_{2}}^{\frac{3}{2}}\E{\Tr\bb{\vp^{T}B_{n-1}UU^{T}B_{n-1}^{T}\vp}^{2}} \\
        &\leq 2\bb{3L^2\sigma^{2}}^{2}\lambda_{1}\Tr\bb{\Lambda_{2}}^{2}\sqrt{\alpha_{n-1}\beta_{n-1}} + 2\bb{3L^2\sigma^{2}}\lambda_{1}^{2}\Tr\bb{\Lambda_{2}}\alpha_{n-1} + 2\bb{3L^2\sigma^{2}}^{3}\lambda_{2}^{\frac{3}{2}}\Tr\bb{\Lambda_{2}}^{\frac{3}{2}}\beta_{n-1}
    }
    For $T_{4}$, 
    \bas{
        T_{4} &= \E{\Tr\bb{\vp^{T}A_{n}B_{n-1}UU^{T}B_{n-1}^{T}A_{n}\vp}^{2}} \\
        &= \E{\bb{X_{n}^{T}B_{n-1}UU^{T}B_{n-1}^{T}X_{n}}^{2}\bb{X_{n}^{T}\vp\vp^{T}X_{n}}^{2}} \\
        &= \E{\E{\bb{X_{n}^{T}B_{n-1}UU^{T}B_{n-1}^{T}X_{n}}^{2}\bb{X_{n}^{T}\vp\vp^{T}X_{n}}^{2}\bigg|\mathcal{F}_{n-1}}} \\
        &\leq \E{\sqrt{\E{\bb{X_{n}^{T}B_{n-1}UU^{T}B_{n-1}^{T}X_{n}}^{4}\bigg|\mathcal{F}_{n-1}}\E{\bb{X_{n}^{T}\vp\vp^{T}X_{n}}^{4}\bigg|\mathcal{F}_{n-1}}}} \\
        &\leq \bb{4L^2\sigma^{2}}^{4}\E{\Tr\bb{U^{T}B_{n-1}^{T}\Sigma B_{n-1}U}^{2}\Tr\bb{\vp^{T}\Sigma \vp}^{2}} \\
        &\leq \bb{4L^2\sigma^{2}}^{4}\Tr\bb{\Lambda_{2}}^{2}\E{\Tr\bb{U^{T}B_{n-1}^{T}\bb{\lambda_{1}v_{1}v_{1}^{T} + \vp\Lambda_{2}\vp^{T}}} B_{n-1}U}^{2} \\\
        &\leq 2\bb{4L^2\sigma^{2}}^{4}\Tr\bb{\Lambda_{2}}^{2}\bb{\lambda_{1}^{2}\alpha_{n-1} + \lambda_{2}^{2}\beta_{n-1}}
    }
    Substituting in Eq~\ref{eq:beta_recursion_fourth_moment} along with using $\eta L^2\sigma^{2}\leq \frac{1}{4}\min\left\{\frac{1}{\lambda_{1}}, \frac{1}{\Tr\bb{\Sigma}}, \frac{1}{\sqrt{\lambda_{1}\Tr\bb{\Sigma}}}\right\}$ we have, 
    \bas{
        & \beta_{n} \leq \bb{1 + 4\eta\lambda_{2} + 100\eta^{2}L^4\sigma^{4}\lambda_{2}\Tr\bb{\Sigma}}\beta_{n-1} + 100\eta^{2}L^2\sigma^{2}\lambda_{1}\Tr\bb{\Sigma}\sqrt{\alpha_{n-1}\beta_{n-1}} \\
        & \;\;\;\;\; + 600\eta^{2}L^4\sigma^{4}\lambda_{1}^{2}\alpha_{n-1}
    }
    Hence proved.
\end{proof}

\begin{lemma}
    \label{lemma:random_gaussian_second_moment} Let $U \in \mathbb{R}^{d \times m}$ and $u_{0} \sim \mathcal{N}\bb{0,I_{d}}$, then for all $n \geq 0$ we have
    \bas{
        \E{u_{0}^{T}B_{n}^{T}UU^{T}B_{n}u_{0}} = \E{v_{1}^{T}B_{n}^{T}UU^{T}B_{n}v_{1}} + \E{\Tr\bb{\vp^{T}B_{n}^{T}UU^{T}B_{n}\vp}}
    }
\end{lemma}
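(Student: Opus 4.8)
The plan is to condition on the sample $X_1,\dots,X_n$ and use that the Gaussian initialization $u_0 \sim \mathcal{N}(0,I_d)$ is drawn independently of the data. The matrix $B_n$ (Eq~\ref{definition:Bn}) is a deterministic function of $X_1,\dots,X_n$, so conditionally on $B_n$ the quantity of interest is a Gaussian quadratic form $u_0^T M u_0$ with $M := B_n^T U U^T B_n = (U^T B_n)^T(U^T B_n)$ a fixed positive semidefinite matrix. By the expectation computation in Lemma~\ref{lemma:gaussian_anti_concentration} (equivalently, $\mathbb{E}[u_0 u_0^T] = I_d$), this gives
\[
\mathbb{E}\!\left[u_0^T B_n^T U U^T B_n u_0 \;\middle|\; B_n\right] = \Tr\!\left(B_n^T U U^T B_n\right).
\]

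The second step is to split this trace along the eigenbasis of $\Sigma$. Since $v_1, v_2, \dots, v_d$ form an orthonormal basis of $\mathbb{R}^d$, we have the resolution of the identity $I_d = v_1 v_1^T + \vp \vp^T$; inserting this and using cyclicity of the trace on each of the two pieces,
\[
\Tr\!\left(B_n^T U U^T B_n\right) = v_1^T B_n^T U U^T B_n v_1 + \Tr\!\left(\vp^T B_n^T U U^T B_n \vp\right).
\]
Taking the expectation over $X_1,\dots,X_n$ on both sides and applying the tower property, $\E{u_0^T B_n^T U U^T B_n u_0} = \E{\,\mathbb{E}[u_0^T B_n^T U U^T B_n u_0 \mid B_n]\,}$, then yields the claimed identity.

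I do not anticipate any substantive obstacle: the argument uses nothing beyond independence of $u_0$ from the data, linearity, and cyclicity of the trace, and holds for an arbitrary fixed $d\times m$ matrix $U$. The only point worth stating carefully is that the two-term structure on the right-hand side is exactly the image of the decomposition $I_d = v_1 v_1^T + \vp\vp^T$ under the linear map $W \mapsto \Tr\!\left(W\, B_n^T U U^T B_n\right)$, which is why the result cleanly separates the contribution of the top eigendirection from that of its orthogonal complement.
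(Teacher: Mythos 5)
Your proof is correct and takes essentially the same approach as the paper: condition on $B_n$ to reduce to the Gaussian quadratic form $\Tr(B_n^T UU^T B_n)$ via $\E{u_0u_0^T}=I$, insert the resolution of the identity $I_d = v_1v_1^T + \vp\vp^T$, and apply cyclicity of the trace together with the tower property.
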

\begin{proof}
    \bas{
        \E{u_{0}^{T}B_{n}^{T}UU^{T}B_{n}u_{0}} &= \E{\Tr\bb{u_{0}^{T}B_{n}^{T}UU^{T}B_{n}u_{0}}} \\
        % &= \E{\E{\Tr\bb{u_{0}^{T}B_{n}^{T}UU^{T}B_{n}u_{0}}|B_{n}}} \\
        % &= \E{\E{\Tr\bb{u_{0}^{T}B_{n}^{T}UU^{T}B_{n}u_{0}}|B_{n}}} \\
        &= \E{\E{\Tr\bb{U^{T}B_{n}u_{0}u_{0}^{T}B_{n}^{T}U}|B_{n}}} \\
        &= \E{\Tr\bb{U^{T}B_{n}\E{u_{0}u_{0}^{T}|B_{n}}B_{n}^{T}U}} \\
        &= \E{\Tr\bb{U^{T}B_{n}\E{u_{0}u_{0}^{T}}B_{n}^{T}U}} \\
        &= \E{\Tr\bb{U^{T}B_{n}B_{n}^{T}U}} \\
        &= \E{\Tr\bb{U^{T}B_{n}v_{1}v_{1}^{T}B_{n}^{T}U}} + \E{\Tr\bb{U^{T}B_{n}\vp\vp^{T}B_{n}^{T}U}} \\
        &= \E{v_{1}^{T}B_{n}^{T}UU^{T}B_{n}v_{1}} + \E{\Tr\bb{\vp^{T}B_{n}^{T}UU^{T}B_{n}\vp}}
    }
\end{proof}

\begin{lemma}
    \label{lemma:random_gaussian_fourth_moment} Let $U \in \mathbb{R}^{d \times m}$ and $u_{0} \sim \mathcal{N}\bb{0,I_{d}}$, then for all $n \geq 0$ we have
    \bas{
        \E{\bb{u_{0}^{T}B_{n}^{T}UU^{T}B_{n}u_{0}}^{2}} \leq 6\E{\bb{v_{1}^{T}B_{n}^{T}UU^{T}B_{n}v_{1}}^{2}} + 6\E{\Tr\bb{\vp^{T} B_{n}^{T}UU^{T}B_{n}\vp}^{2}}
    }
\end{lemma}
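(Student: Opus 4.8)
The plan is to condition on $B_n$, exploit the independence of the Gaussian initialization $u_0 \sim \mathcal{N}(0,I_d)$, and reduce everything to the elementary second-moment formula for Gaussian quadratic forms already recorded in Lemma~\ref{lemma:gaussian_anti_concentration}. Concretely, set $M := B_n^{T}UU^{T}B_n$, which is PSD. Since $u_0 \indep B_n$, we have $\E{(u_0^{T}Mu_0)^2 \mid B_n} = 2\Tr(M^2) + \Tr(M)^2$ by the computation in the proof of Lemma~\ref{lemma:gaussian_anti_concentration}. For a PSD matrix $\Tr(M^2) \leq \Tr(M)^2$ (the eigenvalues are nonnegative, so $\sum_i \mu_i^2 \leq (\sum_i \mu_i)^2$), hence $\E{(u_0^{T}Mu_0)^2 \mid B_n} \leq 3\,\Tr(M)^2$.

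Next I would decompose the trace using the orthogonal resolution $I_d = v_1v_1^{T} + \vp\vp^{T}$: writing $a := v_1^{T}B_n^{T}UU^{T}B_nv_1 \geq 0$ and $b := \Tr(\vp^{T}B_n^{T}UU^{T}B_n\vp) \geq 0$, we get $\Tr(M) = \Tr(U^{T}B_nB_n^{T}U) = a + b$. Then $(a+b)^2 \leq 2a^2 + 2b^2$, so $\E{(u_0^{T}Mu_0)^2 \mid B_n} \leq 3(a+b)^2 \leq 6a^2 + 6b^2$. Taking expectation over $B_n$ and using $\E{\E{\cdot\mid B_n}} = \E{\cdot}$ yields exactly
\bas{
\E{(u_0^{T}B_n^{T}UU^{T}B_nu_0)^2} \leq 6\,\E{(v_1^{T}B_n^{T}UU^{T}B_nv_1)^2} + 6\,\E{\Tr(\vp^{T}B_n^{T}UU^{T}B_n\vp)^2}.
}

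There is no real obstacle here — the argument is a short routine calculation, mirroring the structure of Lemma~\ref{lemma:random_gaussian_second_moment}. The only points that need a word of care are (i) citing the Gaussian fourth-moment identity from Lemma~\ref{lemma:gaussian_anti_concentration} correctly (it gives $\E{(z^{T}Az)^2} = 2\Tr(A^2) + \Tr(A)^2$), (ii) the PSD inequality $\Tr(M^2)\le\Tr(M)^2$, and (iii) the elementary bound $(a+b)^2 \le 2a^2+2b^2$ applied to the two nonnegative trace terms; combining these is what produces the constant $6$.
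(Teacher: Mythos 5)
Your proof is correct and takes essentially the same route as the paper's: both diagonalize $M=B_n^{T}UU^{T}B_n$ (you do so implicitly via the quadratic-form identity, the paper writes out $\sum_i\lambda_i y_i^2$ explicitly), arrive at $\E{(u_0^{T}Mu_0)^2\mid B_n}\le 3\Tr(M)^2$, split $\Tr(M)$ along $v_1$ and $V_\perp$, and finish with $(a+b)^2\le 2a^2+2b^2$. The only cosmetic difference is that you cite the identity $\E{(z^{T}Az)^2}=2\Tr(A^2)+\Tr(A)^2$ from the proof of Lemma~\ref{lemma:gaussian_anti_concentration} and then use $\Tr(M^2)\le\Tr(M)^2$ for PSD $M$, whereas the paper re-derives the same bound directly from $3\sum_i\lambda_i^2+\sum_{i\neq j}\lambda_i\lambda_j\le 3(\sum_i\lambda_i)^2$; these are the same inequality.
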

\begin{proof}
Let the eigendecomposition of $B_{n}^{T}UU^{T}B_{n}$ for a fixed $B_{n}$ be given as $P\Lambda P^{T}$ such that $PP^{T} = P^{T}P = I$ and $\Lambda \succeq 0$. Denote $u_{0} \equiv u$ and $y := P^{T}u$. Therefore, 
\ba{
    \E{\bb{u_{0}^{T}B_{n}^{T}UU^{T}B_{n}u_{0}}^{2}} &= \E{\bb{u^{T}P\Lambda P^{T}u}^{2}} \notag \\
    &= \E{\bb{\sum_{i=1}^{d}\lambda_{i}y_{i}^{2}}^{2}} \notag \\
    &= \sum_{i=1}^{d}\lambda_{i}^{2}\E{y_{i}^{4}} + \sum_{i \neq j}\lambda_{i}\lambda_{j}\E{y_{i}^{2}y_{j}^{2}} \label{eq:u0_fourth_moment_expansion}
}
Note that $\E{y} = P^{T}\E{u} = 0, \; \E{yy^{T}} = P^{T}\E{uu^{T}}P = P^{T}P = I$. Therefore, $y \sim \mathcal{N}\bb{0,I_{d}}$. Therefore, $\E{y_{i}^{4}} = 3$ and $\E{y_{i}^{2}y_{j}^{2}} = \E{y_{i}^{2}}\E{y_{j}^{2}} = 1$. Therefore, 
\ba{
    \E{\bb{u_{0}^{T}B_{n}^{T}UU^{T}B_{n}u_{0}}^{2}} &= 3\sum_{i=1}^{d}\lambda_{i}^{2} + \sum_{i \neq j}\lambda_{i}\lambda_{j}
}
Substituting in Eq~\ref{eq:u0_fourth_moment_expansion}, we have
\bas{
    \E{\bb{u_{0}^{T}B_{n}^{T}UU^{T}B_{n}u_{0}}^{2}} &= 3\sum_{i=1}^{d}\lambda_{i}^{2} + \sum_{i \neq j}\lambda_{i}\lambda_{j} \\
    &\leq 3\E{\bb{\sum_{i=1}^{d}\lambda_{i}}^{2}} \\
    &= 3\E{\Tr\bb{B_{n}^{T}UU^{T}B_{n}}^{2}} \\
    &= 3\E{\bb{\Tr\bb{v_{1}^{T}B_{n}^{T}UU^{T}B_{n}v_{1}} + \Tr\bb{\vp^{T} B_{n}^{T}UU^{T}B_{n}\vp}}^{2}} \\
    &\leq 6\bb{\E{\bb{v_{1}^{T}B_{n}^{T}UU^{T}B_{n}v_{1}}^{2}} + \E{\Tr\bb{\vp^{T} B_{n}^{T}UU^{T}B_{n}\vp}^{2}}}
}
\end{proof}

\section{Proofs of entrywise deviation of Oja's vector}
\label{appendix:proofs_entrywise_oja_vector_bounds}
 We first state some useful results here. Let $a_{0} = v_{1}\bb{i}^{2}, b_{0} = \Tr\bb{\vp^{T}e_{i}e_{i}^{T}\vp} = 1 - v_{1}\bb{i}^{2}$. Let the learning rate, $\eta$, be set according to Lemma~\ref{lemma:learning_rate_set}. Note that $\bb{1+x} \leq \exp\bb{x},\; \forall x \in \mathbb{R}$. From Lemma \ref{lemma:second_moment_recursions}, we have
\ba{
    \E{\bb{v_{1}^{T}B_{n}e_{i}}^{2}} &\leq \bb{1 + 2\eta\lambda_{1} + 8L^4\sigma^{4} \eta^{2}\lambda_{1}^{2}}^{n}\bbb{a_{0} + \eta\lambda_{1}\bb{\frac{4\lambda_{1}}{\bb{\lambda_{1}-\lambda_{2}}}}\bb{b_{0} + a_{0}}} \notag \\
    &\leq \exp\bb{2\eta n\lambda_{1} + 8L^4\sigma^{4} \eta^{2} n\lambda_{1}^{2}}\bb{a_{0} + \eta\lambda_{1}\bb{\frac{4\lambda_{1}}{\bb{\lambda_{1}-\lambda_{2}}}}\bb{b_{0} + a_{0}}}, \label{eq:v1_i_second_moment} \\
    \E{\Tr\bb{\vp^{T}B_{n}e_{i}e_{i}^{T}B_{n}^{T}\vp}} &\leq b_{0}\bb{1 + 2\eta\lambda_{2} + 4L^4\sigma^{4}\eta^2\lambda_{2}\Tr\bb{\Sigma} + 4L^4\sigma^{4} \eta^{2}\lambda_{1}^{2}}^{n} \notag \\ 
    & + \bbb{\eta\lambda_{1}\bb{\frac{4\lambda_{1}}{\bb{\lambda_{1}-\lambda_{2}}}}\bb{a_{0}\frac{\Tr\bb{\Sigma}}{\lambda_{1}} + b_{0}}}\bb{1 + 2\eta\lambda_{1} + 8L^4\sigma^{4} \eta^{2}\lambda_{1}^{2}}^{n} \notag \\
    &\leq b_{0}\exp\bb{2\eta n\lambda_{2} + 4L^4\sigma^{4}\eta^2 n\lambda_{2}\Tr\bb{\Sigma} + 4L^4\sigma^{4} \eta^{2} n\lambda_{1}^{2}} \notag \\ 
    & + \bbb{\eta\lambda_{1}\bb{\frac{4\lambda_{1}}{\bb{\lambda_{1}-\lambda_{2}}}}\bb{a_{0}\frac{\Tr\bb{\Sigma}}{\lambda_{1}} + b_{0}}}\exp\bb{2\eta n\lambda_{1} + 8L^4\sigma^{4} \eta^{2} n\lambda_{1}^{2}} \label{eq:vperp_i_second_moment}
}
Similarly, from Lemma \ref{lemma:fourth_moment_recursions} and using $\bb{a+b}^{2} \leq 2a^{2} + 2b^{2}$, we have
\ba{
    &\E{\bb{v_{1}^{T}B_{n}e_{i}}^{4}} \leq \bb{1 + 2\eta\lambda_{1} + 100L^4\sigma^{4} \eta^{2}\lambda_{1}^{2}}^{2n}\bbb{a_{0} + \eta\lambda_{1}\bb{\frac{4\lambda_{1}}{\bb{\lambda_{1}-\lambda_{2}}}}\bb{b_{0} + a_{0}}}^{2} \notag \\
    & \;\;\;\;\;\;\;\;\;\;\;\;\;\;\;\;\;\;\;\;\;\;\;\;\; \leq \exp\bb{4\eta n\lambda_{1} + 200L^4\sigma^{4} \eta^{2} n\lambda_{1}^{2}}\bbb{a_{0} + \eta\lambda_{1}\bb{\frac{4\lambda_{1}}{\bb{\lambda_{1}-\lambda_{2}}}}\bb{b_{0} + a_{0}}}^{2}, \label{eq:v1_i_fourth_moment}
    }
\ba{
    & \E{\Tr\bb{\vp^{T}B_{n}e_{i}e_{i}^{T}B_{n}^{T}\vp}^{2}} \notag \\
    &\;\;\;\;\; \leq 2b_{0}^{2}\bb{1 + 2\eta\lambda_{2} + 50\eta^{2}L^4\sigma^{4}\lambda_{2}\Tr\bb{\Sigma} + 50L^4\sigma^{4} \eta^{2}\lambda_{1}^{2}}^{2n} \notag \\ 
    &\;\;\;\;\; + 2\bbb{\eta\lambda_{1}\bb{\frac{2\lambda_{1}}{\theta\bb{\lambda_{1}-\lambda_{2}}}}\bb{a_{0}\frac{\Tr\bb{\Sigma}}{\lambda_{1}} + b_{0}\bb{\frac{1-\theta}{\theta}}}}^{2}\bb{1 + 2\eta\lambda_{1} + 100L^4\sigma^{4} \eta^{2}\lambda_{1}^{2}}^{2n} \notag \\
    &\;\;\;\;\;\leq 2b_{0}^{2}\exp\bb{4\eta n\lambda_{2} + 100\eta^{2} nL^4\sigma^{4}\lambda_{2}\Tr\bb{\Sigma} + 100L^4\sigma^{4} \eta^{2} n\lambda_{1}^{2}} \notag \\ 
    &\;\;\;\;\; + 2\bbb{\eta\lambda_{1}\bb{\frac{4\lambda_{1}}{\bb{\lambda_{1}-\lambda_{2}}}}\bb{a_{0}\frac{\Tr\bb{\Sigma}}{\lambda_{1}} + b_{0}}}^{2}\exp\bb{4\eta n\lambda_{1} + 200L^4\sigma^{4} \eta^{2} n\lambda_{1}^{2}} \label{eq:vperp_i_fourth_moment}
}
Finally, noting that $\bb{1+x} \geq \exp\bb{x-x^{2}} \forall x \geq 0$, we have
\ba{
    \E{\bb{v_{1}^{T}B_{n}e_{i}}^{2}} \geq \bb{\E{v_{1}^{T}B_{n}e_{i}}}^{2} = v_{1}\bb{i}^{2}\bb{1+\eta\lambda_{1}}^{2n} \geq v_{1}\bb{i}^{2}\exp\bb{2\eta n\lambda_{1} - 2\eta^{2}n\lambda_{1}^{2}} \label{eq:v1_i_second_moment_lower_bound}
}
Now we are ready to provide proofs of Lemmas~\ref{lemma:i_in_S_bound} and \ref{lemma:i_notin_S_bound}.

\iinSbound*
% \begin{lemma}
%     \label{appendix_lemma:i_in_S_bound} Fix a $\delta \in \bb{0,1}$. Define the event $\mathcal{G} := \left\{ |v_{1}^{T}y_{0}| \geq \frac{\delta}{\sqrt{e}} \right\}$ and threshold \\ $\tau_{n}:= \frac{\delta}{\sqrt{2e}}\min_{i \in S}|v_{1}\bb{i}|\bb{1+\eta\lambda_{1}}^{n}$. Let the learning rate be set according to Lemma~\ref{lemma:learning_rate_set}. Then, 
%     \bas{
%         \forall i \in S, \; \mathbb{P}\bb{|r_{i}| \leq \tau_{n} \bigg| \mathcal{G}} &\leq \eta \lambda_{1}\log\bb{\effecrank} + 24\eta\lambda_{1}\bb{\frac{4\lambda_{1}}{\bb{\lambda_{1}-\lambda_{2}}}}\frac{1}{v_{1}\bb{i}^{2}} + 12\eta^{2}\lambda_{1}^{2}\bb{\frac{4\lambda_{1}}{\bb{\lambda_{1}-\lambda_{2}}}}^{2}\frac{1}{v_{1}\bb{i}^{4}}
%     }
% \end{lemma}
\begin{proof}[Proof of Lemma~\ref{lemma:i_in_S_bound}]
% Define $g_{i} := \bb{I + \eta\Sigma}^{\frac{n}{2}}e_{i}$. Recall that $y_{0} = \frac{z}{\left\|z\right\|_{2}}, \; z \sim \mathcal{N}\bb{0,I}$. From Lemma \ref{lemma:gaussian_anti_concentration}, we have with probability at least $1 - \frac{\delta}{2s}, \; \left|g_{i}^{T}z\right| \geq \frac{\delta}{2e s}\norm{g_{i}}$. Furthermore, using Lemma 1 from \cite{laurent2000adaptive}, we have $\mathbb{P}\bb{\norm{z}^{2} - d \geq 2\sqrt{td} + 2t} \leq \exp\bb{-t}$. Therefore with probability at least $1 - \frac{\delta}{2s}, \; \norm{z}^{2} \leq d + 2\sqrt{d\log\bb{\frac{2s}{\delta}}} + 2\log\bb{\frac{2s}{\delta}} \leq 5d\log\bb{\frac{2s}{\delta}}$. 
% \\ \\
Let $u_{0} = av_{1} + bv_{\perp}$ for $v_{\perp}^{T}v_{1} = 0$ and $a = u_{0}^{T}v_{1}$, and $b = u_{0}^{T}v_{\perp} \in \mathbb{R}$ for some vector $v_{\perp}$ orthogonal to $v_{1}$. Then $\forall i \in S$,
\ba{
    |r_{i}| \leq \tau_{n} &\iff r_{i}^{2} \leq \tau_{n}^{2} \iff \bb{e_{i}^{T}B_{n}y_{0}}^{2} \leq \tau_{n}^{2} \notag \\
      &\iff a^{2}\bb{e_{i}^{T}B_{n}v_{1}}^{2} +  b^{2}\bb{e_{i}^{T}B_{n}v_{\perp}}^{2}  \leq \tau_{n}^{2} \notag \\
      &\implies a^{2}\bb{e_{i}^{T}B_{n}v_{1}}^{2} \leq \tau_{n}^{2} \label{eq:r_i_bound}
}
Then,
\ba{
    \mathbb{P}\bb{|r_{i}| \leq \tau_{n} \bigg| \mathcal{G}}
    &\leq \mathbb{P}\bb{a^{2}\bb{e_{i}^{T}B_{n}v_{1}}^{2} \leq \tau_{n}^{2}\bigg| \mathcal{G}}, \text{ using Eq } \ref{eq:r_i_bound} \notag \\
    &\leq \mathbb{P}\bb{\bb{e_{i}^{T}B_{n}v_{1}}^{2} \leq \frac{e\tau_{n}^{2}}{\delta^{2}} \bigg| \mathcal{G}} \notag \\
    &= \mathbb{P}\bb{\bb{e_{i}^{T}B_{n}v_{1}}^{2} \leq \frac{e\tau_{n}^{2}}{\delta^{2}}}
    \label{eq:i_in_support_delta_split_2}
}
For convenience of notation, define $\gamma_{n} := \frac{\sqrt{e}}{\delta}\tau_{n}$ and $q_{i} := |e_{i}^{T}B_{n}v_{1}|$. Then,
    \ba{
       \mathbb{P}\bb{\bb{e_{i}^{T}B_{n}v_{1}}^{2} \leq \frac{e\tau_{n}^{2}}{\delta^{2}}} &=  \mathbb{P}\bb{q_{i} \leq \gamma_{n}} \notag \\
       &\leq \mathbb{P}\bb{|q_{i}-\E{q_{i}}| \geq |\E{q_{i}}|-\gamma_{n}}, \notag \\
       &= \mathbb{P}\bb{|q_{i}-\E{q_{i}}| \geq |\E{q_{i}}|-\gamma_{n}}, \notag \\
        &\leq  \frac{\E{q_{i}^{2}} - \E{q_{i}}^{2}}{\bb{|\E{q_{i}}|-\gamma_{n}}^{2}}, \text{ using Chebyshev's inequality} \\
        &= \underbrace{\frac{\E{\bb{v_{1}^{T}B_{n}^{T}e_{i}e_{i}^{T}B_{n}v_{1}}} - \E{v_{1}^{T}B_{n}^{T}e_{i}}^{2}}{\bb{|\E{v_{1}^{T}B_{n}^{T}e_{i}}|-\frac{1}{\sqrt{2}}\bb{\min_{i\in S}|v_{1}\bb{i}|}\bb{1+\eta\lambda_{1}}^{n}}^{2}}}_{T_{i}} \label{eq:i_in_S_bound1}
    }
    % Now, for $i \in S$ we have from Lemma \ref{lemma:i_in_S_bound},
    % \ba{
    %     \mathbb{P}\bb{\exists i \in S, |r_{i}| \leq \tau_{n}} \leq \delta + \sum_{i \in S}\underbrace{\frac{\E{\bb{v_{1}^{T}B_{\frac{n}{2}}^{T}e_{i}e_{i}^{T}B_{\frac{n}{2}}v_{1}}^{2}} - \E{v_{1}^{T}B_{\frac{n}{2}}^{T}e_{i}e_{i}^{T}B_{\frac{n}{2}}v_{1}}^{2}}{\bb{\E{v_{1}^{T}B_{\frac{n}{2}}^{T}e_{i}e_{i}^{T}B_{\frac{n}{2}}v_{1}}-\frac{1}{2}\bb{\min_{i\in S}v_{1}\bb{i}^{2}}\bb{1+\eta\lambda_{1}}^{n}}^{2}}}_{T_{i}} \label{eq:i_in_support_step_1}
    % }
    We now bound $T_{i}$ using Eq~\ref{eq:v1_i_second_moment} and Eq~\ref{eq:v1_i_second_moment_lower_bound} as -
    \bas{
        T_{i} &\leq \frac{\exp\bb{2\eta n\lambda_{1} + 8L^4\sigma^{4} \eta^{2} n\lambda_{1}^{2}}\bbb{v_{1}\bb{i}^{2} + \eta\lambda_{1}\bb{\frac{4\lambda_{1}}{\bb{\lambda_{1}-\lambda_{2}}}}} - v_{1}\bb{i}^{2}\bb{1+\eta\lambda_{1}}^{2n}}{\bb{|v_{1}\bb{i}| - \frac{1}{2}\min_{i \in S}|v_{1}\bb{i}|}^{2}\bb{1+\eta\lambda_{1}}^{2n}} \\
        &\leq \frac{\exp\bb{2\eta n\lambda_{1} + 8L^4\sigma^{4} \eta^{2} n\lambda_{1}^{2}}\bbb{v_{1}\bb{i}^{2} + \eta\lambda_{1}\bb{\frac{4\lambda_{1}}{\bb{\lambda_{1}-\lambda_{2}}}}} - v_{1}\bb{i}^{2}\exp\bb{2\eta n\lambda_{1} - 2\eta^{2}n\lambda_{1}^{2}}}{\bb{|v_{1}\bb{i}| - \frac{1}{2}\min_{i \in S}|v_{1}\bb{i}|}^{2}\exp\bb{2\eta n\lambda_{1} - 2\eta^{2}n\lambda_{1}^{2}}} \\
        &= \frac{\exp\bb{2\bb{4L^4\sigma^{4}+1}\eta^{2} n\lambda_{1}^{2}}\bbb{v_{1}\bb{i}^{2} + \eta\lambda_{1}\bb{\frac{4\lambda_{1}}{\bb{\lambda_{1}-\lambda_{2}}}}} - v_{1}\bb{i}^{2}}{\bb{|v_{1}\bb{i}| - \frac{1}{2}\min_{i \in S}|v_{1}\bb{i}|}^{2}}
    }
    The second inequality follows from the fact that $1+x\geq \exp(x-x^2)$, for $x\geq 0$.
    Note that for $x \in \bb{0,1}$, $\exp{x} \leq 1 + 2x$. Therefore, for $\bb{8L^4\sigma^{4}+2}\eta^{2} n\lambda_{1}^{2} \leq \frac{1}{4}$, we have
    \bas{
        T_{i} &\leq \frac{ \bb{\exp\bb{2\bb{4L^4\sigma^{4}+1}\eta^{2} n\lambda_{1}^{2}}-1}v_{1}\bb{i}^{2} + 3\eta\lambda_{1}\bb{\frac{4\lambda_{1}}{\bb{\lambda_{1}-\lambda_{2}}}}}{\bb{|v_{1}\bb{i}| - \frac{1}{2}\min_{i \in S}|v_{1}\bb{i}|}^{2}} \\
        &\leq 4\bb{\frac{ 4\bb{4L^4\sigma^{4}+1}\eta^{2} n\lambda_{1}^{2}v_{1}\bb{i}^{2} + 3\eta\lambda_{1}\bb{\frac{4\lambda_{1}}{\bb{\lambda_{1}-\lambda_{2}}}}}{v_{1}\bb{i}^{2}}} \\
        &= 4\bb{ 4\bb{4L^4\sigma^{4}+1}\eta^{2} n\lambda_{1}^{2} + \frac{3\eta\lambda_{1}\bb{\frac{4\lambda_{1}}{\bb{\lambda_{1}-\lambda_{2}}}}}{v_{1}\bb{i}^{2}} }
    }
    The result then follows by using Claim(2) in Lemma~\ref{lemma:learning_rate_set}.
    % Substituting in Eq~\ref{eq:i_in_support_step_1} we have
    % \ba{
    %     \mathbb{P}\bb{\exists i \in S, |r_{i}| \leq \tau_{n}} &\leq \delta +  8\bb{100L^4\sigma^{4}+2}\eta^{2} n s\lambda_{1}^{2} + 24\eta\lambda_{1}\bb{\frac{4\lambda_{1}}{\bb{\lambda_{1}-\lambda_{2}}}}\sum_{i \in S}\frac{1}{v_{1}\bb{i}^{2}} \notag \\
    %     & \;\;\;\;\;\; + 12\eta^{2}\lambda_{1}^{2}\bb{\frac{4\lambda_{1}}{\bb{\lambda_{1}-\lambda_{2}}}}^{2}\sum_{i \in S}\frac{1}{v_{1}\bb{i}^{4}} \label{eq:i_in_S_final_bound}
    % }
\end{proof}
\noindent
We next provide the proof of Lemma~\ref{lemma:i_notin_S_bound}.
\inotinSbound*
% \begin{lemma}
%     \label{appendix_lemma:i_notin_S_bound} 
%     Fix a $\delta \geq \sqrt{\frac{8e\eta\lambda_{1}\bb{\frac{4\lambda_{1}}{\bb{\lambda_{1}-\lambda_{2}}}}}{\min_{i\in S}v_{1}\bb{i}^{2}}}$ and threshold $\tau_{n}:= \frac{\delta}{\sqrt{2e}}\min_{i \in S}|v_{1}\bb{i}|\bb{1+\eta\lambda_{1}}^{n}$. Let the learning rate, $\eta$, be set according to Lemma~\ref{lemma:learning_rate_set}. Then,
%     \bas{
%         \forall i \notin S, \;\; \mathbb{P}\bb{|r_{i}| > \tau_{n}} \leq \bb{\frac{400e^{2}\bb{4\lambda_{1}}^{2}}{\delta^{4}\bb{\min_{i \in S}v_{i}^{4}}\bb{\lambda_{1}-\lambda_{2}}^{2}}} \eta^{2}\lambda_{1}^{2}
%     }
% \end{lemma}
\begin{proof}[Proof of Lemma~\ref{lemma:i_notin_S_bound}] 
Note that for $i \notin S$, $a_{0} = 0, b_{0} = 1$. Therefore, we have,
    \ba{
\E{\bb{v_{1}^{T}B_{n}^{T}e_{i}e_{i}^{T}B_{n}v_{1}}^{2}} &\leq \exp\bb{4\eta n\lambda_{1} + 200L^4\sigma^{4} \eta^{2} n\lambda_{1}^{2}}\bbb{ \eta\lambda_{1}\bb{\frac{4\lambda_{1}}{\bb{\lambda_{1}-\lambda_{2}}}}}^{2}, \text{ using Eq~}\ref{eq:v1_i_fourth_moment}, \label{eq:v1_i_fourth_moment_notinS} \\
    \E{\Tr\bb{\vp^{T} B_{n}^{T}e_{i}e_{i}^{T}B_{n}\vp}^{2}} &\leq 2\exp\bb{4\eta n\lambda_{2} + 100\eta^{2} n\log\bb{\effecrank}L^4\sigma^{4}\lambda_{2}\Tr\bb{\Sigma} + 100L^4\sigma^{4} \eta^{2} n\lambda_{1}^{2}} \notag \\
    & \;\;\;\;\;\;\;\; + 2\bbb{\eta\lambda_{1}\bb{\frac{4\lambda_{1}}{\bb{\lambda_{1}-\lambda_{2}}}}}^{2}\exp\bb{4\eta n\lambda_{1} + 200L^4\sigma^{4} \eta^{2} n\lambda_{1}^{2}}, \text{ using Eq~}\ref{eq:vperp_i_fourth_moment} \label{eq:vperp_i_fourth_moment_notinS}
    }
    \ba{
    \E{v_{1}^{T}B_{n}^{T}e_{i}e_{i}^{T}B_{n}v_{1}} &\leq \exp\bb{2\eta n\lambda_{1} + 8L^4\sigma^{4} \eta^{2} n\lambda_{1}^{2}}\bb{\eta\lambda_{1}\bb{\frac{4\lambda_{1}}{\bb{\lambda_{1}-\lambda_{2}}}}}, \text{ using Eq~}\ref{eq:v1_i_second_moment} \label{eq:v1_i_second_moment_notinS} \\
    \E{\Tr\bb{\vp^{T}B_{n}^{T}e_{i}e_{i}^{T}B_{n}\vp}} &\leq \exp\bb{2\eta n\lambda_{2} + 4L^4\sigma^{4}\eta^2 n\lambda_{2}\Tr\bb{\Sigma} + 4L^4\sigma^{4} \eta^{2} n\lambda_{1}^{2}} \notag \\ 
    & \;\;\;\;\;\;\;\; + \bbb{\eta\lambda_{1}\bb{\frac{4\lambda_{1}}{\bb{\lambda_{1}-\lambda_{2}}}}}\exp\bb{2\eta n\lambda_{1} + 8L^4\sigma^{4} \eta^{2} n\lambda_{1}^{2}}, \text{ using Eq~}\ref{eq:vperp_i_second_moment} \label{eq:vperp_i_second_moment_notinS} \\
    \frac{\delta^{2}}{2e}\bb{\min_{i\in S}v_{1}\bb{i}^{2}}\bb{1+\eta\lambda_{1}}^{n} &\geq \frac{\delta^{2}}{2e}\bb{\min_{i\in S}v_{1}\bb{i}^{2}}\exp\bb{2\eta n\lambda_{1} - 2\eta^{2}n\lambda_{1}^{2}} \text{ using Eq~}\ref{eq:v1_i_second_moment_lower_bound} \label{eq:v1_i_second_moment_lower_bound_notinS}  
    }
    Define 
    \ba{
         g_{i} := \tau_{n}^{2}-\E{r_{i}^{2}} \label{eq:g_i_definition}
    }
    Note that using Assumptions~\ref{assumption:high_dimensions},
    \bas{
        \frac{16e\eta\lambda_{1}\bb{\frac{4\lambda_{1}}{\bb{\lambda_{1}-\lambda_{2}}}}}{\min_{i\in S}v_{1}\bb{i}^{2}} &\leq \frac{3\log\bb{n}}{n\bb{\lambda_{1}-\lambda_{2}}} \times \frac{128e\lambda_{1}}{\min_{i \in S}v_{1}\bb{i}^{2}} \leq \frac{768e}{\min_{i \in S}v_{1}\bb{i}^{2}} \times \frac{\log\bb{n}}{n} \leq \delta^{2}
    }
    where the last inequality follows for sufficiently large $n$ mentioned in the theorem statement.  Therefore, using Eq~\ref{eq:v1_i_second_moment_notinS} and ~\ref{eq:vperp_i_second_moment_notinS} along with Claim (3) from Lemma~\ref{lemma:learning_rate_set}, $g_{i}$ is bounded as 
    %  \ba{
    %     g_{i} &\geq \bb{\frac{\delta^{2}}{\rd 2\bk e}\bb{\min_{i\in S}v_{1}\bb{i}^{2}}\exp\bb{\rd 2\bk\eta n\lambda_{1} - \rd 2\bk \eta^{2}n\lambda_{1}^{2}}} - \exp\bb{\rd 2\eta n\lambda_{1} + 8L^4\sigma^{4}\eta^2 n\lambda_{1}^2}\rd \bb{\frac{4\lambda_{1}}{\bb{\lambda_{1}-\lambda_{2}}}} \notag \\
    %     &\geq \exp\bb{2\eta n\lambda_{1} - 2\eta^{2}n\lambda_{1}^{2}}\bb{\frac{\delta^{2}}{4e}\bb{\min_{i\in S}v_{1}\bb{i}^{2}} - \rd\exp\bb{-\theta\eta n \bb{\lambda_{1}-\lambda_{2}}}\bk} \notag \\
    %     &\geq \frac{2\delta^{2}}{9e^{2}}\bb{\min_{i\in S}v_{1}\bb{i}^{2}}\exp\bb{2\eta n\lambda_{1} - 2\eta^{2}n\lambda_{1}^{2}} \label{eq:g_i_bound}
    % }
    % \bas{
    % A.49 &\leq \text{lambda2 term}+2\exp\bb{2\eta n\lambda_{1} + \rd 8L^4\sigma^{4} \eta^{2} n\lambda_{1}^{2}}\bb{\eta\lambda_{1}\bb{\frac{4\lambda_{1}}{\bb{\lambda_{1}-\lambda_{2}}}}}\\
    % A.51 - A.49&\leq 
    % }
    \ba{
        g_{i} &\geq \bb{\frac{\delta^{2}}{4e}\bb{\min_{i\in S}v_{1}\bb{i}^{2}}\exp\bb{2\eta n\lambda_{1} - 2\eta^{2}n\lambda_{1}^{2}} - \exp\bb{2\eta n\lambda_{2} + 4L^4\sigma^{4}\eta^2 n\lambda_{2}\Tr\bb{\Sigma} + 4L^4\sigma^{4} \eta^{2} n\lambda_{1}^{2}}} \notag \\
        &\geq \exp\bb{2\eta n\lambda_{1} - 2\eta^{2}n\lambda_{1}^{2}}\bb{\frac{\delta^{2}}{4e}\bb{\min_{i\in S}v_{1}\bb{i}^{2}} - \exp\bb{-\theta\eta n \bb{\lambda_{1}-\lambda_{2}}}} \notag \\
        &\geq \frac{2\delta^{2}}{9e^{2}}\bb{\min_{i\in S}v_{1}\bb{i}^{2}}\exp\bb{2\eta n\lambda_{1} - 2\eta^{2}n\lambda_{1}^{2}} \label{eq:g_i_bound}
    }
    where the last inequality used Claim (4) from Lemma~\ref{lemma:learning_rate_set}. Therefore, we have,
    \ba{
        \mathbb{P}\bb{|r_{i}| > \tau_{n}} 
        &= \mathbb{P}\bb{r_{i}^{2} > \tau_{n}^{2}} \notag \\ 
        &= \mathbb{P}\bb{r_{i}^{2}-\E{r_{i}^{2}} > \tau_{n}^{2}-\E{r_{i}^{2}}}, \notag \\
        &\leq \mathbb{P}\bb{|r_{i}^{2}-\E{r_{i}^{2}}| > \tau_{n}^{2}-\E{r_{i}^{2}}}, \text{ since from Eq~\ref{eq:g_i_bound} } g_{i} \geq 0   \notag \\
        &\leq \frac{\E{\bb{r_{i}^{2}-\E{r_{i}^{2}}}^{2}}}{\bb{\tau_{n}^{2}-\E{r_{i}^{2}}}^{2}} \notag \\
        &= \frac{\E{r_{i}^{4}} - \E{r_{i}^{2}}^{2}}{\bb{\tau_{n}^{2}-\E{r_{i}^{2}}}^{2}} \notag \\
        &= \frac{\E{\bb{y_{0}^{T}B_{n}^{T}e_{i}e_{i}^{T}B_{n}y_{0}}^{2}} - \E{y_{0}^{T}B_{n}^{T}e_{i}e_{i}^{T}B_{n}y_{0}}^{2}}{\bb{\frac{\delta^{2}}{2e}\bb{\min_{i\in S}v_{1}\bb{i}^{2}}\bb{1+\eta\lambda_{1}}^{n}-\bb{\E{y_{0}^{T}B_{n}^{T}e_{i}e_{i}^{T}B_{n}y_{0}}}}^{2}}=:R_i \label{eq:i_notin_S_bound_s1}
    }
    We now bound $R_{i}$. Therefore,  
    \bas{
        R_{i} &\leq \frac{\E{\bb{y_{0}^{T}B_{n}^{T}e_{i}e_{i}^{T}B_{n}y_{0}}^{2}}}{\bb{\frac{\delta^{2}}{2e}\bb{\min_{i\in S}v_{1}\bb{i}^{2}}\bb{1+\eta\lambda_{1}}^{n}-\bb{\E{y_{0}^{T}B_{n}^{T}e_{i}e_{i}^{T}B_{n}y_{0}}}}^{2}} \notag \\
        &\stackrel{(i)}{\leq} \frac{6\bb{\E{\bb{v_{1}^{T}B_{n}^{T}e_{i}e_{i}^{T}B_{n}v_{1}}^{2}} + \E{\Tr\bb{\vp^{T} B_{n}^{T}e_{i}e_{i}^{T}B_{n}\vp}^{2}}}}{\bb{\frac{\delta^{2}}{2e}\bb{\min_{i\in S}v_{1}\bb{i}^{2}}\bb{1+\eta\lambda_{1}}^{n}-\E{v_{1}^{T}B_{n}^{T}e_{i}e_{i}^{T}B_{n}v_{1}} - \E{\Tr\bb{\vp^{T}B_{n}^{T}e_{i}e_{i}^{T}B_{n}\vp}}}^{2}}
    }
    where (i) uses Lemmas \ref{lemma:random_gaussian_second_moment} and \ref{lemma:random_gaussian_fourth_moment}. Denote the numerator and denominator of $R_{i}$ as $N\bb{R_{i}}$ and $D\bb{R_{i}}$. 
    For the numerator $N\bb{R_{i}}$ using Eq~\ref{eq:v1_i_fourth_moment_notinS} and ~\ref{eq:vperp_i_fourth_moment_notinS}, we have
    \bas{
        N\bb{R_{i}} &\leq 18\bbb{ \eta\lambda_{1}\bb{\frac{4\lambda_{1}}{\bb{\lambda_{1}-\lambda_{2}}}}}^{2}\exp\bb{4\eta n\lambda_{1} + 200L^4\sigma^{4} \eta^{2} n\lambda_{1}^{2}}\bb{1 + \frac{2}{3}\exp\bb{-\theta\eta n \bb{\lambda_{1}-\lambda_{2}}}} \\
        &\leq 20\bbb{ \eta\lambda_{1}\bb{\frac{4\lambda_{1}}{\bb{\lambda_{1}-\lambda_{2}}}}}^{2}\exp\bb{4\eta n\lambda_{1} + 200L^4\sigma^{4} \eta^{2} n\lambda_{1}^{2}}
    }
    where the last inequality follows from Claim (4) in Lemma~\ref{lemma:learning_rate_set}. For the denominator $D\bb{R_{i}}$, using Eq~\ref{eq:g_i_bound} 
    \bas{
        D\bb{R_{i}} &= g_{i}^{2} \geq \frac{\delta^{4}}{20e^{2}}\bb{\min_{i\in S}v_{1}\bb{i}^{2}}^{2}\exp\bb{4\eta n\lambda_{1} - 4\eta^{2}n\lambda_{1}^{2}}
    }
    Recall that for $x \in \bb{0,1}$, $\exp\bb{x} \leq 1 + 2x$. Therefore, for $\bb{100L^4\sigma^{4}+2}\eta^{2} n\lambda_{1}^{2} \leq \frac{1}{4}$ which holds due to Claim (2) from Lemma~\ref{lemma:learning_rate_set}, substituting in Eq~\ref{eq:i_notin_S_bound_s1}, we have
    \ba{
        \mathbb{P}\bb{ |r_{i}| > \tau_{n}} &\leq \bb{\frac{ 400\bk e^{2}\bb{4\lambda_{1}}^{2}}{\delta^{4}\bb{\min_{i \in S}v_{i}^{4}}\bb{\lambda_{1}-\lambda_{2}}^{2}}} \eta^{2}\lambda_{1}^{2} \label{eq:i_notin_S_final_bound}
    }
\end{proof}
\section{Proof of convergence for support recovery (Lemma~\ref{lemma:support_recovery_shi},Theorem~\ref{theorem:support_recovery})}
\label{appendix:proofs_support_recovery}

We start with the proof of Lemma~\ref{lemma:support_recovery_shi}.

\sparsepcasupportreclargeelements*
\begin{proof}
    Let $\mathcal{E} := \left\{S \subseteq \hatS \right\}$ and set $\delta := \frac{1}{50}$ for this proof. We upper bound $\mathbb{P}\bb{\mathcal{E}^{c}}$. Define $r_{i}:= e_{i}^{T}B_{n}u_{0},$ $i \in [d]$. Observe that 
    \bas{
        \mathcal{E} \iff \exists \tau_{n} > 0 \text{ such that } \left\{\forall i \in S, |r_{i}| \geq \tau_{n}\right\} \bigcap \left\{\left|\left\{i : i \notin S, |r_{i}| \geq \tau_{n} \right\}\right| \leq k-s\right\}
    }
    or equivalently, 
    \bas{
        \mathcal{E}^{c} \iff \forall \tau_{n} > 0, \left\{\exists i \in S, |r_{i}| \leq \tau_{n}\right\} \bigcup \left\{\left|\left\{i : i \notin S, |r_{i}| \geq \tau_{n} \right\}\right| > k-s\right\}
    }
    Therefore, for any fixed $\tau_{n} > 0$ 
    \bas{
        \mathcal{E}^{c} &\implies \left\{\exists i \in S, |r_{i}| \leq \tau_{n}\right\} \bigcup \left\{\left|\left\{i : i \notin S, |r_{i}| \geq \tau_{n} \right\}\right| > k-s\right\} \\
        % &\implies \left\{\exists i \in S, |r_{i}| \leq \tau_{n}\right\} \bigcup \left\{\left|\left\{i : i \notin S, |r_{i}| \geq \tau_{n} \right\}\right| > k-s\right\}
    }
    Let $\mathcal{G} := \left\{ |v_{1}^{T}u_{0}| \geq \frac{\delta}{\sqrt{e}} \right\}$ and threshold $\tau_{n}:= \frac{\delta}{\sqrt{2e}}\min_{i \in S}|v_{1}\bb{i}|\bb{1+\eta\lambda_{1}}^{n}$. Using a union-bound, 
    \bas{
        \mathbb{P}\bb{\mathcal{E}^{c}} &\leq \mathbb{P}\bb{\left\{\exists i \in S, |r_{i}| \leq \tau_{n}\right\}} + \mathbb{P}\bb{\left|\left\{i : i \notin S, |r_{i}| \geq \tau_{n} \right\}\right| > k-s} \\
        &\leq \mathbb{P}\bb{\left\{\exists i \in S, |r_{i}| \leq \tau_{n}\right\}} + \frac{\sum_{i \notin S}\mathbb{P}\bb{|r_{i}| \geq \tau_{n}}}{k-s+1}, \text{ using Markov's inequality} \\
        &= \mathbb{P}\bb{\mathcal{G}}\mathbb{P}\bb{\exists i \in S, |r_{i}| \leq \tau_{n} \bigg| \mathcal{G}} + \mathbb{P}\bb{\mathcal{G}^{c}}\mathbb{P}\bb{\exists i \in S, |r_{i}| \leq \tau_{n} \bigg| \mathcal{G}^{c}} + \\
        & \;\;\;\; + \frac{\sum_{i \notin S}\mathbb{P}\bb{|r_{i}| \geq \tau_{n}}}{k-s+1} \\
        &\leq \mathbb{P}\bb{\mathcal{G}^{c}} + \mathbb{P}\bb{\exists i \in S, |r_{i}| \leq \tau_{n} \bigg| \mathcal{G}} + \frac{\sum_{i \notin S}\mathbb{P}\bb{|r_{i}| \geq \tau_{n}}}{k-s+1} \\
        &\leq \mathbb{P}\bb{\mathcal{G}^{c}} + \sum_{i \in S}\mathbb{P}\bb{|r_{i}| \leq \tau_{n}\bigg| \mathcal{G}} + \frac{\sum_{i \notin S}\mathbb{P}\bb{|r_{i}| \geq \tau_{n}}}{k-s+1} \\
        % &\leq \mathbb{P}\bb{\mathcal{G}^{c}} + \underbrace{\sum_{i \in S}\mathbb{P}\bb{|r_{i}| \leq \tau_{n}\bigg| \mathcal{G}}}_{T_{1}} + \underbrace{\frac{\sum_{i \in S \setminus S}\mathbb{P}\bb{|r_{i}| \geq \tau_{n}}}{k-s+1}}_{T_{2}} + \underbrace{\frac{\sum_{i \notin S}\mathbb{P}\bb{|r_{i}| \geq \tau_{n}}}{k-s+1}}_{T_{3}}
        &\leq \mathbb{P}\bb{\mathcal{G}^{c}} + \underbrace{\sum_{i \in S}\mathbb{P}\bb{|r_{i}| \leq \tau_{n}\bigg| \mathcal{G}}}_{T_{1}} + \underbrace{\frac{\sum_{i \notin S}\mathbb{P}\bb{|r_{i}| \geq \tau_{n}}}{k-s+1}}_{T_{2}}
    }
    % For $T_{2}$, using Markov's inequality and Eq~\ref{eq:v1_i_second_moment} along with $\bb{1+x} \geq \exp\bb{x-x^{2}} \forall x \geq 0$ we have,
    % \bas{
    %     \sum_{i \in S \setminus S}\mathbb{P}\bb{|r_{i}| \geq \tau_{n}} &\leq \sum_{i \in S \setminus S}\frac{\E{r_{i}^{2}}}{\tau_{n}^{2}}
    %     \leq 10e\frac{\sum_{i \in S \setminus S}\bb{v_{1}\bb{i}^{2}+\frac{4\eta\lambda_1^2}{\lambda_1-\lambda_2}}}{\delta^{2}\min_{i \in S}v_{1}\bb{i}^{2}} \leq \frac{10e }{\delta^{2}\min_{i \in S}v_{1}\bb{i}^{2}}\frac{s\log\bb{\effecrank}}{n}
    % }
    % where in the last inequality, we used the definition of $\eta$, $S_{\text{hi}}$ and Lemma~\ref{lemma:learning_rate_set}. 
    Using Lemma~\ref{lemma:gaussian_anti_concentration}, we have $\mathbb{P}\bb{\mathcal{G}^{c}} \leq \delta$. We bound $T_{1}$ and $T_{2}$ using Lemmas~\ref{lemma:i_in_S_bound} and ~\ref{lemma:i_notin_S_bound} respectively. Therefore, 
    \bas{
        \mathbb{P}\bb{\mathcal{E}^{c}} &\leq \delta + C_{H}\bbb{\eta \lambda_{1}s\log\bb{\effecrank} + \eta\lambda_{1}\bb{\frac{\lambda_{1}}{\bb{\lambda_{1}-\lambda_{2}}}}\sum_{i \in S}\frac{1}{v_{1}\bb{i}^{2}}} \\
        &+ C_{T}\bbb{\eta^{2}\lambda_{1}^{2}\bb{\frac{\lambda_{1}}{\lambda_{1}-\lambda_{2}}}^{2}\bb{\frac{1}{\delta^{2}\min_{i\in S_{\text{hi}}}v_{1}\bb{i}^{2}}}^{2}} \bb{d-s} \\
        &\leq 5\delta
    }
    where the last inequality follows by using the bound on $n$.
\end{proof}

Next, using Lemma~\ref{lemma:support_recovery_shi}, we prove Theorem~\ref{theorem:support_recovery}.

\sparsepcasupportrec*
\begin{proof}
    Consider the set $\mathcal{T}:= \left\{\mathcal{S} : \mathcal{S} \subseteq [d], |\mathcal{S}| = s\right\}$ with the associated metric $\rho\bb{\mathcal{S}, \mathcal{S}'} := \mathbbm{1}\bb{\mathcal{S} \neq \mathcal{S}'}$.

    Then, Lemma~\ref{lemma:support_recovery_shi} shows that the randomized algorithm, $\mathcal{A}$, is a $\constoracle\bb{\mathcal{D}, \theta, \mathcal{T}, \rho, S, 0}$ (Definition ~\ref{def:constant_success_oracle}).

    Therefore, the result follows from Lemma~\ref{lemma:sin2_geometric_aggregation}.
    
\end{proof}

\section{Proof of convergence for sparse PCA (Theorems~\ref{theorem:convergence_truncate_vec},\ref{theorem:convergence_truncate_data})}
\label{appendix:proofs_sparse_pca}

We start by providing the proof of Theorem~\ref{theorem:convergence_truncate_vec} in Section~\ref{appendix_subsection:convg_truncate_vec}, and then provide the proof of Theorem~\ref{theorem:convergence_truncate_data} in Section~\ref{appendix_subsection:convg_truncate_data}.

\subsection{Proof of theorem~\ref{theorem:convergence_truncate_vec}}
\label{appendix_subsection:convg_truncate_vec}

% \subsection{Oja's Algorithm as One Step Truncated Power Method}
% \label{section:one_step_power}
Let $\widehat{v} := \frac{B_{n}w_{0}}{\norm{B_{n}w_{0}}_{2}}$. Then, for any subset $\hat{S} \subseteq S$ (obtained from a support recovery procedure such as Algorithm~\ref{alg:support_recovery}), the corresponding output of a truncation procedure with respect to $\hatS$ is given as:
\ba{
    \vtrunc := \frac{\trunc{\widehat{v}}{\widehat{S}}}{\norm{\trunc{\widehat{v}}{\widehat{S}}}_{2}} =  \frac{I_{\widehat{S}}\widehat{v}}{\norm{I_{\widehat{S}}\widehat{v}}_{2}} = \frac{I_{\widehat{S}}B_{n}w_{0}}{\norm{I_{\widehat{S}}B_{n}w_{0}}_{2}} \label{eq:v_oja_power_method}
}
We first prove a general and flexible result that bounds the $\sin^2$ error as a function of $B_n$ (see Eq~\ref{definition:Bn}) and analyze the performance of $\vtrunc$ by viewing it as a power method on $w_{0}$ followed by a truncation using the set $\hatS$ in the following result.
\begin{lemma}
    \label{lemma:one_step_truncated_power_method}
    Let $B_{n}$ and $\vtrunc$ be defined as in Eq~\ref{definition:Bn} and Eq~\ref{eq:v_oja_power_method} respectively. For $\hatS \subseteq [d]$ such that $\hatS \indep B_{n}, w_{0}$, then with probability at least $1-\delta$  
    \bas{
        \sin^{2}\bb{\vtrunc, v_{1}} \leq \frac{C\log\bb{\frac{1}{\delta}}}{\delta^{2}}\frac{\Tr\bb{B_{n}^{T}\bb{I_{\hatS}-I_{\hatS}v_{1}v_{1}^{T}I_{\hatS}}B_{n}}}{v_{1}^{T}B_{n}^{T}I_{S \bigcap \hatS}B_{n}v_{1}}
    }
    where $C$ is an absolute constant and $\delta \in \bb{0,1}$.
\end{lemma}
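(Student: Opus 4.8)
The plan is to write $\sin^2(\vtrunc,v_1)$ exactly as a ratio of two quadratic forms in the Gaussian vector $w_0\sim\mathcal{N}(0,I)$, and then, conditioning on $B_n$ and $\hatS$, to upper bound the numerator and lower bound the denominator by standard Gaussian concentration and anti-concentration. Setting $u:=I_{\hatS}B_nw_0$ so that $\vtrunc=u/\|u\|_2$, and using that $I_{\hatS}$ is symmetric and idempotent,
\[
\sin^2(\vtrunc,v_1)=1-\frac{(u^Tv_1)^2}{\|u\|_2^2}=\frac{u^T(I-v_1v_1^T)u}{\|u\|_2^2}=\frac{w_0^T B_n^T\bb{I_{\hatS}-I_{\hatS}v_1v_1^TI_{\hatS}}B_n w_0}{w_0^T B_n^T I_{\hatS}B_n w_0}.
\]
Since $\hatS\indep(B_n,w_0)$ and $w_0\indep B_n$, after conditioning on $(B_n,\hatS)$ the vector $w_0$ is still standard Gaussian, and we may treat $A:=B_n^T I_{\hatS}(I-v_1v_1^T)I_{\hatS}B_n=B_n^T(I_{\hatS}-I_{\hatS}v_1v_1^TI_{\hatS})B_n\succeq 0$ and $M:=I_{S\cap\hatS}B_n$ as fixed matrices; note $I_{\hatS}v_1=I_{S\cap\hatS}v_1$ since $\supp(v_1)=S$.

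For the numerator, write $w_0^TAw_0=\sum_i\lambda_ig_i^2$ with $\lambda_i\ge0$ the eigenvalues of $A$ and $g_i$ i.i.d.\ $\mathcal{N}(0,1)$; a Laurent--Massart / Bernstein tail bound together with $\|A\|_2\le\Tr(A)$ and $\|A\|_F^2\le\Tr(A)^2$ gives $w_0^TAw_0\le C_1\log(1/\delta)\Tr(A)$ with probability at least $1-\delta/2$. For the denominator, use $I_{\hatS}\succeq I_{S\cap\hatS}$ to get $\|u\|_2^2\ge\|Mw_0\|_2^2$; if $\sigma_1$ is the largest singular value of $M$ with right singular vector $q_1$, then $\|Mw_0\|_2^2\ge\sigma_1^2(q_1^Tw_0)^2$ with $q_1^Tw_0\sim\mathcal{N}(0,1)$, and Gaussian anti-concentration, $\mathbb{P}(|q_1^Tw_0|\le t)\le t\sqrt{2/\pi}$, yields $\|Mw_0\|_2^2\ge c_2\delta^2\sigma_1^2$ with probability at least $1-\delta/2$. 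Since $\|v_1\|_2=1$, $\sigma_1^2\ge\|Mv_1\|_2^2=v_1^TB_n^TI_{S\cap\hatS}B_nv_1$. A union bound over these two events, followed by division, gives the claimed inequality conditionally on $(B_n,\hatS)$, hence unconditionally; the degenerate case $v_1^TB_n^TI_{S\cap\hatS}B_nv_1=0$ (in particular $S\cap\hatS=\emptyset$) is vacuous because the right-hand side is then $+\infty$.

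The only delicate point is the denominator bound: one should resist trying to concentrate $\|u\|_2^2$ around its conditional mean $\Tr(B_n^TI_{\hatS}B_n)$, which in this high-dimensional regime need not be comparable to $v_1^TB_n^TI_{S\cap\hatS}B_nv_1$ and which does not concentrate well anyway. The crude lower bound by the single largest-singular-value term plus Gaussian anti-concentration is what makes the argument go through, and it is exactly the source of the $\delta^{-2}$ factor in the statement; the remaining work is only the bookkeeping of absolute constants. In the application the regime $\delta=\Theta(1)$ of Theorem~\ref{theorem:convergence_cardinality_support} is used, so these constants are harmless.
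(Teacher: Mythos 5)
Your proof is correct, and the overall strategy---rewrite $\sin^2(\vtrunc,v_1)$ as a ratio of quadratic forms in $w_0$, condition on $(B_n,\hatS)$, upper bound the numerator by a quadratic-form tail bound, lower bound the denominator by Gaussian anti-concentration---is the same as the paper's. The one identifiable difference is in the denominator step: after reducing to $w_0^TB_n^TI_{S\cap\hatS}B_nw_0$, the paper applies a chi-squared anti-concentration bound for the full quadratic form ($\mathbb{P}[z^TAz\geq\beta\Tr(A)]\geq1-\sqrt{e\beta}$, their Lemma~\ref{lemma:gaussian_anti_concentration}) and then uses $\Tr(A)\geq v_1^TAv_1$, whereas you project $w_0$ onto the top right singular vector of $I_{S\cap\hatS}B_n$, invoke one-dimensional Gaussian anti-concentration, and use $\sigma_1^2\geq\|Mv_1\|_2^2$. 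Both yield the same $\delta^{-2}$ dependence and both dominate $v_1^TB_n^TI_{S\cap\hatS}B_nv_1$; your version is slightly more elementary (it only needs the bounded density of a univariate Gaussian), the paper's gives the nominally tighter intermediate quantity $\Tr(A)$ rather than $\|A\|_2$, but for the purpose of this lemma the two are interchangeable.

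Two small points worth being careful about, neither fatal: the numerator probability and the denominator probability should each be $1-\delta/2$ (or some split) so the union bound delivers $1-\delta$, and you have this; and the remark $I_{\hatS}v_1=I_{S\cap\hatS}v_1$, while true under Assumption~\ref{assumption:sparsity} ($\supp(v_1)=S$), is not actually used in your chain of inequalities, so you can drop it.
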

\begin{proof}
    Using the definition of $\sin^{2}$ error, 
    \ba{
        \sin^{2}\bb{\vtrunc, v_{1}} &= 1 -  \bb{\frac{v_{1}^{T}I_{\hatS}B_{n}w_{0}}{\norm{I_{\hatS}B_{n}w_{0}}_{2}}}^{2} = \frac{w_{0}^{T}B_{n}^{T}\bb{I_{\hatS}-I_{\hatS}v_{1}v_{1}^{T}I_{\hatS}}B_{n}w_{0}}{w_{0}^{T}B_{n}^{T}I_{\hatS}B_{n}w_{0}} \label{eq:sin_squared_error_bound1}
        % &\stackrel{\zeta_{1}}{\leq} \frac{C_{1}}{\delta}\frac{w_{0}^{T}B_{n}^{T}\bb{I_{\hatS}-v_{1}v_{1}^{T}}B_{n}w_{0}}{v_{1}^{T}B_{n}B_{n}^{T}v_{1}}, \notag  \\
        % &\stackrel{\zeta_{2}}{\leq} \frac{C\log\bb{\frac{1}{\delta}}}{\delta}\frac{\Tr\bb{B_{n}^{T}\bb{I_{\hatS}-v_{1}v_{1}^{T}}B_{n}}}{v_{1}^{T}B_{n}B_{n}^{T}v_{1}} \label{eq:sin_squared_error_bound1}
    }
    % For the denominator, note that 
    % \bas{
    %     w_{0}^{T}B_{n}^{T}I_{\hatS}B_{n}w_{0} &\geq w_{0}^{T}B_{n}^{T}I_{\hatS}v_{1}v_{1}^{T}I_{\hatS}B_{n}w_{0} \\
    %     &= \bb{v_{1}^{T}I_{\hatS}B_{n}w_{0}}^{2} \\
    %     &\geq \frac{\delta}{C_{1}} \bb{v_{1}^{T}I_{\hatS}B_{n}B_{n}^{T}I_{\hatS}v_{1}}, \text{ with probability } \geq 1-\delta
    % }
    For the denominator, with probability at least $\bb{1-\delta}$, we have
    \ba{
        w_{0}^{T}B_{n}^{T}I_{\hatS}B_{n}w_{0} &\geq w_{0}^{T}B_{n}^{T}I_{S \cap \hatS}B_{n}w_{0} \stackrel{(i)}{\geq} \frac{\delta^{2}}{e}\Tr\bb{B_{n}^{T}I_{S \cap \hatS}B_{n}} \geq \frac{\delta^{2}}{e}v_{1}^{T}B_{n}^{T}I_{S \cap \hatS}B_{n}v_{1} \label{eq:denominator_preliminary}
    }
    where $(i)$ follows from Lemma~\ref{lemma:gaussian_anti_concentration}. For the numerator, using $\zeta_{2}$ from Lemma 3.1 of \cite{jain2016streaming}, with probability at least $\bb{1-\delta}$, we have
    \ba{
        w_{0}^{T}B_{n}^{T}\bb{I_{\hatS}-I_{\hatS}v_{1}v_{1}^{T}I_{\hatS}}B_{n}w_{0} \leq C'\log\bb{\frac{1}{\delta}}\Tr\bb{B_{n}^{T}\bb{I_{\hatS}-I_{\hatS}v_{1}v_{1}^{T}I_{\hatS}}B_{n}} \label{eq:numerator_preliminary}
    }
    Combining Eq~\ref{eq:denominator_preliminary} and Eq~\ref{eq:numerator_preliminary} with Eq~\ref{eq:sin_squared_error_bound1} completes our proof.
    % For $\mathcal{D}_{1}$, using $\zeta_{1}$ from Lemma 3.1 \cite{jain2016streaming}, note that with probability at least $ 1-\delta$, 
    % \bas{
    %     \mathcal{D}_{1} &= \bb{v_{1}^{T}B_{n}w_{0}}^{2} \geq \frac{\delta}{C_{1}}v_{1}^{T}B_{n}B_{n}v_{1} \geq \frac{\delta}{C_{1}}v_{1}^{T}B_{n}I_{S}B_{n}v_{1}
    % }
    % where we used the fact that $v_{1}^{T}B_{n}w_{0}$ is a gaussian random variable with variance $\norm{B_{n}^{T}v_{1}}_{2}^{2}$ and $I_{S} \preceq I$. Next for $\mathcal{D}_{2}$ we have with probability at least $\bb{1-\delta}$,
    % \bas{
    %     w_{0}^{T}B_{n}^{T}I_{S \setminus \hatS}B_{n}w_{0} &= \sum_{i \in S}\bb{e_{i}^{T}B_{n}w_{0}}^{2}\mathbbm{1}\bb{i \in S, i \notin \hatS} \\
    %     &\leq \frac{1}{\delta}\E{\sum_{i \in S}\bb{e_{i}^{T}B_{n}w_{0}}^{2}\mathbbm{1}\bb{i \in S, i \notin \hatS}} \\
    %     &= \frac{1}{\delta}\sum_{i \in S}\E{\bb{e_{i}^{T}B_{n}w_{0}}^{2}}\mathbb{P}\bb{i \in S, i \notin \hatS} \\
    %     &= \frac{1}{\delta}\sum_{i \in S}\bb{\E{v_{1}^{T}B_{n}^{T}e_{i}e_{i}^{T}B_{n}v_{1}} + \E{\Tr\bb{\vp^{T}B_{n}^{T}e_{i}e_{i}^{T}B_{n}\vp}}}\mathbb{P}\bb{i \in S, i \notin \hatS}
    % }
\end{proof}
Lemma~\ref{lemma:one_step_truncated_power_method} provides an intuitive sketch of our proof strategy. Following the recipe proposed in \cite{jain2016streaming}, we show how to upper-bound $\epsilon_{n} := \frac{C\log\bb{\frac{1}{\delta}}}{\delta^{2}}\frac{\Tr\bb{B_{n}^{T}\bb{I_{\hatS}-I_{\hatS}v_{1}v_{1}^{T}I_{\hatS}}B_{n}}}{v_{1}^{T}B_{n}^{T}I_{S \cap \hatS}B_{n}v_{1}}$. For upper-bounding the numerator, we bound $\E{\Tr\bb{B_{n}^{T}\bb{I_{\hatS}-I_{\hatS}v_{1}v_{1}^{T}I_{\hatS}}B_{n}}}$ and use Markov's inequality. To lower-bound the denominator, we lower-bound $\E{v_{1}^{T}B_{n}^{T}I_{S \cap \hatS}B_{n}v_{1}}$,  upper-bound the variance $\E{\bb{v_{1}^{T}B_{n}^{T}I_{S \cap \hatS}B_{n}v_{1}}^{2}}$ and finally use Chebyshev's inequality. A formal analysis is provided in the following theorem - 

\thmrestate{Convergence of Truncated Oja's Algorithm}{convergtruncoja}{
    \label{theorem:convergence_truncated_oja}
    Let $\hatS \subseteq [d]$ be the estimated support set, such that $\hatS \indep B_n, w_0$ (see Algorithm~\ref{alg:sparse_pca_with_support_trunc_vec}). Consider any event $\mathcal{E}$ solely dependent on the randomness of $\hatS$. Define:
    \bas{
        & W_{\hatS} := \E{I_{\hatS}-I_{\hatS}v_{1}v_{1}^{T}I_{\hatS}\;|\;\mathcal{E}}, \; G_{\hatS} := \E{I_{S \cap \hatS}\;|\;\mathcal{E}} \\ 
        & \alpha_{0} := v_{1}^{T}W_{\hatS}v_{1}, \; \beta_{0} := \Tr\bb{\vp^{T}W_{\hatS}\vp}, \; p_{0} := v_{1}^{T}G_{\hatS}v_{1}, \; q_{0} := \Tr\bb{\vp^{T}G_{\hatS}\vp}
    }
    Fix $\delta \in \bb{0.1,1}$. Set the learning rate as $\eta := \frac{3\log\bb{\effecrank}}{{n\bb{\lambda_{1}-\lambda_{2}}}}$. Then, under Assumption~\ref{assumption:high_dimensions}, for $n = \Omega\bb{s\bb{\frac{\lambda_{1}\log\bb{n}}{\lambda_{1}-\lambda_{2}}}^{2}}$ and $p_{0}\bb{1+\frac{\delta}{16}} \leq 1 + 2\eta\lambda_{1}s\bb{\frac{4\lambda_{1}}{\bb{\lambda_{1}-\lambda_{2}}}}$, we have with probability at least $1-\delta-\mathbb{P}\bb{\mathcal{E}^{c}}$,
    \bas{
        \sin^{2}\bb{\vtrunc, v_{1}} \leq \frac{C'\log\bb{\frac{1}{\delta}}}{\delta^{3}}\frac{\lambda_{1}}{ \lambda_{1}-\lambda_{2}}\frac{\alpha_{0}\bb{1 + 2\eta\Tr\bb{\Sigma}} + 2\eta\lambda_{1}\beta_{0}}{p_{0}}
    }
    where $\vtrunc$ is defined in Eq~\ref{eq:v_oja_power_method} and $C' > 0$ is an absolute constant.
}
\begin{proof}[Proof of Theorem~\ref{theorem:convergence_truncated_oja}]
    % Recall that the computation of $\widehat{S}$ is on an independent set of samples. Therefore, $\widehat{S} \indep B_{n}$. For convenience of notation, denote $W_{\hatS} := \E{I_{\hatS}-I_{\hatS}v_{1}v_{1}^{T}I_{\hatS}}$,
    % where the expectation is over the randomness in computing $\hatS$.
    % \\ \\
    %     \ba{
    %     \mathbb{P}\bb{\sin^{2}\bb{v_{\text{oja}}, v_{1}} \geq \epsilon} &= \mathbb{P}\bb{\widehat{S} = S}\mathbb{P}\bb{\sin^{2}\bb{v_{\text{oja}}, v_{1}} \geq \epsilon \bigg| \widehat{S} = S} \notag \\
    %     & \;\;\;\;\; + \mathbb{P}\bb{\widehat{S} \neq S}\mathbb{P}\bb{\sin^{2}\bb{v_{\text{oja}}, v_{1}} \geq \epsilon \bigg| \widehat{S} \neq S} \notag \\
    %     &\leq \mathbb{P}\bb{\sin^{2}\bb{v_{\text{oja}}, v_{1}} \geq \epsilon \bigg| \widehat{S} = S} + \mathbb{P}\bb{\widehat{S} \neq S} \label{eq:sin_squared_error_bound2}
    % }
    We first note that from Lemma~\ref{lemma:one_step_truncated_power_method}, with probability at least $1-\delta$, 
    \ba{
        \sin^{2}\bb{\vtrunc, v_{1}} \leq \frac{C\log\bb{\frac{1}{\delta}}}{\delta^{2}}\frac{\Tr\bb{B_{n}^{T}\bb{I_{\hatS}-I_{\hatS}v_{1}v_{1}^{T}I_{\hatS}}B_{n}}}{v_{1}^{T}B_{n}^{T}I_{S \bigcap \hatS}B_{n}v_{1}} =:\chi \label{eq:one_step_power_method_bound_1}
    }
    Next, we bound $\chi$, conditioned on the event $\mathcal{E}$. Using Markov's inequality, we have with probability at least $1-\delta$, 
    \ba{
        & \Tr\bb{B_{n}^{T}\bb{I_{\hatS}-I_{\hatS}v_{1}v_{1}^{T}I_{\hatS}}B_{n}} \notag  \\
        & \;\;\;\; \leq \frac{\E{\Tr\bb{B_{n}^{T}\bb{I_{\hatS}-I_{\hatS}v_{1}v_{1}^{T}I_{\hatS}}B_{n}}\bigg|\mathcal{E}}}{\delta} \notag \\
        & \;\;\;\; = \frac{\E{v_{1}^{T}B_{n}^{T}\bb{I_{\hatS}-I_{\hatS}v_{1}v_{1}^{T}I_{\hatS}}B_{n}v_{1}\bigg|\mathcal{E}} + \E{\Tr\bb{\vp^{T}B_{n}^{T}\bb{I_{\hatS}-I_{\hatS}v_{1}v_{1}^{T}I_{\hatS}}B_{n}\vp}\bigg|\mathcal{E}}}{\delta} \notag \\
        & \;\;\;\; = \frac{\E{v_{1}^{T}B_{n}^{T}W_{\hatS}B_{n}v_{1}} + \E{\Tr\bb{\vp^{T}B_{n}^{T}W_{\hatS}B_{n}\vp}}}{\delta}, \text{ using } \hatS \indep B_{n}  \label{eq:vp_markov_upper_bound}
    }
    % Using Markov's inequality, we have with probability at least $\bb{1-\delta}$, 
    % \ba{
    %     \Tr\bb{B_{n}^{T}\bb{I_{S}-v_{1}v_{1}^{T}}B_{n}} &\leq \frac{\E{\Tr\bb{B_{n}^{T}\bb{I_{S}-v_{1}v_{1}^{T}}B_{n}}}}{\delta} \notag \\
    %     &= \frac{\E{\Tr\bb{v_{1}^{T}B_{n}^{T}\bb{I_{S}-v_{1}v_{1}^{T}}B_{n}v_{1}}} + \E{\Tr\bb{\vp^{T}B_{n}^{T}\bb{I_{S}-v_{1}v_{1}^{T}}B_{n}\vp}}}{\delta} \label{eq:vp_markov_upper_bound}
    % }
    Note that $\bb{1+x} \leq \exp\bb{x} \forall x \in \mathbb{R}$.
    % From the theorem statement, we have $\bb{1-\theta}\bb{\lambda_{1}-\lambda_{2}} + 25L^4\sigma^{4}\eta\lambda_{1}^{2} = 25L^4\sigma^{4}\log\bb{\effecrank}\eta\lambda_{2}\Tr\bb{\Sigma}$. Then,
    % \bas{
    %     2\eta n\bb{\lambda_{1}-\lambda_{2}} + 4L^4\sigma^{4}\eta^{2}\lambda_{1}^{2} - 4L^4\sigma^{4}\eta^{2}\lambda_{2}\Tr\bb{\Sigma} &\geq  2\eta n\bb{\lambda_{1}-\lambda_{2}} - \frac{4\bb{1-\theta}}{25}\eta n\bb{\lambda_{1}-\lambda_{2}} \\
    %     &\geq \frac{48}{25}\eta n \bb{\lambda_{1}-\lambda_{2}} 
    % }
    From Lemma \ref{lemma:second_moment_recursions}, we have
    % \bas{
    %     & \E{v_{1}^{T}B_{n}^{T}e_{i}e_{i}^{T}B_{n}v_{1}} \leq a_{1}\bb{1 + 2\eta\lambda_{1} + 4L^4\sigma^{4}\eta^2\lambda_{1}^2}^n + a_{2}\bb{1 + 2\eta\lambda_{2} + 4L^4\sigma^{4}\eta^2\lambda_{2}\Tr\bb{\Sigma}}^n, \\
    %     & \E{\Tr\bb{\vp^{T}B_{n}e_{i}e_{i}^{T}B_{n}^{T}\vp}} \leq b_{1}\bb{1 + 2\eta\lambda_{1} + 4L^4\sigma^{4}\eta^2\lambda_{1}^2}^n + b_{2}\bb{1 + 2\eta\lambda_{2} + 4L^4\sigma^{4}\eta^2\lambda_{2}\Tr\bb{\Sigma}}^n, \\
    %     & a_{1} = \frac{\alpha_{0}}{1 + O\bb{\frac{\lambda_{1}^{2}}{\lambda_{2}\Tr\bb{\Sigma}}}} + \frac{\beta_{0}}{O\bb{\frac{\Tr\bb{\Sigma}}{\lambda_{1}} + \frac{\lambda_{1}}{\lambda_{2}}}}, \;\;\;\; a_{2} = \frac{\alpha_{0}}{1 + O\bb{\frac{\lambda_{2}\Tr\bb{\Sigma}}{\lambda_{1}^{2}}}} + \frac{\beta_{0}}{O\bb{\frac{\Tr\bb{\Sigma}}{\lambda_{1}} + \frac{\lambda_{1}}{\lambda_{2}}}}, \\
    %     & b_{1} = \frac{\alpha_{0}}{O\bb{\frac{\lambda_{2}}{\lambda_{1}} + \frac{\lambda_{1}}{\Tr\bb{\Sigma}}}} + \frac{\beta_{0}}{O\bb{\frac{\Tr\bb{\Sigma}\lambda_{2}}{\lambda_{1}^{2}}} + 1}, \;\;\;\; b_{2} = \frac{\alpha_{0}}{O\bb{\frac{\Tr\bb{\Sigma}}{\lambda_{1}} + \frac{\lambda_{1}}{\lambda_{2}}}} + \frac{\beta_{0}}{O\bb{\frac{\Tr\bb{\Sigma}}{\lambda_{2}} + \frac{\lambda_{1}^{2}}{\lambda_{2}^{2}}}}
    % }
    \ba{
        &\E{\Tr\bb{v_{1}^{T}B_{n}^{T}W_{\hatS}B_{n}v_{1}}}
        \leq \bb{1 + 2\eta\lambda_{1} + 8L^4\sigma^{4} \eta^{2}\lambda_{1}^{2}}^{n}\bbb{\alpha_{0} + \eta\lambda_{1}\bb{\frac{4\lambda_{1}}{\bb{\lambda_{1}-\lambda_{2}}}}\bb{\beta_{0} + \alpha_{0}}} \label{eq:v1_Is_second_moment} \\
        % &\leq \eta\lambda_{1}s \bb{\frac{4\lambda_{1}}{\bb{\lambda_{1}-\lambda_{2}}}} \exp\bb{2\eta n\lambda_{1} + 8L^4\sigma^{4} \eta^{2} n\lambda_{1}^{2}},  \\
        & \E{\Tr\bb{\vp^{T}B_{n}^{T}W_{\hatS}B_{n}\vp}} \notag \\
        & \;\; \leq \beta_{0}\bb{1 + 2\eta\lambda_{2} + 4L^4\sigma^{4}\eta^2\lambda_{2}\Tr\bb{\Sigma} + 4L^4\sigma^{4} \eta^{2}\lambda_{1}^{2}}^{n} \notag \\ 
        & \;\; + \bbb{\eta\lambda_{1}\bb{\frac{4\lambda_{1}}{\bb{\lambda_{1}-\lambda_{2}}}}\bb{\alpha_{0}\frac{\Tr\bb{\Sigma}}{\lambda_{1}} + \beta_{0}}}\bb{1 + 2\eta\lambda_{1} + 8L^4\sigma^{4} \eta^{2}\lambda_{1}^{2}}^{n} \notag \\
        % &\leq s\exp\bb{\eta n\lambda_{2} + 2L^4\sigma^{4}\eta^2 n\lambda_{2}\Tr\bb{\Sigma} + 2L^4\sigma^{4} \eta^{2} n\lambda_{1}^{2}} \notag \\ 
        % & + \bbb{\eta\lambda_{1}s\bb{\frac{4\lambda_{1}}{\bb{\lambda_{1}-\lambda_{2}}}}}\exp\bb{\eta n\lambda_{1} + 4L^4\sigma^{4} \eta^{2} n\lambda_{1}^{2}} \notag \\
        &\;\; \leq \exp\bb{2\eta n\lambda_{1} + 8L^4\sigma^{4} \eta^{2} n\lambda_{1}^{2}}\bbb{\eta\lambda_{1}\bb{\alpha_{0}\frac{\Tr\bb{\Sigma}}{\lambda_{1}} + \beta_{0}}\bb{\frac{4\lambda_{1}}{\bb{\lambda_{1}-\lambda_{2}}}} + \beta_{0}\exp\bb{-2\theta \eta n \bb{\lambda_{1}-\lambda_{2}}}} \notag \\
        &\;\; \leq 2\exp\bb{2\eta n\lambda_{1} + 8L^4\sigma^{4} \eta^{2} n\lambda_{1}^{2}}\bbb{\eta\lambda_{1}\bb{\alpha_{0}\frac{\Tr\bb{\Sigma}}{\lambda_{1}} +  \beta_{0}}\bb{\frac{4\lambda_{1}}{\bb{\lambda_{1}-\lambda_{2}}}} }
        \label{eq:vperp_Is_second_moment}
    }
    where the last inequality follows due to Lemma~\ref{lemma:learning_rate_set}. 
    
    % \rd [PS] better to write as $\min(\eta\lambda_1,\exp(-\eta n \text{gap}))$\bk
    
    Substituting Eq~\ref{eq:v1_Is_second_moment} and Eq~\ref{eq:vperp_Is_second_moment} in Eq~\ref{eq:vp_markov_upper_bound}, we have with probability at least $\bb{1-\delta}$, conditioned on the event $\mathcal{E}$,  
    \ba{
        \Tr\bb{B_{n}^{T}\bb{I_{\hatS}-I_{\hatS}v_{1}v_{1}^{T}I_{\hatS}}B_{n}} \leq \frac{\bb{\alpha_{0}\bb{1 + 2\eta\Tr\bb{\Sigma}} + 2\eta\lambda_{1}\beta_{0}}\bb{\frac{12\lambda_{1}}{\bb{\lambda_{1}-\lambda_{2}}}}\exp\bb{2\eta n\lambda_{1} + 8L^4\sigma^{4} \eta^{2} n\lambda_{1}^{2}}}{\delta} \label{eq:numerator_final_upper_bound}
    }
    Similarly, for the denominator we have with probability at least $1-\delta$ using Chebyshev's inequality, conditioned on the event $\mathcal{E}$, 
    \ba{
        v_{1}^{T}B_{n}I_{S \bigcap \hatS}B_{n}^{T}v_{1} \geq \E{v_{1}^{T}B_{n}I_{S \bigcap \hatS}B_{n}^{T}v_{1}\bigg|\mathcal{E}}\bbb{1 - \frac{1}{\sqrt{\delta}}\sqrt{\frac{\E{\bb{v_{1}^{T}B_{n}I_{S \bigcap \hatS}B_{n}^{T}v_{1}}^{2}\bigg|\mathcal{E}}}{\E{v_{1}^{T}B_{n}I_{S \bigcap \hatS}B_{n}^{T}v_{1}\bigg|\mathcal{E}}^{2}} - 1}} \label{eq:v1BnIs_chebyshev_lower_bound}
    }
    Recall that $p_{0} := v_{1}^{T}\E{I_{S \bigcap \hatS}\bigg|\mathcal{E}}v_{1}$. Using the argument from Lemma 11 from \cite{jain2016streaming} and $\hatS \indep B_{n}$,
    \ba{
        \E{v_{1}^{T}B_{n}I_{S \bigcap \hatS}B_{n}^{T}v_{1}\bigg|\mathcal{E}} = \E{v_{1}^{T}B_{n}\E{I_{S \bigcap \hatS}\bigg|\mathcal{E}}B_{n}^{T}v_{1}}
        \geq p_{0}\exp\bb{2\eta n \lambda_{1} - 4\eta^{2}n\lambda_{1}^{2}} \label{eq:v1BnBnv1_lower_bound}
    }
    This is since the base case of their recursion,~\cite{jain2016streaming} has $v_{1}^{T}Iv_{1}$ which is 1, but we have $v_{1}^{T}\E{I_{S \bigcap \hatS}\bigg|\mathcal{E}}v_{1}$ which is defined as $p_{0}$.
    \\
    Next, using Lemma \ref{lemma:fourth_moment_recursions} and noting that $v_{1}^{T}I_{S \bigcap \hatS}v_{1} + \Tr\bb{\vp^{T}I_{S \bigcap \hatS}\vp} = \Tr\bb{I_{S \bigcap \hatS}}$ we have, 
    \ba{
        &\E{\bb{v_{1}^{T}B_{n}I_{S \bigcap \hatS}B_{n}^{T}v_{1}}^{2}\bigg|\mathcal{E}} \notag \\
        & \;\;\;\; \leq \bb{1 + 2\eta\lambda_{1} + 100L^4\sigma^{4} \eta^{2}\lambda_{1}^{2}}^{2n}\mathbb{E}\bbb{\bb{v_{1}^{T}I_{S \bigcap \hatS}v_{1} + \eta\lambda_{1}\bb{\frac{4\lambda_{1}}{\bb{\lambda_{1}-\lambda_{2}}}}\Tr\bb{I_{S \bigcap \hatS}}}^{2}\bigg|\mathcal{E}} \notag \\
        & \;\;\;\; \leq \bb{1 + 2\eta\lambda_{1} + 100L^4\sigma^{4} \eta^{2}\lambda_{1}^{2}}^{2n}\mathbb{E}\bbb{\bb{v_{1}^{T}I_{S \bigcap \hatS}v_{1} + \eta\lambda_{1}s\bb{\frac{4\lambda_{1}}{\bb{\lambda_{1}-\lambda_{2}}}}}^{2}\bigg|\mathcal{E}} \notag \\
        & \;\;\;\; \leq \exp\bb{4\eta n\lambda_{1} + 200L^4\sigma^{4} \eta^{2} n\lambda_{1}^{2}}\bbb{p_{0} + 2\eta\lambda_{1}s p_{0}\bb{\frac{4\lambda_{1}}{\bb{\lambda_{1}-\lambda_{2}}}} + \bb{\eta\lambda_{1}s\bb{\frac{4\lambda_{1}}{\bb{\lambda_{1}-\lambda_{2}}}}}^{2}}, \label{eq:v1_Is_fourth_moment}
    }
    where in the last inequality, we used $\bb{v_{1}^{T}I_{S \bigcap \hatS}v_{1}}^{2} \leq v_{1}^{T}I_{S \bigcap \hatS}v_{1} \leq 1$.
    For convenience of notation, we define 
    \bas{
        \phi &:= p_{0} + 2\eta\lambda_{1}s p_{0}\bb{\frac{4\lambda_{1}}{\bb{\lambda_{1}-\lambda_{2}}}} + \bb{\eta\lambda_{1}s\bb{\frac{4\lambda_{1}}{\bb{\lambda_{1}-\lambda_{2}}}}}^{2} \leq 4
    }
    where we used $\eta\lambda_{1}s\bb{\frac{4\lambda_{1}}{\bb{\lambda_{1}-\lambda_{2}}}} \leq \frac{1}{2}$ (due to Lemma~\ref{lemma:learning_rate_set}) and $p_{0} \leq 1$.
    \\ \\
    Substituting Eq~\ref{eq:v1BnBnv1_lower_bound} and Eq~\ref{eq:v1_Is_fourth_moment} in Eq~\ref{eq:v1BnIs_chebyshev_lower_bound}, and  we have with probability at least $\bb{1-\delta}$, conditioned on $\mathcal{E}$, 
    \ba{
         & v_{1}^{T}B_{n}G_{\hatS}B_{n}^{T}v_{1} \notag \\
         &\;\;\;\; \geq p_{0}\exp\bb{2\eta n \lambda_{1} - 4\eta^{2}n\lambda_{1}^{2}}\bb{1 - \frac{1}{\sqrt{\delta}}\sqrt{ \exp\bb{\bb{100L^4\sigma^{4}+4}\eta^{2} n\lambda_{1}^{2}}\frac{\phi}{p_{0}^{2}} - 1}}, \notag \\
         &\;\;\;\; \stackrel{(i)}{\geq} p_{0}\exp\bb{2\eta n \lambda_{1} - 4\eta^{2}n\lambda_{1}^{2}}\bb{1 - \frac{1}{\sqrt{\delta}}\sqrt{\bb{1 + 2\bb{100L^4\sigma^{4}+4}\eta^{2} n\lambda_{1}^{2}}\frac{\phi}{p_{0}^{2}} - 1}} \notag \\
         &\;\;\;\; \geq p_{0}\exp\bb{2\eta n \lambda_{1} - 4\eta^{2}n\lambda_{1}^{2}}\bb{1 - \frac{1}{\sqrt{\delta}}\sqrt{\frac{\phi-p_{0}^{2}}{p_{0}^{2}} + 2\frac{\phi}{p_{0}^{2}}\bb{100L^4\sigma^{4}+4}\eta^{2} n\lambda_{1}^{2}}} \notag \\
         &\;\;\;\; \stackrel{(ii)}{\geq} \frac{p_{0}}{2}\exp\bb{2\eta n \lambda_{1} - 4\eta^{2}n\lambda_{1}^{2}} \label{eq:denominator_lower_bound_final}
    }
    where in $(i)$ we used $\bb{100L^4\sigma^{4}+4}\eta^{2} n\lambda_{1}^{2} \leq 1$ and $x \in \bb{0,1}$, $\exp\bb{x} \leq 1 + 2x$. For $(ii)$, it suffices to have
    \bas{
        \frac{\phi-p_{0}^{2}}{p_{0}^{2}} \leq \frac{\delta}{8}, \;\; \frac{\phi}{p_{0}^{2}}\bb{100L^4\sigma^{4}+4}\eta^{2} n\lambda_{1}^{2} \leq \frac{\delta}{16}
    }
    which is further ensured by,
    \bas{
        p_{0} \geq \max\left\{\frac{1 + 2\eta\lambda_{1}s\bb{\frac{4\lambda_{1}}{\bb{\lambda_{1}-\lambda_{2}}}}}{1+\frac{\delta}{16}}, \; \frac{4\eta\lambda_{1}s\bb{\frac{4\lambda_{1}}{\bb{\lambda_{1}-\lambda_{2}}}}}{\sqrt{\delta}}, \; 8\sqrt{\frac{\bb{100L^4\sigma^{4}+4}\eta^{2} n\lambda_{1}^{2}}{\delta}}\right\}
    }
    Note that for the choice of $\eta$, and $\delta \geq \frac{1}{10}$, we have using Lemma~\ref{lemma:learning_rate_set},
    \bas{
        \frac{1 + 2\eta\lambda_{1}s\bb{\frac{4\lambda_{1}}{\bb{\lambda_{1}-\lambda_{2}}}}}{1+\frac{\delta}{16}} \geq \max\left\{\frac{4\eta\lambda_{1}s\bb{\frac{4\lambda_{1}}{\bb{\lambda_{1}-\lambda_{2}}}}}{\sqrt{\delta}}, \; 8\sqrt{\frac{\bb{100L^4\sigma^{4}+4}\eta^{2} n\lambda_{1}^{2}}{\delta}}\right\}
    }
    for sufficiently large $n$. Therefore, we only ensure that $p_{0}$ is greater than the first term in the theorem statement. Finally, let
    \bas{
        \xi := \frac{C'\log\bb{\frac{1}{\delta}}}{\delta^{3}}\frac{\lambda_{1}}{\lambda_{1}-\lambda_{2}}\frac{\alpha_{0}\bb{1 + 2\eta\Tr\bb{\Sigma}} + 2\eta\lambda_{1}\beta_{0}}{p_{0}}
    }
    Using Eq~\ref{eq:numerator_final_upper_bound} and Eq~\ref{eq:denominator_lower_bound_final} and substituting in Eq~\ref{eq:one_step_power_method_bound_1}, we have with probability at least $1-2\delta$, conditioned on $\mathcal{E}$, $\chi \leq \xi$, or equivalently $\mathbb{P}\bb{\chi \geq \xi \bigg| \mathcal{E}} \leq 2\delta$. Therefore,  
    \bas{
        \mathbb{P}\bb{\chi \geq \xi} &= \mathbb{P}\bb{\mathcal{E}}\mathbb{P}\bb{\chi \geq \xi \bigg| \mathcal{E}} + \mathbb{P}\bb{\mathcal{E}^{c}}\mathbb{P}\bb{\chi \geq \xi \bigg| \mathcal{E}^{c}} \leq \mathbb{P}\bb{\chi \geq \xi \bigg| \mathcal{E}} + \mathbb{P}\bb{\mathcal{E}^{c}} \leq 2\delta + \mathbb{P}\bb{\mathcal{E}^{c}}
    }
    The proof follows by making $\delta$ smaller by a constant factor.
\end{proof}

With Theorem~\ref{theorem:convergence_truncated_oja} in place, we are ready to finally provide the proof of one of our main results, Theorem~\ref{theorem:convergence_truncate_vec}.

% \convergencecardinalitysupport*
\convergenceojatruncatevec*
% \begin{theorem}
%     \label{appendix_theorem:convergence_cardinality_support}
%     (Convergence of $\CardinalitySupport$)
%     Let $\hatS$ be obtained from Algorithm~\ref{alg:cardinality_support} and subsequently used in Algorithm~\ref{alg:truncated_oja}. Fix a value of $\delta \in \bb{0,1}$. Let the step-size, $\eta$ satisfy $\bb{1-\theta}\bb{\lambda_{1}-\lambda_{2}} + 25L^4\sigma^{4}\eta\lambda_{1}^{2} = 25L^4\sigma^{4}\log\bb{\effecrank}\eta\lambda_{2}\Tr\bb{\Sigma}$ for $\theta \in \bb{\frac{1}{2}, 1}$ and $\eta L^2\sigma^{2}\leq \frac{1}{4}\min\left\{\frac{1}{\lambda_{1}}, \frac{\log\bb{\effecrank}}{\Tr\bb{\Sigma}}, \frac{\log\bb{\effecrank}}{\sqrt{\lambda_{1}\Tr\bb{\Sigma}}}\right\}$. Define $\sh := \left\{i : i \in S, \; |v_{1}\bb{i}| \geq \sqrt{\frac{\log\bb{\effecrank}}{n}} \right\}, \; s' := |\sh| \leq s$. Then for $n \geq \bb{\frac{80e\lambda_{1}}{\delta^{2.5}\sqrt{k-s+1}\min_{i\in \sh}v_{1}\bb{i}^{2}\bb{\lambda_{1}-\lambda_{2}}}}^{2}$, with probability at least $1-\delta$, 
%     \bas{
%         \sin^{2}\bb{v_{\text{oja}}, v_{1}} \leq \frac{C''\log\bb{\frac{1}{\delta}}}{\delta^{3}}\frac{\lambda_{1}}{\lambda_{1}-\lambda_{2}}\frac{k\log\bb{\effecrank}}{n}
%     }
%     \rd Should have $1/(\lambda_1-\lambda_2)^2$\bk
%     where $C'' > 0$ is an absolute constant.
% \end{theorem}
\begin{proof}
    Let $\mathcal{E} := \left\{\sh \subseteq \hatS \right\}$ and set $\delta := \frac{1}{4}$ for this proof. Consider the following variables from Theorem~\ref{theorem:convergence_truncated_oja}:
    \bas{
        & W_{\hatS} := \E{I_{\hatS}-I_{\hatS}v_{1}v_{1}^{T}I_{\hatS}\;|\;\mathcal{E}}, \; G_{\hatS} := \E{I_{S \bigcap \hatS}\;|\;\mathcal{E}} \\ 
        & \alpha_{0} := v_{1}^{T}W_{\hatS}v_{1}, \; \beta_{0} := \Tr\bb{\vp^{T}W_{\hatS}\vp}, \; p_{0} := v_{1}^{T}G_{\hatS}v_{1}, \; q_{0} := \Tr\bb{\vp^{T}G_{\hatS}\vp}
    }
    Since $|\hatS| = k$, therefore, 
    \ba{\label{eq:beta0}
        \beta_{0} = \Tr\bb{\vp^{T}W_{\hatS}\vp} \leq \Tr\bb{W_{\hatS}} \leq \Tr\bb{\hatS} = k
    }
    Furthermore, under event $\mathcal{E}$, $\sh \subseteq \left\{S \bigcap \hatS\right\}$. Therefore, 
    \ba{\label{eq:p0}
        p_{0} &= v_{1}^{T}G_{\hatS}v_{1} \geq \sum_{i \in \sh}v_{1}\bb{i}^{2} = 1 - \sum_{i \notin \sh}v_{1}\bb{i}^{2} \geq 1 - \frac{s\log\bb{\effecrank}}{n}, \text{ using definition of } \sh
    }
    To verify the assumption on $p_{0}$ mentioned in Theorem~\ref{theorem:convergence_truncated_oja}, it is sufficient to ensure
    \bas{
        \eta\lambda_{1}s \bb{\frac{20\lambda_{1}}{\lambda_{1}-\lambda_{2}}} \leq \bb{1 - \frac{s\log\bb{\effecrank}}{n}}\bb{1+\frac{\delta}{4}} - 1
    }
    which is true by the definition of $\eta$ and $n$ (see Lemma~\ref{lemma:learning_rate_set}). Lastly, 
    \ba{\label{eq:alpha0}
        \alpha_{0} &= v_{1}^{T}W_{\hatS}v_{1} = \E{v_{1}^{T}I_{\hatS}v_{1}-\bb{v_{1}^{T}I_{\hatS}v_{1}}^{2}\;|\;\mathcal{E}} \notag\\
                   &\leq 1 - \E{v_{1}^{T}I_{\hatS}v_{1}\;|\;\mathcal{E}}, \text{ using }  v_{1}^{T}I_{\hatS}v_{1} \leq 1 \notag\\
                   &\leq 1 - \sum_{i \in \sh}v_{1}\bb{i}^{2}, \text{ since } \sh \subseteq \hatS \notag\\
                   &= \sum_{i \notin \sh}v_{1}\bb{i}^{2} \leq \frac{s\log\bb{\effecrank}}{n} %\leq \frac{1}{2}
    }
    Therefore, using bounds on $\beta_0, p_0$ and $\alpha_0$ from Eqs~\ref{eq:beta0}, ~\ref{eq:p0} and ~\ref{eq:alpha0} respectively, in conjunction with Theorem~\ref{theorem:convergence_truncated_oja}, with probability at least $1-\delta-\mathbb{P}\bb{\mathcal{E}^{c}}$,
    \ba{
        \sin^{2}\bb{v_{\text{oja}}, v_{1}} \leq \frac{C'\log\bb{\frac{1}{\delta}}}{\delta^{3}}\frac{5\lambda_{1}}{\lambda_{1}-\lambda_{2}}\eta\lambda_{1}k \label{eq:cardinality_sin_squared_bound}
    }
    Using Lemma~\ref{lemma:support_recovery_shi}, $\mathbb{P}\bb{\mathcal{E}^{c}} \leq 5\delta$. The result then follows using Eq~\ref{eq:cardinality_sin_squared_bound} and setting $\delta$ smaller by a constant.
\end{proof}

\subsection{Proof of theorem~\ref{theorem:convergence_truncate_data}}
\label{appendix_subsection:convg_truncate_data}

We first state the result from \cite{liang2023optimality} achieving the optimal $\sin^{2}$ error rate for Oja's Algorithm. The proof follows from Theorem 3.1 in their paper followed by Markov's inequality.

\begin{restatable}[Optimal Rate for Oja's Algorithm with Subgaussian Data]{proposition}{optimalsubgaussoja}\label{prop:optimal_oja} Let $\left\{X_i\right\}_{i \in [n]}$ be i.i.d samples from a subgaussian distribution (Definition~\ref{definition:subgaussian}) with covariance matrix, $\Sigma$, leading eigenvector $v_{1}$ and eigengap, $\lambda_{1}-\lambda_{2} > 0$. Then, there exists an algorithm $\OjaSubgaussianOptimal$ which operates in $O\bb{d}$ space, $O\bb{nd}$ time, processes one datapoint at a time, and returns an estimate $\hat{v}$ which satisfies, with probability at least $\frac{2}{3}$,
\bas{
    \sin^{2}\bb{\hat{v}, v_{1}} \leq C\frac{\lambda_{1}\lambda_{2}}{\bb{\lambda_{1}-\lambda_{2}}^{2}}\frac{d}{n}
}
where $C > 0$ is an absolute constant.
\end{restatable}

\convergenceojatruncatedata*
\begin{proof}
    Let $\delta := \frac{1}{3}$. Define the event $\mathcal{E} = \left\{S \subseteq \hatS \right\}$. Using Lemma~\ref{lemma:support_recovery_shi}, we have that 
    \bas{
        \mathbb{P}\bb{\mathcal{E}} \geq 1-\delta
    } 
    Let $\chi := \sin^{2}\bb{\vojatruncatevec, v_{1}}$ and $\xi := C\frac{\lambda_{1}\lambda_{2}}{\bb{\lambda_{1}-\lambda_{2}}^{2}}\frac{k\log\bb{\frac{1}{\delta}}}{n}$ for an absolute constant $C > 0$. Therefore,
    \ba{
        \mathbb{P}\bb{\chi \geq \xi} &= \mathbb{P}\bb{\mathcal{E}}\mathbb{P}\bb{\chi \geq \xi \bigg| \mathcal{E}} + \mathbb{P}\bb{\mathcal{E}^{c}}\mathbb{P}\bb{\chi \geq \xi \bigg| \mathcal{E}^{c}} \notag \\
        &\leq \mathbb{P}\bb{\chi \geq \xi \bigg| \mathcal{E}} + \mathbb{P}\bb{\mathcal{E}^{c}} \notag \\
        &\leq \mathbb{P}\bb{\chi \geq \xi \bigg| \mathcal{E}} + \delta \label{eq:prob}
    }
    Therefore, next we bound $\mathbb{P}\bb{\sin^{2}\bb{\vojatruncatevec, v_{1}}\bigg|\mathcal{E}}$. Therefore, we seek to bound the $\sin^{2}$ error after truncating the data using the true support, $S$. Note that 
    \bas{
        \E{I_{S}XX^{\top}I_{S}} &= \lambda_{1}v_{1}v_{1}^{\top} + I_{S}\vp\Lambda_{2}\vp^{\top}I_{S}
    }
    Therefore, after truncation, the leading eigenvector and eigenvalue are preserved, and the second largest eigenvalue is at most $\lambda_{2}$. Furthermore, the truncated distribution is still subgaussian, and therefore Proposition~\ref{prop:optimal_oja} is applicable here and we have with probability at least $1-\delta$, 
    \ba{
        \sin^{2}\bb{\vojatruncatevec, v_{1}} \leq \xi \label{eq:sin_error}
    }
    Eq~\eqref{eq:prob} and \eqref{eq:sin_error} show that $\mathcal{A}$ is a $\constoracle\bb{\mathcal{D}, (\eta, k), \mathcal{T}, \rho, v_{1}, O\bb{k\log\bb{d}/n}}$ (Definition ~\ref{def:constant_success_oracle}) for the set $\mathcal{T} = \left\{u : u \in \R^{d}, \norm{u}_{2} = 1\right\}$ with the metric $\rho\bb{u,v} := \norm{uu^{\top} - vv^{\top}}_{F} = \frac{1}{2}\left|\sin\bb{u,v}\right|$. The result then follows from Lemma~\ref{lemma:sin2_geometric_aggregation}.
\end{proof}
\section{Alternate method for truncation}
\label{appendix:alternate_truncation}
In this section, we present another algorithm for truncation, based on a value-based thresholding, complementary to the technique described in Section~\ref{section:main_results}. The proof technique uses the same tools as the ones described in Section~\ref{section:main_results}. Both Algorithm~\ref{alg:sparse_pca_with_support_trunc_vec} and \ref{alg:threshold_support} may be of independent interest depending on the particular use-case and constraints of the particular problem.
\begin{algorithm}[!hbt]
\caption{$\ThresholdSupport\bb{\left\{X_{i}\right\}_{i \in [n]}, \gamma_{n}, \eta}$} \label{alg:threshold_support}
\begin{algorithmic}[1]
\STATE \textbf{Input} : Dataset $\left\{X_{i}\right\}_{i \in [n]}$, learning rate $\eta > 0$, truncation threshold $\gamma_{n}$
\STATE Set $\normBnu \leftarrow 0$ and choose $y_0, w_0 \sim \mathcal{N}\bb{0,I}$ independently
\FOR{t in range[1, $\frac{n}{2}$]}
\STATE $y_{t} \leftarrow (I + \eta X_{t}X_{t}^{T})y_{t-1}$
\STATE $b_{n} \leftarrow b_{n} + \log\bb{\norm{y_{t}}_{2}}$
\STATE $y_{t} \leftarrow \frac{y_{t}}{\norm{y_{t}}_{2}}$
\ENDFOR
\STATE $\widehat{S} \leftarrow \text{ Set of indices, $i \in [d]$, such that} \log\bb{|e_{i}^{T}y_{n}|} + b_{n} - \log\bb{\gamma_{n}} \geq 0$.
\STATE $\widehat{v} \leftarrow \SparsePCA\bb{\left\{X_{i}\right\}_{i \in \{n/2+1,\dots,n\}}, \eta, w_{0}}$
\STATE $\vojathresh \leftarrow \frac{\trunc{\widehat{v}}{\widehat{S}}}{\norm{\trunc{\widehat{v}}{\widehat{S}}}_{2}}$
\RETURN $\bbb{\vojathresh, \widehat{S}}$
\end{algorithmic}
\end{algorithm}
Theorem~\ref{theorem:convergence_threshold_support} provides the convergence guarantees for Algorithm~\ref{alg:threshold_support}. Note that compared to Theorem~\ref{theorem:convergence_truncate_vec}, Theorem~\ref{theorem:convergence_threshold_support} provides a better guarantee for the sample size. However, this comes at the cost of the sparsity of the returned vector, $\vojathresh$, not being a controllable parameter. We can however show that the support size of $\vojathresh$ is $O\bb{s}$ in expectation. For the purpose of this proof, let $S_{\text{hi}} := \left\{i: i \in S, |v_{1}\bb{i}| \geq \sqrt{\frac{\log\bb{d}}{n}}\right\}$.

\thmrestate{Convergence of $\ThresholdSupport$}{convergencethresholdsupport}{
\label{theorem:convergence_threshold_support}
    Let $\vojathresh, \; \hatS$ be obtained from Algorithm~\ref{alg:threshold_support}. Set the learning rate as $\eta := \frac{3\log\bb{\effecrank}}{n\bb{\lambda_{1}-\lambda_{2}}}$. Define threshold $\gamma_{n}:= \frac{3}{4\sqrt{2e}}\min_{i \in S_{\text{hi}}}|v_{1}\bb{i}|\bb{1+\eta\lambda_{1}}^{n}$. Then for $n = \widetilde{\Omega}\bb{\frac{1}{s\min_{i \in S_{\text{hi}}}v_{1}\bb{i}^{4}}\bb{\frac{\lambda_{1}}{\lambda_{1}-\lambda_{2}}}^{2}}$, we have $\E{|\hatS|} \leq C's$ and with probability at least $\frac{3}{4}$,
    \bas{
        \sin^{2}\bb{\vojathresh, v_{1}} \leq C''\bb{\frac{\lambda_{1}}{\lambda_{1}-\lambda_{2}}}^{2}\frac{\max\left\{s, \log\bb{d}\right\}\log\bb{d}}{n}, \;\text{ }\; S_{\text{hi}} \subseteq \hatS
    }
    where $C', C'' > 0$ are absolute constants.
}
\begin{proof}
    Consider the setting of Theorem~\ref{theorem:convergence_truncated_oja}. Set $\delta := \frac{1}{4}$ for this proof and let $\mathcal{E}$ be the event $\left\{ |v_{1}^{T}y_{0}| \geq \frac{\delta}{\sqrt{e}} \right\}$. By Lemma~\ref{lemma:gaussian_anti_concentration}, $\mathbb{P}\bb{\mathcal{E}} \geq 1-\delta$. Recall the definitions,  
    \bas{
        & W_{\hatS} := \E{I_{\hatS}-I_{\hatS}v_{1}v_{1}^{T}I_{\hatS} \bigg| \mathcal{E}}, \; G_{\hatS} := \E{I_{S \bigcap \hatS} \bigg| \mathcal{E}} \\ 
        & \alpha_{0} := v_{1}^{T}W_{\hatS}v_{1}, \; \beta_{0} := \Tr\bb{\vp^{T}W_{\hatS}\vp}, \; p_{0} := v_{1}^{T}G_{\hatS}v_{1}
    }
    We upper bound $\alpha_{0}$, $\beta_{0}$ and lower bound $p_{0}$ under the setting of Algorithm \ref{alg:threshold_support}. Define $r_{i} := e_{i}^{T}B_{n}y_{0}, i \in [d]$.
    For $\alpha_{0}, p_{0}$, we have
    \ba{
      \alpha_{0} &= v_{1}^{T}W_{\hatS}v_{1} = \E{v_{1}^{T}I_{\hatS}v_{1} - \bb{v_{1}^{T}I_{\hatS}v_{1}}^{2}\bigg| \mathcal{E}} \notag \\
       &= \E{v_{1}^{T}I_{\hatS}v_{1}\bb{1 - v_{1}^{T}I_{\hatS}v_{1}}\bigg| \mathcal{E}} \notag \\
       &\leq 1 - \E{v_{1}^{T}I_{\hatS}v_{1}\bigg| \mathcal{E}}, \text{ using } v_{1}^{T}I_{\hatS}v_{1} \leq 1 \notag \\
       &= 1 - \sum_{i \in S}v_{1}\bb{i}^{2}\mathbb{P}\bb{i \in \hatS; i \in S \bigg| \mathcal{E}}, \notag \\
       &= \sum_{i \in S}v_{1}\bb{i}^{2}\mathbb{P}\bb{i \notin \hatS; i \in S \bigg| \mathcal{E}}\label{eq:cardinality_based_threshold_alpha0_bound}
    }
    \ba{
        p_{0} &= v_{1}^{T}G_{\hatS}v_{1} \notag \\
              &= v_{1}^{T}\E{I_{S\bigcap \hatS}}v_{1} \notag \\
              % &= \sum_{i \in S \bigcap \hatS}v_{1}\bb{i}^{2}\mathbb{P}\bb{i \in \hatS; i\in S \bigg| \mathcal{E}} \notag \\
              &= \sum_{i \in S}v_{1}\bb{i}^{2}\mathbb{P}\bb{ i \in \hatS; i\in S \bigg| \mathcal{E}}  \notag \\
              &= 1 - \sum_{i \in S}v_{1}\bb{i}^{2}\mathbb{P}\bb{ i \notin \hatS ; i\in S \bigg| \mathcal{E}} \label{eq:cardinality_based_threshold_p0_bound} 
    }
    Therefore, for both $\alpha_{0}, p_{0}$, we seek to upper bound $\sum_{i \in S}v_{1}\bb{i}^{2}\mathbb{P}\bb{i \notin \hatS; i\in S\bigg| \mathcal{E}}$. We have
    \bas{
        \sum_{i \in S}v_{1}\bb{i}^{2}\mathbb{P}\bb{i \notin \hatS; i\in S\bigg| \mathcal{E}}
        &\;\;\;\;= \sum_{i \in \sh}v_{1}\bb{i}^{2}\mathbb{P}\bb{i \notin \hatS; i\in S\bigg| \mathcal{E}} + \sum_{i \in S \setminus \sh}v_{1}\bb{i}^{2}\mathbb{P}\bb{i \notin \hatS; i\in S\bigg| \mathcal{E}} \\
        &\;\;\;\;\leq \frac{s\log\bb{\effecrank}}{n} + \sum_{i \in S \setminus \sh}v_{1}\bb{i}^{2}\mathbb{P}\bb{i \notin \hatS; i\in S\bigg| \mathcal{E}} \\
        &\;\;\;\;= \frac{s\log\bb{\effecrank}}{n} + \sum_{i \in S \setminus \sh}v_{1}\bb{i}^{2}\mathbb{P}\bb{|r_{i}| < \gamma_{n} ; i \in S \bigg| \mathcal{E}} \\
        &\;\;\;\;\stackrel{(i)}{\leq} \frac{s\log\bb{\effecrank}}{n} + C_{H}\sum_{i \in S \setminus \sh}v_{1}\bb{i}^{2}\bbb{\eta \lambda_{1}\log\bb{\effecrank} + \eta\lambda_{1}\bb{\frac{\lambda_{1}}{\lambda_{1}-\lambda_{2}}}\frac{1}{v_{1}\bb{i}^{2}}} \\
        &\;\;\;\;= \frac{s\log\bb{\effecrank}}{n} + C_{H}\eta \lambda_{1}\log\bb{\effecrank} + C_{H}\eta\lambda_{1}s'\bb{\frac{\lambda_{1}}{\bb{\lambda_{1}-\lambda_{2}}}} \\
        &\;\;\;\;\leq C_{H}\eta\lambda_{1}\left\{s, \log\bb{\effecrank}\right\} \leq \frac{1}{2}, \text{ using } |v_{1}\bb{i}| \geq \sqrt{\frac{\log\bb{\effecrank}}{n}}, i \in \sh
    }
    For $\beta_{0}$ we have
    \bas{
        \beta_{0} &\leq \E{\Tr\bb{W_{\hatS}}\bigg| \mathcal{E}} = \sum_{i \in [d]}\mathbb{P}\bb{i \in \hatS \bigg| \mathcal{E}} - \sum_{i \in S}v_{1}\bb{i}^{2}\mathbb{P}\bb{i \in \hatS \bigg| \mathcal{E}} \\
        &\leq \sum_{i \notin S}\mathbb{P}\bb{i \in \hatS \bigg| \mathcal{E}} + \sum_{i \in S}\bb{1-v_{1}\bb{i}^{2}}\mathbb{P}\bb{i \in \hatS \bigg| \mathcal{E}} = \sum_{i \notin S}\mathbb{P}\bb{i \in \hatS \bigg| \mathcal{E}} + s-1 \\
        &\leq \sum_{i \notin S}\mathbb{P}\bb{|r_{i}| \geq \gamma_{n}\bigg| \mathcal{E}} + s-1 = \sum_{i \notin S}\frac{\mathbb{P}\bb{|r_{i}| \geq \gamma_{n}}}{\mathbb{P}\bb{\mathcal{E}}} + s-1 \\
        &\leq 2\sum_{i \notin S}\mathbb{P}\bb{|r_{i}| \geq \gamma_{n}} + s - 1, \text{ since } \mathbb{P}\bb{\mathcal{G}} \geq 1-\delta \\ 
        &\leq 2C_{T}\bbb{\bb{\frac{\lambda_{1}}{\lambda_{1}-\lambda_{2}}}^{2}\bb{\frac{1}{\delta^{2}\min_{i\in S_{\text{hi}}}v_{1}\bb{i}^{2}}}^{2}}\eta^{2}\lambda_{1}^{2}\bb{d-s} + s-1, \text{ using Lemma}~\ref{lemma:i_notin_S_bound} \\
        &\leq 2s \text{, using bound on } n
    }
    The result then follows using Theorem~\ref{theorem:convergence_truncated_oja} and substituting the bounds on $\alpha_{0}$, $\beta_{0}$ and $p_{0}$. Finally, note that using a similar argument as Theorem~\ref{theorem:convergence_truncate_vec}, we have
    \bas{
        \mathbb{P}\bb{S_{\text{hi}} \not\subseteq \hatS|\mathcal{E}} &\leq \sum_{i \in S_{\text{hi}}}\mathbb{P}\bb{i \notin \hatS; i\in S_{\text{hi}}\bigg| \mathcal{E}} \\
        &= \sum_{i \in S_{\text{hi}}}\mathbb{P}\bb{|r_{i}| < \gamma_{n} ; i \in S \bigg| \mathcal{E}} \leq C_{H}\sum_{i \in S_{\text{hi}}}\eta \lambda_{1}\log\bb{\effecrank} + \eta\lambda_{1}\bb{\frac{\lambda_{1}}{\lambda_{1}-\lambda_{2}}}\frac{1}{v_{1}\bb{i}^{2}} \\
        &\leq C_{H}\eta \lambda_{1}s\log\bb{\effecrank} +  C_{H}\eta\lambda_{1}\bb{\frac{\lambda_{1}}{\lambda_{1}-\lambda_{2}}}\sum_{i \in S_{\text{hi}}}\frac{1}{v_{1}\bb{i}^{2}} \leq \delta
    }
    using the sample size bound on $n$.
\end{proof}

% \convergencethresholdsupport*
% \begin{theorem}
%     \label{appendix_theorem:convergence_threshold_support}
%     (Convergence of $\ThresholdSupport$) Let $\hatS$ be obtained from Algorithm~\ref{alg:threshold_support} and subsequently used in Algorithm~\ref{alg:truncated_oja}. Fix a value of $\delta \in \bb{0,1}$. Let the step-size, $\eta$ satisfy $\bb{1-\theta}\bb{\lambda_{1}-\lambda_{2}} + 25L^4\sigma^{4}\eta\lambda_{1}^{2} = 25L^4\sigma^{4}\log\bb{\effecrank}\eta\lambda_{2}\Tr\bb{\Sigma}$ for $\theta \in \bb{\frac{1}{2}, 1}$ and $\eta L^2\sigma^{2}\leq \frac{1}{4}\min\left\{\frac{1}{\lambda_{1}}, \frac{\log\bb{\effecrank}}{\Tr\bb{\Sigma}}, \frac{\log\bb{\effecrank}}{\sqrt{\lambda_{1}\Tr\bb{\Sigma}}}\right\}$. Define $\sh := \left\{i : i \in S, \; |v_{1}\bb{i}| \geq \sqrt{\frac{\log\bb{\effecrank}}{n}} \right\}, \; s' := |\sh| \leq s$ and threshold $\gamma_{n}:= \frac{\delta}{\sqrt{2e}}\min_{i \in \sh}|v_{1}\bb{i}|\bb{1+\eta\lambda_{1}}^{n}$. Then for $n \geq \bb{\frac{80e\lambda_{1}}{\delta^{2}\sqrt{s}\min_{i\in \sh}v_{1}\bb{i}^{2}\bb{\lambda_{1}-\lambda_{2}}}}^{2}$, with probability at least $1-\delta$, 
%     \bas{
%         \sin^{2}\bb{v_{\text{oja}}, v_{1}} \leq \frac{C''\log\bb{\frac{1}{\delta}}}{\delta^{3}}\frac{\lambda_{1}}{\lambda_{1}-\lambda_{2}}\frac{\max\left\{s, \log\bb{\effecrank}\right\}\log\bb{\effecrank}}{n}
%     }
%     where $C'' > 0$ is an absolute constant.
% \end{theorem}
\end{appendix}

% \newpage
% \input{checklist}

\end{document}